\newcommand{\HH}{{\mathbb H}}
\newcommand{\tr}{\operatorname{tr}}
\newcommand{\Hom}{\operatorname{Hom}}
\newcommand{\diag}{\operatorname{diag}}
\newcommand{\sgn}{\operatorname{sgn}}
\newcommand{\str}{\operatorname{str}}
\newcommand{\Ind}{\operatorname{Ind}}
\newcommand{\id}{\operatorname{id}}
\renewcommand{\Im}{\operatorname{im}}
\newcommand{\Ker}{\operatorname{ker}}
\newcommand{\Ext}{\operatorname{Ext}}
\newcommand{\End}{\operatorname{End}}
\newcommand{\ad}{\operatorname{ad}}
\newcommand{\ch}{\operatorname{ch}_{\kk^{\CC}}}
\newcommand{\fch}{\operatorname{ch}}
\newcommand{\pr}{\operatorname{pr}}
\newcommand{\bil}{B}
\newcommand{\Dirac}{\operatorname{D}}
\newcommand{\Diracv}{\operatorname{D}\! v}
\newcommand{\Diracw}{\operatorname{D}\! w}
\newcommand{\DC}{\operatorname{H}_{\operatorname{{D}}}}
\newcommand{\DI}{\operatorname{I}}
\def\WW{{\mathcal{W}}}
\def\osp{{\mathfrak{osp}}}
\def\dd{{\mathfrak{d}}}
\def\Dp{{\Dirac^{\pp_{1}}}}
\def\Dq{{\Dirac^{\qq_{2}}}}
\def\ddp{{\mathrm{d}^{\pp_{1}}}}
\def\ddq{{\mathrm{d}^{\qq_{2}}}}
\def\delp{{\delta^{\pp_{1}}}}
\def\delq{{\delta^{\qq_{2}}}}
\def\CC{{\mathbb C}}
\def\RR{{\mathbb R}}
\def\su{{\mathfrak{su}}}
\def\Mat{{\text{Mat}}}
\def\HH{{\mathcal{H}}}
\def\hh{{\mathfrak h}}
\def\pp{{\mathfrak p}}
\def\uu{{\mathfrak u}}
\def\kk{{\mathfrak k}}
\def\u{{\mathfrak u}}
\def\ll{{\mathfrak l}}
\def\qq{{\mathfrak q}}
\def\bb{{\mathfrak b}}
\def\even{{\mathfrak{g}_{\bar{0}}}}
\def\odd{{\mathfrak{g}_{\bar{1}}}}
\def\pp{{\mathfrak p}}
\def\reg{{reg}}
\def\rform{\gg_{\bar{0}}^{\RR}}
\def\nn {{\mathfrak{n}}}
\def\pp{{\mathfrak{p}}}
\def\gg{{\mathfrak{g}}}
\def\ZZ{{\mathbb Z}}
\def\Cl{{\operatorname{Cl}}}
\def\gsmod{{\mathfrak{g}}\textbf{-smod}}
\def\gmod{{\mathfrak{g}_{\bar{0}}}\textbf{-mod}}
\def\evsmod{{\mathfrak{g}_{\bar{0}}}\textbf{-smod}}
\def\Weyl{{\mathscr{W}(\odd)}}
\def\slmn{{\mathfrak{sl}(m\vert n)}}
\def\glmn{{\mathfrak{gl}(m\vert n)}} 
\def\UE{{\mathfrak U}}
\def\Dis{{\mathscr{D}^{\text{hol}}}}
\def\sl{{\mathfrak{sl}}}
\newcommand{\gl}{{\mathfrak{gl}}}
\newcommand*{\da@rightarrow}{\mathchar"0\hexnumber@\symAMSa 4B }
\newcommand*{\da@leftarrow}{\mathchar"0\hexnumber@\symAMSa 4C }
\newcommand*{\xdashrightarrow}[2][]{%
  \mathrel{%
    \mathpalette{\da@xarrow{#1}{#2}{}\da@rightarrow{\,}{}}{}%
  }%
}
\newcommand{\xdashleftarrow}[2][]{%
  \mathrel{%
    \mathpalette{\da@xarrow{#1}{#2}\da@leftarrow{}{}{\,}}{}%
  }%
}
\newcommand*{\da@xarrow}[7]{%
  % #1: below
  % #2: above
  % #3: arrow left
  % #4: arrow right
  % #5: space left 
  % #6: space right
  % #7: math style 
  \sbox0{$\ifx#7\scriptstyle\scriptscriptstyle\else\scriptstyle\fi#5#1#6\m@th$}%
  \sbox2{$\ifx#7\scriptstyle\scriptscriptstyle\else\scriptstyle\fi#5#2#6\m@th$}%
  \sbox4{$#7\dabar@\m@th$}%
  \dimen@=\wd0 %
  \ifdim\wd2 >\dimen@
    \dimen@=\wd2 %   
  \fi
  \count@=2 %
  \def\da@bars{\dabar@\dabar@}%
  \@whiledim\count@\wd4<\dimen@\do{%
    \advance\count@\@ne
    \expandafter\def\expandafter\da@bars\expandafter{%
      \da@bars
      \dabar@ 
    }%
  }%  
  \mathrel{#3}%
  \mathrel{%   
    \mathop{\da@bars}\limits
    \ifx\\#1\\%
    \else
      _{\copy0}%
    \fi
    \ifx\\#2\\%
    \else
      ^{\copy2}%
    \fi
  }%   
  \mathrel{#4}%
}
\newsavebox\myboxA
\newsavebox\myboxB
\newlength\mylenA
\newcommand*\xtilde[2][0.8]{%
    \sbox{\myboxA}{$\m@th#2$}%
    \setbox\myboxB\null% Phantom box
    \ht\myboxB=\ht\myboxA%
    \dp\myboxB=\dp\myboxA%
    \wd\myboxB=#1\wd\myboxA% Scale phantom
    \sbox\myboxB{$\m@th\widetilde{\copy\myboxB}$}%  Overlined phantom
    \setlength\mylenA{\the\wd\myboxA}%   calc width diff
    \addtolength\mylenA{-\the\wd\myboxB}%
    \ifdim\wd\myboxB<\wd\myboxA%
       \rlap{\hskip 0.5\mylenA\usebox\myboxB}{\usebox\myboxA}%
    \else
        \hskip -0.5\mylenA\rlap{\usebox\myboxA}{\hskip 0.5\mylenA\usebox\myboxB}%
    \fi}
\newbox\usefulbox
\def\getslant #1{\strip@pt\fontdimen1 #1}
\def\xxtilde #1{\mathchoice
 {{\setbox\usefulbox=\hbox{$\m@th\displaystyle #1$}%
    \dimen@ \getslant\the\textfont\symletters \ht\usefulbox
    \divide\dimen@ \tw@ 
    \kern\dimen@ 
    \xtilde{\kern-\dimen@ \box\usefulbox\kern\dimen@ }\kern-\dimen@ }}
 {{\setbox\usefulbox=\hbox{$\m@th\textstyle #1$}%
    \dimen@ \getslant\the\textfont\symletters \ht\usefulbox
    \divide\dimen@ \tw@ 
    \kern\dimen@ 
    \xtilde{\kern-\dimen@ \box\usefulbox\kern\dimen@ }\kern-\dimen@ }}
 {{\setbox\usefulbox=\hbox{$\m@th\scriptstyle #1$}%
    \dimen@ \getslant\the\scriptfont\symletters \ht\usefulbox
    \divide\dimen@ \tw@ 
    \kern\dimen@ 
    \xtilde{\kern-\dimen@ \box\usefulbox\kern\dimen@ }\kern-\dimen@ }}
 {{\setbox\usefulbox=\hbox{$\m@th\scriptscriptstyle #1$}%
    \dimen@ \getslant\the\scriptscriptfont\symletters \ht\usefulbox
    \divide\dimen@ \tw@ 
    \kern\dimen@ 
    \xtilde{\kern-\dimen@ \box\usefulbox\kern\dimen@ }\kern-\dimen@ }}%
 {}}
\newcommand*\xoverline[2][0.75]{%
    \sbox{\myboxA}{$\m@th#2$}%
    \setbox\myboxB\null% Phantom box
    \ht\myboxB=\ht\myboxA%
    \dp\myboxB=\dp\myboxA%
    \wd\myboxB=#1\wd\myboxA% Scale phantom
    \sbox\myboxB{$\m@th\overline{\copy\myboxB}$}%  Overlined phantom
    \setlength\mylenA{\the\wd\myboxA}%   calc width diff
    \addtolength\mylenA{-\the\wd\myboxB}%
    \ifdim\wd\myboxB<\wd\myboxA%
       \rlap{\hskip 0.5\mylenA\usebox\myboxB}{\usebox\myboxA}%
    \else
        \hskip -0.5\mylenA\rlap{\usebox\myboxA}{\hskip 0.5\mylenA\usebox\myboxB}%
    \fi}
\def\xxoverline #1{\mathchoice
 {{\setbox\usefulbox=\hbox{$\m@th\displaystyle #1$}%
    \dimen@ \getslant\the\textfont\symletters \ht\usefulbox
    \divide\dimen@ \tw@ 
    \kern\dimen@ 
    \overline{\kern-\dimen@ \box\usefulbox\kern\dimen@ }\kern-\dimen@ }}
 {{\setbox\usefulbox=\hbox{$\m@th\textstyle #1$}%
    \dimen@ \getslant\the\textfont\symletters \ht\usefulbox
    \divide\dimen@ \tw@ 
    \kern\dimen@ 
    \xoverline{\kern-\dimen@ \box\usefulbox\kern\dimen@ }\kern-\dimen@ }}
 {{\setbox\usefulbox=\hbox{$\m@th\scriptstyle #1$}%
    \dimen@ \getslant\the\scriptfont\symletters \ht\usefulbox
    \divide\dimen@ \tw@ 
    \kern\dimen@ 
    \xoverline{\kern-\dimen@ \box\usefulbox\kern\dimen@ }\kern-\dimen@ }}
 {{\setbox\usefulbox=\hbox{$\m@th\scriptscriptstyle #1$}%
    \dimen@ \getslant\the\scriptscriptfont\symletters \ht\usefulbox
    \divide\dimen@ \tw@ 
    \kern\dimen@ 
    \xoverline{\kern-\dimen@ \box\usefulbox\kern\dimen@ }\kern-\dimen@ }}%
 {}}
\newcommand{\mylabel}[2]{#2\def\@currentlabel{#2}\label{#1}}
\newcommand{\Mac}{}
\DeclareRobustCommand{\Mac}{%
  M%
  \raisebox{\dimexpr\fontcharht\font`M-\height}{%
    \check@mathfonts\fontsize{\sf@size}{0}\selectfont
    c%
  }%
}
\newtheoremstyle{citing}% name
  {}%      Space above, empty = `usual value'
  {}%      Space below
  {\itshape}% Body font
  {}%         Indent amount (empty = no indent, \parindent = para indent)
  {\bfseries}% Thm head font
  {\textbf{.}}%        Punctuation after thm head
  {.5em}%     Space after thm head: " " = normal interword space;
\theoremstyle{plain}
\newtheorem{theorem}{Theorem}
\newtheorem{lemma}[theorem]{Lemma}
\newtheorem{corollary}[theorem]{Corollary}
\newtheorem{proposition}[theorem]{Proposition}
\theoremstyle{remark}
\newtheorem{example}[theorem]{Example}
\theoremstyle{definition}
\newtheorem{definition}[theorem]{Definition}
\numberwithin{equation}{section}
\theoremstyle{remark}
\newtheorem{remark}[theorem]{Remark}
\theoremstyle{citing}
\theoremstyle{definition}
\title{Dirac Operators, Dirac Cohomology and Unitarity for $A(m\vert n)$}
\author{Steffen Schmidt}
\address{Center for Quantum Mathematics, 
University of Southern Denmark, DK-5230 Odense, Denmark}
\email{stschmidt@imada.sdu.dk}
\let\origmaketitle\maketitle
\def\maketitle{
  \begingroup
  \def\uppercasenonmath##1{} % this disables uppercasing title
  \let\MakeUppercase\relax % this disables uppercasing authors
  \origmaketitle
  \endgroup
}
\begin{document}
\thispagestyle{empty}

\begin{abstract}
Dirac operators and Dirac cohomology for Lie superalgebras of Riemannian type, introduced by Huang and Pand\v{z}i\'{c}, provide an effective tool for the study of unitarizable supermodules. In this article, we study these objects for Lie superalgebras of type $A$ and relate them systematically to unitarity. In the first part, we establish the basic structure of the theory in this setting. We relate unitarity to the Dirac operator, derive the corresponding Dirac inequality, and show that Dirac cohomology determines unitarizable supermodules. We also determine explicitly the Dirac cohomology of unitarizable simple supermodules. In the second part, we turn to applications. We obtain a new characterization of unitarity, establish a relation with Kostant's cohomology, derive a formula for formal characters, and introduce a Dirac index.
\end{abstract}

\maketitle

\setlength{\parindent}{1em}
\setcounter{tocdepth}{1}

\tableofcontents

%-----------------------------------------------------
%-----------------------------------------------------

\section{Introduction}
\subsection{Vue d'ensemble} Dirac operators entered representation theory through the work of Parthasarathy and Vogan. Parthasarathy used them in the construction of discrete series representations of semisimple Lie groups \cite{atiyah1977geometric,parthasarathy1972dirac}, while Vogan introduced Dirac cohomology for $(\gg,K)$-modules and conjectured that it determines the infinitesimal character. This conjecture was proved by Huang and Pand\v{z}i\'{c} \cite{huang2002dirac}.

Since then, Dirac cohomology has become an important invariant in representation theory, with strong links to other cohomological theories. For highest weight modules it is closely related to $\mathfrak n$-cohomology, and for Vogan--Zuckerman modules $A_{\mathfrak q}(\lambda)$ it is closely related to $(\gg,K)$-cohomology \cite{Dirac_cohomology_Lie_algebra_cohomology,Classification_Dirac_unitarity,Dirac_cohomology_of_A_q}. At the same time, it is often computationally more accessible than $(\gg,K)$-cohomology while retaining substantial information about the representation.

In \cite{huang2007dirac}, Huang and Pand\v{z}i\'{c} extended the theory of Dirac operators and Dirac cohomology to Lie superalgebras of Riemannian type. Their construction requires a non-degenerate invariant supersymmetric bilinear form $B$ on $\gg$ and a decomposition of $\odd$ into complementary Lagrangian subspaces. Under these assumptions, one obtains a natural Dirac operator $\Dirac$ on tensor products of $\gg$-supermodules with the oscillator module $M(\odd)$ for the Weyl algebra of $\odd$, whose square is the sum of two quadratic Casimir operators and a constant. The associated Dirac cohomology of a $\gg$-supermodule $M$ is defined by
\begin{equation}
\DC(M)\coloneqq \ker\Dirac/(\ker\Dirac\cap\operatorname{Im}\Dirac).
\end{equation}
Huang and Pand\v{z}i\'{c} proved a superalgebra analogue of Vogan's conjecture: if $\DC(M)\neq 0$, then $\DC(M)$ determines the infinitesimal character of $M$ \cite{huang2005dirac}.

An important link with other cohomological invariants was established in \cite{cheng2004analogue}, where Dirac cohomology was related to Kostant's $\gg_{+1}$-cohomology for finite-dimensional $\gg$-supermodules, assuming that $\gg$ admits a $\mathbb Z$-grading compatible with the $\mathbb Z_{2}$-grading. For unitarizable $\gl(m\vert n)$-supermodules, Xiao later showed that Dirac cohomology is, up to a twist, isomorphic to $\mathrm H^{*}(\gg_{+1},M)$ \cite{xiao2015dirac}.

Explicit computations of Dirac cohomology in the super setting are still missing, and its relation to unitarity has not been investigated systematically. In this article, we address both problems for Lie superalgebras of type $A$, denoted by $\gg=A(m-1\vert n-1)$: we compute Dirac cohomology explicitly and relate it to unitarity.

\subsection{Results}
The results are divided into two parts. The first develops the structural relations between unitarity, the Dirac operator, and Dirac cohomology. The second gives explicit applications, including a new characterization of unitarity, relations with Kostant's cohomology, the construction of a Dirac index and explicit formulas for the formal character.

We begin with the first part. We realize the Dirac operator $\Dirac$ of Huang and Pand\v{z}i\'{c} as a distinguished element of the algebra of $\even$-invariants $\mathcal{W}(\gg,\even)$ in the quantum Weil algebra
$
\WW(\gg)\coloneqq \UE(\gg)\otimes \Cl(\gg).
$
This construction extends to arbitrary quadratic Lie superalgebras and provides a general framework for the study of $\Dirac$. In particular, viewed as an element of $\WW(\gg,\even)=\UE(\gg)\otimes\Cl(\odd)$, the Dirac operator $\Dirac$ acts naturally on $M\otimes M(\odd)$, where $M$ is a $\gg$-supermodule and $M(\odd)$ denotes the simple $\Cl(\odd)$-module, that is, the oscillator module.

In this article, we are primarily interested in unitarizable $\gg$-supermodules, that is, $\gg$-supermodules $M$ admitting a positive definite Hermitian form with respect to which the action of $\gg$ is contravariant for some conjugate-linear anti-involution $\omega$. Such anti-involutions are in bijection with real forms of $\gg$. For Lie superalgebras of type $A$, non-trivial simple unitarizable supermodules exist only for a restricted class of real forms, and in the cases considered here they are necessarily highest weight supermodules. 

To relate unitarity to the Dirac operator, one must work with a basis adapted not to a $\ZZ_{2}$-compatible $\ZZ$-grading, but to the real form defining the unitary structure. More precisely, for every real form admitting non-trivial unitarizable supermodules, we construct a basis of $\gg$ compatible both with the even non-degenerate supersymmetric invariant bilinear form $B$ and with the conjugate-linear anti-involution $\omega$. This basis is naturally adapted to the corresponding maximal compact subalgebra. In terms of it, the Hermitian structure on a supermodule and the Dirac operator become directly comparable, and one obtains the following characterization of contravariance (\emph{cf.}~Theorem~\ref{DiracUnit}).

\begin{theorem}
Let $M$ be a simple $\gg$-supermodule equipped with a positive definite Hermitian form $\langle \cdot, \cdot \rangle_M$, such that $M_{\bar{0}}$ and $M_{\bar{1}}$ are mutually orthogonal. Assume that $M$ is $\even$-semisimple. Then the following are equivalent:
\begin{enumerate}
 \item[(i)] $(M, \langle \cdot, \cdot \rangle_M)$ is a unitarizable $\gg$-supermodule.
 \item[(ii)] The Dirac operator $\Dirac$ is selfadjoint with respect to $\langle \cdot, \cdot \rangle_{M \otimes M(\odd)}$.
\end{enumerate}
\end{theorem}

As a direct consequence of selfadjointness, one obtains a Dirac inequality, which becomes a basic tool in the study of unitarizable supermodules. The following combines Proposition~\ref{prop::Dirac_inequality} and Proposition~\ref{prop::specified_Dirac_inequality} from the main text.

\begin{proposition}
Let $\HH$ be a unitarizable $\gg$-supermodule. Then
\[
\langle \Dirac^{2}v,v\rangle_{\HH\otimes M(\odd)}\geq 0
\qquad\text{for all }v\in \HH\otimes M(\odd).
\]
In particular, if $\HH$ is simple of highest weight $\Lambda$, and if $V$ is a $\even$-highest weight submodule of $\HH\otimes M(\odd)$ with highest weight $\mu$, then
\[
(\mu+2\rho,\mu)\geq (\Lambda+2\rho,\Lambda).
\]
\end{proposition}

Turning to Dirac cohomology, one observes that, for general supermodules, it does not admit an adjoint functor, although it satisfies a six-term exact sequence. 
A modified Dirac cohomology is therefore introduced; it admits a right adjoint and agrees with the usual Dirac cohomology on unitarizable supermodules.

For unitarizable supermodules, the Dirac cohomology simplifies to
$
\DC(M)=\Ker\Dirac.
$
Moreover, if $M$ is a simple unitarizable $\gg$-supermodule, then $\DC(M)$ is a unitarizable $\even$-module and decomposes as a direct sum of simple unitarizable $\even$-modules (neglecting parity). We show that these $\even$-modules are precisely those arising from the relative holomorphic discrete series of the universal covering group associated with $\even$. Combined with the superalgebra analogue of Vogan's conjecture, this leads to an explicit determination of the Dirac cohomology of all unitarizable $\gg$-supermodules. The following statements correspond to Theorem~\ref{thm::Dirac_cohomology_simple_supermodules} and Theorem~\ref{Unique} in the main text.

\begin{theorem}
Let $L(\Lambda)$ be a non-trivial unitarizable highest weight $\gg$-supermodule with highest weight $\Lambda$. Then
\[
\DC(L(\Lambda)) \cong L_{0}(\Lambda-\rho_{\bar{1}}).
\]
Consequently, two unitarizable $\gg$-supermodules $\HH_{1}$ and $\HH_{2}$ are equivalent if and only if
$\DC(\HH_{1}) \cong \DC(\HH_{2})$ as $\gg_{\bar{0}}$-modules.
\end{theorem}
 We extend this result to the Dirac cohomology of highest weight supermodules, which possess Jordan--Hölder filtrations with unitarizable quotients.

The second part of this article concerns several applications of Dirac operators and Dirac cohomology. The first result gives a criterion for unitarity in terms of the $\even$-structure of a highest weight $\gg$-supermodule. This is non-trivial in both directions: semisimplicity over $\even$ with unitarizable $\even$-constituents does not by itself imply unitarity, and induction from $\even$ to $\gg$ does not in general preserve it. The missing condition is provided by the Dirac inequality.

Indeed, if $M$ is a highest weight $\gg$-supermodule of highest weight $\Lambda$, then $M$ admits a $\even$-filtration with simple highest weight subquotients $L_{0}(\Lambda-\gamma)$, where $\gamma$ is a sum of distinct positive odd roots. In the unitarizable case, $M$ is completely reducible over $\even$, and the Dirac inequality is strict on every non-trivial $\even$-constituent. This suggests that unitarity should be characterized by the unitarizability of the highest weight $\even$-constituent together with the strict Dirac inequality on all remaining $\even$-composition factors. We show that this is indeed the case. The following is Theorem~\ref{UnitarityD2} in the main text.

\begin{theorem}
Let $M$ be a highest weight $\gg$-supermodule with highest weight $\Lambda$. Then $M$ is unitarizable if and only if the following two conditions are satisfied:
\begin{enumerate}
\item[a)] $\Lambda$ is the highest weight of a unitarizable highest weight $\even$-module.
\item[b)] If $L_{0}(\mu)$ is a simple $\even$-composition factor in a $\even$-filtration of $M$ with highest weight $\mu$, then
\[
(\mu+2\rho,\mu)>(\Lambda+2\rho,\Lambda).
\]
\end{enumerate}
\end{theorem}

Next, we relate Dirac cohomology to Kostant's cohomology. The selfadjointness condition for the Dirac operator fixes a positive system on $\odd$. As a consequence, Kostant's cohomology is naturally defined only as a module over the maximal compact subalgebra $\kk$ of the even part of the chosen real form. In this setting, we obtain the following comparison theorem which is Theorem~\ref{Injection} in the main text.

\begin{theorem}
For any unitarizable simple $\gg$-supermodule $\HH$, there is a $\kk^{\CC}$-module isomorphism
\[
\DC(\HH)\cong \mathrm{H}^{\ast}(\gg_{+1},\HH)\otimes \CC_{-\rho_{\bar{1}}}.
\]
\end{theorem}

We then consider the Euler characteristic of Dirac cohomology. 
For this purpose, we introduce the \emph{Dirac index} of a $\gg$-supermodule $M$, defined as the virtual $\gg_{\bar{0}}$-module
\begin{equation}
\DI(M)\coloneqq M\otimes M(\gg_{\bar{1}})_{\bar{0}} - M\otimes M(\gg_{\bar{1}})_{\bar{1}}.
\end{equation}
The Dirac index coincides with the Euler characteristic of Dirac cohomology (\emph{cf.}~Proposition~\ref{prop::Euler_char}), \emph{i.e.},
\begin{equation}
\DI(M)=\DC^{+}(M)-\DC^{-}(M).
\end{equation}
Moreover, it commutes with tensoring by finite-dimensional $\gg$-supermodules, in analogy with the classical case.

Finally, we obtain two formulas for the formal $\kk^{\CC}$-character. For unitarizable simple supermodules, the first is obtained from Kostant cohomology. For $\Dirac^{2}$-semisimple Harish-Chandra supermodules $\HH$, that is, those satisfying
\begin{equation}
\HH\otimes M(\odd)\cong \Ker\Dirac^{2}\oplus \Im\Dirac^{2},
\end{equation}
the second is obtained from the Dirac index. The main results are Theorems~\ref{formalCharacter} and~\ref{thm::formal_character_Dirac_index}.

\begin{theorem}
Let $F^\nu$ be the simple $\kk^{\CC}$-module of highest weight $\nu\in\hh^\ast$, and set $N(\mu)\coloneqq \bigwedge(\nn_{\bar1}^-)\otimes F^\mu$.
\begin{enumerate}
\item[a)] Let $\HH$ be a unitarizable simple $\gg$-supermodule. Then
\[
\ch(\HH)=\sum_\mu\sum_{k\geq 0}(-1)^k[\mathrm H^k(\gg_{+1},\HH):F^\mu]\,\ch\bigl(N(\mu)\bigr).
\]
\item[b)] Let $\HH$ be a $\Dirac^{2}$-semisimple Harish-Chandra supermodule. If $\DC^+(\HH)=\sum_\mu F^\mu$ and $\DC^-(\HH)=\sum_\nu F^\nu$, then
\[
\ch(\HH)=\sum_\mu \ch\bigl(N(\mu+\rho_{\bar1})\bigr)-\sum_\nu \ch\bigl(N(\nu+\rho_{\bar1})\bigr).
\]
\end{enumerate}
\end{theorem}

\subsection{Structure of the paper} Section~\ref{sec::preliminaries} introduces the structure theory of Lie superalgebras $A(m\vert n)$ and their real forms $\su(p,q\vert r,s)/\mathfrak{psu}(p,q\vert r,s)$.

Section~\ref{UnitarySL} introduces unitarizable $\gg$-supermodules and analyzes the consequences of unitarity. 
Harish-Chandra supermodules are defined, and it is shown that every unitarizable highest weight $\gg$-supermodule arises as a quotient of a Verma supermodule, leading to the Shapovalov form as the canonical contravariant Hermitian form. 
We further examine the $\gg_{\bar{0}}$-constituents of unitarizable highest weight $\gg$-supermodules and introduce a distinguished subclass, the relative holomorphic discrete series modules.

Section~\ref{ch::Dirac_operator_Dirac_cohomology} provides an overview of Dirac operators and Dirac cohomology in the language of the quantum Weil algebra and constructs a Dirac induction functor that is left adjoint to Dirac cohomology on unitarizable supermodules.

Section~\ref{DiracAndUnitarity} examines the interaction between Dirac operators, Dirac cohomology, and unitarity. 
In particular, the Dirac operator is shown to detect unitarity, and the Dirac cohomology of unitarizable simple $\gg$-supermodules is computed.

Finally, Section~\ref{sec::complementary_perspectives} discusses further applications of the Dirac operator and Dirac cohomology. 
These include a new characterization of unitarity, a relation with Kostant’s cohomology, the introduction of a Dirac index and formulas for formal $\kk^{\CC}$-characters.

\subsection{Notation}
We denote by $\ZZ_{+}$ the set of positive integers. Let $\ZZ_{2}\coloneqq \ZZ / 2\ZZ$ be the ring of integers modulo $2$. We denote the elements of $\ZZ_{2}$ by $\overline{0}$ (the residue class of even integers) and $\overline{1}$ (the residue class of odd integers). The ground field is $\CC$, unless otherwise stated.

If $V\coloneqq V_{\bar{0}} \oplus V_{\bar{1}}$ is a super vector space and $v \in V$ is a homogeneous element, then $p(v)$ denotes the parity of $v$, meaning $p(v) = \bar{0}$ if $v \in V_{\bar{0}}$ and $p(v) = \bar{1}$ if $v \in V_{\bar{1}}$. For any two super vector spaces $V$ and $W$, define the $\ZZ_{2}$-graded tensor product $V\otimes W$ by
\[
(V\otimes W)_{\bar{0}}\coloneqq (V_{\bar{0}}\otimes W_{\bar{0}})\oplus(V_{\bar{1}}\otimes W_{\bar{1}}),\quad
(V\otimes W)_{\bar{1}}\coloneqq (V_{\bar{0}}\otimes W_{\bar{1}})\oplus(V_{\bar{1}}\otimes W_{\bar{0}}).
\]
The assignment $(V,W)\mapsto V\otimes W$ is additive in each variable. Moreover, $\otimes$ is associative, and the map $V\otimes W\to W\otimes V$, $v\otimes w\mapsto(-1)^{p(v)p(w)}w\otimes v$, is an isomorphism. If $V$ and $W$ are superalgebras, the product in $V \otimes W$ is $(v_{1}\otimes w_{1})(v_{2}\otimes w_{2})=(-1)^{p(w_{1})p(v_{2})}(v_{1}v_{2}\otimes w_{1}w_{2})$ for any homogeneous $v_{1},v_{2}\in V$ and $w_{1},w_{2} \in W$.

For a given Lie superalgebra $\gg = \even \oplus \odd$, we denote its universal enveloping superalgebra by $\UE(\gg)$. The universal enveloping algebra of the Lie subalgebra $\even$ is denoted by $\UE(\even)$. The centers are denoted by $\mathfrak{Z}(\gg)$ and $\mathfrak{Z}(\even)$, respectively.

By a (left) $\gg$-supermodule, we mean a super vector space $M = M_{\bar{0}} \oplus M_{\bar{1}}$ equipped with a graded linear (left) action of $\gg$, such that $[X, Y]v = X(Yv) - (-1)^{p(X)p(Y)} Y(Xv)$ for all homogeneous $X, Y \in \gg$ and $v \in M$. Analogously, a $\UE(\gg)$-supermodule is defined. A morphism $f: M \to N$ of $\gg$-supermodules is a linear map such that $f(M_{\overline{i}}) \subset N_{\overline{i}}$ and $f(Xv) = Xf(v)$ for all $X \in \gg$ and $v \in M$. We write $\gsmod$ for the category of all (left) $\gg$-supermodules. This category is a $\CC$-linear abelian category, equipped with an endofunctor $\Pi$, the \emph{parity reversing functor}. The parity reversing functor is defined by $\Pi(M)_{\bar{0}} = M_{\bar{1}}$ and $\Pi(M)_{\bar{1}} = M_{\bar{0}}$. Moreover, $\Pi(M)$ is viewed as a $\gg$-supermodule with the induced action $X \cdot v\coloneqq (-1)^{p(X)} Xv$ for any $X \in \gg$ and $v \in M$. In particular, a $\gg$-supermodule $M$ is not necessarily isomorphic to $\Pi(M)$. 

We denote by $\gmod$ the category of (left) $\even$-modules. When $\even$ is considered as a purely even Lie superalgebra, the category $\gsmod$ is simply the direct sum of two copies of $\gmod$. We view any $\even$-module as a $\even$-supermodule concentrated in a single parity. Additionally, if we disregard parity, any $\gg$-supermodule $M$ can be viewed as a $\even$-module, also denoted by $M\vert_{\even}$. In what follows, the $\ZZ_2$-grading of $\even$-supermodules will often be left implicit and will be indicated only when it is relevant.

We will not make an explicit distinction between the categories $\gsmod$ and $\UE(\gg)$\textbf{-smod} or $\gmod$ and $\UE(\even)$\textbf{-mod}, which are the categories of all (left) $\UE(\gg)$-supermodules and left $\UE(\even)$-modules, respectively.

\medskip\noindent \textit{Acknowledgments.} I extend special thanks to Johannes Walcher and Rainer Weissauer for numerous conversations. I also thank Simone Noja and Raphael Senghaas for discussions and collaboration on related projects. 
I am grateful to IMPA - Instituto de Matemática Pura e Aplicada, Rio de Janeiro, for their support and hospitality.
This work is funded by the Deutsche Forschungsgemeinschaft (DFG, German Research Foundation) under
project number 517493862 (Homological Algebra of Supersymmetry: Locality, Unitary, Duality). 
This work is funded by the Deutsche Forschungsgemeinschaft (DFG, German Research Foundation) under
Germany’s Excellence Strategy EXC 2181/1 — 390900948 (the Heidelberg STRUCTURES Excellence Cluster).

\section{Preliminaries} \label{sec::preliminaries}
In this section, we introduce linear Lie superalgebras $A(m\vert n)$, present their structure theory, and describe the real forms $\su(p,q\vert r,s)$ and $\mathfrak{psu}(p,q\vert r,s)$.

\subsection{Lie superalgebras \texorpdfstring{$A(m\vert n)$}\ } \label{subsec::A} The \emph{general linear Lie superalgebra} $\glmn$ is the Lie superalgebra consisting of all block matrices
\begin{equation}
X = \left(\begin{array}{@{}c|c@{}}
 A & B \\
\hline
 C & D
\end{array}\right),
\end{equation}
where $A$ is an $m\times m$ complex matrix, $B$ an $m\times n$ complex matrix, 
$C$ an $n\times m$ complex matrix, and $D$ an $n\times n$ complex matrix. 
The \emph{special linear Lie superalgebra} $\slmn$ is the Lie subsuperalgebra of $\glmn$ consisting of all matrices $X$ with zero supertrace, that is,
\begin{equation}
    \str(X) = \tr(A)-\tr(D)=0.
\end{equation}
This is a codimension-1 ideal in $\glmn$. The even part of $\slmn$ is $\slmn_{\bar{0}} \cong \mathfrak{sl}(m) \oplus \mathfrak{sl}(n) \oplus \mathbb{C} E_{m\vert n}$, where $E_{m\vert n}$ denotes the diagonal matrix with $A = nE_{m}$ and $D = m E_{n}$, and where $E_{m}, E_{n}$ are identity matrices of the indicated sizes. Furthermore, $\slmn$ is simple if and only if $m \neq n$ and $m + n > 2$. When $m = n$ and $m+n > 2$, $\mathfrak{sl}(n\vert n)$ contains the nontrivial center $\mathbb{C} E_{n\vert n}$, and the quotient $\mathfrak{psl}(n\vert n)\coloneqq \mathfrak{sl}(n\vert n)/\mathbb{C} E_{n\vert n}$ is a simple Lie superalgebra. The Lie superalgebra $\mathfrak{sl}(1\vert 1)$ is nilpotent. For the remainder of this article, we assume $m + n > 2$.

In order to treat simultaneously the cases $m=n$ and $m\neq n$, we define the simple Lie superalgebras $A(m\vert n)$ by
\begin{equation}
A(m\vert n)\coloneqq 
\begin{cases}
 \mathfrak{sl}(m+1\vert n+1), & \text{if } m\neq n,\; m,n\ge 0,\\
 \mathfrak{psl}(n+1\vert n+1), & \text{if } m=n,\; n>0.
\end{cases}
\end{equation}
In the sequel we write $\gg\coloneqq A(m-1\vert n-1)$.

We denote by $\dd\coloneqq \{ H = \diag(h_1,\ldots,h_{m+n})\}$ the abelian Lie subalgebra of diagonal matrices 
in $\glmn$. We choose the subspace of diagonal matrices with vanishing supertrace, denoted by $\hh$, as a Cartan subalgebra of $\slmn$. The standard basis of the dual space $\dd^\ast$ of $\dd$ is
$(\epsilon_{1},\dotsc,\epsilon_{m},\delta_{1},\ldots,\delta_{n})$ where $\epsilon_{i}(H)=h_{i},\
\delta_{k}(H)=h_{k+m}$ for $i=1,\dotsc,m$ and $k=1,\dotsc,n$, for any $H\in\dd$. We use the symbols for the induced basis of $\mathfrak{h}^*$ of $\mathfrak{h}$. Namely, we identify weights $\lambda \in \hh^{\ast}$ for $\gg$ 
with tuples $(\lambda_{1},\dotsc,\lambda_{m}\vert \lambda'_{1},\dotsc,\lambda'_{n})$ via the expansion
$
\lambda=\lambda_{1}\epsilon_{1}+\dotsc+\lambda_{m}\epsilon_{m}+\lambda'_{1}\delta_{1}+\dotsc+\lambda'_{n}\delta_{n},
$
keeping in mind that shifts by $(1,\ldots,1\vert {-}1,\ldots,-1)$ do not change the weight. 

The Cartan subalgebra $\hh$ of $\gg$ is, in particular, a Cartan subalgebra for $\even$, and hence acts semisimply on $\even$. However, $\odd$ is a completely reducible $\even$-module under the adjoint action induced by the matrix supercommutator $[\cdot, \cdot]$, so that we obtain a $\ZZ_{2}$-graded root space decomposition of $\gg$:
\begin{equation}
 \gg = \hh \oplus \bigoplus_{\alpha \in \hh^{\ast}} \gg^{\alpha}, \qquad \gg^{\alpha}\coloneqq \{ X \in \gg \ : \ [H, X] = \alpha(H) X \ \text{for all} \ H \in \hh \}.
\end{equation}
We call $\alpha \in \hh^{\ast}$ a \emph{root} if $\alpha \neq 0$ and $\gg^{\alpha} \neq \{0\}$. Elements of $\gg^{\alpha}$ are called \emph{root vectors}, and $\gg^{\alpha}$ is called the \emph{root space} corresponding to the root $\alpha$. Let $\Delta$ denote the set of all roots. Each root space $\gg^{\alpha}$ has either superdimension $(1\vert 0)$ or $(0\vert 1)$. A root $\alpha \in \Delta$ is called \emph{even} if $\gg^{\alpha} \cap \even \neq \{0\}$, and \emph{odd} if $\gg^{\alpha} \cap \odd \neq \{0\}$. The associated sets of roots are denoted by $\Delta_{\bar{0}}$ and $\Delta_{\bar{1}}$, respectively. Altogether, the set of roots of $\gg$ is given by $\Delta = \Delta_{\bar{0}} \sqcup \Delta_{\bar{1}}$, where
\begin{equation}
\begin{split}
 \Delta_{\bar{0}} &= \{\pm(\epsilon_{i} - \epsilon_{j}), \pm(\delta_{k} - \delta_{l}) : 1 \leq i < j \leq m, \ 1 \leq k < l \leq n \}, \\
 \Delta_{\bar{1}} &= \{\pm(\epsilon_{r} - \delta_{s}) : 1 \leq r \leq m, \ 1 \leq s \leq n \}.
\end{split}
\end{equation}
The set of even roots $\Delta_{\bar{0}}$ is the disjoint union of the root systems of $\mathfrak{sl}(m)$ and $\mathfrak{sl}(n)$.

On $\Delta_{\bar{0}}$, we fix the standard positive system for the remainder of this article:
\begin{equation} \label{eq::standard_ordering}
 \Delta_{\bar{0}}^{+}\coloneqq \{\epsilon_{i} - \epsilon_{j}, \ \delta_{k} - \delta_{l} \ : \ 1 \leq i < j \leq m, \ 1 \leq k < l \leq n\},
\end{equation}
such that the root vectors for $\epsilon_{i}-\epsilon_{j}$, $i<j$ are strictly upper triangular
matrices of $\sl(m)$, and the root vectors for $\delta_{k}-\delta_{l}$, $k<l$, are strictly upper
triangular matrices of $\sl(n)$, both diagonally embedded in $\even$. A positive system for $\Delta_{\bar{1}}^{+}$ will be specified in Section~\ref{subsubsec::Real_form}. Then the set of positive roots is $\Delta^{+} = \Delta_{\bar{0}}^{+} \sqcup \Delta_{\bar{1}}^{+}$. With respect to a choice of $\Delta^{+}$, the Lie superalgebra $\gg$ has a triangular decomposition
\begin{equation}
\gg = \nn^{-} \oplus \hh \oplus \nn^{+}, \qquad \nn^{\pm}\coloneqq \sum_{\alpha \in \Delta^{+}} \gg^{\pm \alpha},
\end{equation}
with associated \emph{Borel subalgebra} $\bb = \hh \oplus \nn^{+}$. Restricting the triangular decomposition of $\gg$ to $\even$ yields a triangular decomposition of $\even$, namely $\even = \nn_{\bar{0}}^{-} \oplus \hh \oplus \nn_{\bar{0}}^{+}$, with $\nn_{\bar{0}}^{\pm}\coloneqq \sum_{\alpha \in \Delta_{\bar{0}}^{+}} \gg^{\pm \alpha}$.

For a fixed positive system $\Delta^{+}$, we define the \emph{fundamental system} $\Uppi \subset \Delta^{+}$ to be the set of all $\alpha \in \Delta^{+}$ which cannot be written as the sum of two roots in $\Delta^{+}$. Elements of $\Uppi$ are called \emph{simple} roots. For $\Uppi = \{\alpha_{1},\dots,\alpha_{r}\}$, any $\alpha \in \Delta$ can be uniquely represented as a linear combination
\begin{equation}
\alpha = \sum_{i = 1}^{r} k_{i}\alpha_{i},
\end{equation}
where either all $k_{i} \in \ZZ_{\geq 0}$ or all $k_{i} \in \ZZ_{\leq 0}$.

The general linear Lie superalgebra $\glmn$ is an example of a quadratic Lie superalgebra, that is, it carries a non-degenerate, even, supersymmetric, and invariant bilinear form $(\cdot, \cdot) : \glmn \times \glmn \longrightarrow \CC$ given by
\begin{equation} \label{eq::supertrace_form}
(X, Y)\coloneqq \str(XY)
\end{equation}
for any $X, Y \in \glmn$. Here, invariance means that $([X, Y], Z) = (X, [Y, Z])$ for all $X, Y, Z \in \glmn$, while supersymmetry means that $(\cdot, \cdot)$ is symmetric on $\glmn_{\bar{0}}$, skew-symmetric on $\glmn_{\bar{1}}$, and that $\glmn_{\bar{0}}$ and $\glmn_{\bar{1}}$ are orthogonal to each other. The restriction of $(\cdot, \cdot)$ to $\slmn$ is non-degenerate only when $m \neq n$. If $m = n$, the one-dimensional center of $\mathfrak{sl}(n\vert n)$ becomes the radical of $(\cdot, \cdot)$. Furthermore, the restriction to $\dd$ is non-degenerate while the restriction to $\hh$ is non-degenerate if $m \neq n$. The bilinear form induced
on the dual spaces is denoted by the same symbol. On the standard basis, we have for all
$1 \leq i,j \leq m$ and $1 \leq k,l \leq n$:
\begin{equation} \label{eq::roots}
 (\epsilon_{i}, \epsilon_{j}) = \delta_{ij}, \qquad (\delta_{k}, \delta_{l}) = - \delta_{kl}, \qquad (\epsilon_{i}, \delta_{k}) = 0.
\end{equation}
For any root $\alpha \in \Delta$, we associate a unique element $h_{\alpha} \in \hh$ through the condition $\alpha(H) = (H, h_{\alpha})$ for all $H \in \hh$. This element $h_{\alpha}$ is called the \emph{coroot} associated with $\alpha$. If $m = n$, we fix the coroots as elements of $\hh$ by requiring $\alpha(H) = (H, h_{\alpha})$ for all $H \in \dd$.
However, the bilinear form on $\hh^{\ast}$, given by the linear extension of $(\alpha,\beta)\coloneqq (h_{\alpha},h_{\beta})$ for $\alpha,\beta\in\Delta$, remains 
non-degenerate only when $m\neq n$. In addition, it follows from Equation~\ref{eq::roots} that the odd roots are all isotropic, that is, 
$(\alpha,\alpha)=0$ for all $\alpha\in\Delta_{\bar{1}}$.

The root system $\Delta$ admits a natural action by the Weyl group $W$ associated with the even part $\even$ of the Lie superalgebra. This group $W$ is isomorphic to the direct product of symmetric groups $S_m \times S_n$, acting on $m$ and $n$ elements, respectively. It is generated by reflections with respect to the even roots, given by
\begin{equation}
\label{evenreflection}
R_\alpha(\beta)\coloneqq \beta - 2\frac{(\alpha,\beta)}{(\alpha,\alpha)} \alpha, \qquad \alpha\in \Delta_{\bar{0}},\ \beta\in\Delta.
\end{equation}
This action extends linearly to the dual space $\hh^*$ and preserves the bilinear form $(\cdot,\cdot)$. The \emph{Weyl group of $\gg$} is defined to be the Weyl group of $\even$. The \emph{Weyl vector} $\rho$ is $\rho=\rho_{\bar{0}}-\rho_{\bar{1}}$ where
\begin{equation}
\rho_{\bar{0}}\coloneqq \frac{1}{2}\sum_{\alpha \in \Delta_{\bar{0}}^{+}}\alpha = \frac{1}{2}\left( \sum_{i=1}^{m} (m-2i+1)\epsilon_{i} + \sum_{j=1}^{n}(n-2j+1)\delta_{j}\right), \qquad \rho_{\bar{1}}\coloneqq 
\frac{1}{2}\sum_{\beta \in \Delta_{\bar{1}}^{+}}\beta.
\end{equation}
We define the \emph{dot action} of $W$ on $\hh^{\ast}$ by
\begin{equation}
w \cdot \lambda\coloneqq w(\lambda + \rho) - \rho
\end{equation}
for any $\lambda \in \hh^{\ast}$ and $w \in W$. We say that $\lambda, \mu \in \hh^{\ast}$ are $W$\emph{-linked} if there exists some $w \in W$ such that $\mu = w \cdot \lambda$. This gives an equivalence relation on $\hh^{\ast}$, and the Weyl orbit $\{w \cdot \lambda : w \in W\}$ of $\lambda$ under the dot action is called the $W$\emph{-linkage class} of $\lambda$. 

Finally, the Lie superalgebra $\gg$ of type $A$ is an example of an \emph{involutive Lie superalgebra}, that is, a Lie superalgebra endowed with a conjugate-linear anti-involution $\omega: \gg \to \gg$. A conjugate-linear map $\omega : \gg \to \gg$ is called a \emph{conjugate-linear anti-involution} if $\omega \vert_{\gg_{\bar{0},\bar{1}}} \neq \operatorname{id}_{\gg_{\bar{0},\bar{1}}}$, $\omega^{2} = \operatorname{id}_{\gg}$, and 
\begin{equation}
\omega([x,y]) = [\omega(y),\omega(x)], \qquad \forall x,y \in \gg.
\end{equation}
The conjugate-linear anti-involutions $\omega$ are in one-to-one correspondence with real forms of $\gg$ \cite{ParkerRealForms,serganova1983classification}, and we will see that a specific choice of $\omega$ is essential when studying unitarizable supermodules over $\gg$.

\subsection{Special unitary Lie superalgebras \texorpdfstring{$\mathfrak{(p)su}(p,q \vert r,s)$}\ } \label{sec::realforms}

The \emph{special unitary (indefinite) Lie superalgebras} $\mathfrak{(p)su}(p,q\vert r,s)$ are real forms of $\gg = A(m-1\vert n-1)$, where $p+q = m$ and $r+s = n$. We construct the real Lie superalgebras $\mathfrak{(p)su}(p,q\vert r,s)$ using Hermitian forms rather than super Hermitian forms, following \cite[Section 2]{jakobsen1994full}. For that, we consider $V = \CC^{m\vert n}$, \emph{i.e.}, the complex super vector space of superdimension $(m\vert n)$. For any fixed $p,q,r,s \in \ZZ_{+}$ with $p+q = m$ and $r+s = n$, we equip $V$ with the non-degenerate Hermitian form $\langle \cdot,\cdot\rangle: V\times V \to \CC$ given by 
\begin{equation}
 \langle v, w \rangle\coloneqq \overline{v}^{T} J_{(p,q\vert r,s)} w, \qquad J_{(p,q\vert r,s)}\coloneqq \left(\begin{array}{@{}c|c@{}}
 I_{p,q}
 & 0 \\
\hline
 0 &
 I_{r,s}
\end{array}\right).
\end{equation}
Here, $\overline{\ \cdot\ }$ means complex conjugation, and we consider any element of $\CC^{m\vert n}$ as a column vector. The matrix $I_{k,l}$ is the diagonal matrix whose first $k$ entries are $1$ and whose last $l$ entries are $-1$. This Hermitian form is \emph{consistent}, that is, $\langle V_{\bar{0}},V_{\bar{1}}\rangle = 0$.

We define the \emph{(indefinite) unitary Lie superalgebra} $\mathfrak{u}(p,q\vert r,s) = \mathfrak{u}(p,q\vert r,s)_{\bar{0}} \oplus \mathfrak{u}(p,q\vert r,s)_{\bar{1}}$ as the Lie superalgebra preserving the Hermitian form $\langle\cdot,\cdot\rangle$:
\begin{equation}
\mathfrak{u}(p,q\vert r,s)_{\bar{k}}\coloneqq \{ X \in \glmn_{\bar{k}}: \langle Xv,w\rangle+\langle v,Xw\rangle =0 \ \text{for all} \ v,w \in V\}.
\end{equation}
The \emph{(indefinite) special unitary Lie superalgebra} is defined as $\mathfrak{su}(p,q\vert r,s)\coloneqq \mathfrak{u}(p,q\vert r,s) \cap A(m-1\vert n-1)$ if $m \neq n$ and $\mathfrak{psu}(p,q\vert r,s)\coloneqq \mathfrak{u}(p,q\vert r,s) \cap A(n-1\vert n-1)$ if $m=n$. To treat both cases uniformly, we write
\begin{equation}
 \mathfrak{(p)su}(p,q\vert r,s)\coloneqq \begin{cases}
 \mathfrak{su}(p,q\vert r,s), \qquad &\text{if} \ m\neq n, \\
 \mathfrak{psu}(p,q\vert r,s) \qquad &\text{if} \ m = n.
 \end{cases}
\end{equation}

In our standard realization of $\gg$, $\mathfrak{(p)su}(p,q\vert r,s)$ can be described explicitly as
\begin{equation}
\mathfrak{(p)su}(p,q\vert r,s) = \left\{ X \in \gg : J_{(p,q\vert r,s)}^{-1} X^{\dagger} J_{(p,q\vert r,s)} = - X \right\}
\end{equation}
with $(\cdot)^{\dagger}$ denoting complex conjugate-transpose. The associated conjugate-linear anti-involution on $\gg$ defining the real form $\mathfrak{(p)su}(p,q\vert r,s)$ is
\begin{equation} \label{eq::definition_omega_matrix}
\omega(X) = J_{(p,q\vert r,s)}^{-1} X^{\dagger} J_{(p,q\vert r,s)}, \qquad X \in \gg.
\end{equation}

Of particular interest are the real forms $\mathfrak{(p)su}(p,q\vert n,0)$ and $\mathfrak{(p)su}(p,q\vert 0,n)$ for $p,q \neq 0$, as they are the only real forms of $\gg$ admitting non-trivial unitarizable supermodules (see Theorem~\ref{thm::HW_property_M}). 

\subsubsection{Lie superalgebras \texorpdfstring{$\mathfrak{(p)su}(p,q \vert n,0)$}\ and \texorpdfstring{$\mathfrak{(p)su}(p,q \vert 0,n)$}\ } \label{subsubsec::Real_form}

To simplify notation, we restrict attention to the real forms $\mathfrak{su}(p,q\vert n,0)$ and $\mathfrak{su}(p,q\vert 0,n)$ for $m \neq n$, which are real forms of $\sl(m\vert n)$ for $p+q=m$; the cases $\mathfrak{psu}(p,q\vert n,0)$ and $\mathfrak{psu}(p,q\vert 0,n)$ are obtained by factoring out the center.

Assume $m \neq n$. We fix as a real form either $\su(p,q \vert n,0)$ or $\su(p,q \vert 0,n)$. Then the even subalgebra of $\gg$ is $\even \cong (\mathfrak{su(p,q)\oplus \su(n)} \oplus \u(1))^{\CC}$ to emphasize the real form. This leads to a natural refinement of the set of even roots $\Delta_{\bar{0}}$. The Lie subalgebra of the real form has a maximal compact subalgebra, namely
\begin{equation}
\kk\coloneqq 
 \su(p)\oplus \su(q)\oplus \u(1) \oplus \su(n)\oplus \u(1)
\end{equation}
diagonally embedded in $\gg$. The subalgebra $\kk$ satisfies the \emph{equal rank condition}
\begin{equation}
\hh \subset \kk^{\CC} \subset \even \subset \gg,
\end{equation}
\emph{i.e.}, $\hh$ is a Cartan subalgebra for $\gg, \even$ and $\kk^{\CC}$. We say a root $\alpha \in \Delta_{\bar{0}}$ is \emph{compact} if the associated root vector lies in $\kk^{\CC}$; otherwise, the root is referred to as \emph{non-compact}. The compact roots make up precisely the root system of $(\hh,\kk^{\CC})$ and can be identified with the subset
\begin{equation}
\Delta_{c}\coloneqq \{\pm(\epsilon_{i}-\epsilon_{j}) : 1\leq i < j \leq p, \ p+1\leq i < j \leq m\} \cup \{\pm (\delta_{i}-\delta_{j}) : 1\leq i< j \leq n\}.
\end{equation}
The \emph{non-compact roots} will be denoted by $\Delta_{nc}\coloneqq \Delta_{\bar{0}} \setminus \Delta_{c}$, such that 
\begin{equation}
\Delta_{\bar{0}} = \Delta_{c} \sqcup \Delta_{nc}.
\end{equation}

The fixed positive system on $\Delta_{\bar{0}}$ induces a positive system on $\Delta_{c,nc}$, namely, $\Delta_{c,nc}^{+}\coloneqq \Delta^{+} \cap \Delta_{c,nc}$, and in analogy to Section~\ref{subsec::A}, we set 
\begin{equation}
\nn_{\bar{0},c}^{+}\coloneqq \bigoplus_{\alpha \in \Delta_{c}^{+}} \gg_{\bar{0}}^{\alpha}, \quad \text{and} \quad \nn_{\bar{0},nc}^{+}\coloneqq \bigoplus_{\alpha \in \Delta_{nc}^{+}} \gg_{\bar{0}}^{\alpha}.
\end{equation}
The associated Weyl vectors are denoted by $\rho_{c}$ and $\rho_{nc}$, respectively.

This refinement of the even structure allows for an explicit characterization of the possible positive odd root systems in $\Delta_{\bar{1}}$ compatible with the chosen real form. In general, we distinguish two cases: the case where either $p = 0$ or $q = 0$, and the case where $p, q \neq 0$. In the former case, the real form under consideration is the compact real form of $\gg$ satisfying $\even \cong \kk^{\CC}$.

We first consider the case in which either $p = 0$ or $q = 0$, that is, we consider the real Lie superalgebras $\su(m,0\vert n,0) \cong \su(0,m\vert 0,n)$ or $\su(m,0\vert 0,n) \cong \su(0,m\vert n,0)$ whose Lie subalgebra is isomorphic to $\kk^{\CC}$. The odd part $\odd$ is a completely reducible $\kk^{\CC}$-module under the adjoint action, and it decomposes as 
\begin{equation}
\odd = \left\{
\left(
\begin{array}{@{}c|c@{}}
 0 & B \\
\hline
 0 & 0
\end{array}
\right) \ : \ B\in \Mat(m\times n;\CC)
\right\} \oplus 
\left\{
\left(
\begin{array}{@{}c|c@{}}
 0 & 0 \\
\hline
 C & 0
\end{array}
\right) \ : \ C \in \Mat(n\times m;\CC)
\right\}.
\end{equation} Consequently, there are exactly two conjugate-linear anti-involutions compatible with the standard ordering, which produce the even real forms $\su(m) \oplus \su(n)$ on $\even$:
\begin{equation}
\omega_{\pm}
\left(
\begin{array}{@{}c|c@{}}
 A & B \\
\hline
 C & D
\end{array}
\right)
= 
\left(
\begin{array}{@{}c|c@{}}
 A^{\dagger} & \pm C^{\dagger} \\
\hline
 \pm B^{\dagger} & D^{\dagger}
\end{array}
\right).
\end{equation}
The real Lie superalgebra $\su(m,0\vert n,0) \cong \su(0,m\vert 0,n)$ corresponds to the conjugate-linear anti-involution $\omega_{+}$, while the real Lie superalgebra $\su(m,0\vert 0,n) \cong \su(0,m\vert n,0)$ corresponds to the conjugate-linear anti-involution $\omega_{-}$.
There are essentially two choices for the odd positive roots, namely
\begin{equation}
\begin{split}
 \Delta_{\bar{1}, \text{st}}^{+}\coloneqq \{ \epsilon_{i} - \delta_{j} \ : \ 1 \leq i \leq m, \ 1 \leq j \leq n\}, \qquad \Delta_{\bar{1}, \text{-st}}^{+}\coloneqq \{ -\epsilon_{i} + \delta_{j} \ : \ 1 \leq i \leq m, \ 1 \leq j \leq n\},
 \end{split}
\end{equation}
which differ by a sign. We fix $\Delta_{\bar{1},\text{st}}^{+}$ as our positive system, so that $\nn^{+}$ consists precisely of the upper block matrices in $\gg$. We denote this system simply by $\Delta^{+}$.
\\
\\
We now describe the conjugate-linear anti-involutions compatible with the standard ordering that produce the real forms $\su(p,q) \oplus \su(n)$ in the semisimple part of $\even$ for $p, q \neq 0$. In this case, the odd part decomposes under $\kk^{\CC}$ as
\[
\odd = \qq_{1} \oplus \qq_{2} \oplus \pp_{1} \oplus \pp_{2},
\]
where explicitly
\begin{equation}
\begin{split}
\pp_{1} &= \left\{
\left(\begin{array}{@{}cc|c@{}}
 0 & 0 & P_{1} \\
 0 & 0 & 0 \\
\hline
 0 & 0 & 0
\end{array}\right) : P_{1} \in \Mat(p \times n; \CC) \right\}, 
\qquad \pp_{2} = \left\{
\left(\begin{array}{@{}cc|c@{}}
 0 & 0 & 0 \\
 0 & 0 & P_{2} \\
\hline
 0 & 0 & 0
\end{array}\right) : P_{2} \in \Mat(q \times n; \CC) \right\}, \\
\qq_{1} &= \left\{
\left(\begin{array}{@{}cc|c@{}}
 0 & 0 & 0 \\
 0 & 0 & 0 \\
\hline
 Q_{1} & 0 & 0
\end{array}\right) : Q_{1} \in \Mat(n \times p; \CC) \right\}, 
\qquad \qq_{2} = \left\{
\left(\begin{array}{@{}cc|c@{}}
 0 & 0 & 0 \\
 0 & 0 & 0 \\
\hline
 0 & Q_{2} & 0
\end{array}\right) : Q_{2} \in \Mat(n \times q; \CC) \right\}.
\end{split}
\end{equation}
Given this decomposition of $\odd$, we express a general element $X \in \gg$ as
\begin{equation}
X=\left(\begin{array}{@{}c|c@{}}
 \begin{matrix}
 a & b \\ c & d
 \end{matrix}
 & \begin{matrix}
 P_{1} \\ P_{2}
 \end{matrix} \\
\hline
 \begin{matrix}
 Q_{1} & Q_{2}
 \end{matrix} &
 E
\end{array}\right),
\end{equation}
where $P_{i} \in \pp_{i}$, $Q_{j} \in \qq_{j}$ for $1 \leq i, j \leq 2$, $\begin{pmatrix}
 a & b\\ c & d 
\end{pmatrix} \in \su(p,q)^{\CC}$, $E \in \su(n)^{\CC}$ and $\tr \begin{pmatrix}
 a & b\\ c & d 
\end{pmatrix} - \tr E = 0$.
Then, there are exactly two conjugate-linear anti-involutions compatible with the standard ordering, whose restriction to the semisimple part of $\even$ yields the real form $\su(p,q) \oplus \su(n)$ \cite[Lemma 5.1]{jakobsen1994full}: 
\begin{equation} \label{eq::form_omega_plus_omega_minus} \omega_{(-,+)}(X) = \left(\begin{array}{@{}c|c@{}}
 \begin{matrix}
 a^{\dagger} & -c^{\dagger} \\ -b^{\dagger} & d^{\dagger}
 \end{matrix}
 & \begin{matrix}
 -Q_{1}^{\dagger} \\ Q_{2}^{\dagger}
 \end{matrix}\\
\hline
 \begin{matrix}
 -P_{1}^{\dagger} & P_{2}^{\dagger}
 \end{matrix} &
 \begin{matrix}
 E^{\dagger}
 \end{matrix}
\end{array}\right), \qquad
 \omega_{(+,-)}(X) = \left(\begin{array}{@{}c|c@{}}
 \begin{matrix}
 a^{\dagger} & -c^{\dagger} \\ -b^{\dagger} & d^{\dagger}
 \end{matrix}
 & \begin{matrix}
 Q_{1}^{\dagger} \\ -Q_{2}^{\dagger}
 \end{matrix}\\
\hline
 \begin{matrix}
 P_{1}^{\dagger} & -P_{2}^{\dagger}
 \end{matrix} &
 \begin{matrix}
 E^{\dagger}
 \end{matrix}
\end{array}\right).
\end{equation}
The real Lie superalgebra $\su(p,q\vert 0,n)$ corresponds to $\omega_{(-,+)}$ while $\su(p,q\vert n,0)$ corresponds to $\omega_{(+,-)}$. We conclude that essentially three positive systems are relevant for the odd part. These are uniquely determined by
\begin{equation}
\nn_{\bar{1}, \text{st}}^{+}\coloneqq \pp_{1} \oplus \pp_{2}, \quad \nn_{\bar{1}, \text{-st}}^{+}\coloneqq \qq_{1} \oplus \qq_{2}, \quad \nn_{\bar{1}, \text{nst}}^{+}\coloneqq \pp_{1} \oplus \qq_{2},
\end{equation}
referred to as \emph{standard}, \emph{minus standard}, and \emph{non-standard}, respectively. We will fix the non-standard choice and denote it by $\Delta^{+}$, as this choice is preferred in the context of the Dirac operator, as explained in Remark~\ref{rmk::Dirac_standard}. In this case, we have
\begin{equation}
 \rho_{\bar{1}}
=\frac{1}{2}\left(
n\sum_{i=1}^{p}\epsilon_{i}
- n\sum_{j=p+1}^{m}\epsilon_{j}
+ (q-p)\sum_{k=1}^{n}\delta_{k}\right).
\end{equation}

For convenience, we set $\nn_{\bar{1}}^{+}\coloneqq \bigoplus_{\alpha \in \Delta_{\bar{1}}^{+}}\gg^{\alpha}$ and $\nn_{\bar{1}}^{-}\coloneqq \bigoplus_{\alpha \in \Delta_{\bar{1}}^{+}}\gg^{-\alpha}$, leading to the decomposition of super vector spaces $\odd = \nn_{\bar{1}}^{-} \oplus \nn_{\bar{1}}^{+}$ and $\gg = \nn_{\bar{1}}^{-} \oplus \even \oplus \nn_{\bar{1}}^{+}$. However, $[\nn_{\bar{1}}^{\pm}, \nn_{\bar{1}}^{\pm}] \neq \{0\}$, indicating that this $\mathbb{Z}$-grading is not compatible with the $\mathbb{Z}_{2}$-grading since $\nn_{\bar{1}}^{\pm}$ are invariant under $\kk^{\CC}$ but not under $\even$. The following lemma summarizes the important commutation relations.

\begin{lemma} \label{lemm::commutation_relations}
 The following commutation relations hold in $\gg$ for $1\leq k,l\leq 2$:
$$
[\kk^{\CC}, \pp_{k}] \subset \pp_{k}, \qquad [\kk^{\CC}, \qq_{l}] \subset \qq_{l}, \qquad [\nn_{\bar{1}}^{+}, \nn_{\bar{1}}^{+}] \subset \nn_{\bar{0},nc}^{+}.
$$
\end{lemma}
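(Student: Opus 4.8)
The plan is to establish all three relations by a direct computation with block matrices. Every subspace occurring in the statement --- $\pp_{1},\pp_{2},\qq_{1},\qq_{2}$, the complexification $\kk^{\CC}$ of the maximal compact subalgebra, and $\nn_{\bar{0},n}^{+}$ --- is realised inside $\glmn$ as an explicit space of block matrices with respect to the refined $(p,q\vert n)$ block structure, and the bracket is the matrix supercommutator. In the case $m=n$ the computations are carried out in $\glmn$ and the resulting identities descend to the central quotient defining $\gg$, since $\CC E_{m,m}$ is even and central. So it is enough to verify the three inclusions inside $\glmn$.

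For the first two relations, a general element of $\kk^{\CC}$ is a block-diagonal matrix $K=\diag(a,d,E)$ with $a\in\mathfrak{gl}(p)$, $d\in\mathfrak{gl}(q)$, $E\in\mathfrak{gl}(n)$ --- indeed $\kk^{\CC}$ is spanned by $\hh$ together with the root vectors $E_{ij}$ attached to the compact roots in $\Delta_{c}$, all of which have this shape. Since $K$ is even, $[K,X]=KX-XK$ for any odd $X$. If $P_{1}\in\pp_{1}$ is supported in the upper-right $p\times n$ block, then $KP_{1}$ is supported there with block $aP_{1}$ and $P_{1}K$ is supported there with block $P_{1}E$, so $[K,P_{1}]=aP_{1}-P_{1}E\in\pp_{1}$. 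The identical computation, with the blocks relabelled, gives $[K,P_{2}]=dP_{2}-P_{2}E\in\pp_{2}$, $[K,Q_{1}]=EQ_{1}-Q_{1}a\in\qq_{1}$, and $[K,Q_{2}]=EQ_{2}-Q_{2}d\in\qq_{2}$. This yields $[\kk^{\CC},\pp_{k}]\subset\pp_{k}$ and $[\kk^{\CC},\qq_{l}]\subset\qq_{l}$ for $k,l\in\{1,2\}$, and the asserted relations for $\kk$ follow from $\kk\subset\kk^{\CC}$.

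For the third relation recall $\nn_{\bar{1}}^{+}=\pp_{1}\oplus\qq_{2}$, so it suffices to compute $[\pp_{1},\pp_{1}]$, $[\qq_{2},\qq_{2}]$, and $[\pp_{1},\qq_{2}]$, using that for odd $X,Y$ one has $[X,Y]=XY+YX$. Comparing supports: the matrices in $\pp_{1}$ have nonzero entries only in rows $1,\dots,p$ and columns $m{+}1,\dots,m{+}n$, while those in $\qq_{2}$ have nonzero entries only in rows $m{+}1,\dots,m{+}n$ and columns $p{+}1,\dots,m$. In any product, a column index of the left factor that is $>m$ cannot be a row index of the right factor that is $\le p$, which forces $P_{1}P_{1}'=P_{1}'P_{1}=0$ and, likewise, $Q_{2}Q_{2}'=Q_{2}'Q_{2}=0$; hence $[\pp_{1},\pp_{1}]=[\qq_{2},\qq_{2}]=0$. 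By the same support argument $Q_{2}P_{1}=0$, whereas $P_{1}Q_{2}$ is supported in rows $1,\dots,p$ and columns $p{+}1,\dots,m$, i.e.\ in the off-diagonal $p\times q$ ``$b$''-block inside the $\mathfrak{sl}(m)$-summand of $\even$. Therefore $[P_{1},Q_{2}]=P_{1}Q_{2}$ lies in that block.

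It then remains to identify this $p\times q$ block with $\nn_{\bar{0},n}^{+}$. From the descriptions of $\Delta_{\bar{0}}$ and $\Delta_{c}$ one reads off
\[
\Delta_{n}^{+}=\Delta^{+}\cap(\Delta_{\bar{0}}\setminus\Delta_{c})=\{\epsilon_{i}-\epsilon_{j}\ :\ 1\le i\le p<j\le m\},
\]
and the root vector attached to $\epsilon_{i}-\epsilon_{j}$ is the matrix unit $E_{ij}$; these matrix units form a basis of the ``$b$''-block, so $\nn_{\bar{0},n}^{+}=\bigoplus_{\alpha\in\Delta_{n}^{+}}\even^{\alpha}$ is precisely that block, and $[\nn_{\bar{1}}^{+},\nn_{\bar{1}}^{+}]\subset\nn_{\bar{0},n}^{+}$ follows. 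The computation is entirely routine; the only point deserving a moment's care is this last step --- matching the explicitly computed off-diagonal block with the abstractly defined space $\nn_{\bar{0},n}^{+}$ --- and it presents no genuine obstacle.
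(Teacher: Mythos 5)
Your proof is correct, and it is exactly the direct block-matrix verification that the paper evidently has in mind (the lemma is stated there without proof). All the key points check out: the block-diagonal shape of $\kk^{\CC}$, the support bookkeeping showing $\pp_{1}$ and $\qq_{2}$ are abelian with $[\pp_{1},\qq_{2}]$ landing in the upper-right $p\times q$ block of the $\mathfrak{sl}(m)$-summand, and the identification of that block with $\nn_{\bar{0},n}^{+}$ via $\Delta_{n}^{+}=\{\epsilon_{i}-\epsilon_{j}:1\le i\le p<j\le m\}$.
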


For the remainder of this article, unless otherwise stated, we assume $m \leq n$ and $p, q \neq 0$. The results for the finite-dimensional case are then obtained by setting $p = 0$ or $q = 0$ and utilizing $\kk^{\CC} = \even$.

\section{Unitarizable supermodules} \label{UnitarySL}

In this section, we present unitarizable supermodules over $\gg$, examine their properties and algebraic realization, and explain how they relate to Harish-Chandra supermodules.

\subsection{Unitarizable supermodules} \label{subsec::Unitarizable} There are two equivalent approaches to defining unitarizable supermodules over $\gg$, based on the fact that Hermitian forms and super Hermitian forms on a complex super vector space can be transformed into one another. In this paper, we work with Hermitian forms, viewing them as complex-valued maps that are conjugate-linear in the first variable and linear in the second. For a more thorough discussion comparing Hermitian and super Hermitian forms, see \cite{jakobsen1994full, SchmidtGeneralizedSuperdimension}.

Let $\HH=\HH_{\bar{0}}\oplus \HH_{\bar{1}}$ be a complex super vector space equipped with a positive definite Hermitian form $\langle\cdot,\cdot\rangle$ such that $\langle \HH_{\bar{0}},\,\HH_{\bar{1}} \rangle = 0$.
We refer to such a structure as a \emph{pre-super Hilbert space}. Given this structure, we can now define unitarizable supermodules. These supermodules are defined with respect to real forms of the Lie superalgebra $\gg$, which are in one-to-one correspondence with conjugate-linear anti-involutions $\omega$ \cite{ParkerRealForms, serganova1983classification} (see Section~\ref{subsubsec::Real_form} for examples), such that the Hermitian form of the pre-super Hilbert space is \emph{contravariant} with respect to $\omega$.

\begin{definition}[{\cite[Definition 2.3]{jakobsen1994full}}]
Let $\HH$ be a $\gg$-supermodule, and let $\omega$ be a conjugate-linear anti-involution on $\gg$. The supermodule $\HH$ is called an $\omega$\emph{-unitarizable} $\gg$\emph{-supermodule} if $\HH$ is a super pre-Hilbert space such that for all $v,w \in \HH$ and all $X \in \gg$, the Hermitian form $\langle \cdot,\cdot \rangle$ is $\omega$-contravariant:
$$
\langle Xv,w \rangle = \langle v,\omega(X)w \rangle.
$$
\end{definition}

If we instead consider $\UE(\gg)$-supermodules, we extend $\omega$ to $\UE(\gg)$ in the obvious way, keeping the same notation. In this setting, a $\gg$-supermodule $\HH$ is unitarizable precisely when it is a \emph{Hermitian supermodule} over $(\UE(\gg), \omega)$, that is, when
$
\langle Xv, w \rangle = \langle v, \omega(X)w \rangle
$
holds for all $v, w \in \HH$ and $X \in \UE(\gg)$. Whenever $\omega$ is clear from the context, we will simply say “unitarizable”.

Direct consequences of this definition are collected in the following proposition.

\begin{proposition}\label{CompletelyReducible}
 Let $\HH$ be a $\omega$-unitarizable $\gg$-supermodule. Then the following assertions hold:
 \begin{enumerate}
 \item[a)] $\HH$ is completely reducible; that is, for any invariant subsuperspace, its orthogonal complement is also an invariant subsuperspace.
 \item[b)] As a $\gg_{\bar{0}}$-supermodule, $\HH$ is $\omega\vert_{\even}$-unitarizable. In particular, it is completely reducible as a $\even$-supermodule.
 \end{enumerate}
\end{proposition}

We will see that unitarizability is a very strong condition for supermodules since only a specific but central class of supermodules can satisfy it: the \emph{highest} or \emph{lowest weight supermodules}. 

\begin{definition}
A $\gg$-supermodule $M$ is called a \emph{highest weight} $\gg$\emph{-supermodule with highest weight} $\Lambda \in \hh^{\ast}$ if there exists a nonzero vector $v_{\Lambda} \in M$ such that:
\begin{enumerate}
 \item[a)] $Xv_{\Lambda}=0$ for all $X \in \nn^{+}$,
 \item[b)] $Hv_{\Lambda}=\Lambda(H)v_{\Lambda}$ for all $H \in \hh$, and 
 \item[c)] $\UE(\gg)v_{\Lambda}=M$.
\end{enumerate}
The vector $v_{\Lambda}$ is referred to as the highest weight vector of $M$. \emph{Lowest weight} $\gg$\emph{-supermodules} are defined analogously. 
\end{definition}

\begin{remark}
 We fixed in Section~\ref{sec::realforms} the non-standard odd positive system $\Delta^{+}_{\bar{1}}\coloneqq \Delta^{+}_{\bar{1},\text{nst}}$ for this article, which is not a substantial restriction. Indeed, set $\Delta_{\text{nst}}^{+}\coloneqq \Delta_{\bar{0}}^{+} \sqcup \Delta_{\bar{1},\text{nst}}^{+}$ and $\Delta_{\text{st}}^{+}\coloneqq \Delta_{\bar{0}}^{+} \sqcup \Delta_{\bar{1},\text{st}}^{+}$. If $L(\Lambda; \Delta_{\text{nst}}^{+})$ is a simple highest weight $\gg$-supermodule with respect to $\Delta_{\text{nst}}^{+}$, then there exists a $\Lambda' \in \hh^{\ast}$ such that either $L(\Lambda; \Delta_{\text{nst}}^{+}) \cong L(\Lambda'; \Delta_{\text{st}}^{+})$ or $L(\Lambda; \Delta_{\text{nst}}^{+}) \cong \Pi L(\Lambda'; \Delta_{\text{st}}^{+})$, with $\Lambda' + \rho_{\text{st}} = \Lambda + \rho_{\text{nst}}$, where $\rho_{\text{nst}}$ and $\rho_{\text{st}}$ are the Weyl vectors for $\Delta^{+}_{\text{nst}}$ and $\Delta^{+}_{\text{st}}$, respectively. This is because the associated fundamental systems can be transformed into each other via a sequence of odd reflections, as explained in \cite[Section 1.4]{cheng2012dualities}, using $\Delta_{\text{st}}^{+} \cap \Delta_{\bar{0}} = \Delta_{\bar{0}}^{+} = \Delta_{\text{nst}}^{+} \cap \Delta_{\bar{0}}$, and the multiple application of \cite[Lemma 1.40]{cheng2012dualities}.
\end{remark}

The central theorem concerning unitarizable supermodules is the following:

\begin{theorem}[\cite{furutsu1991classification,neeb2011lie}] \label{thm::HW_property_M} 
The Lie superalgebra $\gg$ of type $A$ admits non-trivial unitarizable $\gg$-supermodules only with respect to the real forms $\mathfrak{(p)su}(p,q\vert n,0)$ and $\mathfrak{(p)su}(p,q\vert 0,n)$. In particular, any such simple supermodule is either a lowest weight or a highest weight $\gg$-supermodule.
\end{theorem}

It suffices to consider highest weight $\gg$-supermodules, as the case of lowest weight $\gg$-supermodules follows by the same arguments. 

Recall the anti-involutions $\omega_{(-,+)}$ and $\omega_{(+,-)}$ of \eqref{eq::form_omega_plus_omega_minus}; the real forms $\mathfrak{(p)su}(p,q\vert 0,n)$ and $\mathfrak{(p)su}(p,q\vert n,0)$ correspond respectively to $\omega_{(-,+)}$ and $\omega_{(+,-)}$. Notably, the Lie superalgebra $\gg$ admits non-trivial unitarizable highest weight supermodules only with respect to $\omega_{(-,+)}$, whereas $\omega_{(+,-)}$ corresponds to unitarizable lowest weight $\gg$-supermodules (see \cite{jakobsen1994full, SchmidtGeneralizedSuperdimension}). Consequently, we restrict attention to the real form $\mathfrak{(p)su}(p,q\vert 0,n)$ and, for notational convenience, write $\mathfrak{(p)su}(p,q\vert n)$ instead. In what follows, we omit the prefix $\omega_{(-,+)}$ and refer simply to unitarizable highest weight $\gg$-supermodules.

Next, we collect further algebraic properties of unitarizable highest weight $\gg$-supermodules. We remind the reader of our assumptions that, unless otherwise stated, $m \leq n$ and $p, q \neq 0$.

\subsection{Harish-Chandra supermodules} \label{subsec::HC_Supermodule}
Unitarizable highest weight $\gg$-supermodules constitute a class of \emph{Harish-Chandra supermodules}; that is, they carry an intrinsic structure induced by the decomposition relative to a maximal compact subalgebra of a chosen real form.

Fix a real form $\mathfrak{(p)su}(p,q\vert n)$ of $\gg$, and let $\kk$ be the maximal compact subalgebra of $\mathfrak{(p)su}(p,q\vert n)_{\bar{0}}$ introduced in Section~\ref{sec::realforms}, such that the following relation holds:
\begin{equation}
\kk^{\CC} \subset \even \subset \gg.
\end{equation}
Any $\gg$-supermodule $M$ can be viewed as a $\even$-supermodule, and any $\even$-supermodule can be regarded as a $\kk^{\CC}$-supermodule. However, when viewing a $\gg$-supermodule as a $\kk^{\CC}$-supermodule, we ignore the parity and treat it simply as a $\kk^{\CC}$-module. 

If $M$ is unitarizable, the action of $\kk^{\CC}$ is contravariant by the definition of the Hermitian form, and $M$ can be decomposed under $\kk^{\CC}$, which is equivalent to decomposing $M\vert_{\even}$ under $\kk^{\CC}$. This naturally leads to the definition of a $(\gg,\kk^{\CC})$-supermodule or a $(\even,\kk^{\CC})$-module.

\begin{definition}
 A complex $(\gg, \kk^{\CC})$-supermodule ($(\even,\kk^{\CC})$-module) is a complex $\gg$-supermodule ($\even$-module) $M$ which, as a $\kk^{\CC}$-module, decomposes as a direct sum of simple $\kk^{\CC}$-modules, \emph{i.e.}, $M$ is $\kk^{\CC}$-semisimple.
\end{definition}

An important class of $(\even,\kk^{\CC})$-modules consists of those arising from unitary irreducible representations of the universal covering Lie group of the Lie group $S(U(p,q)\times U(n))$, associated with $\mathfrak{(p)su}(p,q\vert n)_{\bar{0}}$. These modules exhibit an additional property that leads to the definition of Harish-Chandra (super)modules.

\begin{definition}
 A complex $(\gg, \kk^{\CC})$-supermodule ($(\even,\kk^{\CC})$-module) is called a \emph{Harish-Chandra supermodule (Harish-Chandra module)} if it is finitely generated and is locally finite as a $\kk^{\CC}$-module. 
\end{definition}

For highest weight $\gg$-supermodules, the conditions of local finiteness over $\kk^{\CC}$ and finite generation are redundant.

\begin{proposition}[{\cite[Proposition 2.8]{Carmeli_Fioresi_Varadarajan_HW}}] \label{prop::HW_HC_Supermodules}
Let $M$ be a highest weight $\gg$-supermodule with highest weight $\Lambda$ and highest weight vector $v_{\Lambda}$. The following assertions are equivalent:
\begin{enumerate}
 \item[a)] $\dim(\UE(\kk^{\CC})v_{\Lambda}) < \infty$.
 \item[b)] $M$ is a $(\gg,\kk^{\CC})$-supermodule.
 \item[c)] $M$ is a Harish-Chandra supermodule.
\end{enumerate}
If these assertions hold, then $\UE(\kk^{\CC})v_{\Lambda}$ is a simple $\kk^{\CC}$-module.
\end{proposition}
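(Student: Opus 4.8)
The plan is to prove the cycle of implications (c) $\Rightarrow$ (b) $\Rightarrow$ (a) $\Rightarrow$ (c), and then establish the final assertion about simplicity. The implication (c) $\Rightarrow$ (b) is immediate from the definitions: a Harish-Chandra supermodule is by definition a $(\gg,\kk^{\CC})$-supermodule with the extra finiteness conditions, so nothing is to be shown. The implication (b) $\Rightarrow$ (a): if $M$ is $\kk^{\CC}$-semisimple, then in particular the cyclic vector $v_{\Lambda}$ lies in a finite-dimensional $\kk^{\CC}$-submodule, because it is a finite sum of vectors from finite-dimensional simple $\kk^{\CC}$-isotypic components; hence $\UE(\kk^{\CC})v_{\Lambda}$ is finite-dimensional. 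The only substantive direction is (a) $\Rightarrow$ (c).

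For (a) $\Rightarrow$ (c), I would use the PBW-type factorization of $\UE(\gg)$ adapted to the decomposition $\gg = \nn^{-} \oplus \kk^{\CC} \oplus (\text{complement in }\even) \oplus \nn^{+}$; more precisely, write $\gg = \nn_{\bar 0,n}^{-}\oplus\nn_{\bar1}^{-}\oplus\kk^{\CC}\oplus\nn_{\bar 0,n}^{+}\oplus\nn_{\bar1}^{+}$, so that $\nn^{-}$ (the full negative part, even non-compact plus odd negative) together with $\kk^{\CC}$ and $\nn^{+}$ spans $\gg$. Since $v_{\Lambda}$ is a highest weight vector, $\UE(\gg)v_{\Lambda} = \UE(\nn^{-})\UE(\kk^{\CC})v_{\Lambda}$. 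Setting $V_{0} := \UE(\kk^{\CC})v_{\Lambda}$, which is finite-dimensional by hypothesis (a), we get $M = \UE(\nn^{-})V_{0}$, so $M$ is finitely generated as a $\UE(\nn^{-})$-module, in particular finitely generated as a $\gg$-supermodule. For local finiteness as a $\kk^{\CC}$-module: the key point is that $\kk^{\CC}$ normalizes $\nn^{-}$ (indeed $[\kk^{\CC},\nn_{\bar1}^{-}]\subset\nn_{\bar1}^{-}$ by Lemma \ref{lemm::commutation_relations} and $[\kk^{\CC},\nn_{\bar0,n}^{-}]\subset\nn_{\bar0,n}^{-}$), so $\nn^{-}$ is a $\kk^{\CC}$-module, and each graded piece $\bigwedge^{j}\nn_{\bar1}^{-}\otimes S^{k}(\nn_{\bar 0,n}^{-})$ of $\UE(\nn^{-})$ is a finite-dimensional $\kk^{\CC}$-module. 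Therefore $M = \UE(\nn^{-})V_{0}$ is a sum (over the finitely many generators and over degrees) of $\kk^{\CC}$-submodules of the form $W\cdot v$ with $W$ a finite-dimensional $\kk^{\CC}$-module inside $\UE(\nn^{-})$ and $v\in V_{0}$; each such $W\cdot v$ is a quotient of $W\otimes V_{0}$, hence finite-dimensional, and it is closed under $\kk^{\CC}$ because $\kk^{\CC}$ acts on $\UE(\nn^{-})$ by the adjoint action and on $v$ simultaneously. Concretely, any $m\in M$ lies in a finite sum of such pieces, hence in a finite-dimensional $\kk^{\CC}$-submodule; this is exactly local finiteness. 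Since $\kk^{\CC}$ is a reductive Lie algebra (a product of $\mathfrak{sl}$'s and centers) acting locally finitely and the center acts by scalars on weight spaces, complete reducibility gives that $M$ is $\kk^{\CC}$-semisimple into finite-dimensional simples, so $M$ is a $(\gg,\kk^{\CC})$-supermodule; combined with finite generation, $M$ is a Harish-Chandra supermodule.

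For the final claim that $V_{0}=\UE(\kk^{\CC})v_{\Lambda}$ is a \emph{simple} $\kk^{\CC}$-module under assumption (a): note that $v_{\Lambda}$ is killed by $\nn_{\bar 0,c}^{+}=\nn^{+}\cap\kk^{\CC}$ and is a weight vector of weight $\Lambda|_{\hh}$, so $V_{0}$ is a highest weight $\kk^{\CC}$-module with highest weight $\Lambda$; being finite-dimensional, it would normally be the simple finite-dimensional module $F^{\Lambda}$ — but one must be careful, since a priori a finite-dimensional highest weight module over a reductive Lie algebra need not be simple unless we know it is the irreducible quotient. The cleanest argument: $V_{0}$ is finite-dimensional, hence completely reducible as a $\kk^{\CC}$-module (reductivity), and it is generated by the single highest weight vector $v_{\Lambda}$; in a direct sum decomposition into simples, $v_{\Lambda}$ projects nontrivially to at least one summand, and since $v_{\Lambda}$ generates everything, comparing highest weights forces there to be exactly one summand, namely $F^{\Lambda}$. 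The main obstacle in the whole argument is the bookkeeping in (a) $\Rightarrow$ (c): making precise that $M=\UE(\nn^{-})V_{0}$ decomposes into finite-dimensional $\kk^{\CC}$-pieces requires the interplay of the PBW filtration on $\UE(\nn^{-})$ with the $\kk^{\CC}$-action, i.e.\ that $\kk^{\CC}$ preserves the finite-dimensional filtered pieces — this rests on the commutation relations of Lemma \ref{lemm::commutation_relations}, in particular that $\kk^{\CC}$ (not $\even$) normalizes the non-standard $\nn^{\pm}_{\bar1}$, which is precisely the structural feature the paper emphasized. Everything else is routine PBW and reductive-Lie-algebra representation theory, and an alternative to reproving it from scratch is simply to cite \cite{Carmeli_Fioresi_Varadarajan_HW} as the statement does.
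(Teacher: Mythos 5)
Your proposal is correct. Note that the paper does not prove this proposition at all — it is imported verbatim from \cite{Carmeli_Fioresi_Varadarajan_HW}, so there is no in-paper argument to compare against; your write-up is essentially the standard proof from that reference. The two implications you call trivial are indeed trivial, and your treatment of a) $\Rightarrow$ c) correctly isolates the one structural input that makes the argument work here, namely that $\kk^{\CC}$ normalizes $\nn_{\bar 0,n}^{\pm}$ and the non-standard $\nn_{\bar 1}^{\pm}$ (Lemma \ref{lemm::commutation_relations}), so that the PBW-filtered pieces $\UE(\nn_{\bar 0,n}^{-}\oplus\nn_{\bar 1}^{-})_{\leq k}\cdot\UE(\kk^{\CC})v_{\Lambda}$ are finite-dimensional $\kk^{\CC}$-submodules exhausting $M$; semisimplicity then follows since the centre of $\kk^{\CC}$ sits in $\hh$ and $M$ is an $\hh$-weight module. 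For the final claim, the quickest way to nail down ``exactly one summand'' is to observe that $\dim M^{\Lambda}=1$ (as in Proposition \ref{prop::Properties_HWM}a)), whereas $r$ simple summands, each necessarily isomorphic to $F^{\Lambda}$, would force $\dim(\UE(\kk^{\CC})v_{\Lambda})^{\Lambda}=r$.
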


We show that any unitarizable highest weight $\gg$-supermodule is a Harish-Chandra supermodule. Recall that any unitarizable highest weight $\gg$-supermodule is $\even$-semisimple, and each $\even$-constituent is a unitarizable highest weight $\even$-module (disregarding parity), for which we have the following well-known lemma.

\begin{lemma}[{\cite[Lemma IX.3.10]{NeebHW}}]
 Any unitarizable highest weight $\even$-module is a Harish-Chandra module.
\end{lemma}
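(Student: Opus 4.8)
The plan is to reduce the claim, via the evident $\even$-module analogue of Proposition~\ref{prop::HW_HC_Supermodules}, to the single assertion that the highest weight vector $v_{\Lambda}$ of a unitarizable highest weight $\even$-supermodule $M$ with highest weight $\Lambda$ generates a \emph{finite-dimensional} $\kk^{\CC}$-submodule, i.e.\ $\dim \UE(\kk^{\CC})v_{\Lambda}<\infty$. Granting this, $M=\UE(\even)v_{\Lambda}$ is finitely generated; moreover, using $\even=\nn_{\bar 0,n}^{-}\oplus\kk^{\CC}\oplus\nn_{\bar 0,n}^{+}$ with $\nn_{\bar 0,n}^{\pm}$ abelian $\kk^{\CC}$-submodules and $\nn_{\bar 0,n}^{+}v_{\Lambda}=0$ (as $\Delta_{n}^{+}\subset\Delta^{+}$), PBW gives $M=\UE(\nn_{\bar 0,n}^{-})\,\UE(\kk^{\CC})v_{\Lambda}$, which is the increasing union of the $\kk^{\CC}$-stable finite-dimensional subspaces obtained by applying products of at most $N$ elements of $\nn_{\bar 0,n}^{-}$ to $\UE(\kk^{\CC})v_{\Lambda}$; hence $M$ is $\kk^{\CC}$-locally finite, and, being unitary, each such finite-dimensional piece is $\kk^{\CC}$-semisimple (Weyl's theorem for $[\kk^{\CC},\kk^{\CC}]$ together with the fact that the centre of $\kk^{\CC}$ acts on the highest weight module $M$ by a single scalar). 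Thus $M$ is a $(\even,\kk^{\CC})$-module, finitely generated and $\kk^{\CC}$-locally finite, i.e.\ a Harish-Chandra module.

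For the remaining assertion, note that $\Delta_{c}^{+}\subset\Delta_{\bar 0}^{+}\subset\Delta^{+}$, so $v_{\Lambda}$ is annihilated by $\nn_{\bar 0,c}^{+}$ and is an $\hh$-weight vector; by PBW, $\UE(\kk^{\CC})v_{\Lambda}=\UE(\nn_{\bar 0,c}^{-})v_{\Lambda}$ is a highest weight $\kk^{\CC}$-module of highest weight $\Lambda|_{\hh}$ relative to $\Delta_{c}^{+}$. The restriction of $\langle\cdot,\cdot\rangle_{M}$ to this submodule is again positive definite and, by hypothesis, contravariant with respect to $\omega|_{\kk^{\CC}}$. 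Since $\kk\subset\su(p,q\vert n)_{\bar 0}$ and $\omega(X)=-X$ for $X\in\su(p,q\vert n)$, the conjugate-linear anti-involution $\omega|_{\kk^{\CC}}$ cuts out exactly the compact real form $\kk$ of $\kk^{\CC}$ as its $(-1)$-eigenspace. Consequently $\UE(\kk^{\CC})v_{\Lambda}$ is a unitarizable highest weight module over the compact reductive Lie algebra $\kk$.

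It remains to observe that a unitarizable highest weight module over a compact reductive Lie algebra is finite-dimensional. The centre of $\kk^{\CC}$ lies in $\hh$ and hence acts on $\UE(\kk^{\CC})v_{\Lambda}$ by a scalar, so it suffices to treat the semisimple part. For each simple root $\alpha$ of $\Delta_{c}^{+}$ choose an $\mathfrak{sl}_{2}$-triple $(e_{\alpha},h_{\alpha},f_{\alpha})$ normalized compatibly with the compact form, so that $\omega(e_{\alpha})=f_{\alpha}$ and $\alpha(h_{\alpha})>0$; writing $c$ for the eigenvalue of $h_{\alpha}$ on $v_{\Lambda}$ (so $c=2(\Lambda,\alpha)/(\alpha,\alpha)$), the standard $\mathfrak{sl}_{2}$ calculation yields
$$
\norm{f_{\alpha}^{\,k}v_{\Lambda}}^{2}=\bigl\langle v_{\Lambda},\,e_{\alpha}^{\,k}f_{\alpha}^{\,k}v_{\Lambda}\bigr\rangle=\Bigl(\prod_{j=1}^{k}j\,(c-j+1)\Bigr)\norm{v_{\Lambda}}^{2}.
$$
Positive definiteness forces each partial product to be $\ge 0$, which is possible only if $c\in\ZZ_{\ge 0}$, and then $\norm{f_{\alpha}^{\,c+1}v_{\Lambda}}^{2}=0$, hence $f_{\alpha}^{\,c+1}v_{\Lambda}=0$. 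As this holds for every simple root of $\Delta_{c}^{+}$, the highest weight module $\UE(\kk^{\CC})v_{\Lambda}$ is integrable with dominant integral highest weight, therefore finite-dimensional. Combined with the first paragraph, this shows that $M$ is a Harish-Chandra $\even$-supermodule.

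The step I expect to require the most care is the transition between the reductive structures: verifying that $\omega|_{\kk^{\CC}}$ really recovers the compact form $\kk$ rather than some other real form, choosing the $\mathfrak{sl}_{2}$-triples so that they are simultaneously $\omega$-adapted and correctly normalized, and checking that the central and $\u(1)$-directions do not obstruct $\kk^{\CC}$-semisimplicity. None of this is deep, but it is exactly where one must use that $\even$ is of Hermitian type with compact subalgebra $\kk^{\CC}$ (e.g.\ $[\kk^{\CC},\nn_{\bar 0,n}^{\pm}]\subset\nn_{\bar 0,n}^{\pm}$, the even analogue of Lemma~\ref{lemm::commutation_relations}) and not merely reductive; the rest is the classical argument that a unitary highest weight module over a compact Lie algebra is finite-dimensional, transported to the generator $v_{\Lambda}$.
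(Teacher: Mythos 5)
The paper does not prove this lemma at all; it is quoted verbatim from Neeb's book (Lemma IX.3.10), so there is no in-paper argument to compare against. Your proof is a correct, self-contained reconstruction of the standard argument behind that citation: reduce to $\dim\UE(\kk^{\CC})v_{\Lambda}<\infty$ via the decomposition $\even=\nn_{\bar 0,n}^{-}\oplus\kk^{\CC}\oplus\nn_{\bar 0,n}^{+}$, identify $\UE(\kk^{\CC})v_{\Lambda}$ as a unitarizable highest weight module for the compact form $\kk$, and run the $\mathfrak{sl}_{2}$ norm computation to force dominant integrality and hence finite-dimensionality. The only slip is the parenthetical claim that the centre of $\kk^{\CC}$ acts on $M$ ``by a single scalar'' --- it acts by a different scalar on each $\hh$-weight space --- but what you actually need is only that it acts semisimply, which holds because $M$ is an $\hh$-weight module (or, more directly, because unitarity makes every finite-dimensional invariant subspace completely reducible), so the conclusion is unaffected.
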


To establish that unitarizable simple $\gg$-supermodules are Harish-Chandra supermodules, we need the following lemma, which is straightforward but useful.

\begin{lemma} \label{lemm::HC_g_HC_ev}
 A (complex) finitely generated $\gg$-supermodule $M$ is a $(\gg,\kk^{\CC})$-supermodule if and only if $M\vert_{\even}$ is a $(\even,\kk^{\CC})$-module.
\end{lemma}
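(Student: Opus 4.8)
The plan is to unwind the definitions and observe that the only thing requiring an argument is how the $\kk^{\CC}$-semisimplicity interacts with the passage between $M$ and $M_{\ev}$, since finite generation as a $\gg$-supermodule and as a $\UE(\gg)$-supermodule is literally the same hypothesis on both sides, and $M_{\ev}$ is $M$ with parity forgotten but the same underlying $\kk^{\CC}$-action. The forward direction is immediate: if $M$ is a $(\gg,\kk^{\CC})$-supermodule then by definition, when we regard $M$ as a $\kk^{\CC}$-module we forget parity anyway (as noted in the paragraph preceding the definition of $(\gg,\kk^{\CC})$-supermodule), so $M$ decomposes as a direct sum of finite-dimensional simple $\kk^{\CC}$-modules; but that decomposition is exactly the statement that $M_{\ev}$ is $\kk^{\CC}$-semisimple, hence $M_{\ev}$ is a $(\even,\kk^{\CC})$-supermodule, and it is finitely generated over $\even$ because it is already finitely generated over the larger algebra $\gg$ (finitely many $\gg$-generators, together with a basis of $\nn_{\bar 1}^{-}$ applied to them, give finitely many $\even$-generators, using the PBW-type decomposition $\gg = \nn_{\bar 1}^{-}\oplus\even\oplus\nn_{\bar 1}^{+}$; alternatively just observe $\UE(\gg)$ is a finitely generated $\UE(\even)$-module).

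For the converse, suppose $M_{\ev}$ is a $(\even,\kk^{\CC})$-supermodule and $M$ is finitely generated as a $\gg$-supermodule. Again $\kk^{\CC}$-semisimplicity of $M$ as a module is precisely $\kk^{\CC}$-semisimplicity of $M_{\ev}$, which holds by hypothesis; and finite generation of $M$ over $\gg$ is the remaining hypothesis, imposed directly. So $M$ is a $\gg$-supermodule that is $\kk^{\CC}$-semisimple, i.e., a $(\gg,\kk^{\CC})$-supermodule by definition. The one subtlety worth spelling out is that ``$M$ decomposes as a $\kk^{\CC}$-module into finite-dimensional simples'' is a statement about the non-super $\kk^{\CC}$-module underlying $M$, and this underlying module is the same whether we build it from $M$ or from $M_{\ev}$, since $\Pi$ and the parity-forgetting functor do not change the $\kk^{\CC}$-action on the underlying vector space (the twist in $\Pi$ is by a sign $(-1)^{p(X)}$, which is trivial on $\kk^{\CC}\subset\even$); so there is genuinely nothing to check here beyond this remark.

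I do not anticipate a real obstacle: the lemma is, as the authors say, straightforward, and the proof amounts to carefully matching the two clauses ``finitely generated'' and ``$\kk^{\CC}$-semisimple'' in the two definitions and noting the parity-forgetting identification. The only place one must be slightly careful is the finite-generation transfer in the forward direction, and there the clean way is to invoke that $\UE(\gg)$ is finitely generated as a left $\UE(\even)$-module (immediate from the PBW theorem and finite-dimensionality of $\odd$), so that a finite $\gg$-generating set of $M$ yields a finite $\even$-generating set. With that remark in place the equivalence follows directly from the definitions.
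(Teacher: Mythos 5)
Your proposal is correct and follows essentially the same route as the paper, whose entire proof is the one-line observation that $\UE(\gg)$ is a finitely generated $\UE(\even)$-module; you invoke exactly this fact for the finite-generation transfer and merely add the (correct, if tautological) remark that $\kk^{\CC}$-semisimplicity of $M$ and of $M_{\ev}$ are the same condition on the same underlying vector space.
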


\begin{proof}
 The statement follows immediately, as $\UE(\gg)$ is a finitely generated $\UE(\even)$-module.
\end{proof}

We now obtain the following proposition.

\begin{proposition} \label{prop::unitarizable_are_HC_supermodules}
 Let $\HH$ be a unitarizable highest weight $\gg$-supermodule. Then $\HH$ is a Harish-Chandra supermodule.
\end{proposition}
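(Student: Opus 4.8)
The plan is to chain together the results already established in this subsection. The statement to prove is: if $\HH$ is a unitarizable highest weight $\gg$-supermodule, then $\HH$ is a Harish-Chandra supermodule.

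First I would observe that by Proposition \ref{prop::HW_HC_Supermodules}, it suffices to verify that $\HH$ is a $(\gg,\kk^{\CC})$-supermodule, i.e.\ that $\HH$ is $\kk^{\CC}$-semisimple with all $\kk^{\CC}$-isotypic pieces finite-dimensional; equivalently, by Lemma \ref{lemm::HC_g_HC_ev} (noting $\HH$ is finitely generated, being generated by its highest weight vector), it suffices to show $\HH_{\ev}$ is a $(\even,\kk^{\CC})$-module. Next I would invoke Lemma \ref{lemm::ev}: since $\HH$ is $\omega$-unitarizable, the restriction $\HH_{\ev}$ is a unitarizable $\even$-module, hence completely reducible, and by Proposition \ref{CompletelyReducible}-type reasoning it decomposes as a direct sum of simple unitarizable $\even$-modules. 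Each such simple constituent is a highest weight $\even$-module (as recalled before Theorem \ref{thm::HW_property_M}, via \cite{furutsu1991classification,neeb2011lie}), so by the quoted Lemma \cite[Lemma IX.3.10]{NeebHW} each constituent is a Harish-Chandra $\even$-module, in particular $\kk^{\CC}$-semisimple with finite $\kk^{\CC}$-multiplicities.

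The one point requiring slight care — and what I expect to be the only real obstacle — is passing from ``each $\even$-constituent of $\HH_{\ev}$ is $\kk^{\CC}$-locally finite'' to ``$\HH_{\ev}$ itself is $\kk^{\CC}$-locally finite and finitely generated over $\UE(\even)$.'' Local finiteness as a $\kk^{\CC}$-module is preserved under arbitrary direct sums, so $\HH_{\ev}$ is automatically $\kk^{\CC}$-semisimple with each simple $\kk^{\CC}$-type appearing (a priori) possibly with infinite multiplicity; to rule this out I would use that $\HH$ has a highest weight, so its weights lie in $\Lambda - \ZZ_{\geq 0}\Delta^{+}$ with finite-dimensional weight spaces, whence each $\kk^{\CC}$-type (determined by its highest weight, which is among these weights) occurs finitely often. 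Finite generation of $\HH$ over $\UE(\gg)$ is immediate, and then Lemma \ref{lemm::HC_g_HC_ev} upgrades this to the $\gg$-statement.

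Putting it together: $\HH$ is finitely generated over $\UE(\gg)$; $\HH_{\ev}$ is unitarizable over $\even$ hence a direct sum of simple unitarizable highest weight $\even$-modules, each of which is Harish-Chandra by \cite[Lemma IX.3.10]{NeebHW}; the highest weight condition bounds the $\kk^{\CC}$-multiplicities, so $\HH_{\ev}$ is a $(\even,\kk^{\CC})$-module; Lemma \ref{lemm::HC_g_HC_ev} then gives that $\HH$ is a $(\gg,\kk^{\CC})$-supermodule; finally Proposition \ref{prop::HW_HC_Supermodules} (equivalence of (b) and (c)) yields that $\HH$ is a Harish-Chandra supermodule, completing the proof.
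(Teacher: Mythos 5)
Your proof is correct and follows essentially the same route the paper intends: restrict to $\even$ via Lemma \ref{lemm::ev}, decompose into unitarizable highest weight $\even$-constituents, apply \cite[Lemma IX.3.10]{NeebHW} to each, and lift back with Lemma \ref{lemm::HC_g_HC_ev} and Proposition \ref{prop::HW_HC_Supermodules}. Your additional remark on bounding the $\kk^{\CC}$-multiplicities via the weight space decomposition is a harmless (and correct) extra precaution.
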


The category of Harish-Chandra (super)modules is the full subcategory whose objects are the Harish-Chandra (super)modules and whose morphisms are the even $\even$-module and $\gg$-supermodule homomorphisms, respectively. In this article, all (super)modules are understood to lie in this category.

\subsection{Infinitesimal characters} \label{subsubsec::infinitesimal_characters} 
Unitarizable simple $\gg$-supermodules are of highest weight type, and hence admit infinitesimal characters. These are algebra homomorphisms $\chi : \mathfrak{Z}(\gg) \to \CC$, which can be explicitly described using the Harish-Chandra homomorphism for Lie superalgebras. To construct this homomorphism, we use the following isomorphism of super vector spaces:
\begin{equation} \label{eq::PBW_decomposition}
\mathfrak{U}(\gg) \cong \mathfrak{U}(\hh) \oplus (\nn^{-}\mathfrak{U}(\gg) + \mathfrak{U}(\gg)\nn^{+}),
\end{equation}
which follows directly from the PBW Theorem for $\gg$. The decomposition is stable under $\omega$, extended to $\UE(\gg)$. The associated projection $\pr: \mathfrak{U}(\gg) \to \mathfrak{U}(\hh)$ is called the \emph{Harish-Chandra projection}, whose restriction to $\mathfrak{Z}(\gg)$ defines an algebra homomorphism:
\begin{equation}
\pr\big\vert_{\mathfrak{Z}(\gg)}: \mathfrak{Z}(\gg) \to \mathfrak{U}(\hh) \cong \operatorname{S}(\hh).
\end{equation}
The \emph{Harish-Chandra homomorphism} $\operatorname{HC}: \mathfrak{Z}(\gg) \to S(\hh)$ is given by the composition of $\pr\big\vert_{\mathfrak{Z}(\gg)}$ with the twist $\zeta: \operatorname{S}(\hh)\to \operatorname{S}(\hh)$, defined by 
$\lambda(\zeta(f))\coloneqq (\lambda-\rho)(f)$ for all $f \in \operatorname{S}(\hh)$ and $\lambda\in\hh^*$, where $\rho$ denotes the Weyl vector corresponding to the chosen positive system $\Delta^{+}$. For any $\Lambda \in \hh^{\ast}$, the associated map 
\begin{equation}
\chi_{\Lambda}(z)\coloneqq (\Lambda+\rho)(\text{HC}(z))
\end{equation}
defines an algebra homomorphism $\chi_{\Lambda} : \mathfrak{Z}(\gg) \to \CC$.
A $\gg$-supermodule $M$ is said to admit an \emph{infinitesimal character} if there exists $\Lambda \in \hh^{\ast}$ such that every $z \in \mathfrak{Z}(\gg)$ acts on $M$ by scalar multiplication with $\chi_{\Lambda}(z) \in \CC$. The map $\chi_{\Lambda}$ is referred to as the \emph{infinitesimal character} of $M$.

Highest weight $\gg$-supermodules admit an infinitesimal character by Proposition~\ref{prop::Properties_Verma_Supermodule}. More precisely, the following holds:

\begin{lemma}[{\cite[Lemma 8.5.3]{musson2012lie}}] \label{lemm::action_Casimir}
 Let $M$ be a highest weight $\gg$-supermodule with highest weight $\Lambda$. Then $M$ admits an infinitesimal character $\chi_{\Lambda}$. Furthermore, the quadratic Casimir element $\Omega_{\gg} \in \mathfrak{Z}(\gg)$ acts by the scalar
 \[
 (\Lambda + 2\rho, \Lambda).
 \]
\end{lemma}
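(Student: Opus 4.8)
The plan is to use that $M$ is cyclic on the highest weight vector $v_{\Lambda}$ and reduce every claim to the action of $\mathfrak{Z}(\gg)$ on $v_{\Lambda}$. By the PBW theorem, $M = \UE(\gg)v_{\Lambda} = \UE(\nn^{-})v_{\Lambda}$, so $M$ is a weight supermodule with weights in $\Lambda - \sum_{\alpha\in\Delta^{+}}\ZZ_{\geq 0}\alpha$ and top weight space $M^{\Lambda} = \CC v_{\Lambda}$. For $z\in\mathfrak{Z}(\gg)$, centrality gives $\nn^{+}(zv_{\Lambda}) = z(\nn^{+}v_{\Lambda}) = 0$ and $H(zv_{\Lambda}) = z(Hv_{\Lambda}) = \Lambda(H)zv_{\Lambda}$ for $H\in\hh$, so $zv_{\Lambda}\in M^{\Lambda} = \CC v_{\Lambda}$, say $zv_{\Lambda} = c_{z}v_{\Lambda}$; since $z$ commutes with $\UE(\gg)$ it then acts by $c_{z}$ on all of $M = \UE(\gg)v_{\Lambda}$, so $M$ admits an infinitesimal character. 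To see it equals $\chi_{\Lambda}$, decompose $z = p(z) + n$ along $\UE(\gg) = \UE(\hh)\oplus(\nn^{-}\UE(\gg) + \UE(\gg)\nn^{+})$: the part of $n$ in $\UE(\gg)\nn^{+}$ annihilates $v_{\Lambda}$, the part in $\nn^{-}\UE(\gg)$ carries $v_{\Lambda}$ into $\bigoplus_{\mu<\Lambda}M^{\mu}$, and since $nv_{\Lambda} = c_{z}v_{\Lambda} - \Lambda(p(z))v_{\Lambda}$ must lie in both $\CC v_{\Lambda}$ and $\bigoplus_{\mu<\Lambda}M^{\mu}$, it vanishes; hence $c_{z} = \Lambda(p(z))$. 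Unwinding the twist $\zeta$ in $\operatorname{HC} = \zeta\circ p|_{\mathfrak{Z}(\gg)}$ gives $c_{z} = \Lambda(p(z)) = \bigl((\Lambda+\rho)-\rho\bigr)(p(z)) = (\Lambda+\rho)(\operatorname{HC}(z)) = \chi_{\Lambda}(z)$.

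For the quadratic Casimir $\Omega_{\gg}$ attached to $(\cdot,\cdot)$, I would compute $\Omega_{\gg}v_{\Lambda}$ in a basis adapted to $\gg = \nn^{-}\oplus\hh\oplus\nn^{+}$: a basis $\{H_{i}\}$ of $\hh$ with $(\cdot,\cdot)$-dual basis $\{H^{i}\}$, and for each $\alpha\in\Delta^{+}$ root vectors $x_{\alpha}\in\gg^{\alpha}$, $y_{\alpha}\in\gg^{-\alpha}$ with $(x_{\alpha},y_{\alpha}) = 1$. The $\hh$-part evaluates on $v_{\Lambda}$ to $\sum_{i}\Lambda(H_{i})\Lambda(H^{i}) = (\Lambda,\Lambda)$. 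For each $\alpha$, using $x_{\alpha}v_{\Lambda} = 0$ and the invariance identity $[x_{\alpha},y_{\alpha}] = h_{\alpha}$ with $\Lambda(h_{\alpha}) = (\Lambda,\alpha)$, the root-pair term of $\Omega_{\gg}$ contributes $(\Lambda,\alpha)v_{\Lambda}$ when $\alpha$ is even and $-(\Lambda,\alpha)v_{\Lambda}$ when $\alpha$ is odd, the sign change being forced by the skew-symmetry of $(\cdot,\cdot)$ on $\odd$ (which flips the dual-basis partner of an odd root vector) together with the anticommutator convention for odd brackets. Summing over $\Delta^{+} = \Delta_{\bar{0}}^{+}\sqcup\Delta_{\bar{1}}^{+}$ and using $\rho = \rho_{\bar{0}} - \rho_{\bar{1}}$,
$$
\Omega_{\gg}v_{\Lambda} = \Bigl((\Lambda,\Lambda) + \sum_{\alpha\in\Delta_{\bar{0}}^{+}}(\Lambda,\alpha) - \sum_{\beta\in\Delta_{\bar{1}}^{+}}(\Lambda,\beta)\Bigr)v_{\Lambda} = \bigl((\Lambda,\Lambda) + 2(\Lambda,\rho)\bigr)v_{\Lambda} = (\Lambda + 2\rho,\Lambda)\,v_{\Lambda}.
$$
Since $\Omega_{\gg}$ is central it acts by the scalar $(\Lambda+2\rho,\Lambda)$ on all of $M$, which is also the value $\chi_{\Lambda}(\Omega_{\gg})$ coming from the first part.

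The ingredients in the first paragraph (PBW, the Harish-Chandra decomposition, one-dimensionality of $M^{\Lambda}$) are standard and already recorded in the excerpt, so the only step demanding genuine attention is the super sign bookkeeping in the Casimir computation: producing $-\rho_{\bar{1}}$ rather than $+\rho_{\bar{1}}$ in $2\rho$ rests on correctly combining (i) the chosen normalization of $\Omega_{\gg}$, (ii) the skew-symmetry of $\str(XY)$ on $\odd$, and (iii) the anticommutator $[x_{\alpha},y_{\alpha}] = x_{\alpha}y_{\alpha} + y_{\alpha}x_{\alpha}$ for odd $\alpha$; it is precisely this interplay that manufactures the opposite contribution of the odd roots. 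For $A(n\vert n)$ one should additionally recall that $\str(XY)$ is degenerate on $\mathfrak{sl}(n+1\vert n+1)$ with radical $\CC E_{n+1,n+1}$ but descends to the non-degenerate invariant form on the quotient used in Section \ref{sec::preliminaries}, so $\Omega_{\gg}$ is well defined and the computation is unaffected.
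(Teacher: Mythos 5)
The paper does not prove this lemma — it is quoted from Musson's book — and your argument is precisely the standard proof given there: cyclicity on $v_{\Lambda}$ plus $\dim M^{\Lambda}=1$ forces $z v_{\Lambda}=\Lambda(p(z))v_{\Lambda}$, the $\zeta$-twist converts this to $\chi_{\Lambda}(z)$, and the Casimir evaluation on $v_{\Lambda}$ yields $(\Lambda,\Lambda)+2(\rho,\Lambda)$ with the odd positive roots entering with a minus sign. The sign bookkeeping you flag is indeed where all the content sits, and your account of it is correct (the skew-symmetry of $(\cdot,\cdot)$ on $\odd$ flips the dual partner of $f_{\beta}$ to $-e_{\beta}$, and the parity sign in the normalization of $\Omega_{\gg}$ then produces $-(\Lambda,\beta)$ per odd positive root after using the anticommutator $[e_{\beta},f_{\beta}]=h_{\beta}$); the only cosmetic imprecision is that the splitting of $n$ into its $\nn^{-}\UE(\gg)$ and $\UE(\gg)\nn^{+}$ parts is not unique, though any choice makes the argument work.
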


The Harish-Chandra homomorphism is an injective ring homomorphism. To describe its image, we define 
\begin{equation}
\operatorname{S}(\hh)^{W}\coloneqq \{ f\in \operatorname{S}(\hh) \mid w (\lambda)(f) = \lambda(f) \ \text{for all} \ w \in W, \lambda \in \hh^*\},
\end{equation}
and for any $\lambda\in\hh^*$, we introduce
\begin{equation}
A_{\lambda}\coloneqq \{ \alpha \in \Delta_{\bar{1}}^{+} \mid (\lambda+\rho,\alpha) = 0\}.
\end{equation}
The image of $\operatorname{HC}$ is then given by \cite{GorelikHC, KacHC}:
\begin{equation}
\Im(\operatorname{HC}) = \bigl\{ f \in \operatorname{S}(\hh)^{W} \mid (\lambda+t\alpha)(f) = \lambda(f) \ 
\text{for all $t\in \CC$, $\lambda\in\hh^*$, and $\alpha \in A_{\lambda-\rho}$}\bigr\}.
\end{equation}
As an immediate consequence, one obtains $\chi_{\Lambda} = \chi_{\Lambda'}$ whenever 
\begin{equation}\label{eq::relation_infinitesimal_character}
\Lambda' = w\biggl(\Lambda+\rho+\sum_{i=1}^{k}t_{i}\alpha_{i}\biggr)-\rho,
\end{equation}
where $w \in W$, $t_{i}\in \CC$, and $\alpha_{1},\dotsc,\alpha_{k} \in A_\Lambda$ are linearly independent odd isotropic roots satisfying $(\Lambda+\rho,\alpha_{i}) = 0$. This leads to the concept of \emph{atypicality}. A weight $\Lambda \in \hh^{\ast}$ is called \emph{typical} if $A_\Lambda=\emptyset$, \emph{i.e.},
$(\Lambda+\rho,\alpha)\neq 0$ for
all $\alpha \in \Delta_{\bar{1}}^{+}$. Otherwise $\Lambda$ is called \emph{atypical}.

\subsection{Algebraic realization and Shapovalov form} \label{subsec::Realization_Shapovalov}

In this section, we realize unitarizable highest weight $\gg$-supermodules as simple quotients of Verma supermodules to specify their structure and to construct a natural contravariant Hermitian form on them.

\subsubsection{Verma supermodules} We consider the fixed involutive Lie superalgebra $(\gg,\omega = \omega_{(-,+)})$, and the fixed \emph{Borel subalgebra} 
\begin{equation}
\bb = \hh \oplus \nn^{+}, \qquad \nn^{+}\coloneqq \bigoplus_{\alpha \in \Delta^{+}} \gg^{\alpha},
\end{equation}
with $\Delta^{+}\coloneqq \Delta^{+}_{\text{nst}}$ (see Section~\ref{subsubsec::Real_form}). The Borel satisfies $\gg = \bb + \bb^{\omega}$ for
$
\bb^{\omega}\coloneqq \{\omega(X) : X \in \bb\}.
$
We consider $\UE(\gg)$ as a right $\UE(\bb)$-supermodule with respect to right multiplication. For any $\Lambda \in \hh^{\ast}$, let $\CC_{\Lambda}$ be the one-dimensional $\UE(\bb)$-supermodule with trivial action of $\nn^{+}$ and weight $\Lambda$. The \emph{Verma supermodule} associated to $\Lambda \in \hh^{\ast}$ is the induced $\gg$-supermodule
\begin{equation} \label{eq::Verma_supermodules}
M^{\bb}(\Lambda)\coloneqq \UE(\gg) \otimes_{\UE(\bb)} \CC_{\Lambda}.
\end{equation}
The superscript $\bb$ will be omitted when the Borel subalgebra is fixed.

The well-known properties of Verma supermodules are summarized in the following proposition. The proof of the proposition follows standard arguments similar to those used for reductive Lie algebras and will therefore be omitted.

\begin{proposition}[{\cite[Chapter 8]{musson2012lie}}] \label{prop::Properties_Verma_Supermodule} 
 \begin{enumerate}
 \item[a)] The Verma supermodule $M(\Lambda)$ is a highest weight $\gg$-supermodule with highest weight $\Lambda$, and highest weight vector $[1 \otimes 1]$. In particular, $\UE(\nn^{-})[1 \otimes 1] = M(\Lambda)$.
 \item[b)] For each $\bb$-eigenvector $v_{\Lambda}$ of weight $\Lambda$ in a $\gg$-supermodule $M$, there exists a uniquely determined surjective $\gg$-supermodule morphism $M(\Lambda) \to M$ sending $[1\otimes 1]$ to $v_{\Lambda}$.
 \item[c)] $M(\Lambda)$ is a \emph{weight supermodule}, that is, $M(\Lambda)=\bigoplus_{\lambda \in \hh^{\ast}}M(\Lambda)^{\lambda}$, where 
$M(\Lambda)^{\lambda}\coloneqq \{m\in M(\Lambda) : Hm=\lambda(H)m \text{ for all } H\in \hh\}$. The space $M(\Lambda)^{\lambda}$ is 
the \emph{weight space} of weight $\lambda$, and $\dim(M(\Lambda)^{\lambda})$ its \emph{multiplicity}. 
$\dim(M(\Lambda)^{\lambda}) < \infty$ for all $\lambda$, with $\dim(M(\Lambda)^{\Lambda})=1$.
 \item[d)] $\End_{\gg}(M(\Lambda)) \cong \CC$.
 \item[e)] $M(\Lambda)$ has a unique maximal subsupermodule and unique simple quotient, denoted by $L(\Lambda)$. In particular, $M(\Lambda)$ is indecomposable.
 \item[f)] $M(\Lambda)$ has a finite Jordan--Hölder series.
 \end{enumerate}
\end{proposition}

The properties of Verma supermodules pass, in particular, to highest weight supermodules. 
Moreover, all simple highest weight supermodules with the same highest weight and the same parity are isomorphic.

Combining Theorem~\ref{thm::HW_property_M} with Proposition~\ref{prop::Properties_Verma_Supermodule}, we conclude that any unitarizable highest weight $\gg$-supermodule is the unique simple quotient of a Verma supermodule. In general, Verma supermodules admit a natural $\omega$-contravariant Hermitian form, known as the Shapovalov form. It descends to a $\omega$-contravariant form on the unique simple quotient, also referred to as \emph{Shapovalov form}. A highest weight $\gg$-supermodule is unitarizable if and only if the associated Shapovalov form is positive definite.

\subsubsection{Shapovalov form} \label{subsubsec::Shapovalov_form}
Each Verma supermodule carries a natural contravariant Hermitian form if the highest weight $\Lambda \in \hh^{\ast}$ satisfies 
\begin{equation}\Lambda(\omega(H)) = \overline{\Lambda(H)}, \qquad \forall H \in \hh.
\end{equation} An element $\Lambda \in \hh^{\ast}$ that meets this condition is called \emph{symmetric}. Notably, all highest weights of unitarizable highest weight $\gg$-supermodules are symmetric. Indeed, let $\HH$ be a unitarizable highest weight $\gg$-supermodule with highest weight $\Lambda$ and highest weight vector $v_{\Lambda}$. Then, for any $H \in \hh$,
\begin{equation}
\langle H v_{\Lambda}, v_{\Lambda} \rangle = \langle v_{\Lambda}, \omega(H) v_{\Lambda} \rangle \quad \Leftrightarrow \quad \overline{\Lambda(H)} = \Lambda(\omega(H)).
\end{equation}
Recall that the decomposition \eqref{eq::PBW_decomposition} is stable under $\omega$. Let $\pr: \UE(\gg) \to \UE(\hh)$ be the Harish-Chandra projection from above. For any $\Lambda \in \hh^{\ast}$, we define a bilinear map on $\UE(\gg)$:
\begin{equation}
 (X,Y)_{\Lambda}\coloneqq \Lambda(\pr(\omega(X)Y)), \qquad X,Y \in \UE(\gg).
\end{equation}

\begin{lemma}
 Let $\Lambda \in \hh^{\ast}$ be symmetric. Then the following assertions hold:
 \begin{enumerate}
 \item[a)] $(\cdot,\cdot)_{\Lambda}$ is conjugate-linear in the first coordinate and linear in the second coordinate. 
 \item[b)] $(X,Y)_{\Lambda} = \overline{(Y,X)}_{\Lambda}$ for all $X,Y \in \UE(\gg)$.
 \item[c)] $(ZX,Y)_{\Lambda} = (X, \omega(Z)Y)_{\Lambda}$ for any $X, Y, Z \in \mathfrak{U}(\gg)$.
 \end{enumerate}
\end{lemma}

\begin{proof} a) and c) follow by definition and the fact that $\omega$ is conjugate-linear. For b), let $X,Y \in \UE(\gg)$, and note that $\omega(h)=\overline{h}$ for all $h \in \hh$ using \eqref{eq::form_omega_plus_omega_minus}. Using $\omega$-stability of the decomposition and $\overline{\Lambda(H)}=\Lambda(\omega(H))$ for all $H \in \hh$, one has \eqref{eq::PBW_decomposition}
 \[
 \begin{split}
(X,Y)_{\Lambda} = \Lambda(\pr(\omega(X)Y)) = \overline{\Lambda(\omega(\pr(\omega(X)Y)))} = \overline{\Lambda(\omega(Y)X)} = \overline{(Y,X)}_{\Lambda}. \qedhere
 \end{split}
 \]
\end{proof}

Since $M(\Lambda)=\UE(\gg)[1\otimes 1]$, the form $(\cdot,\cdot)_{\Lambda}$ induces a well-defined contravariant Hermitian form on $M(\Lambda)$. This form is the \emph{Shapovalov form}, denoted by $\langle\cdot,\cdot\rangle_{M(\Lambda)}$.

\begin{proposition} \label{prop::uniqueness_Shapovalov_form}
 If $\Lambda \in \hh^{\ast}$ is symmetric, then there exists a unique $\omega$-contravariant Hermitian form on $M(\Lambda)$ satisfying $\langle [1\otimes 1],[1\otimes 1]\rangle_{M(\Lambda)} = 1$. All other contravariant Hermitian forms are real multiples of this form.
\end{proposition}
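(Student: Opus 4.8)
The plan is to construct the Shapovalov form on $M(\Lambda)$ explicitly from the contravariant Hermitian form $(\cdot,\cdot)_\Lambda$ on $\UE(\gg)$ built above, verify well-definedness, and then deduce uniqueness from the fact that $M(\Lambda)$ is generated by $[1\otimes 1]$. First, recall that $M(\Lambda) = \UE(\gg)[1\otimes 1]$ by Proposition \ref{prop::Properties_Verma_Supermodule}(a), so every element of $M(\Lambda)$ is of the form $X[1\otimes 1]$ for some $X \in \UE(\gg)$. I would \emph{define} $\langle X[1\otimes 1], Y[1\otimes 1]\rangle_{M(\Lambda)} := (X,Y)_\Lambda = \widetilde\chi_\Lambda(X\omega(Y))$ and check this does not depend on the choice of representatives $X, Y$: if $X[1\otimes 1] = X'[1\otimes 1]$, then $X - X' \in \UE(\gg)\nn^{+} + (\text{terms killed in } M(\Lambda))$ — more precisely $(X-X')$ lies in the left ideal annihilating $[1\otimes 1]$, which is $\UE(\gg)\nn^{+} + \sum_{H\in\hh}\UE(\gg)(H - \Lambda(H))$. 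One then checks $\widetilde\chi_\Lambda((X-X')\omega(Y)) = 0$ for all such $X - X'$, using that $\pr$ vanishes on $\nn^{-}\UE(\gg) \oplus \UE(\gg)\nn^{+}$, that $\omega(\nn^{+}) = \nn^{-}$, that the decomposition is $\omega$-stable, and that $\chi_\Lambda$ is the infinitesimal character so $\widetilde\chi_\Lambda(H - \Lambda(H)) = 0$ absorbs the Cartan terms. Symmetry in the second argument follows the same way using $\omega^2 = \id$.

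Next, I would verify that the resulting form on $M(\Lambda)$ is Hermitian and contravariant. Hermitian symmetry $\overline{\langle v,w\rangle_{M(\Lambda)}} = \langle w,v\rangle_{M(\Lambda)}$ is inherited directly from the corresponding property of $(\cdot,\cdot)_\Lambda$ stated in the text (which in turn rests on $\overline{\Lambda(H)} = \Lambda(\omega(H))$). Contravariance — $\langle Zv, w\rangle_{M(\Lambda)} = \langle v, \omega(Z)w\rangle_{M(\Lambda)}$ for $Z \in \gg$ (hence all of $\UE(\gg)$) — follows from $(ZX,Y)_\Lambda = (X,\omega(Z)Y)_\Lambda$, which holds because $\widetilde\chi_\Lambda((ZX)\omega(Y)) = \widetilde\chi_\Lambda(X\omega(Y)\cdot\text{?})$; more cleanly, $(ZX)\omega(Y) = X\,\omega(\omega(Z)Y)$ after moving $\omega(Z)$ across via the anti-involution property $\omega(\omega(Z)Y) = \omega(Y)Z$, wait — one uses $\omega(AB) = \omega(B)\omega(A)$, so $\omega(\omega(Z)Y) = \omega(Y)\omega(\omega(Z)) = \omega(Y)Z$; thus $X\omega(\omega(Z)Y) = X\omega(Y)Z$, and I must instead write $(ZX)\omega(Y)$ and $X\omega(\omega(Z)Y) = X\omega(Y)Z$ — these agree under $\widetilde\chi_\Lambda$ because $\widetilde\chi_\Lambda$ is conjugation-invariant on $\UE(\gg)$ (it factors through the center-twisted Harish-Chandra projection), so $\widetilde\chi_\Lambda(ZX\omega(Y)) = \widetilde\chi_\Lambda(X\omega(Y)Z)$. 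This conjugation-invariance is the essential algebraic input and is already implicit in the construction of $\widetilde\chi_\Lambda$ via $\chi_\Lambda$; I would spell it out. Finally, $\langle[1\otimes 1],[1\otimes 1]\rangle_{M(\Lambda)} = (1,1)_\Lambda = \widetilde\chi_\Lambda(1) = \chi_\Lambda(\pr(1)) = 1$ after the $\zeta$-twist normalization, giving the required value $1$.

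For uniqueness: suppose $\langle\cdot,\cdot\rangle'$ is any contravariant Hermitian form on $M(\Lambda)$. Since $M(\Lambda) = \UE(\gg)[1\otimes 1]$, the form is entirely determined by the values $\langle X[1\otimes 1], Y[1\otimes 1]\rangle'$, and by contravariance $\langle X[1\otimes 1], Y[1\otimes 1]\rangle' = \langle[1\otimes 1], \omega(X)Y[1\otimes 1]\rangle'$. Now $\omega(X)Y[1\otimes 1] = (\text{scalar})\,[1\otimes 1] + (\text{lower weight part})$, and pairing with $[1\otimes 1]$ — which has the top weight $\Lambda$ in its weight-graded support, while distinct weight spaces are orthogonal under any contravariant form (a standard argument: $(H - \Lambda(H))$ acts as zero adjointly w.r.t. $\omega$ up to the weight) — only the scalar survives. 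That scalar is precisely $\widetilde\chi_\Lambda(\omega(X)Y)$ up to a global factor $c := \langle[1\otimes 1],[1\otimes 1]\rangle' \in \RR$ (real by Hermitian symmetry), whence $\langle\cdot,\cdot\rangle' = c\langle\cdot,\cdot\rangle_{M(\Lambda)}$; normalizing $c = 1$ forces uniqueness, and an arbitrary contravariant Hermitian form is a real multiple. The main obstacle I anticipate is the careful bookkeeping in the well-definedness step — showing $\widetilde\chi_\Lambda$ annihilates the annihilator ideal of $[1\otimes 1]$ paired against arbitrary $\omega(Y)$ — together with the orthogonality-of-weight-spaces lemma in the super setting, where one must be attentive to signs from the $\ZZ_2$-grading and to the fact that $\omega$ is conjugate-linear; but all of these are routine adaptations of the classical reductive-Lie-algebra argument, as the text already signals.
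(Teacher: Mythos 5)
Your uniqueness argument is correct and is essentially the paper's: both reduce an arbitrary contravariant Hermitian form to its single value at $[1\otimes 1]$ by pushing all of $\UE(\gg)$ onto the cyclic vector via contravariance. The paper does this through the decomposition $\gg = \bb + \bb^{\omega}$, so that $\UE(\gg)[1\otimes 1] = \UE(\bb^{\omega})[1\otimes 1]$ and, after moving across the form, $\UE(\bb)[1\otimes 1] = \CC[1\otimes 1]$; you do it through orthogonality of distinct weight spaces together with $\dim M(\Lambda)^{\Lambda} = 1$. These are interchangeable, and the ``real multiples'' conclusion follows in both cases because contravariant Hermitian forms constitute a real vector space. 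Note that the paper's proof consists \emph{only} of this uniqueness step: existence is taken as already established by the construction of $(\cdot,\cdot)_{\Lambda}$ in the preceding paragraphs, so the first two-thirds of your proposal re-proves material the paper treats as given.

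Within that existence portion there is one step that would fail as written: your justification of contravariance. You reduce $(ZX,Y)_{\Lambda} = (X,\omega(Z)Y)_{\Lambda}$ to the identity $\widetilde{\chi}_{\Lambda}(ZA) = \widetilde{\chi}_{\Lambda}(AZ)$ and assert that $\widetilde{\chi}_{\Lambda}$ is ``conjugation-invariant'' because it factors through the Harish-Chandra projection. It is not: already for $\mathfrak{sl}(2)$ with $e \in \nn^{+}$, $f \in \nn^{-}$ one has $\pr(ef) = \pr(fe + h) = h$ while $\pr(fe) = 0$, so $\widetilde{\chi}_{\Lambda}(ef) = \Lambda(h) \neq 0 = \widetilde{\chi}_{\Lambda}(fe)$ in general; the Harish-Chandra projection is not a trace. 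The standard repair is to order the product so that no trace property is needed: with the form written as $\widetilde{\chi}_{\Lambda}(\omega(Y)X)$, contravariance is immediate from $\omega(\omega(Z)Y) = \omega(Y)Z$, and the same ordering makes the well-definedness step transparent, since the left annihilator ideal $\UE(\gg)\nn^{+} + \sum_{H}\UE(\gg)(H-\Lambda(H))$ of $[1\otimes 1]$ then lands on the right of the product, where $\pr$ and the evaluation at $\Lambda$ visibly kill it. This does not affect your uniqueness argument, which is the only part the proposition's proof actually requires, but it should be fixed if the existence half is meant to be self-contained.
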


\begin{proof}
 To complete the argument, we must show uniqueness. Thus it suffices to verify that any contravariant Hermitian form $\langle\cdot,\cdot\rangle$ on $M(\Lambda)$ with $\langle[1\otimes 1],[1\otimes 1]\rangle=0$ is necessarily trivial. This follows from the computation below:
\begin{equation*}
\begin{split}
 \langle M(\Lambda), M(\Lambda) \rangle &= \langle \UE(\gg)[1\otimes 1], \UE(\gg)[1\otimes 1] \rangle
 = \langle [1\otimes 1], \UE(\gg)[1\otimes 1] \rangle \\
 &= \langle [1\otimes 1], \UE(\bb^{\omega})[1\otimes 1] \rangle 
 = \langle \UE(\bb)[1\otimes 1], [1\otimes 1] \rangle \\
 &\subset \CC \langle [1\otimes 1], [1\otimes 1] \rangle = 0. \qedhere
 \end{split} 
\end{equation*} 
\end{proof}

By the construction of the Shapovalov form, weight spaces of different weights with respect to $\hh$ are orthogonal. We conclude the following corollary.

\begin{corollary}
 If $\Lambda$ is symmetric, the maximal proper subsupermodule of $M(\Lambda)$ coincides with the radical $R$ of the Shapovalov form $\langle \cdot, \cdot \rangle_{M(\Lambda)}.$ In particular, $L(\Lambda) \cong M(\Lambda)/R$.
\end{corollary}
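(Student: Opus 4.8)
The plan is to prove the two inclusions $R \subseteq N$ and $N \subseteq R$ separately, where $N$ denotes the unique maximal proper subsupermodule of $M(\Lambda)$ provided by Proposition~\ref{prop::Properties_Verma_Supermodule}d) (equivalently, the kernel of $M(\Lambda)\twoheadrightarrow L(\Lambda)$), and $R = \{v\in M(\Lambda) : \langle v,w\rangle_{M(\Lambda)}=0 \text{ for all }w\in M(\Lambda)\}$.

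First I would check that $R$ is a proper subsupermodule, which immediately forces $R\subseteq N$, since every proper subsupermodule of a highest weight supermodule is contained in the maximal one. Stability under $\UE(\gg)$ is a direct consequence of contravariance, $\langle Xv,w\rangle_{M(\Lambda)}=\langle v,\omega(X)w\rangle_{M(\Lambda)}$: if $v\in R$ then so is $Xv$ for every $X\in\UE(\gg)$. That $R$ is $\ZZ_{2}$-graded follows because the Shapovalov form is consistent --- the Harish--Chandra projection $\operatorname{pr}$ annihilates odd elements of $\UE(\gg)$, as $\UE(\hh)$ is purely even, so $\langle M(\Lambda)_{\bar0},M(\Lambda)_{\bar1}\rangle_{M(\Lambda)}=0$ --- and restricting the vanishing condition to each parity then shows $R=(R\cap M(\Lambda)_{\bar0})\oplus(R\cap M(\Lambda)_{\bar1})$. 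Finally $R$ is proper: $[1\otimes1]\notin R$ because $\langle[1\otimes1],[1\otimes1]\rangle_{M(\Lambda)}=1\neq0$, whereas $M(\Lambda)=\UE(\gg)[1\otimes1]$.

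For the reverse inclusion I would exploit two facts: a contravariant form pairs distinct $\hh$-weight spaces trivially (recorded just before the corollary), and a proper subsupermodule of a highest weight supermodule avoids the highest weight line. Take $v\in N$ and an arbitrary $w=X[1\otimes1]$ with $X\in\UE(\gg)$. Contravariance gives $\langle v,w\rangle_{M(\Lambda)}=\langle\omega(X)v,[1\otimes1]\rangle_{M(\Lambda)}$, and $\omega(X)v\in N$ since $N$ is $\UE(\gg)$-stable; thus it suffices to show $\langle n,[1\otimes1]\rangle_{M(\Lambda)}=0$ for all $n\in N$. Writing $n=\sum_{\mu}n_{\mu}$ in weight components, orthogonality of distinct weight spaces leaves only the term $\langle n_{\Lambda},[1\otimes1]\rangle_{M(\Lambda)}$; but $n_{\Lambda}\in N\cap M(\Lambda)^{\Lambda}=N\cap\CC[1\otimes1]$, and this space is $0$, since otherwise $[1\otimes1]\in N$ and then $N=\UE(\gg)[1\otimes1]=M(\Lambda)$, contradicting properness of $N$. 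Hence $\langle n,[1\otimes1]\rangle_{M(\Lambda)}=0$, so $\langle v,w\rangle_{M(\Lambda)}=0$ for every $w$, i.e. $v\in R$. Combining the two inclusions yields $N=R$.

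I do not anticipate a genuine obstacle here: this is the familiar dictionary between the radical of a contravariant form and the maximal submodule of a highest weight module, and the only step that deserves a moment's care is confirming that $R$ is a bona fide subsupermodule (that is, $\ZZ_{2}$-graded). This rests on the consistency of the Shapovalov form --- equivalently, on the elementary observation that the parity of a weight vector of $M(\Lambda)$ is determined by its weight, the highest weight vector being taken even.
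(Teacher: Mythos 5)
Your argument is correct and is exactly the standard argument the paper has in mind: the paper gives no explicit proof, deducing the corollary from the orthogonality of distinct $\hh$-weight spaces under the Shapovalov form, which is precisely the key step in your reverse inclusion, while the forward inclusion (the radical is a proper subsupermodule not containing $[1\otimes 1]$) is implicit in the preceding proposition. No gaps; the one point you flag — that $R$ is $\ZZ_{2}$-graded, via consistency of the form — is handled correctly.
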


Let $M$ be a highest weight $\gg$-supermodule with highest weight $\Lambda$ and highest weight vector $v_{\Lambda}$. Assume that $\Lambda$ is symmetric. By Proposition~\ref{prop::Properties_Verma_Supermodule}, there is a $\gg$-supermodule homomorphism $q: M(\Lambda) \to M$ that maps $[1\otimes 1]$ to $v_{\Lambda}$. Since $\Lambda$ is symmetric, the maximal subsupermodule of $M(\Lambda)$ is the same as the radical $R$ of the Shapovalov form. As a result, the kernel of $q$ must lie within $R$. This leads us to the following proposition. 

\begin{proposition}\label{prop::universal_property_Shapovalov}
 $M$ carries a nonzero unique $\omega$-contravariant consistent Hermitian form $\langle \cdot,\cdot \rangle$ such that $\langle v_{\Lambda},v_{\Lambda}\rangle = 1$. The form is non-degenerate if and only if $M$ is simple, in which case $M \cong L(\Lambda)\coloneqq M(\Lambda) / R$, and the space of $\bb$-eigenvectors of weight $\Lambda$ in $L(\Lambda)$ is one-dimensional.
\end{proposition}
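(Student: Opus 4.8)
\emph{Plan.} The idea is to reduce the statement to the universal property of $M(\Lambda)$ together with the properties of the Shapovalov form established above. Recall from the paragraph preceding the proposition that there is a surjection $q\colon M(\Lambda)\twoheadrightarrow \HH$ of $\gg$-supermodules with $q([1\otimes 1])=v_\Lambda$, and that $\ker q$ lies in the unique maximal proper subsupermodule of $M(\Lambda)$ (Proposition~\ref{prop::Properties_Verma_Supermodule}(d)), which, since $\Lambda$ is symmetric, is the radical $R$ of $\langle\cdot,\cdot\rangle_{M(\Lambda)}$.

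For \emph{existence}: $\HH$ is by hypothesis a pre-super Hilbert space carrying a contravariant consistent positive definite Hermitian form, and since $\langle v_\Lambda,v_\Lambda\rangle>0$ we may rescale to obtain such a form (automatically non-zero and consistent) with $\langle v_\Lambda,v_\Lambda\rangle=1$; equivalently, as $\ker q\subseteq R$, the radical of $\langle\cdot,\cdot\rangle_{M(\Lambda)}$, the Shapovalov form descends along $q$ to a contravariant Hermitian form on $\HH\cong M(\Lambda)/\ker q$ with value $1$ at $(v_\Lambda,v_\Lambda)$. For \emph{uniqueness}: if $\beta$ is any contravariant Hermitian form on $\HH$ with $\beta(v_\Lambda,v_\Lambda)=0$, then $q^{\ast}\beta$ (defined by $(q^{\ast}\beta)(a,b):=\beta(q(a),q(b))$) is a contravariant Hermitian form on $M(\Lambda)$ vanishing at $([1\otimes 1],[1\otimes 1])$, hence identically zero by the uniqueness assertion of the preceding proposition on $M(\Lambda)$; surjectivity of $q$ then gives $\beta\equiv 0$. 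Thus every contravariant Hermitian form on $\HH$ is a real multiple of the normalized one, which is therefore unique.

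Let $\operatorname{Rad}$ denote the radical of the normalized form on $\HH$. Contravariance, $\langle Xv,w\rangle=\langle v,\omega(X)w\rangle$, shows that $\operatorname{Rad}$ is a $\gg$-subsupermodule, and it is proper since $\langle v_\Lambda,v_\Lambda\rangle=1$; hence $\HH$ simple forces $\operatorname{Rad}=0$, i.e.\ non-degeneracy. Conversely, suppose $0\neq N\subsetneq\HH$ is a proper subsupermodule. Since $\HH$ is a highest weight supermodule, $\dim\HH^{\Lambda}=1$ with $\HH^{\Lambda}=\CC v_\Lambda$, and $v_\Lambda\notin N$ (else $N\supseteq\UE(\gg)v_\Lambda=\HH$); as $N$ is a weight submodule, orthogonality of distinct weight spaces gives $\langle N,v_\Lambda\rangle=\langle N^{\Lambda},v_\Lambda\rangle=0$, and then $N=\UE(\gg)N$ together with contravariance yields $\langle N,\HH\rangle=\langle N,\UE(\gg)v_\Lambda\rangle=\langle \UE(\gg)N,v_\Lambda\rangle=\langle N,v_\Lambda\rangle=0$, so $0\neq N\subseteq\operatorname{Rad}$ and the form is degenerate. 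Therefore non-degeneracy is equivalent to simplicity. In the simple case $\ker q$ is a maximal proper subsupermodule of $M(\Lambda)$, hence $\ker q=R$ and $\HH\cong M(\Lambda)/R=L(\Lambda)$ as $\gg$-supermodules; finally the space of $\bb$-eigenvectors of weight $\Lambda$ in $L(\Lambda)$ is contained in $L(\Lambda)^{\Lambda}=\CC v_\Lambda$ and contains $v_\Lambda$, so it equals $\CC v_\Lambda$ and is one-dimensional.

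All the ingredients — the radical of a contravariant form being a subsupermodule, orthogonality of weight spaces of distinct weights, and the fact that a contravariant form descends to any quotient by a subsupermodule contained in its radical — follow the classical Verma-module pattern, so I expect no substantial obstacle. The only points needing mild care are that the descended form is genuinely \emph{consistent}, which is immediate from the pre-super Hilbert space structure of $\HH$ (or from the parity of a weight-$(\Lambda-\mu)$ vector of $M(\Lambda)$ depending only on $\mu$), and, in the degenerate direction, the identity $N=\UE(\gg)N$, which is what collapses $\langle N,\HH\rangle$ to $\langle N,v_\Lambda\rangle=0$ and thereby confines every proper subsupermodule to the radical.
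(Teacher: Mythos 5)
Your proposal is correct and follows essentially the same route as the paper: realize $\HH$ as $M(\Lambda)/\ker q$ with $\ker q\subseteq R$, descend/pull back the Shapovalov form to get existence and uniqueness, and identify non-degeneracy with $\ker q=R$, i.e.\ with simplicity. The only difference is presentational — you prove the degenerate direction by showing directly that every proper subsupermodule of $\HH$ lands in the radical (via weight-space orthogonality and contravariance), whereas the paper phrases the same fact through the containment $\ker q\subsetneq R$; both rest on the identical identification of the maximal submodule of $M(\Lambda)$ with the radical of the Shapovalov form.
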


\begin{proof}
 It remains to show that the form is non-degenerate if and only if $M$ is simple. It is clear that the Shapovalov form induces a non-degenerate form on $L(\Lambda)=M(\Lambda)/R$.

 If, conversely, $M$ carries a non-degenerate Hermitian form, then $\ker(q)$ cannot be properly contained in $R$. Consequently, $R= \ker(q)$ and $M \cong L(\Lambda)$.
\end{proof}

By abuse of notation, we also call the unique form on M the \emph{Shapovalov form}. In particular, $M$ is unitarizable iff the Shapovalov form is positive definite. We identify $M \cong L(\Lambda)$ whenever $M$ is simple. 

\subsection{\texorpdfstring{$\even$}{even}-Constituents} Let $\HH$ be a unitarizable highest weight $\gg$-supermodule of highest weight $\Lambda$, so that $\HH \cong L(\Lambda)$. By Proposition~\ref{CompletelyReducible}, $\HH$ is completely reducible as a $\gg_{\bar{0}}$-module, with parity suppressed. We denote the occurring unitarizable highest weight $\even$-modules by $L_{0}(\mu)$ for $\mu \in \hh^{\ast}$.

\begin{theorem}[{\cite[Theorem 10.4.5]{musson2012lie}, \cite[Theorem 2.5]{jakobsen1994full}}] \label{thm::even_constituents_HW} Let $\HH$ be a unitarizable highest weight $\gg$-supermodule with highest weight $\Lambda$. Then $\HH$ decomposes into a finite direct sum of unitarizable highest weight $\even$-modules, called $\even$-constituents; each of them has highest weight $\Lambda-\gamma$, where $\gamma$ is a sum of pairwise distinct odd positive roots.
\end{theorem}

We proceed to describe the $\even$-constituents of $L(\Lambda)$.

\begin{lemma} \label{lemm::Shapovalov_TP} Let $\gg_{\bar{0}}\coloneqq \gg_{1} \oplus \gg_{2}$ be a direct sum of reductive Lie algebras with root space decompositions. Let $L_{0}(\lambda;\gg_{1})$ and $L_{0}(\mu;\gg_{2})$ be two simple highest weight modules with highest weight vectors $v_{\lambda}$ and $v_{\mu}$, respectively. Assume that $\lambda$ and $\mu$ are symmetric. Define the $\even$-module $V\coloneqq L_{0}(\lambda; \gg_{1}) \boxtimes L_{0}(\mu;\gg_{2})$. Then the following assertions hold:
 \begin{enumerate}
 \item[a)] The tensor product of the Shapovalov forms on both factors defines a non-degenerate contravariant Hermitian form on $V$.
 \item[b)] $v_{\lambda} \otimes v_{\mu}$ is a primitive element in $V$, that is, a nonzero $\bb_{\bar{0}}$-eigenvector.
 \item[c)] $V \cong L_{0}(\lambda + \mu)$ if and only if $v_{\lambda} \otimes v_{\mu}$ is cyclic in $V$.
 \end{enumerate}
\end{lemma}
\begin{proof}
 We denote the Shapovalov forms on $L_{0}(\lambda;\gg_{1})$ and $L_{0}(\mu;\gg_{2})$ by $\langle \cdot, \cdot \rangle_{\lambda}$ and $\langle \cdot, \cdot \rangle_{\mu}$, respectively. Then $V$ has a contravariant Hermitian form 
 $$
 \langle v_{1} \otimes w_{1}, v_{2} \otimes w_{2} \rangle = \langle v_{1}, v_{2} \rangle_{\lambda} \langle w_{1}, w_{2} \rangle_{\mu}.
 $$
 To see that $\langle \cdot, \cdot \rangle$ is non-degenerate, consider $x\coloneqq \sum_{j=1}^{k} v_{j} \otimes w_{j}$ with $\langle x, y \rangle = 0$ for all $y \in V$. Without loss of generality, assume that $v_{1}, \ldots, v_{k}$ are linearly independent. Since $L_{0}(\lambda;\gg_{1})$ is simple, $\langle \cdot, \cdot \rangle_{\lambda}$ is non-degenerate, and we find for each $1 \leq j \leq k$ an element $y_{j} \in L_{0}(\lambda;\gg_{1})$ such that $\langle v_{i}, y_{j} \rangle_{\lambda} = \delta_{ij}$. Consequently, $\langle x, y_{j} \otimes w \rangle = \langle w_{j}, w \rangle_{\mu} = 0$, so $w_{j} = 0$ as $\langle \cdot, \cdot \rangle_{\mu}$ is non-degenerate. This concludes the proof for assertion a). 

 For assertions b) and c), note that the set of weights of $V$, denoted by $P_{V}$, is the sum of the sets of weights for $L_{0}(\lambda;\gg_{1})$ and $L_{0}(\mu; \gg_{2})$, \emph{i.e.}, $P_{V} \subset \lambda + \mu - \ZZ_{+}[\Delta_{\bar{0}}^{+}]$ with $\dim V^{\lambda + \mu} = 1$. Assertion b) is now clear, while assertion c) follows directly from a).
\end{proof}

\begin{proposition}
 Let $\gg_{\bar{0}}\coloneqq \gg_{1} \oplus \gg_{2}$ be the direct sum of Lie algebras with root space decompositions. Let $\hh\coloneqq \hh_{1} \oplus \hh_{2}$, where $\hh_{1}$ and $\hh_{2}$ denote the Cartan subalgebras of $\gg_{1}$ and $\gg_{2}$, respectively. Identify the root systems $\Delta_{1}$ and $\Delta_{2}$ with subsets of $\Delta_{\bar{0}}$, and let $\lambda = (\lambda_{1}, \lambda_{2}) \in \hh^{\ast} \cong \hh_{1}^{\ast} \times \hh_{2}^{\ast}$. Then 
 $$
 L_{0}(\lambda) \cong L_{0}(\lambda_{1}; \gg_{1}) \boxtimes L_{0}(\lambda_{2}; \gg_{2}).
 $$
\end{proposition}

\begin{proof}
 The positive systems for $\gg_{1}$ and $\gg_{2}$ are $\Delta_{1}^{+}\coloneqq \Delta_{1} \cap \Delta^{+}_{\bar{0}}$ and $\Delta_{2}^{+}\coloneqq \Delta_{2} \cap \Delta^{+}_{\bar{0}}$. The projections $p_{j}: \even \to \gg_{j}$ are Lie algebra morphisms, allowing us to identify $L_{0}(\lambda_{j}; \gg_{j})$ with $L_{0}(\lambda_{j})$. Define $v_{\lambda}\coloneqq v_{\lambda_{1}} \otimes v_{\lambda_{2}}$, where $v_{\lambda_{j}}$ are the highest weight vectors for $L_{0}(\lambda_{j})$. We compute
 \begin{equation*}
 \begin{aligned}
 \UE(\even) v_{\lambda} &= (\UE(\gg_{1}) v_{\lambda_{1}}) \otimes (\UE(\gg_{2}) v_{\lambda_{2}}) = L_{0}(\lambda_{1}; \gg_{1}) \boxtimes L_{0}(\lambda_{2}; \gg_{2}),
 \end{aligned}
 \end{equation*}
 \emph{i.e.}, $v_{\lambda}$ is cyclic, and the statement follows with Lemma~\ref{lemm::Shapovalov_TP}.
\end{proof}

\begin{corollary}
 Any unitarizable simple highest weight $\even$-module $L_{0}(\mu)$ is given by the outer tensor product of unitarizable highest weight modules over $\su(p,q)^{\CC}$, $\su(n)^{\CC}$, and $\u(1)^{\CC}$, respectively. If $m=n$, the $\mathfrak{u}(1)^{\CC}$-module is trivial.
\end{corollary}

Simple modules over $\su(n)$ and $\u(1)$ are finite-dimensional, hence highest weight and unitarizable; they are parametrized by dominant integral weights. 
The unitarizable highest weight modules of $\su(p,q)$ are described in \cite{enright1983classification, Jakobsen_Hermitian}. 
For Dirac cohomology, the relevant $\gg_{\bar{0}}$-modules are those attached to the relative holomorphic discrete series of the underlying Lie group.

\subsubsection{Relative holomorphic discrete series modules over \texorpdfstring{$\even$}{}} \label{subsubsec::relative_holomorphic} Given a unitarizable simple $\gg$-supermodule $\HH$, upon forgetting parity, we can consider its decomposition under $\even$ as a direct sum of unitarizable highest weight $\even$-modules. Among the unitarizable highest weight $\even$-constituents, there is a particularly important class known as \emph{relative holomorphic discrete series modules}. These modules are distinguished by the property that, for each $\even$-constituent, one can assign a formal dimension, which serves as an analog of the dimension for infinite-dimensional modules \cite{SchmidtGeneralizedSuperdimension}. Their explicit definition and construction are extensive and require additional terminology, so we only provide a parameterization of the highest weights. For an explicit discussion, we refer to \cite{Neeb}.

Let $\HH$ be a non-trivial unitarizable $\gg$-supermodule. Then $\HH$ is $\even$-semisimple and decomposes into a direct sum of unitarizable highest weight $\even$-supermodules $L_{0}(\mu)$.
We denote the associated real form by $\rform$ and the associated analytic reductive Lie group by $G^{\RR}_{\bar{0}}$, with complexification $G_{\bar{0}}$. Let $G\coloneqq \tilde{G}^{\RR}_{\bar{0}}$ be the universal covering Lie group, and let $T$ be the analytic subgroup of $G$ associated with $\hh^{\RR}$, the real form of the Cartan subalgebra.

We parametrize the unitarizable holomorphic supermodules in terms of the character group $X^{\ast}(T^{\CC})$ of the complexification $T^{\CC}$ of $T$ and Weyl chambers. The coset $X^{\ast}(T^{\CC})+\rho_{\bar{0}}$ of $X^{\ast}(T^{\CC})\otimes \RR$ is independent of the choice of positive roots. We call an element $\lambda$ in $X^{\ast}(T^{\CC})\otimes \RR$ \emph{regular} if no dual root is orthogonal to $\lambda$; otherwise, $\lambda$ is called \emph{singular}. Let $(X^{\ast}(T^{\CC})\otimes \RR)^{\reg}$ denote the set of regular elements in $X^{\ast}(T^{\CC})\otimes \RR$, and set 
$(X^{\ast}(T^{\CC})+\rho_{\bar{0}})^{\reg}\coloneqq (X^{\ast}(T^{\CC})+\rho_{\bar{0}})\cap (X^{\ast}(T^{\CC})\otimes \RR)^{\reg}$. A \emph{Weyl chamber} of $G$ is a connected component of $(X^{\ast}(T^{\CC})\otimes \RR)^{\reg}$. The Weyl chambers are in one-to-one correspondence with the systems of positive roots for $(\even,\hh)$.

Recall the definition of $\Delta_{nc}^{+}$ in Section~\ref{sec::realforms}. We define a Weyl chamber $C$ to be \emph{holomorphic} if for any $\lambda'$ in the interior of $C$ we have $(\lambda,\alpha)<0$ for all $\alpha \in \Delta_{nc}^{+}$. Then there are exactly $\vert W_{c}\vert$ holomorphic Weyl chambers, forming a single orbit for the action of $W_{c}$. A \emph{holomorphic Harish-Chandra parameter} is a pair $(\lambda,C)$, where $C$ is a holomorphic Weyl chamber and $\lambda \in (X^{\ast}(T^{\CC})+\rho_{\bar{0}})^{\reg}\cap C$.

There is a bijective correspondence between the isomorphism classes of relative holomorphic discrete series supermodules and the quotient $(X^{\ast}(T^{\CC}) + \rho_{\bar{0}})^{\reg} / W_{c}$, where the Weyl group $W_{c}$ acts naturally. Note that we have fixed the standard positive system and, consequently, the standard Weyl chamber $C_{\text{st}}$. Therefore, the weights in $(X^{\ast}(T^{\CC})+\rho_{\bar{0}})^{\reg} \cap C_{\text{st}}$ parameterize the relative holomorphic discrete series supermodules. For convenience, we rewrite the parameterization by a shift of $\rho_{\bar{0}}$
\begin{equation}
 \Dis\coloneqq \{ \Lambda \in (X^{\ast}(T^{\CC})\otimes \RR)^{\reg} : (\Lambda+\rho_{\bar{0}}, \alpha)<0 \ \text{for all} \ \alpha \in \Delta_{nc}^{+}\}.
\end{equation}
For the fixed standard positive system, $\Dis$ consists precisely of those $\Lambda\in (X^{\ast}(T^{\CC})\otimes\RR)^{\reg}$ satisfying
\begin{equation}
(\Lambda+\rho_{\bar0},\epsilon_{1}-\epsilon_{m})<0.
\end{equation}
In particular, $\Dis$ is a subset of the unitarizable highest weights of $\even$ consisting of those which are the unique highest weights of unitarizable $\even$-modules in their $W$-linkage class.

\begin{lemma} \label{lemm::D_Weyl_Orbit}
 Let $L_{0}(\mu)$ be a unitarizable highest weight $\even$-module with highest weight $\mu\in \Dis$. Then $\mu$ is the unique element in its $W$-linkage class that appears as the highest weight of a unitarizable simple $\even$-module with respect to the standard positive system.
\end{lemma}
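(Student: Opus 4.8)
The plan is to fix $w\in W$ and show: if $\nu:=w\cdot\mu$ is the highest weight of a unitarizable simple $\even$-module (with respect to the standard positive system), then $\nu=\mu$. I would first record two necessary conditions on such a $\nu$. Since $v_{\nu}$ is annihilated by the compact positive root vectors, $\UE(\kk^{\CC})v_{\nu}$ is a highest weight $\kk^{\CC}$-module; inheriting a contravariant positive definite form from $L_{0}(\nu)$ makes it a unitarizable highest weight module for the reductive algebra $\kk^{\CC}$, hence finite dimensional, so $\nu$ is $\Delta_{c}^{+}$-dominant. Next, for $\beta\in\Delta_{n}^{+}$ one reads off from the explicit formula for $\omega_{(-,+)}$ in Section \ref{subsubsec::Real_form} that $\omega_{(-,+)}(E_{-\beta})=-E_{\beta}$ on noncompact root vectors; taking $E_{\beta},E_{-\beta}$ to be matrix units so that also $[E_{\beta},E_{-\beta}]=h_{\beta}$, positive definiteness gives $0\le\langle E_{-\beta}v_{\nu},E_{-\beta}v_{\nu}\rangle=-\langle v_{\nu},E_{\beta}E_{-\beta}v_{\nu}\rangle=-(\nu,\beta)\langle v_{\nu},v_{\nu}\rangle$, so $(\nu,\beta)\le 0$. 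Applying the same two facts to $\mu$ (which is itself unitarizable) and using that $\mu\in\Dis$ is regular and satisfies Harish-Chandra's condition, $\mu$ is $\Delta_{c}^{+}$-dominant \emph{regular} and $(\mu+\rho_{\bar 0},\beta)<0$, hence $(\mu,\beta)<0$ strictly, for all $\beta\in\Delta_{n}^{+}$.

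The decisive structural input is the shape of $\rho_{\bar 1}$ for the non-standard positive odd system. A direct count gives $\rho_{\bar 1}=\tfrac{n}{2}\bigl(\sum_{i\le p}\epsilon_{i}-\sum_{j>p}\epsilon_{j}\bigr)+\tfrac{q-p}{2}\sum_{s}\delta_{s}$, so $(\rho_{\bar 1},\beta)=n$ for every $\beta\in\Delta_{n}^{+}$, whereas $(\rho_{\bar 0},\epsilon_{i}-\epsilon_{j})=j-i\le m-1$. Since $m\le n$, the Weyl vector $\rho=\rho_{\bar 0}-\rho_{\bar 1}$ is strictly anti-dominant on $\Delta_{n}^{+}$ — this is precisely where the $A(m\vert n)$-linkage (using $\rho$) behaves differently from the even linkage (using $\rho_{\bar 0}$), and it is what makes the lemma true. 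Writing $m_{k}:=(\mu+\rho,\epsilon_{k})$, I would combine dominance-regularity of $\mu$ inside the two compact $\epsilon$-blocks with the strict inequality $(\mu+\rho_{\bar 0},\epsilon_{1}-\epsilon_{m})<0$ and the $\pm\tfrac{n}{2}$ shift from $\rho_{\bar 1}$ to conclude that the $m_{k}$ are pairwise distinct and arranged as $m_{p+1}>\dots>m_{m}>m_{1}>\dots>m_{p}$; in particular $m_{m}-m_{1}>n$, so the $p$ smallest among the $m_{k}$ are exactly $m_{1},\dots,m_{p}$.

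I would then enumerate the $\Delta_{c}^{+}$-dominant members of the $W$-linkage class of $\mu$. Because $W=S_{m}\times S_{n}$ acts on the $\epsilon$- and $\delta$-coordinates separately and $(\mu+\rho)$ is regular in the $\delta$-directions, $\Delta_{c}^{+}$-dominance of $\nu$ forces the $S_{n}$-part of $w$ to be trivial, so $\nu$ and $\mu$ agree on the $\delta$- and $\mathfrak{u}(1)^{\CC}$-parts; the $S_{m}$-part must arrange $(m_{1},\dots,m_{m})$ into two decreasing blocks, which amounts to choosing the $p$-subset $S'\subseteq\{m_{1},\dots,m_{m}\}$ forming the first block. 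Thus such $\nu$ are indexed by $p$-subsets, with $\mu$ corresponding to $S=\{m_{1},\dots,m_{p}\}$. If $S'\ne S$, then $S'$ contains some $m_{a}$ with $a>p$ and its complement contains some $m_{b}$ with $b\le p$, so the first $\epsilon$-coordinate of $\nu+\rho$ is $\ge m_{a}\ge m_{m}$ and the last is $\le m_{b}\le m_{1}$; subtracting $(\rho,\epsilon_{1}-\epsilon_{m})=(m-1)-n$ yields $(\nu,\epsilon_{1}-\epsilon_{m})\ge(m_{m}-m_{1})-(m-1-n)>0$, contradicting the necessary condition $(\nu,\beta)\le 0$ at $\beta=\epsilon_{1}-\epsilon_{m}\in\Delta_{n}^{+}$. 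Hence $S'=S$ and $\nu=\mu$.

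I expect the coordinate bookkeeping in the middle two paragraphs — in particular verifying that $\rho_{\bar 1}$ pushes the two compact $\epsilon$-blocks of $\mu+\rho$ apart by more than $n$ — to be the only real work; it relies on nothing beyond the standing hypothesis $m\le n$ and the explicit non-standard choice of $\Delta_{\bar 1}^{+}$. Finally, the degenerate cases $p=0$ or $q=0$ are immediate, since then $\even=\kk^{\CC}$ and $\Delta_{n}^{+}=\emptyset$, so any element of $\Dis$ is a regular dominant weight and is automatically the unique dominant weight in its $W$-linkage class.
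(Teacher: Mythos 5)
The paper states Lemma \ref{lemm::D_Weyl_Orbit} without proof, so there is no argument of the author's to compare against; on its own terms your proof is correct and complete. The two necessary conditions you isolate --- $\Delta_{c}^{+}$-dominance of $\nu$ via finite-dimensionality of $\UE(\kk^{\CC})v_{\nu}$, and $(\nu,\beta)\le 0$ for $\beta\in\Delta_{n}^{+}$ via $\omega_{(-,+)}(E_{-\beta})=-E_{\beta}$ --- are the right ones, your formula for $\rho_{\bar{1}}$ in the non-standard odd positive system checks out, and the resulting separation $m_{m}-m_{1}=(\mu+\rho_{\bar{0}},\epsilon_{m}-\epsilon_{1})+n>n$ between the two compact $\epsilon$-blocks of $\mu+\rho$, measured against $(\rho,\epsilon_{1}-\epsilon_{m})=(m-1)-n$, does rule out every rearrangement other than the identity; the reduction of the enumeration to $p$-subsets, using regularity of $\mu+\rho$ in the $\delta$-directions to freeze the $S_{n}$-part of $w$, is also sound. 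Two caveats. First, since $(\delta_{i},\delta_{j})=-\delta_{ij}$, dominance in the $\delta$-block reads $(\nu,\alpha)\le 0$ rather than $\ge 0$ for compact $\alpha$ in the $\delta$-directions; this is harmless, as all you use is that regularity forces a unique admissible ordering of the $\delta$-coordinates. Second, you read ``$W$-linkage'' via the paper's official dot action for $\rho=\rho_{\bar{0}}-\rho_{\bar{1}}$, and your block-separation estimate is genuinely tied to that choice: for the even dot action $w(\lambda+\rho_{\bar{0}})-\rho_{\bar{0}}$ the gap between the two blocks of $\mu+\rho_{\bar{0}}$ is only $(\mu+\rho_{\bar{0}},\epsilon_{m}-\epsilon_{1})>0$, and the same comparison with $(\rho_{\bar{0}},\epsilon_{1}-\epsilon_{m})=m-1$ does not close. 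Since the lemma is later invoked (in the proof of Theorem \ref{thm::Dirac_cohomology_simple_supermodules}) for the orbit $\{w(\Lambda+\rho)-\rho_{\bar{0}}\}$, i.e.\ the even-dot orbit of $\Lambda-\rho_{\bar{1}}$, the set of weights compared there is not literally the orbit you analyse; that mismatch is an ambiguity in the paper's statement and its use, not a gap in your argument.
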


Other consequences of the definition of $\Dis$ are collected in the following lemma.

\begin{lemma}\label{lemm::direct_consequences_Dis} Assume $\Lambda\in\Dis$. Then:
\begin{enumerate}
\item[a)] Every highest weight $\even$-module of highest weight $\Lambda$ is simple.
\item[b)] $L_{0}(\Lambda)$ is a Harish-Chandra module.
\item[c)] In the category of Harish-Chandra modules, one has
\[
\Ext^{1}(L_{0}(\Lambda),L_{0}(\Lambda'))=0
\qquad \forall\,\Lambda,\Lambda'\in\Dis.
\]
\end{enumerate}
\end{lemma}

\section{Dirac operators and Dirac cohomology: Revisited} \label{ch::Dirac_operator_Dirac_cohomology}

We introduce the \emph{Dirac operator} and \emph{Dirac cohomology} for simple Lie superalgebras $\gg\coloneqq A(m-1\vert n-1)$, as first described in \cite{huang2005dirac}. We also generalize these results by studying functorial properties of Dirac cohomology.

Recall the definition of the supertrace form $(\cdot, \cdot)$ on $\gg = \even \oplus \odd$ (see \eqref{eq::supertrace_form}). We modify it for later convenience by $B(\cdot, \cdot)\coloneqq \frac{1}{2}(\cdot, \cdot)$. Recall that $B(\cdot,\cdot)$ is non-degenerate, supersymmetric, and invariant.

\subsection{Dirac operator} \label{subsec::Dirac_operator} Quadratic Dirac operators and their relative counterparts are defined here in the setting of the supersymmetric analogue of the quantum Weil algebra introduced in \cite{Meinrenken}. Within this framework, the Dirac operator of \cite{huang2005dirac} appears as the relative Dirac operator associated with the pair $(\gg,\gg_{\bar{0}})$.

\subsubsection{Weil algebra, quantum Weil algebra and Weyl algebra} We begin by describing the quantum Weil algebra for a general quadratic Lie superalgebra $\ll$ and for embeddings of quadratic Lie subsuperalgebras. We then make these constructions explicit for the quadratic pair $(\gg,\even)$.

Let $V$ be a super vector space with a supersymmetric and non-degenerate bilinear form $\langle\cdot,\cdot\rangle$, and let $T(V)$ denote the tensor superalgebra over $V$, equipped with the $\ZZ_{2}$-grading induced by $V$. 
We endow $T(V)$ with its natural $\ZZ$-grading, and write $T(V)=\bigoplus_{n\geq 0}T^{n}(V)$ with $T^{0}(V)=\CC$. 
The following associative unital superalgebras arise naturally as quotients $T(V)/I$, where $I$ is a two-sided homogeneous ideal of $T(V)$. The $\ZZ_{2}$-grading is the one induced by the grading of $V$.

\begin{enumerate}
 \item \emph{Exterior superalgebra} $\bigwedge(V)$ with $I_{\wedge}$ generated by the elements
\begin{equation*}
 v \otimes w + (-1)^{p(v)p(w)}\, w \otimes v, \qquad v,w \in V.
\end{equation*}
Its multiplication will be written $\wedge$ and called \emph{exterior multiplication}. Moreover, as the ideal $I_{\wedge}$ is homogeneous, we have a $\ZZ$-grading 
\begin{equation*}
 \bigwedge(V)=\bigoplus_{n=0}^{\infty}\bigwedge\nolimits ^{n}(V), \qquad \bigwedge\nolimits ^{n}(V)\coloneqq T^{n}(V)/(I_{\wedge}\cap T^{n}(V)).
\end{equation*}

\item \emph{Symmetric superalgebra} $S(V)$ with $I_{S}$ generated by the elements
\[
v \otimes w - (-1)^{p(v)p(w)}w \otimes v, \qquad v,w \in V.
\]
Similarly to the exterior superalgebra, $S(V)$ has a $\ZZ$-grading $S(V) = \bigoplus_{n=0}^{\infty}S^{n}(V)$ induced by the $\ZZ$-grading of $T(V)$.
\item \emph{Clifford superalgebra} $\Cl(V)$ with $I_{\Cl}$ generated by the elements
\begin{equation*}
v \otimes w + (-1)^{p(v)p(w)} w \otimes v - 2\langle v,w \rangle 1_{T(V)}, \qquad v,w \in V.
\end{equation*}
\end{enumerate}

If $(\ll=V,B)$ is a quadratic Lie superalgebra, we define the \emph{Weil algebra} and \emph{quantum Weil algebra} generalizing the notion in \cite{Meinrenken} to the super setting.

\begin{definition}
 Let $\ll$ be a quadratic Lie superalgebra. We call $W(\ll)\coloneqq S(\ll) \otimes \bigwedge(\ll)$ the \emph{Weil algebra} and $\mathcal{W}(\ll)\coloneqq \UE(\ll) \otimes \Cl(\ll)$ the \emph{quantum Weil algebra}. $W(\ll)$ and $\mathcal{W}(\ll)$ are defined with respect to the $\ZZ_{2}$-graded tensor product over $\CC$.
\end{definition}

The quantum Weil algebra $\WW(\ll)$ is generated by the elements $x\otimes 1$ and $1\otimes x$, where $x\in\ll$. It carries a $\ZZ_2\times\ZZ_2$-grading given by
\begin{equation}\label{def::bidegree}
\deg(1\otimes x)=(\bar1,p(x)),\qquad \deg(x\otimes 1)=(\bar0,p(x)).
\end{equation}
For $\alpha=(\bar a,\bar b)$ and $\beta=(\bar c,\bar d)$ in $\ZZ_2\times\ZZ_2$, set
\begin{equation}
\chi(\alpha,\beta)\coloneqq (-1)^{\bar a\bar c+\bar b\bar d}.
\end{equation}
If $A,B\in\WW(\ll)$ are homogeneous of bidegrees $\alpha$ and $\beta$, define
\begin{equation}\label{eq::colour_commutator}
[A,B]_{\WW}\coloneqq AB-\chi(\alpha,\beta)BA,
\end{equation}
and extend $\CC$-bilinearly. The total degree is the sum of the two components of the bidegree in $\ZZ_2$. In particular, if $A\in\WW(\ll)$ has total degree $\bar1$, then $A^2=\frac12[A,A]_{\WW}$. Thus $\WW(\ll)$ becomes a colour Lie algebra (\emph{cf.}~\cite{Ree}), called the \emph{colour quantum Weil algebra}. Its relation to the Weil algebra is given by quantization and PBW supersymmetrization.

\begin{theorem} \label{thm::quantization}
 Let $\ll$ be a quadratic Lie superalgebra. Then there exists an isomorphism of $\ZZ$-graded super vector spaces $\mathcal{Q} : W(\ll) \to \mathcal{W}(\ll)$ given by $\mathcal{Q} = q \otimes \operatorname{sym}$, where 
 \begin{enumerate}
 \item[(i)] $q : \bigwedge(\ll) \to \Cl(\ll)$ is the quantization map: 
\[
q(x_{1} \wedge \ldots \wedge x_{k})\coloneqq \frac{1}{k!}\sum_{\sigma \in S_{k}} p(\sigma;x_{1}, \ldots, x_{k}) x_{\sigma(1)}\ldots x_{\sigma(k)},
\]
with
\[
p(\sigma; x_{1},\ldots, x_{k}) = \sgn(\sigma)\prod_{1 \leq i < j \leq k, \ \sigma^{-1}(i) > \sigma^{-1}(j)} (-1)^{p(x_{i})p(x_{j})}.
\]
\item[(ii)] $\operatorname{sym}$ is the PBW supersymmetrization 
\begin{equation*}
 \operatorname{sym}(x_{1}\ldots x_{k})\coloneqq \frac{1}{k!}\sum_{\sigma \in S_{k}} p'(\sigma;x_{1}, \ldots, x_{k}) x_{\sigma(1)}\ldots x_{\sigma(k)},
\end{equation*}
where $p'(\sigma; x_{1},\ldots, x_{k})
$ is given by
\begin{equation*}
p'(\sigma; x_{1},\ldots, x_{k}) = \prod_{1 \leq i < j \leq k, \ \sigma^{-1}(i) > \sigma^{-1}(j)} (-1)^{p(x_{i})p(x_{j})}.
\end{equation*}
 \end{enumerate}
\end{theorem}

\begin{proof}
 The quantization map $q : \bigwedge(\ll) \to \Cl(\ll)$ is an isomorphism of $\ZZ$-graded super vector spaces \cite{generalized_Clifford}, and the PBW supersymmetrization $\operatorname{sym} : S(\ll) \to \UE(\ll)$ is an isomorphism of $\ll$-supermodules by \cite[Theorem 6.4.4]{musson2012lie}. By definition of $W(\ll)$ and $\mathcal{W}(\ll)$, the claim follows. 
\end{proof}

We realize $\ll$ inside $\WW(\ll)$. For every $x\in\ll$, one has $\ad_x\in\osp(\ll)$ by the invariance and supersymmetry of $B$, where
\begin{equation}
\osp(\ll)\coloneqq \left\{T\in\End_{\CC}(\gg)\,:\, B(Tx,y)+(-1)^{p(T)p(x)}B(x,Ty)=0 \quad \forall\,x,y\in\ll\right\}
\end{equation}
is the orthosymplectic Lie superalgebra over $\ll$ equipped with canonical supercommutator. $\osp(\ll)$ identifies with $\bigwedge^2(\ll)$ via (\emph{cf.}~\cite[Prop.~2.13]{Meyer})
\begin{equation}
\lambda(T)=-\frac12\sum_a T(e^a)\wedge e_a,
\end{equation}
where $\{e_a\}$ is a basis of $\ll$ and $\{e^a\}$ its $B$-dual basis.
Its quantization
$
\gamma'(x)\coloneqq q\bigl(\lambda(\ad_{x})\bigr)
$
is naturally an element of $\Cl(\ll)$, and we obtain a map
\begin{equation} \label{eq::general_embedding}
\gamma^{\WW} : \ll\to \WW(\ll),
\qquad
\gamma^{\WW}(x)\coloneqq x\otimes 1-\tfrac{1}{2}(1\otimes \gamma'(x))
\end{equation}
so that $\gamma^{\WW}([x,y]_{\ll})=[\gamma^{\WW}(x),\gamma^{\WW}(y)]_{\WW}$ for all $x,y \in \ll$. We shall make this construction explicit below in a certain example.

Next, we incorporate subsuperalgebras into the construction. Let $\uu\subseteq\ll$ be a quadratic Lie subsuperalgebra, with $B_{\uu}\coloneqq B|_{\uu}$. Then $(\ll,\uu)$ is called a \emph{quadratic pair}. For a quadratic pair $(\ll,\uu)$, one has the orthogonal decomposition
\begin{equation}
\ll=\uu\oplus\pp,\qquad \pp\coloneqq \uu^{\perp},
\end{equation}
with respect to $B$. The restriction $B_{\pp}\coloneqq B|_{\pp}$ is non-degenerate. Hence one may form the Clifford superalgebra $\Cl(\pp)\coloneqq \Cl(\pp;B_{\pp})$. For example, one may take $\uu=\ll_{\bar 0}$ and $\pp=\ll_{\bar 1}$.

We define a map $j : \mathcal{W}(\uu) \to \mathcal{W}(\ll)$ by sending the generators $x \otimes 1$ and $1 \otimes x$ for $x \in \uu$ to the corresponding generators in $\mathcal{W}(\ll)$, that is, 
\begin{equation} \label{eq::definition_j}
 j(x\otimes 1)\coloneqq x \otimes 1, \qquad j(1\otimes x)\coloneqq 1 \otimes x, \qquad x \in \uu.
\end{equation}
This evidently defines an injective homomorphism preserving the colour commutator \eqref{eq::colour_commutator}. By construction of the map $j$, the following lemma is immediate.

\begin{lemma}
 The commutant of $j(\mathcal{W}(\uu))$ in $\mathcal{W}(\ll)$ is the space of $\uu$-invariants of $\mathcal{W}(\ll)$.
\end{lemma}

In what follows, we make these constructions explicit for $\gg=\ll$, $\even=\uu$, and $\odd=\pp$. In particular, we describe an explicit embedding of $\even$ into $\WW(\gg)$, which will play a central role in the rest of this article. We start by constructing an embedding $\mathcal{W}(\gg_{\bar{0}})\hookrightarrow \mathcal{W}(\gg)$. By the universal property of $\UE(\gg)$, there is a canonical embedding of $\UE(\even)$ into $\UE(\gg)$. Next, we embed $\Cl(\even)$ into $\Cl(\odd)$.

 The form $B$ restricts to a symplectic form on $\odd$ which allows us to identify $\Cl(\odd)$ with the \emph{Weyl algebra} over $\odd$. We fix two Lagrangian subspaces of $(\odd, B\vert_{\odd}(\cdot,\cdot))$, spanned by $\{x_{1}, \ldots, x_{mn}\}$ and $\{\partial_{1}, \ldots, \partial_{mn}\}$, chosen compatibly with the real form $\mathfrak{(p)su}(p,q \vert 0,n)$, that is, $B\vert_{\odd}(x_{i}, x_{j}) = B\vert_{\odd}(\partial_{i}, \partial_{j}) =0$, $B\vert_{\odd}(\partial_{i}, x_{j}) = \frac{1}{2}\delta_{ij}$ and $\omega(x_{i}) = - \partial_{i}$ for all $1 \leq i,j \leq mn$. Precisely, in accordance with the fixed non-standard positive system $\Delta^{+}$, we have:
\begin{equation}
\begin{split}
 \partial_{(i-1)n + (j-m)} &= 
 \begin{cases}
 E_{ij} \ \ \qquad &\text{for} \ 1 \leq i \leq p, \ m+1 \leq j \leq m+n, \\
 E_{ji} \qquad &\text{for} \ p+1 \leq i \leq m, \ m+1 \leq j \leq m+n,
 \end{cases} \\
 x_{(i-1)n + (j-m)} &= 
 \begin{cases}
 E_{ji} \qquad &\text{for} \ 1 \leq i \leq p, \ m+1 \leq j \leq m+n, \\
 -E_{ij} \qquad &\text{for} \ p+1 \leq i \leq m, \ m+1 \leq j \leq m+n.
 \end{cases}
\end{split}
\end{equation}
Here, the $x_{k}$'s span the space of negative odd roots $\nn_{\bar{1}}^{-} = \pp_{2} \oplus \qq_{1}$ and the $\partial_{k}$'s span the space of positive odd roots $\nn_{\bar{1}}^{+} = \pp_{1} \oplus \qq_{2}$ (see Section~\ref{subsubsec::Real_form}).

\begin{remark} \label{rmk::Dirac_standard}
It is important to note that, for the Lagrangian subspaces $\ll^{+}\coloneqq \pp_{1} \oplus \pp_{2}$ and $\ll^{-}\coloneqq \qq_{1} \oplus \qq_{2}$ associated with the standard positive system, there does not exist a basis $x_{1}, \ldots, x_{mn}$ of $\ll^{-}$ and $\partial_{1}, \ldots, \partial_{mn}$ of $\ll^{+}$ that simultaneously satisfies $B(\partial_{i}, x_{j}) = \frac{1}{2} \delta_{ij}$ and $\omega(x_{i}) = -\partial_{i}$ for all $1 \leq i,j \leq mn$.
\end{remark}

The \emph{Weyl algebra} $\Weyl$ is defined as the quotient $T(\odd)/I_{\mathcal{W}}$, where $I_{\mathcal{W}}$ is the two-sided ideal generated by all elements of the form $v \otimes w - w \otimes v - 2\bil(v, w)$ for $v, w \in \odd$. As $\odd$ is odd, we have by definition $\Cl(\odd) = \Weyl$. With this notation, the Weyl algebra $\Weyl$ over $\odd$ is generated by $\partial_{i}$ and $x_{j}$, \emph{i.e.}, it can be identified with the algebra of differential operators with polynomial coefficients in the variables $x_{1}, \dotsc, x_{mn}$, by identifying $\partial_{i}$ with the partial derivative $\partial/\partial x_{i}$ for all $i = 1, \dotsc, mn$. In particular, with the canonical Lie bracket $[\cdot, \cdot]_{\WW}$, the Weyl algebra $\Weyl$ is a Lie algebra with the following commutation relations:
\begin{equation}
 [x_{i}, x_{j}]_{\mathscr{W}} = 0, \qquad
 [\partial_{i}, \partial_{j}]_{\mathscr{W}} = 0, \qquad
 [\partial_{i}, x_{j}]_{\mathscr{W}} = \delta_{ij},
\end{equation}
for all $1 \leq i,j \leq mn$.

Since $\Weyl = \Cl(\odd)$, the Weyl algebra has a natural $\ZZ$-grading induced by the grading of $T(\odd)$. Similarly, the symmetric superalgebra $\operatorname{S}(\odd)$ has a natural $\ZZ$-grading $\operatorname{S}(\odd) = \bigoplus_{n\geq 0}S^{n}(\odd)$. As $\ZZ$-graded vector spaces, they are isomorphic, with isomorphism given by the symmetrization map
\begin{equation}
 \text{sym} : \operatorname{S}(\odd) \to \Weyl, \qquad \text{sym}(v_{1}\ldots v_{k})\coloneqq \tfrac{1}{k!}\sum_{\sigma \in S_{k}}v_{\sigma(1)}\ldots v_{\sigma(k)}.
\end{equation}

In $\operatorname{S}(\odd)$, the \emph{symmetric square} $S^{2}(\odd)$ is a Lie algebra spanned by $\{x_{i}x_{j}, \partial_{i}\partial_{j} : i \leq j\} \cup \{\partial_{i}x_{j}\}$. Under the isomorphism above, the image of $S^{2}(\odd)$ is the Lie algebra $\text{sym}(S^{2}(\odd))$ spanned by
 \begin{equation}
 \text{sym}(x_{i}x_{j})=x_{i}x_{j}, \qquad \text{sym}(\partial_{i}\partial_{j})=\partial_{i}\partial_{j}, \qquad \text{sym}(\partial_{i}x_{j})=\partial_{i}x_{j}-\frac{1}{2}\delta_{ij}
 \end{equation}
 in the basis $\{x_{i},\partial_{j}\}$. Now, if $\mathfrak{sp}(\odd)$ denotes the complex symplectic algebra over the vector space $\odd$ defined with respect to the symplectic form $B\vert_{\odd}(\cdot, \cdot)$, a direct calculation yields an isomorphism of Lie algebras 
 \begin{equation} \label{eq::isomorphism_of_Lie_algebras}
 (\mathfrak{sp}(\odd),[\cdot,\cdot]) \cong (\operatorname{sym}(S^{2}(\odd)),[\cdot,\cdot]_{W}).
 \end{equation}

 This identification yields a natural embedding of $\even$ into $(\Weyl,[\cdot,\cdot]_{W})$. 
Indeed, the adjoint action of $\even$ on $\odd$, induced by the super Lie bracket $[\cdot,\cdot]$ of $\gg$, defines a Lie algebra homomorphism
\begin{equation}
\nu:\even \longrightarrow \mathfrak{sp}(\odd),
\end{equation}
since $B$ is invariant. 
For $m\neq n$ the kernel of $\nu$ is trivial, whereas for $m=n$ it consists precisely of the scalar multiples of the identity matrix. This proves the following lemma.

\begin{lemma} \label{lemm::alpha_embedding}
There exists a Lie algebra homomorphism $\alpha : \even \to \Weyl$ which is injective for $m\neq n$, and whose kernel is $\CC E_{m\vert n}$ for $m=n$.
\end{lemma}

For later use, this Lie algebra homomorphism is given explicitly by (\emph{cf.}~\cite[Equation 10]{huang2005dirac}):
\begin{multline} \label{eq::alpha_explicit}
\alpha(X)= \sum_{i,j=1}^{mn}(B(X,[\partial_{i},\partial_{j}])x_{i}x_{j}+B(X,[x_{i},x_{j}])\partial_{i}\partial_{j}) 
\\ -\sum_{i,j=1}^{mn}2B(X,[x_{i},\partial_{j}])x_{j}\partial_{i}-\sum_{l=1}^{mn}B(X,[\partial_{l},x_{l}])
\end{multline}
for any $X\in \even$.

We define a diagonal embedding with respect to $\alpha$ (\emph{cf.}~\eqref{eq::general_embedding}):
\begin{equation}
\even \to \UE(\gg)\otimes \Weyl, \qquad X\mapsto X\otimes 1 + 1\otimes \alpha(X).
\end{equation}
We denote the images of $\even$, $\UE(\even)$, and $\mathfrak Z(\even)$ by $\gg_{\bar{0},\Updelta}, \UE(\gg_{\bar{0},\Updelta})$, and $\mathfrak{Z}(\gg_{\bar{0},\Updelta})$, respectively. Furthermore, the image of the quadratic Casimir $\Omega_{\even}$ of $\even$ is denoted by $\Omega_{\gg_{\bar{0}},\Updelta}$. The quadratic Casimir of $\gg$ is denoted by $\Omega_{\gg}$. The following lemma follows from a direct calculation.

\begin{lemma}[{\cite{huang2002dirac, kostant2001weyl}}] \label{lemm::constant_C}
Let $\{W_{k}\}$ be an orthonormal basis for $\even$ with respect to $\bil(\cdot,\cdot)$. Then the following assertions hold: 
\begin{enumerate}
\item[a)] $\Omega_{\even}=\sum_{k}W_{k}^{2}$.
 \item[b)] $\Omega_{\gg}=\Omega_{\even}+2\sum_{i}(x_{i}\partial_{i}-\partial_{i}x_{i})$.
 \item[c)] $\Omega_{\even,\Updelta}=\sum_{k}\left(W_{k}^{2}\otimes 1+2W_{k}\otimes \alpha(W_{k})+1\otimes \alpha(W_{k})^{2}\right)$.
 \item[d)] $C\coloneqq \sum_{k}\alpha(W_{k})^{2}$ is a constant. 
\end{enumerate}
\end{lemma}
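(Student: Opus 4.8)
The statement bundles four facts of rather different character, and I would prove them in the order (a), (b), (c), (d), with only (d) requiring genuine work. Part (a) is merely the definition of the quadratic Casimir: for any basis $\{W_{k}\}$ of $\even$ with $B$-dual basis $\{W^{k}\}$ one has $\Omega_{\even}=\sum_{k}W_{k}W^{k}$, and for an orthonormal basis $W^{k}=W_{k}$ (non-degeneracy of $B|_{\even}$ follows from that of $B|_{\hh}$ together with invariance of $B$). For (b) I would extend $\{W_{k}\}$ to a homogeneous basis of $\gg$ by adjoining the basis $\{x_{i}\}$ of $\nn_{\bar{1}}^{-}$ and $\{\partial_{i}\}$ of $\nn_{\bar{1}}^{+}$: since $B$ is skew on $\odd$ and $B(\partial_{i},x_{j})=\frac{1}{2}\delta_{ij}$, the $B$-dual of $x_{i}$ is $-2\partial_{i}$ and the $B$-dual of $\partial_{i}$ is $2x_{i}$, and feeding the pairs $(W_{k},W_{k})$, $(x_{i},-2\partial_{i})$, $(\partial_{i},2x_{i})$ into the expression of $\Omega_{\gg}$ as a sum over a basis and its dual basis (with the sign governed by the parity of the basis vector) makes the even part collapse to $\sum_{k}W_{k}^{2}=\Omega_{\even}$ by (a) and the odd part to $2\sum_{i}(x_{i}\partial_{i}-\partial_{i}x_{i})$, the commutator and the factor $2$ recording the odd-parity sign and the normalization $B(\partial_{i},x_{j})=\frac{1}{2}\delta_{ij}$. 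For (c) I would apply the algebra homomorphism $\UE(\even)\to\UE(\gg)\otimes\Weyl$ extending the diagonal embedding $X\mapsto X\otimes 1+1\otimes\alpha(X)$ to the identity $\Omega_{\even}=\sum_{k}W_{k}^{2}$: as each $W_{k}$ is even there are no signs, $(W_{k}\otimes 1+1\otimes\alpha(W_{k}))^{2}=W_{k}^{2}\otimes 1+2\,W_{k}\otimes\alpha(W_{k})+1\otimes\alpha(W_{k})^{2}$, and summing over $k$ gives the claimed formula for $\Omega_{\even,\Delta}$. All three of these are formal and I would not spell them out beyond this.

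The content is in (d). The element $C=\sum_{k}\alpha(W_{k})^{2}$ is the image of the Casimir $\Omega_{\even}$ under the algebra homomorphism $\UE(\even)\to\Weyl$ induced by $\alpha$; since $\alpha$ factors through $\nu:\even\to\mathfrak{sp}(\odd)$ composed with $\mathfrak{sp}(\odd)\cong\sigma(S^{2}(\odd))\subset\Weyl$, the element $C$ lies in the subalgebra of $\Weyl$ generated by $\sigma(S^{2}(\odd))$ and, being the image of a central element, commutes with $\alpha(\even)$. To prove that $C$ is in fact a scalar I would compute it directly from the explicit formula for $\alpha$ displayed above, using $B(\partial_{i},x_{j})=\frac{1}{2}\delta_{ij}$, orthonormality of $\{W_{k}\}$, and the commutation relations of Lemma \ref{lemm::commutation_relations} (notably $[\nn_{\bar{1}}^{+},\nn_{\bar{1}}^{+}]\subset\nn_{\bar{0},n}^{+}$ and $[\kk,\pp_{k}]\subset\pp_{k}$, $[\kk,\qq_{l}]\subset\qq_{l}$), and then check that, with respect to the standard degree filtration of $\Weyl$ (in which each $x_{i}$ and $\partial_{i}$ has degree one), the homogeneous components of $C$ of degrees $4$ and $2$ both vanish, so that $C\in\CC\cdot 1\subset\Weyl$. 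This is exactly the computation carried out by Kostant and by Huang--Pand\v{z}i\'{c} \cite{huang2002dirac,kostant2001weyl}, which I would reproduce or invoke; equivalently, (d) is the assertion that the oscillator (metaplectic) representation of $\mathfrak{sp}(\odd)$ on $\CC[x_{1},\dots,x_{mn}]$, restricted to $\nu(\even)$, sends $\Omega_{\even}$ to a scalar.

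The main obstacle is therefore (d), and within it the cancellation of the non-scalar (degree $4$ and degree $2$) parts of $\sum_{k}\alpha(W_{k})^{2}$. This cancellation is a genuine identity rather than a formality: with $\even\cong\mathfrak{sl}(m)\oplus\mathfrak{sl}(n)\oplus\CC E_{m,n}$, the partial sums over the simple ideals and over $\CC E_{m,n}$ are individually non-scalar, and the miracle that the total $\bar\alpha(\Omega_{\even})$ is constant reflects the precise way $\nu$ embeds $\even$ into $\mathfrak{sp}(\odd)$, including the signs coming from the supertrace form $B$ (which restricts to opposite-signed invariant forms on the $\mathfrak{sl}(m)$- and $\mathfrak{sl}(n)$-summands, since $\str=\tr-\tr$). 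The additional feature, absent from the classical Clifford-algebra setting, is that $\nn_{\bar{1}}^{+}$ is not abelian --- Lemma \ref{lemm::commutation_relations} only gives $[\nn_{\bar{1}}^{+},\nn_{\bar{1}}^{+}]\subset\nn_{\bar{0},n}^{+}$ --- so extra cross terms appear and one must verify that the leading contributions still cancel. Everything else, namely (a), (b), (c) and the reduction of (d) to this cancellation, is bookkeeping with the bilinear form $B$, the PBW theorem, and the formula for $\alpha$.
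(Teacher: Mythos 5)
Your proposal is correct and follows essentially the same route as the paper, which offers no proof of its own beyond the citation to Kostant and Huang--Pand\v{z}i\'{c} and the remark that the lemma ``follows by a direct calculation'': parts a)--c) are the same bookkeeping with dual bases and the diagonal embedding, and for the substantive part d) you, like the paper, reduce to (and defer to the cited references for) the computation that the degree-$4$ and degree-$2$ components of $\sum_{k}\alpha(W_{k})^{2}$ cancel. Your reformulation of d) via faithfulness of the oscillator module is a correct and slightly more conceptual gloss, but does not change the substance.
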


\subsubsection{Relative quadratic Dirac operators}
Our basic example is the quadratic pair $(\gg,\even)$ with $\gg=\even\oplus\odd$ and $[\odd,\odd]\subseteq\even$. We therefore begin with a general quadratic pair $(\ll,\uu)$ such that
\begin{equation}
\ll=\uu\oplus\pp,\qquad [\pp,\pp]\subseteq\uu.
\end{equation}
The condition $[\pp,\pp]\subseteq\uu$ is the algebraic input needed for the relative quadratic Dirac operator. To encode the $\uu$-structure inside the Weil algebra, we write $W(\ll,\uu)$ and $\WW(\ll,\uu)$ for the subalgebras of $\uu$-invariants in $W(\ll)$ and $\WW(\ll)$, respectively, where the $\uu$-action on $\WW(\ll)$ is induced by \eqref{eq::general_embedding}. Concretely, the $\uu$-action is induced by the adjoint action on both factors, which is for $\WW(\ll)$ precisely $[\gamma^{\WW}(x),\cdot]_{\WW}$ for any $x \in \uu$.

Choose a homogeneous basis $\{e_a\}$ of $\ll$ with $B$-dual basis $\{e^a\}$. Then
\begin{equation}
\mathcal D_{\ll}\coloneqq \sum_a e^a\otimes e_a\in W(\ll)
\end{equation}
is the canonical quadratic Dirac element attached to $\ll$. Likewise, for $\uu$ one has $\mathcal D_{\uu} \in W(\uu)$. Passing to quantizations using $\mathcal{Q}$ (see Theorem~\ref{thm::quantization}) gives the operators $\Dirac_{\ll}$ and $\Dirac_{\uu}$. Their difference defines the $\uu$\emph{-relative quadratic Dirac operator}:
\begin{equation}
\Dirac_{\ll,\uu}\coloneqq j(\Dirac_{\uu})-\Dirac_{\ll}.
\end{equation}

\begin{lemma}[\cite{Dirac_quadratic}]\label{lemm::properties_relative_Dirac}
The operator $\Dirac_{\ll,\uu}$ satisfies the following properties:
\begin{enumerate}
\item[a)] It is independent of the choice of basis.
\item[b)] One has $\Dirac_{\ll,\uu}\in \UE(\ll)\otimes\Cl(\pp)$.
\item[c)] It is $\uu$-invariant for the induced $\uu$-action on $\UE(\ll)\otimes\Cl(\pp)$, that is,
\[
[j(\gamma^{\WW}(x)),\Dirac_{\ll,\uu}]_{\WW}=0
\qquad \forall\,x\in\uu.
\]
\item[d)] $\Dirac_{\ll,\uu}$ defines a differential on $\WW(\ll,\uu)$ by
\[
d^{\WW}\coloneqq [\Dirac_{\ll,\uu},\cdot]_{\WW}.
\]
\end{enumerate}
\end{lemma}

The fact that $d^{\mathcal{W}}$
squares to zero means that $\Dirac_{\ll,\uu}$ squares to a central element.

\subsubsection{The Dirac operator \texorpdfstring{$\Dirac$}{}}
In this article, we fix a Lie superalgebra $\gg$ of type $A$. Since $\gg$ is quadratic and $B|_{\even}$ is symmetric, non-degenerate, and invariant, the pair $(\gg,\even)$ is quadratic. Thus we may define the $\even$-relative quadratic Dirac operator
\begin{equation}
\Dirac\coloneqq j(\Dirac_{\even})-\Dirac_{\gg},
\end{equation}
which we simply call the \emph{Dirac operator}. This is the operator introduced in \cite{huang2005dirac}. In what follows, we identify $\WW(\even)$ with its image in $\WW(\gg)$ under $j$ and suppress $j$ from the notation. 

Since the definition of $\Dirac$ is basis-independent, we choose the basis $\{x_i,\partial_i\}$ introduced above, whose $B$-dual basis is $\{-2\partial_i,2x_i\}$. With this choice,
\begin{equation}
\Dirac=2\sum_{i=1}^{mn}\bigl(\partial_i\otimes x_i-x_i\otimes\partial_i\bigr)\in \UE(\gg)\otimes\Weyl.
\end{equation}
By Lemma~\ref{lemm::properties_relative_Dirac}, the operator $\Dirac$ is $\even$-invariant. For completeness, we record the square of $\Dirac$ (\emph{cf.}~\cite[Proposition 2]{huang2005dirac}).

\begin{theorem} \label{thm::square_dirac}
One has
\[
\Dirac^{2}=-\Omega_{\gg}\otimes 1+\Omega_{\even,\Updelta}+C(1\otimes 1), \qquad C=-\frac{1}{24}\bigl(\str_{\gg}(\ad_{\gg}(\Omega_{\gg}))-\str_{\even}(\ad_{\even}(\Omega_{\even}))\bigr)
\]
where $C$ is the constant of Lemma~\ref{lemm::constant_C}.
\end{theorem}

\begin{remark} \label{rmk::Explicit_form_D_square}
For explicit calculations, it is particularly useful to express the square of the Dirac operator in the following alternative form:
\begin{equation*}
\Dirac^{2} = 2 \sum_{i,j = 1}^{mn} ([\partial_{i},\partial_{j}] \otimes x_{i}x_{j} + [x_{i},x_{j}] \otimes \partial_{i}\partial_{j} - 2[\partial_{i},x_{j}]\otimes x_{i}\partial_{j}) - 4 \sum_{i=1}^{mn}x_{i}\partial_{i}\otimes 1.
\end{equation*}
\end{remark}

We finally decompose $\Dirac$ in appropriate $\kk^{\CC}$-invariant pieces. To this end, recall that $x_{1}, \ldots, x_{mn}$ span $\nn_{\bar{1}}^{-} = \pp_{2} \oplus \qq_{1}$ and $\partial_{1}, \ldots, \partial_{mn}$ span $\nn_{\bar{1}}^{+} = \pp_{1} \oplus \qq_{2}$. The spaces $\pp_{1,2}$ and $\qq_{1,2}$ (see Section~\ref{subsubsec::Real_form}) are invariant under $\kk^{\CC}$, but not under $\even$. Accordingly, we decompose the Dirac operator as $\Dirac\coloneqq \Dirac^{\pp_{1}} + \Dirac^{\qq_{2}}$, where
\begin{equation} \label{eq::decomposition_Dirac}
\begin{split}
\Dp &\coloneqq 2(\ddp - \delp), \qquad
\ddp\coloneqq \sum_{i=1}^{pn} \partial_{i} \otimes x_{i}, \qquad \ \ \ \
\delp\coloneqq \sum_{i=1}^{pn} x_{i} \otimes \partial_{i}, \\
\Dq &\coloneqq 2(\ddq - \delq), \qquad
\ddq\coloneqq \sum_{i=pn+1}^{mn} \partial_{i} \otimes x_{i}, \qquad
\delq\coloneqq \sum_{i=pn+1}^{mn} x_{i} \otimes \partial_{i}.
\end{split}
\end{equation}

\begin{lemma} \label{lemm::k-invariance}
 The operators $\ddp,\ddq$ and $\delp,\delq$ are $\kk^{\CC}$-invariant, meaning $[\mathrm{d}^{\pp_{1},\qq_{2}}, \kk^{\CC}] = 0$ and $[\delta^{\pp_{1},\qq_{2}}, \kk^{\CC}] = 0$.
\end{lemma}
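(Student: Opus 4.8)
The plan is to bootstrap from the already–established invariance $[\even,\Dirac]=0$, hence $[\kk^{\CC},\Dirac]=0$ since $\kk^{\CC}\subset\even$. The idea is that $\ddp,\delp,\ddq,\delq$ are exactly the components of $\tfrac12\Dirac=\ddp-\delp+\ddq-\delq$ relative to a $\kk^{\CC}$-stable direct-sum decomposition of $\UE(\gg)\otimes\Weyl$, so their invariance will follow from that of $\Dirac$. The decomposition is built from $\odd=\pp_{1}\oplus\pp_{2}\oplus\qq_{1}\oplus\qq_{2}$, which is $\kk^{\CC}$-stable by Lemma~\ref{lemm::commutation_relations}.

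Concretely, I would proceed as follows. First, note that $\kk^{\CC}$ preserves each of $\pp_{1},\pp_{2},\qq_{1},\qq_{2}$ both inside $\gg\subset\UE(\gg)$ (adjoint action) and inside $\Weyl$ via $Y\mapsto\ad(\alpha(Y))$, using $[\alpha(Y),v]_{W}=[Y,v]$ for $v\in\odd$. Second, match the basis of Section~\ref{subsubsec::Dirac_Operator}: $\partial_{1},\dots,\partial_{pn}$ span $\pp_{1}$, $x_{1},\dots,x_{pn}$ span $\qq_{1}$, $\partial_{pn+1},\dots,\partial_{mn}$ span $\qq_{2}$, $x_{pn+1},\dots,x_{mn}$ span $\pp_{2}$; thus, as subspaces of $\UE(\gg)\otimes\Weyl$,
$$
\ddp\in\pp_{1}\otimes\qq_{1},\qquad \delp\in\qq_{1}\otimes\pp_{1},\qquad \ddq\in\qq_{2}\otimes\pp_{2},\qquad \delq\in\pp_{2}\otimes\qq_{2}.
$$
These four subspaces are stable under the diagonal $\kk^{\CC}$-action $Y\cdot(u\otimes w)=[Y,u]\otimes w+u\otimes[\alpha(Y),w]_{W}$, and since they have pairwise distinct first tensor factors drawn from the linearly independent root-space sums $\pp_{1},\pp_{2},\qq_{1},\qq_{2}$, their sum is direct. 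Hence $\tfrac12\Dirac=\ddp-\delp+\ddq-\delq$ is the unique decomposition of an element of this direct sum into its components, $\kk^{\CC}$ acts componentwise, and the invariance of $\Dirac$ forces each component to be $\kk^{\CC}$-invariant, i.e.\ $[\ddp,\kk^{\CC}]=[\delp,\kk^{\CC}]=[\ddq,\kk^{\CC}]=[\delq,\kk^{\CC}]=0$.

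I do not expect a genuine obstacle here; the only things needing care are the two verifications that the four tensor subspaces are in direct sum and that $\kk^{\CC}$ acts on $\Weyl$ through $\alpha$ as claimed. If one prefers an argument not invoking $\Dirac$ at all, the same conclusion follows by a short direct computation: for $Y\in\kk^{\CC}$ write $[Y,\partial_{k}]=\sum_{j}c_{kj}\partial_{j}$ and $[Y,x_{k}]=\sum_{j}d_{kj}x_{j}$ (legitimate by the stability above); invariance of $B$ together with $B(\partial_{k},x_{l})=\tfrac12\delta_{kl}$ gives $c_{kl}=-d_{lk}$; and expanding $[Y\otimes 1+1\otimes\alpha(Y),\ddp]=\sum_{k\le pn}\bigl([Y,\partial_{k}]\otimes x_{k}+\partial_{k}\otimes[Y,x_{k}]\bigr)$ and re-indexing shows the coefficient of $\partial_{j}\otimes x_{k}$ equals $c_{kj}+d_{jk}=0$; the cases $\delp,\ddq,\delq$ are identical.
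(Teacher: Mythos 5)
Your proposal is correct, and your primary argument takes a genuinely different route from the paper. The paper proves the lemma by the direct computation you relegate to your final sentences: for $X\in\kk^{\CC}$ it expands $[X,\partial_i]$ and $[X,x_i]$ in the standard basis using the stability of $\pp_1$ (Lemma~\ref{lemm::commutation_relations}) and the pairing $B(\partial_k,x_l)=\tfrac12\delta_{kl}$, then shows the coefficients in $[X,\ddp]$ cancel by the invariance of $B$ — exactly your $c_{kl}=-d_{lk}$ bookkeeping, and your re-indexed version of it is in fact cleaner than the paper's displayed sum. Your main argument instead bootstraps from the already-available fact $[\even,\Dirac]=0$: you correctly match $\partial_1,\dots,\partial_{pn}$ with $\pp_1$, $x_1,\dots,x_{pn}$ with $\qq_1$, $\partial_{pn+1},\dots,\partial_{mn}$ with $\qq_2$, and $x_{pn+1},\dots,x_{mn}$ with $\pp_2$, so that $\ddp,\delp,\ddq,\delq$ lie in four distinct summands $V_i\otimes V_j$ of $\odd\otimes\odd\subset\UE(\gg)\otimes\Weyl$, each $\kk^{\CC}$-stable for the diagonal action (using $[\alpha(Y),v]_W=[Y,v]$ for $v\in\odd$, which is the defining property of $\alpha$ and is also implicitly used in the paper's computation); since $\kk^{\CC}$ acts componentwise on this direct sum and annihilates $\tfrac12\Dirac$, it annihilates each component. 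This is a legitimate and somewhat more conceptual proof: it explains \emph{why} the refinement of $\Dirac$ into four pieces is $\kk^{\CC}$-equivariant (the pieces are the isotypic components of $\Dirac$ for a $\kk^{\CC}$-stable grading), at the cost of the two auxiliary verifications you flag; the paper's computation is more self-contained and does not invoke the invariance of $\Dirac$ as a black box.
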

\begin{proof}
We prove the statement only for $\ddp = \sum_{i=1}^{pn} \partial_{i} \otimes x_{i}$; the rest follows analogously.

Let $X \in \kk^{\CC}$. Since $\kk^{\CC}$ leaves $\pp_{1}$ invariant by Lemma~\ref{lemm::commutation_relations}, the commutators in the standard basis are given by
\begin{equation*}
[X, x_{i}] = -2 \sum_{j=1}^{pn} B(X, [x_{i}, \partial_{j}]) x_{j}, \qquad [X, \partial_{i}] = 2 \sum_{j=1}^{pn} B(X, [\partial_{i}, x_{j}]) \partial_{j}
\end{equation*}
for all $1\leq i \leq pn$. This leads to the following computation:
\begin{equation*}
\begin{split}
[\gamma^{\WW}(X),\ddp]_{\WW} &= \sum_{i=1}^{pn} \left([X,\partial_{i}] \otimes x_{i} + \partial_{i} \otimes [X,x_{i}]\right) \\ &= 2\sum_{i,j=1}^{pn}B(X,[\partial_{i},x_{j}])\partial_{j}\otimes x_{i}-2\sum_{i,j=1}^{pn}B(X,[x_{i},\partial_{j}])\partial_{i}\otimes x_{j}\\&=2\sum_{i,j=1}^{pn} \left(-B(X,[x_{i},\partial_{j}]) + B(X,[\partial_{j},x_{i}])\right) \partial_{i}\otimes x_{j} \\ &=0,
\end{split}
\end{equation*}
where we used in the last equality $B(X,[\partial_{i},x_{j}]) = B(X,[x_{j},\partial_{i}])$ for all $1 \leq i,j\leq pn$ since $x_{i}$ and $\partial_{j}$ are odd.
\end{proof}

\subsection{Dirac cohomology} \label{subsec::Dirac_Cohomology} 
The Dirac cohomology assigns to any $\gg$-supermodule $M$ a $\even$-supermodule $\DC(M)$, utilizing the $\even$-invariance of the Dirac operator $\Dirac$. To define it, we consider the natural componentwise action of $\Dirac \in \UE(\gg) \otimes \Weyl$ on $M \otimes M(\odd)$, where $M(\odd)$ denotes the oscillator module over $\Weyl$, which we briefly introduce.

\subsubsection{Oscillator module} \label{subsubsec::Weyl_Supermodule} The Weyl algebra $\Weyl$, identified with the algebra of differential operators with polynomial coefficients in the variables $x_i$, where $\partial_i$ corresponds to the partial derivative $\frac{\partial}{\partial x_i}$, acts naturally on $M(\odd)\coloneqq \CC[x_1,\dotsc, x_{mn}]$. We refer to $M(\odd)$ as the \emph{oscillator module}. Any element in $M(\odd)$ that is annihilated by all $\partial_{i}$ is
necessarily constant. We conclude that the maximal proper submodule of $M(\odd)$ is zero,
and $M(\odd)$ is a simple module over $\Weyl$.

\begin{proposition} \label{prop::Weyl_Simple}
 The oscillator module $M(\odd)$ is a simple $\Weyl$-module. 
\end{proposition}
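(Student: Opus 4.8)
The plan is to prove directly that $M(\odd) = \CC[x_1,\dots,x_{mn}]$ admits no nonzero proper $\Weyl$-submodule, using only that both the multiplication operators $x_k$ and the partial derivatives $\partial_k$ belong to $\Weyl$. So let $V \subseteq M(\odd)$ be a nonzero $\Weyl$-submodule and fix a nonzero element $f \in V$; the goal is to show $V = M(\odd)$.

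The first step is to produce a nonzero scalar inside $V$, hence $1 \in V$. Among the monomials $x^{\alpha} := x_1^{\alpha_1}\cdots x_{mn}^{\alpha_{mn}}$ occurring in $f$ with nonzero coefficient $c_\alpha$, I would choose one, say $x^{\alpha}$, of maximal total degree $|\alpha| = \alpha_1 + \dots + \alpha_{mn}$. Applying $\partial^{\alpha} := \partial_1^{\alpha_1}\cdots\partial_{mn}^{\alpha_{mn}} \in \Weyl$ to $f$ then annihilates every other monomial $x^{\beta}$ appearing in $f$: one has $\partial^{\alpha} x^{\beta} = 0$ unless $\beta_i \geq \alpha_i$ for all $i$, and together with $|\beta| \leq |\alpha|$ this forces $\beta = \alpha$. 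Hence $\partial^{\alpha} f = c_{\alpha}\,\alpha!$ with $\alpha! := \prod_i \alpha_i! \neq 0$ and $c_\alpha \neq 0$, a nonzero scalar in $V$, so $1 \in V$.

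The second step is immediate: since $V$ is stable under the multiplication operators $x_k \in \Weyl$, it contains $x^{\gamma}\cdot 1 = x^{\gamma}$ for every multi-index $\gamma$, hence contains the $\CC$-span of all monomials, which is all of $M(\odd)$. Therefore $V = M(\odd)$, and $M(\odd)$ is simple.

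There is essentially no genuine obstacle in this argument; the only point needing a little care is the selection of the monomial to differentiate — taking one of maximal total degree (rather than an arbitrary monomial of $f$) is what guarantees that $\partial^{\alpha}$ kills all the other monomials and leaves behind a nonzero constant. Alternatively, one could simply invoke the standard facts that the Weyl algebra over $\CC$ is a simple ring and that $\CC[x_1,\dots,x_{mn}]$ is its natural simple module, but the computation above is short and self-contained, mirroring the classical situation.
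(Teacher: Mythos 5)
Your proof is correct: the choice of a monomial of maximal total degree guarantees that $\partial^{\alpha}$ kills every other term of $f$ and produces a nonzero constant, after which multiplication by the $x_k$'s generates all of $M(\odd)$. The paper offers no proof at all (it labels the proposition as standard), and your argument is precisely the standard one, so there is nothing to compare beyond noting that you have supplied the omitted details correctly.
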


We equip $M(\odd)$ with a $\ZZ_{2}$-grading by declaring $M(\odd)_{\bar{0}}$ to be the subspace generated by homogeneous polynomials of even degree, and $M(\odd)_{\bar{1}}$ to be the subspace generated by homogeneous polynomials of odd degree. 

Additionally, $M(\odd)$ carries a unique Hermitian form $\langle\cdot,\cdot\rangle_{M(\odd)}$, known as the \emph{Bargmann--Fock Hermitian form} or \emph{Fischer--Fock Hermitian form} \cite{BargmannHermitianForm, FischerHermitianForm, FockHermitianForm}, such that $\partial_{k}$ and $x_{k}$ are adjoint to each other and the following orthogonality relations hold:
\begin{equation}
\begin{split}
\langle \prod_{k=1}^{mn}x_{k}^{p_{k}},\prod_{k=1}^{mn}x_{k}^{q_{k}}\rangle_{M(\odd)}=\begin{cases}\prod_{k=1}^{mn}p_{k}! \qquad &\text{if} \ p_{k}=q_{k} \ \text{for all} \ k, \\
0 \qquad &\text{otherwise}.\end{cases}
\end{split}
\end{equation}

The conjugate-linear anti-involution $\omega$ induces a $^{\ast}$-structure on $\Cl(\odd)=\Weyl$, that is, a conjugate-linear involution $(\cdot)^{\ast} : \Weyl \to \Weyl$ such that $(xy)^{\ast}=y^{\ast}x^{\ast}$ for all $x,y \in \Weyl$. 

\begin{lemma} \label{lemm::star_structure}
 The conjugate-linear involution $\omega$ induces a $^{\ast}$-structure on $\Cl(\odd) = \Weyl$ via $x^{\ast}\coloneqq -\omega(x)$ such that $x_{i}^{\ast}=\partial_{i}$.
\end{lemma}
\begin{proof}
We first note that $\omega(\odd) \subset \odd$. 
We extend $\omega$ to $T(\odd)$ by defining it on simple tensors via
\[
\omega(v_{1} \otimes \cdots \otimes v_{k})
\coloneqq \omega(v_{k}) \otimes \cdots \otimes \omega(v_{1}).
\]
This map descends to $\Cl(\odd)$ provided that it preserves the ideal $I_{\Cl}$ generated by the elements
\[
v \otimes w + (-1)^{p(v)p(w)} w \otimes v - 2 B(v,w) \, 1_{T(\odd)}.
\]
Applying $\omega$ shows that this condition is equivalent to
\[
\overline{B(w,v)} = B(\omega(v),\omega(w)) \qquad \text{for all } v,w\in\odd.
\]

Recall that $\omega$ is defined by
\[
\omega(x)
 = J_{(p,q\vert 0,n)}^{-1} x^\dagger J_{(p,q\vert 0,n)},
 \qquad x\in\gg.
\]
Hence, for all $x,y\in\gg$ we obtain
\[
B(\omega(x),\omega(y))
 = \tfrac{1}{2} \str(\omega(x)\omega(y))
 = \tfrac{1}{2} \str\!\left(J_{(p,q\vert 0,n)}^{-1} x^\dagger y^\dagger J_{(p,q\vert 0,n)}\right)
 = \tfrac{1}{2} \overline{\str(yx)}
 = \overline{B(y,x)}.
\]
Now, set $v^{\ast}\coloneqq -\omega(v)$. 
Since $\omega$ preserves $\odd$ and, by construction, satisfies $\omega(vw)=\omega(w)\omega(v)$ for all $v,w\in\odd$, the map $(\cdot)^{\ast} : \Weyl \to \Weyl$ is a $^{\ast}$-structure on $\Weyl$. 
\end{proof}

We fix the $^{\ast}$-structure on $\Weyl$ induced by $\omega$. 
A $\Weyl$-module $M$ is called a $^{\ast}$-module if it is equipped with a positive definite Hermitian form $\langle\cdot,\cdot\rangle$ satisfying
\begin{equation} \label{eq::star_module}
 \langle x v, w \rangle = \langle v, x^{\ast} w \rangle
 \qquad x\in\Weyl,\; v,w\in M.
\end{equation}

This yields the following lemma.

\begin{lemma}\label{lemm::adjoint_Weyl_supermodule}
 The oscillator module $M(\odd)$, together with the form $\langle \cdot,\cdot\rangle_{M(\odd)})$, is a $^{\ast}$-module over $\Weyl$. In particular, the generators of $\Weyl$ satisfy the following relations for all $1 \leq i \leq mn$:
$$
\langle \partial_i v, w \rangle_{M(\odd)} = \langle v, x_i w \rangle_{M(\odd)}, \qquad \langle x_i v, w \rangle_{M(\odd)} = \langle v, \partial_i w \rangle_{M(\odd)}.
$$
\end{lemma}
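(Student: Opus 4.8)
The plan is to reduce everything to a computation on the monomial basis. Since both $\partial_k$ and $x_k$ act as linear operators on $M(\odd) = \CC[x_1,\dots,x_{mn}]$ and $\langle\cdot,\cdot\rangle_{M(\odd)}$ is sesquilinear, it suffices to verify the identities when $v$ and $w$ run over the monomials $\prod_l x_l^{p_l}$ and $\prod_l x_l^{q_l}$. I would moreover check only the first identity directly: the second follows from it via the Hermitian symmetry $\langle v, w\rangle_{M(\odd)} = \overline{\langle w, v\rangle}_{M(\odd)}$ together with the fact that all monomial inner products $\prod_k p_k!$ are real, since then
$$
\langle x_k v, w\rangle_{M(\odd)} = \overline{\langle w, x_k v\rangle}_{M(\odd)} = \overline{\langle \partial_k w, v\rangle}_{M(\odd)} = \langle v, \partial_k w\rangle_{M(\odd)}.
$$

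For the first identity, fix an index $j$ and monomials $v = \prod_l x_l^{p_l}$, $w = \prod_l x_l^{q_l}$. On the left, $\partial_j v = p_j \prod_l x_l^{p_l - \delta_{jl}}$ (understood as $0$ when $p_j = 0$), so the orthogonality relations force $\langle \partial_j v, w\rangle_{M(\odd)}$ to vanish unless $q_j = p_j - 1$ and $q_l = p_l$ for $l \neq j$, in which case it equals $p_j \cdot (p_j - 1)!\,\prod_{l \neq j} p_l!$. On the right, $x_j w = \prod_l x_l^{q_l + \delta_{jl}}$, so $\langle v, x_j w\rangle_{M(\odd)}$ vanishes unless the same condition $p_j = q_j + 1$, $p_l = q_l$ for $l \neq j$ holds, and in that case it equals $\prod_l p_l! = p_j \cdot (p_j - 1)!\,\prod_{l \neq j} p_l!$. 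The two expressions agree in every case (both being $0$ otherwise, including when $p_j = 0$), which gives the claim.

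There is no real obstacle here; the only point requiring a moment's care is the bookkeeping of exponents, namely checking that the scalar $p_j$ produced by $\partial_j$ combines with $(p_j-1)!$ to reproduce exactly the factorial $p_j!$ that the orthogonality relations attach to the right-hand side. Everything else is formal: sesquilinearity of the form to pass from monomials to general elements, and the conjugate-symmetry argument above to deduce the second identity from the first.
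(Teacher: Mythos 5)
Your proof is correct. The paper states this lemma without proof, as an immediate consequence of the defining orthogonality relations of the Bargmann--Fock form, and your monomial computation (the cancellation $p_j\cdot(p_j-1)! = p_j!$, the vanishing cases including $p_j=0$, and the deduction of the second identity from the first via conjugate symmetry) is exactly the verification the paper leaves implicit.
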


We finally consider $M(\odd)$ as a $\even$-module under the Lie algebra homomorphism $\alpha : \even \to \Weyl$. 

\begin{lemma} \label{lemm::compatibility_omega_alpha}
 As operators on $M(\odd)$, one has for any $X \in \even$
 \[
 \alpha(X)^{\ast} = \alpha(\omega(X)).
 \]
\end{lemma}

\begin{proof}
We first recall $x_{i}^{\ast}=\partial_{i}$, and $\overline{B(x,y)} = B(\omega(y),\omega(x))$. The Lie algebra homomorphism $\alpha$ is explicitly given by 
\begin{multline*}
\alpha(X)= \sum_{i,j=1}^{mn}(B(X,[\partial_{i},\partial_{j}])x_{i}x_{j}+B(X,[x_{i},x_{j}])\partial_{i}\partial_{j}) 
\\ -\sum_{i,j=1}^{mn}2B(X,[x_{i},\partial_{j}])x_{j}\partial_{i}-\sum_{l=1}^{mn}B(X,[\partial_{l},x_{l}]).
\end{multline*}
 We demonstrate the statement for the first summand $B(X,[\partial_{i},\partial_{j}])x_{i}x_{j}$. The rest follows by a similar line of argument. One has
 \[
 \begin{aligned}
 (B(X,[\partial_{i},\partial_{j}])x_{i}x_{j})^{\ast} &= \overline{B(X,[\partial_{i},\partial_{j}])}x_{j}^{\ast}x_{i}^{\ast} \\ &= B(\omega([\partial_{i},\partial_{j}]),\omega(X))\partial_{j}\partial_{i}\\ &= B([\omega(\partial_{j}),\omega(\partial_{i})],\omega(X))\partial_{j}\partial_{i} \\ &= B([x_{j},x_{i}], \omega(X))\partial_{j}\partial_{i}.
 \end{aligned}
 \]
 This coincides with the second summand of $\alpha(\omega(X))$ as $B([x_{j},x_{i}],\omega(X))=B(\omega(X),[x_{j},x_{i}])$ since $B$ is supersymmetric and $p(X)=\bar{0}$.
\end{proof}

\begin{proposition} \label{prop:oscillator_module_even_semisimple}
 The oscillator module $M(\odd)$ is a $\omega\vert_{\even}$-unitarizable $\even$-module under the action induced by $\alpha: \even \to \Weyl$. In particular, $M(\odd)$ is $\even$-semisimple.
\end{proposition}

\begin{proof}
 The statement follows directly from Lemma~\ref{lemm::star_structure} and Lemma~\ref{lemm::compatibility_omega_alpha}.
\end{proof}

\begin{remark} \label{rmk::decompsoition_oscillator_module}
 $M(\odd)$ has a natural interpretation as the oscillator module for $\even$ \cite{kashiwara1978segal}, and its $\even$-constituents are often referred to as \emph{ladder modules}. In particular, it decomposes discretely into a direct sum of simple highest weight $\even$-modules.
\end{remark}

For the fixed real form $\mathfrak{(p)su}(p,q\vert n)$ of $\gg$, let $\kk$ be the maximal compact subalgebra of $\mathfrak{(p)su}(p,q\vert n)_{\bar{0}}$ introduced in Section~\ref{sec::realforms}, so that:
\begin{equation}
\kk^{\CC} \subset \even \subset \gg.
\end{equation}
We view $M(\odd)$ as a $(\even,\kk^{\CC})$-module (\emph{cf.}~Section~\ref{subsec::HC_Supermodule}), which allows us to compare the action of $\kk^{\CC}$ on $M(\odd)$ induced by $\alpha$ with the natural action of $\kk^{\CC}$ on $\CC[x_1, \dots, x_{mn}]$ arising from the adjoint action \footnote{This is the best available comparison, as $\nn_{\bar{1}}^{-}$ is invariant only under $\kk^{\CC}$.}. These two $\even$-modules are related by the following proposition.

\begin{proposition}\label{EvenRestriction}
The action of $\alpha(\kk^{\CC})$ on $M(\odd)$ and the adjoint action of $\kk^{\CC}$ on $\CC[x_{1},\dotsc,x_{mn}]$ differ by a twist via $\CC_{-\rho_{\bar{1}}}$. In particular, we have an isomorphism of $\kk^{\CC}$-modules:
$$
M(\odd)\cong \CC[x_{1},\dotsc,x_{mn}] \otimes \CC_{-\rho_{\bar{1}}}.
$$
\end{proposition}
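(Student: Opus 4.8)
The plan is to evaluate the explicit formula for the embedding $\alpha\colon\even\to\Weyl$ on an element $X\in\kk^{\CC}$ and to show that, as an operator on $M(\odd)=\CC[x_{1},\dots,x_{mn}]$, it equals the derivation implementing the adjoint action of $X$ on $\CC[x_{1},\dots,x_{mn}]$ plus a scalar; the scalar, read off as a linear functional on $\kk^{\CC}$, is then identified with $-\rho_{\bar{1}}$. The first point is that the two quadratic pieces of $\alpha(X)$ drop out when $X\in\kk^{\CC}$. Since the $\partial_{k}$ span $\nn_{\bar{1}}^{+}$, Lemma~\ref{lemm::commutation_relations} gives $[\partial_{k},\partial_{j}]\in\nn_{\bar{0},n}^{+}$, and applying the anti-involution $\omega$ (which interchanges the $\partial_{k}$ and the $x_{k}$, hence sends $\nn_{\bar{0},n}^{+}$ to $\nn_{\bar{0},n}^{-}$) gives $[x_{k},x_{j}]\in\nn_{\bar{0},n}^{-}$. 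With respect to $B=\tfrac12(\cdot,\cdot)$, distinct root spaces $\gg^{\alpha},\gg^{\beta}$ with $\alpha+\beta\ne0$ are orthogonal and $\hh$ is orthogonal to every nonzero root space; since a compact root is never the negative of a non-compact root, this forces $B(\kk^{\CC},\nn_{\bar{0},n}^{\pm})=0$. Thus $B(X,[\partial_{k},\partial_{j}])=B(X,[x_{k},x_{j}])=0$ for all $k,j$, and the formula for $\alpha$ collapses to
\[\alpha(X)=-2\sum_{k,j=1}^{mn}B(X,[x_{k},\partial_{j}])\,x_{j}\partial_{k}-\sum_{l=1}^{mn}B(X,[\partial_{l},x_{l}]).\]

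Next I would match the surviving bilinear term with the adjoint action. Because $\kk^{\CC}$ preserves $\pp_{1},\pp_{2},\qq_{1},\qq_{2}$ by Lemma~\ref{lemm::commutation_relations}, it preserves $\nn_{\bar{1}}^{-}=\operatorname{span}(x_{k})$ and $\nn_{\bar{1}}^{+}=\operatorname{span}(\partial_{k})$, and the computation in the proof of Lemma~\ref{lemm::k-invariance}, now run over all indices, gives $[X,x_{i}]=-2\sum_{k=1}^{mn}B(X,[x_{i},\partial_{k}])\,x_{k}$. Under the identification $\partial_{i}\leftrightarrow\partial/\partial x_{i}$ the adjoint action of $X$ on $\CC[x_{1},\dots,x_{mn}]$ is the derivation $D_{X}=\sum_{i}[X,x_{i}]\,\partial_{i}=-2\sum_{i,k}B(X,[x_{i},\partial_{k}])\,x_{k}\partial_{i}$, which after relabeling the summation indices is exactly the first sum in the formula for $\alpha(X)$ above. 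Hence $\alpha(X)=D_{X}-c(X)$ as operators on $M(\odd)$, where $c(X):=\sum_{l=1}^{mn}B(X,[\partial_{l},x_{l}])$.

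Finally I would identify $c$. Since $\alpha$ and $X\mapsto D_{X}$ are Lie algebra homomorphisms whose difference takes scalar values (the scalars being central in $\Weyl$), $X\mapsto-c(X)$ is a one-dimensional character of $\kk^{\CC}$ and is therefore determined by its restriction to $\hh$. For $H\in\hh$, each $\partial_{l}$ is a root vector for a unique $\beta_{l}\in\Delta_{\bar{1}}^{+}$ with $x_{l}\in\gg^{-\beta_{l}}$, so invariance of $B$ and $B(\partial_{l},x_{l})=\tfrac12$ give $B(H,[\partial_{l},x_{l}])=B([H,\partial_{l}],x_{l})=\tfrac12\beta_{l}(H)$. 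As odd root spaces are one-dimensional and the $\partial_{l}$ span $\nn_{\bar{1}}^{+}$, the roots $\beta_{1},\dots,\beta_{mn}$ run exactly once through $\Delta_{\bar{1}}^{+}$, so $c(H)=\tfrac12\sum_{\beta\in\Delta_{\bar{1}}^{+}}\beta(H)=\rho_{\bar{1}}(H)$. Therefore $\alpha(X)=D_{X}-\rho_{\bar{1}}(X)\cdot\mathrm{id}$ on $M(\odd)$, which says precisely that the $\alpha$-action on $M(\odd)$ is the adjoint action on $\CC[x_{1},\dots,x_{mn}]$ twisted by $\CC_{-\rho_{\bar{1}}}$, giving the stated $\kk^{\CC}$-module isomorphism. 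The only input beyond a direct calculation is the vanishing $B(\kk^{\CC},\nn_{\bar{0},n}^{\pm})=0$ of the first step (and the bracket relations behind it) together with keeping the index bookkeeping straight; that $-\rho_{\bar{1}}|_{\hh}$ extends to a character of $\kk^{\CC}$ comes for free from the homomorphism property.
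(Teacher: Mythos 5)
Your proposal is correct and follows essentially the same route as the paper's proof: collapse the explicit formula for $\alpha(X)$ when $X\in\kk^{\CC}$ by showing the quadratic terms pair $\kk^{\CC}$ against $\nn_{\bar{0},n}^{\pm}$ and hence vanish, identify the remaining first-order term with the adjoint derivation on $\CC[x_{1},\dotsc,x_{mn}]$, and evaluate the scalar shift on $\hh$ to get $-\rho_{\bar{1}}$. Your justifications for $B(\kk^{\CC},\nn_{\bar{0},n}^{\pm})=0$ via root-space orthogonality and for reducing the character computation to $\hh$ are slightly more explicit than the paper's, but the argument is the same.
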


\begin{proof}
First, we recall the commutation relations from Lemma~\ref{lemm::commutation_relations}:
\begin{equation*}
[\partial_{i},\partial_{j}] \in \nn_{\bar{0},nc}^{+}, \quad [x_{i},x_{j}] \in \nn_{\bar{0},nc}^{-}, \quad [x_{i},\partial_{i}] \in \hh, \quad [\kk^{\CC},\nn_{\bar{0},nc}^{\pm}] \subset \nn_{\bar{0},nc}^{\pm}
\end{equation*}
for all $1 \leq i, j \leq mn$. These relations imply that $B(X,[\partial_{k},\partial_{l}]) = 0$ and $B(X,[x_{i},x_{j}]) = 0$ for any $X \in \kk^{\CC}$ since $B$ is invariant and induced by the supertrace; in particular $B(x,y)=0$ whenever $x,y \in \nn_{\bar{0},nc}^{\pm}$. Consequently, $\alpha : \gg \big\vert_{\kk^{\CC}} \to \Weyl$ is given by
\begin{equation*}
\alpha(X) = -\sum_{i,j=1}^{mn} 2 \bil(X,[x_{i},\partial_{j}]) x_{j} \partial_{i} - \sum_{l=1}^{mn} \bil(X,[\partial_{l},x_{l}]), \qquad X \in \kk^{\CC}.
\end{equation*}
We claim that $\alpha(X) P = [X,P]$ for any $X \in \kk^{\CC}$ and any polynomial $P \in \CC[x_{1}, \ldots, x_{mn}] \otimes \CC_{-\rho_{\bar{1}}}$, where $\CC_{-\rho_{\bar{1}}}$ is identified with the one-dimensional $\kk^{\CC}$-module generated by the constant polynomial $1$. By linearity, it is sufficient to consider monomials of the form $P = \prod_{k=1}^{mn} x_{k}^{r_{k}} \otimes 1$, where $r_{k} \in \ZZ_{+}$. 

Since $\nn_{\bar{1}}^{-}$ is $\kk^{\CC}$-invariant, we have the following expression in the standard basis:
\begin{equation*}
[X,x_{i}] = -2 \sum_{j=1}^{mn} B([X,x_{i}],\partial_{j}) x_{j} = -2 \sum_{j=1}^{mn} B(X,[x_{i},\partial_{j}]) x_{j}.
\end{equation*}
This implies
\begin{equation*}
\alpha(X) = \sum_{i=1}^{mn} [X,x_{i}] \partial_{i} - \sum_{l=1}^{mn} B(X,[\partial_{l},x_{l}]).
\end{equation*}
The first term acts non-trivially only on $\prod_{l=1}^{mn} x_{l}^{r_{l}}$, while the second term acts as a scalar multiple of the identity. Hence, it suffices to consider the action on $1$.

To compute this, we evaluate the action of the two terms separately. First, we have
\begin{equation*}
\sum_{i=1}^{mn} [X,x_{i}] \partial_{i} P = \sum_{i=1}^{mn} [X,x_{i}] \frac{r_{i}}{x_{i}} \prod_{l=1}^{mn} x_{l}^{r_{l}} 
= \sum_{i=1}^{mn} x_{1}^{r_{1}} \cdots [X,x_{i}^{r_{i}}] \cdots x_{mn}^{r_{mn}} 
= [X, \prod_{l=1}^{mn} x_{l}^{r_{l}}] = [X, P].
\end{equation*}

Next, we compute the action of $-\sum_{l=1}^{mn} B(X, [x_{l}, \partial_{l}])$ on $1$. Since $[x_{l}, \partial_{l}] \in \hh$, the action is trivial unless $X = H \in \hh$:

\begin{equation*}
-\sum_{l=1}^{mn} \bil(H, [\partial_{l}, x_{l}]) \cdot 1 = -\sum_{l=1}^{mn} B(H, E_{ll} + E_{n+l, n+l}) \cdot 1 = -\frac{1}{2} \sum_{\alpha \in \Delta_{\bar{1}}^{+}} \alpha(H) \cdot 1 
= -\rho_{\bar{1}}(H) \cdot 1.
\end{equation*}
This concludes the proof.
\end{proof}

\subsubsection{Dirac cohomology} The Dirac operator $\Dirac$ is an element of $\UE(\gg)\otimes\Weyl$ and therefore acts naturally on $M\otimes M(\odd)$ for every $\gg$-supermodule $M$. Since $\Dirac$ is $\even$-invariant for the induced $\even$-action on $\UE(\gg)\otimes\Weyl$, the kernel $\ker\Dirac$ is a $\even$-supermodule. This leads to the following definition.

\begin{definition}[{\cite{huang2005dirac}}]
 Let $M$ be a $\gg$-supermodule. Consider the action of the Dirac operator $\Dirac \in \UE(\gg)\otimes \Weyl$ on $M \otimes M(\odd)$.
 The \emph{Dirac cohomology} of $M$ is the $\even$-supermodule
 $$
 \DC(M)\coloneqq \Ker \Dirac / \left(\Ker \Dirac \cap \Im \Dirac\right).
 $$
\end{definition}
\begin{remark}
One can analogously define Dirac cohomology for general quadratic pairs $(\ll,\uu)$. This was studied in \cite{Schmidt_Noja}.
\end{remark}
\begin{example}
Let $M$ be the trivial $\gg$-supermodule. Then there is an isomorphism of $\even$-modules
$$
\DC(M)\cong M(\odd).
$$
In particular, $\DC(M)$ is a unitarizable $\even$-module and it decomposes as a direct sum of unitarizable simple $\even$-modules that occur with multiplicity one \cite{kashiwara1978segal}.
\end{example}

In general, Dirac cohomology defines a functor 
\begin{equation}
\DC(\cdot): \ \gsmod \longrightarrow \evsmod, \qquad M \mapsto \DC(M),
\end{equation}
and it admits a natural decomposition, $\DC(M) = \DC^{+}(M) \oplus \DC^{-}(M)$, induced by the $\ZZ_{2}$-grading of $M(\odd)$ introduced above. We decompose the Dirac operator as $\Dirac = \Dirac^{+} + \Dirac^{-}$, where
\begin{equation}
\begin{split}
\Dirac^{+} &\coloneqq \Dirac\Big\vert_{M \otimes M(\odd)_{\bar{0}}} : M \otimes M(\odd)_{\bar{0}} \to M \otimes M(\odd)_{\bar{1}}, \\
\Dirac^{-} &\coloneqq \Dirac\Big\vert_{M \otimes M(\odd)_{\bar{1}}} : M \otimes M(\odd)_{\bar{1}} \to M \otimes M(\odd)_{\bar{0}},
\end{split}
\end{equation}
and define $\DC^{+}(M)\coloneqq \mathrm{H}_{\Dirac^{+}}(M)$ and $\DC^{-}(M)\coloneqq \mathrm{H}_{\Dirac^{-}}(M)$. 

\subsubsection{Dirac cohomology and infinitesimal characters}
Dirac cohomology becomes especially effective for $\gg$-supermodules with infinitesimal character. We therefore recall the relevant notion (\emph{cf.}~Section~\ref{subsubsec::infinitesimal_characters}). A $\gg$-supermodule $M$ has infinitesimal character if there exists $\lambda\in\hh^\ast$ such that $\mathfrak Z(\gg)$ acts on $M$ by
\begin{equation}
zv=\chi_\lambda(z)v,\qquad \chi_\lambda(z)\coloneqq (\lambda+\rho)(\operatorname{HC}_{\gg}(z))\qquad \forall\,z\in\mathfrak Z(\gg),\ v\in M.
\end{equation}
Here $\operatorname{HC}_{\gg}$ denotes the Harish-Chandra homomorphism for $\gg$.

Similarly, a $\even$-supermodule $V$ has infinitesimal character if there exists $\lambda\in\hh^\ast$ such that $\mathfrak Z(\even)$ acts on $V$ by
\begin{equation}
zv=\chi^{\bar0}_{\lambda}(z)v,\qquad \chi^{\bar0}_{\lambda}(z)\coloneqq (\lambda+\rho_{\bar0})(\operatorname{HC}_{\even}(z))\qquad \forall\,z\in\mathfrak Z(\even),\ v\in V.
\end{equation}
The infinitesimal character $\chi^{\bar0}_{\lambda}$ is determined up to the dot action of the Weyl group:
\begin{equation}\label{eq::dot_action_and_infinitesimal_character}
\chi^{\bar0}_{\lambda}=\chi^{\bar0}_{\mu}\iff \lambda=w\cdot\mu=w(\mu+\rho_{\bar0})-\rho_{\bar0}
\end{equation}
for some $w\in W$.

The link between the infinitesimal character of a $\gg$-supermodule and that of its Dirac cohomology is provided by the following result.

\begin{theorem}[{\cite[Theorem 6]{huang2005dirac}}]\label{thm::zeta}
For every $z\in\mathfrak Z(\gg)$, there exists an algebra homomorphism
$
\zeta\colon \mathfrak Z(\gg)\to \mathfrak Z(\even)\cong \mathfrak Z(\gg_{\bar0,\Updelta})
$
and a $\even$-invariant element $a\in \UE(\gg)\otimes\Weyl$ such that
\[
z\otimes 1=\zeta(z)+\Dirac a+a\Dirac
\]
in $\UE(\gg)\otimes\Weyl$. Moreover, $\zeta$ is characterized by the commutative diagram
\[
\begin{tikzcd}
\mathfrak Z(\gg)\arrow{r}{\zeta}\arrow[swap]{d}{\operatorname{HC}_{\gg}}&\mathfrak Z(\even)\arrow{d}{\operatorname{HC}_{\even}}\\
S(\hh)^W\arrow{r}{\id}&S(\hh)^W.
\end{tikzcd}
\]
\end{theorem}

As an immediate consequence, one obtains the superalgebra analog of Vogan's theorem.

\begin{theorem}\label{VoganConjecture}
Let $M$ be a $\gg$-supermodule with infinitesimal character $\chi_\Lambda$. If $\DC(M)$ contains a nonzero $\even$-supermodule with infinitesimal character $\chi^{\bar0}_{\lambda}$ for some $\lambda\in\hh^\ast$, then
\[
\chi_\Lambda(z)=\chi^{\bar0}_{\lambda}(\zeta(z))
\qquad \forall\,z\in\mathfrak Z(\gg).
\]
\end{theorem}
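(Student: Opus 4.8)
The plan is to deduce the statement formally from the factorization in \cref{thm::zeta}: all the analytic content has been absorbed into that theorem, so what remains is a short representation-theoretic argument of exactly the type used by Huang--Pandžić in the reductive case.

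First I would record how $z \otimes 1$, $\zeta(z)$ and $\Dirac$ interact as operators on $M \otimes M(\odd)$. Since $z \in \mathfrak{Z}(\gg)$ is central and even, $z \otimes 1$ super-commutes with each of the factors $\partial_k \otimes x_k$ and $x_k \otimes \partial_k$ appearing in $\Dirac$, hence $[z \otimes 1, \Dirac] = 0$; consequently $z \otimes 1$ preserves $\Ker \Dirac$ and $\Im \Dirac$, so it descends to $\DC(M)$, and it acts there as the scalar $\chi_{\Lambda}(z)$ because $z$ acts on $M$ by $\chi_{\Lambda}(z)\cdot\id$. Likewise, $\zeta(z)$ — viewed via the isomorphism $\mathfrak{Z}(\even) \cong \mathfrak{Z}(\mathfrak{g}_{\bar{0},\Delta})$ as an element of $\UE(\gg) \otimes \Weyl$ — is central in $\UE(\mathfrak{g}_{\bar{0},\Delta})$, and since $\Dirac$ is $\even$-invariant it commutes with every $X \otimes 1 + 1 \otimes \alpha(X)$, hence with all of $\UE(\mathfrak{g}_{\bar{0},\Delta})$; thus $[\zeta(z), \Dirac] = 0$ and $\zeta(z)$ also descends to $\DC(M)$. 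Moreover, on any $\even$-submodule of $\DC(M)$ carrying the even infinitesimal character $\chi_{\lambda}^{\bar{0}}$, the element $\zeta(z)$ acts by the scalar $\chi_{\lambda}^{\bar{0}}(\zeta(z))$, by the very definition of an even infinitesimal character.

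Next I would show that $z \otimes 1$ and $\zeta(z)$ induce the \emph{same} operator on $\DC(M)$. Applying the identity $z \otimes 1 = \zeta(z) + \Dirac a + a \Dirac$ from \cref{thm::zeta} to a representative $v \in \Ker \Dirac$, the term $a\Dirac v$ vanishes, so $(z \otimes 1 - \zeta(z))v = \Dirac a v \in \Im \Dirac$; and since both $z \otimes 1$ and $\zeta(z)$ preserve $\Ker \Dirac$, this element also lies in $\Ker \Dirac$. Hence $(z \otimes 1 - \zeta(z))v \in \Ker \Dirac \cap \Im \Dirac$, which is exactly what is quotiented out in $\DC(M) = \Ker \Dirac /(\Ker \Dirac \cap \Im \Dirac)$, so the two operators agree on $\DC(M)$. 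Combining the steps: choosing a nonzero class $[v]$ in an $\even$-submodule of $\DC(M)$ with even infinitesimal character $\chi_{\lambda}^{\bar{0}}$, one gets $\chi_{\Lambda}(z)[v] = (z \otimes 1)[v] = \zeta(z)[v] = \chi_{\lambda}^{\bar{0}}(\zeta(z))[v]$, and since $[v] \neq 0$ the scalars coincide, giving $\chi_{\Lambda}(z) = \chi_{\lambda}^{\bar{0}}(\zeta(z))$ for all $z \in \mathfrak{Z}(\gg)$.

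I do not expect a genuine obstacle here: the hard work — producing $\zeta$ and the $\even$-invariant $a$ with $z \otimes 1 = \zeta(z) + \Dirac a + a\Dirac$ and the commuting square — is precisely the statement of \cref{thm::zeta}, which we are free to invoke. The only points demanding a little care are parity bookkeeping (one uses that $a$ is odd, so that $\Dirac a + a \Dirac$ is the super-anticommutator of two odd operators and hence even, matching the parity of $z \otimes 1$) and the verification that the operators in question genuinely preserve $\Ker \Dirac$, $\Im \Dirac$ and therefore descend to the subquotient $\DC(M)$; both are routine once the conventions of \cref{subsec::Dirac_Cohomology} are in force.
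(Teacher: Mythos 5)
Your argument is correct and is precisely the standard Huang--Pand\v{z}i\'{c} descent argument that the paper itself invokes implicitly when it calls \cref{VoganConjecture} a ``direct consequence'' of \cref{thm::zeta}: apply $z\otimes 1=\zeta(z)+\Dirac a+a\Dirac$ to a representative in $\Ker\Dirac$, observe that the discrepancy lands in $\Ker\Dirac\cap\Im\Dirac$, and compare the scalars by which $z\otimes 1$ and $\zeta(z)$ act on the given $\even$-submodule of $\DC(M)$. No gaps; your writeup simply makes explicit the proof the paper omits.
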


This applies in particular to highest weight and simple $\gg$-supermodules, since these admit infinitesimal characters. It is especially useful when the underlying $\gg$-supermodule is completely reducible over $\even$, as is the case for unitarizable supermodules.

\subsection{Dirac induction}\label{subsec::Dirac_Induction} For general $\gg$-supermodules, Dirac cohomology has poor functorial properties. In particular, $\DC(\cdot)$ does not in general admit an adjoint functor. However, for supermodules with infinitesimal character, it satisfies a six-term exact sequence.

\begin{lemma}\label{exactSequence}
Let $0 \to M' \xlongrightarrow{a} M \xlongrightarrow{b} M'' \to 0$ be a short exact sequence of $\gg$-supermodules admitting an infinitesimal character. Then there exists a six-term exact sequence:
\[
\begin{tikzcd}
\DC^{+}(M') \arrow{r} & \DC^{+}(M) \arrow{r} & \DC^{+}(M'') \arrow{d} \\
\DC^{-}(M'') \arrow{u} & \arrow{l} \DC^{-}(M) & \arrow{l} \DC^{-}(M')
\end{tikzcd}
\]
\end{lemma}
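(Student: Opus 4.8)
The plan is to promote $(M\otimes M(\odd),\Dirac)$ to an honest $\ZZ_{2}$-graded complex and then apply the standard hexagonal long exact sequence in cohomology, using the infinitesimal character precisely to control where $\Dirac^{2}$ fails to vanish. First, since $M(\odd)$ is just a complex vector space, applying $-\otimes M(\odd)$ to the given sequence produces a short exact sequence of $\even$-supermodules
\[
0\longrightarrow N'\xrightarrow{\,a\otimes\id\,}N\xrightarrow{\,b\otimes\id\,}N''\longrightarrow 0,\qquad N^{\bullet}:=M^{\bullet}\otimes M(\odd),
\]
and, $a$ and $b$ being $\gg$-supermodule maps and hence $\UE(\gg)\otimes\Weyl$-linear, both maps commute with the componentwise action of $\Dirac$, and of $\Dirac^{2}$.

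Second, I would record a formal fact valid for any supermodule: $\Dirac$ maps $\Ker\Dirac^{2}$ into $\Ker\Dirac\subseteq\Ker\Dirac^{2}$, so $\widetilde\Dirac:=\Dirac|_{\Ker\Dirac^{2}}$ is an odd endomorphism with $\widetilde\Dirac^{\,2}=0$; and if $v\in\Ker\Dirac$ with $v=\Dirac u$, then $\Dirac^{2}u=0$, so $\Ker\Dirac\cap\Im\Dirac=\Dirac(\Ker\Dirac^{2})=\Im\widetilde\Dirac$. Hence $\DC(M)\cong\mathrm{H}(\Ker\Dirac^{2},\widetilde\Dirac)$ compatibly with parity, i.e. $\DC^{\pm}(M)$ are the even and odd cohomology of the honest $2$-periodic complex $(\Ker\Dirac^{2},\widetilde\Dirac)$; the same holds for $M'$ and $M''$.

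Third, the hypothesis enters. By \cref{prop::square_dirac} (with \cref{lemm::constant_C}), $\Dirac^{2}=-\Omega_{\gg}\otimes1+\Omega_{\even,\Delta}-C$; since $M,M',M''$ admit infinitesimal characters and $\Omega_{\gg}$ is central, $\Omega_{\gg}\otimes1$ acts on each of $N,N',N''$ by one and the same scalar, and therefore $\Dirac^{2}=\Omega_{\even,\Delta}-c$ uniformly. As $M(\odd)$ is a finite direct sum of simple $\even$-modules, each admitting an infinitesimal character (\cref{lemm::oscillator_module_even_semisimple}), the central element $\Omega_{\even,\Delta}$ acts locally finitely on $N,N',N''$, so each of them decomposes into generalized $\Dirac^{2}$-eigenspaces $N^{\bullet}=\bigoplus_{\lambda}N^{\bullet}_{\lambda}$, a decomposition respected by $a\otimes\id$, $b\otimes\id$ and $\Dirac$. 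On the summands with $\lambda\neq0$ the operator $\Dirac^{2}$, hence $\Dirac$, is invertible and contributes nothing, so $\Ker\Dirac^{2}\subseteq N_{0}$ and $\DC^{\pm}$ is computed inside $N_{0}$. (If $\Dirac^{2}$ happens to act semisimply on $N$ — for instance when $M$ is $\even$-semisimple, so $N$ is as well — then $\Ker\Dirac^{2}=N_{0}$ is a $\Dirac$-stable direct summand with $\Dirac^{2}|_{N_{0}}=0$, and the last step below reduces to a direct-summand projection.)

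Finally, I would show that the inclusions induce a short exact sequence of $\ZZ_{2}$-graded complexes
\[
0\longrightarrow\Ker(\Dirac^{2}|_{N'})\longrightarrow\Ker(\Dirac^{2}|_{N})\longrightarrow\Ker(\Dirac^{2}|_{N''})\longrightarrow0
\]
(with the differentials $\widetilde\Dirac$); the six-term sequence of the statement is then exactly the hexagonal long exact sequence in cohomology attached to it, and its connecting maps are $\even$-linear because $\Dirac$ is $\even$-invariant. Injectivity and middle-exactness are immediate, as $\Ker(\Dirac^{2}|_{N'})=N'\cap\Ker(\Dirac^{2}|_{N})$. The hard part is surjectivity: for $z''\in\Ker(\Dirac^{2}|_{N''})\subseteq N''_{0}$ one lifts to $z\in N_{0}$, observes $\Dirac^{2}z\in N'_{0}$, and must correct $z$ by an element of $N'_{0}$ so that it lies in $\Ker(\Dirac^{2}|_{N})$, i.e. must prove $\Dirac^{2}z\in\Dirac^{2}(N'_{0})$ — equivalently, that the snake-lemma connecting map $\Ker(\Dirac^{2}|_{N''})\to N'_{0}/\Dirac^{2}(N'_{0})$ vanishes. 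This is the single place where the infinitesimal character is genuinely indispensable (the six-term sequence fails for general supermodules), and I would attack it by analysing the nilpotent operator $\Dirac^{2}=\Omega_{\even,\Delta}-c$ on $N_{0},N'_{0},N''_{0}$, using the semisimplicity of the $\even$-action on $M(\odd)$ to put $\Dirac^{2}|_{N_{0}}$ into a normal form in which the required preimage exists.
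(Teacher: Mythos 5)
Your overall strategy coincides with the paper's (which follows the zig-zag construction of Huang--Pand\v{z}i\'{c}--Renard): tensor the sequence with $M(\odd)$, use that $a\otimes\id$ and $b\otimes\id$ commute with $\Dirac$, and produce the hexagon. Your observation that $\DC^{\pm}(M)$ is the cohomology of the genuine two-periodic complex $(\Ker\Dirac^{2},\Dirac|_{\Ker\Dirac^{2}})$ is correct and is a clean way to set up the target.

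There is, however, a genuine gap, which you have located but not closed: the right-exactness of
\[
0\longrightarrow\Ker(\Dirac^{2}|_{N'})\longrightarrow\Ker(\Dirac^{2}|_{N})\longrightarrow\Ker(\Dirac^{2}|_{N''})\longrightarrow0
\]
is precisely the content of the lemma, and the plan of ``putting the locally nilpotent operator $\Dirac^{2}|_{N_{0}}$ into a normal form'' cannot succeed in the abstract. For a single Jordan block $T$ on $N=\CC^{2}$ with $N'=\Ker T$ and $N''=N/N'$, the induced map $\Ker(T|_{N})\to\Ker(T|_{N''})$ is zero, so surjectivity fails for a general locally nilpotent operator compatible with a short exact sequence; any correct argument must use the specific shape $\Dirac^{2}=-\Omega_{\gg}\otimes1+\Omega_{\even,\Delta}-C$ rather than nilpotency alone. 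The paper's proof does not form a short exact sequence of complexes at all: it builds the connecting map directly. Given $m''\in\Ker\Dirac$, one takes an arbitrary lift $m$ and corrects it so that $\Dirac^{2}m=0$; the correction is possible because $b$ intertwines the $\Dirac^{2}$-eigenspace decompositions, so the eigencomponents of $m$ for nonzero eigenvalues are sent to the corresponding eigencomponents of $m''$, which vanish since $\Dirac^{2}m''=0$ (schematically, if $\Dirac^{2}m=cm$ then $0=\Dirac^{2}m''=cb(m)=cm''$ forces $c=0$). This is exactly where the infinitesimal-character hypothesis is consumed, via Proposition \ref{prop::square_dirac}: it makes $\Omega_{\gg}\otimes1$ a scalar, so $\Dirac^{2}$ differs from $\Omega_{\even,\Delta}$ by a constant and the eigenspace analysis applies. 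One then has $\Dirac m\in\Ker(b\otimes\id)=\Im(a\otimes\id)$, writes $\Dirac m=a(m')$, checks $\Dirac m'=0$ by injectivity of $a$, and sets $\partial[m'']:=[m']$, verifying exactness by hand. If you wish to keep your packaging, you must prove the surjectivity of $\Ker(\Dirac^{2}|_{N})\to\Ker(\Dirac^{2}|_{N''})$ by exactly this lifting argument; it is equivalent to the vanishing of the snake map of $\Dirac^{2}$ and is not a routine verification.
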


\begin{proof}
We modify \cite[Theorem 8.1]{Dirac_cohomology_Lie_algebra_cohomology} to our setting. We tensor the short exact sequence with $M(\odd)$ and denote the arrows again by $a$ and $b$, which get tensored with the identity on $M(\odd)$. As a result, we obtain a right exact sequence
$$
M'\otimes M(\odd)\to M\otimes M(\odd) \to M''\otimes M(\odd) \to 0.
$$
However, $m'' \neq 0$, yielding $c=0$. In addition, we have $b(\Dirac\! m)=\Dirac\! b(m)=\Dirac\! m''=0$, \emph{i.e.}, $\Dirac\! m$ lies in $\ker(b)=\Im(a)$, and we find some $m' \in M$ with $\Dirac\! m = a(m')$. The element $m'$ determines a cohomology class, as $a(\Dirac\! m')= \Dirac\! a(m')= \Dirac^{2}\! m =0$, and $a$ is injective. This class is by definition the image of the class of $m''$ under the connecting homomorphism. This defines both vertical arrows. Exactness is verified by a direct calculation, which we omit.
\end{proof}

This motivates an alternative definition of Dirac cohomology, which agrees with the classical one on unitarizable supermodules. The construction goes back to \cite{pandzic2010dirac}. Following \cite{pandzic2010dirac}, we define a functor, again called \emph{Dirac cohomology}, which admits a right adjoint, called \emph{Dirac} induction, and coincides with $\DC(\cdot)$ on the subcategory of unitarizable supermodules.

Any $\gg$-supermodule $M$ extends to a $(\UE(\gg)\otimes\Weyl)$-supermodule by $M\mapsto M\otimes M(\odd)$. Accordingly, let $(\UE(\gg)\otimes\Weyl)\textbf{-smod}^{\mathrm{osc}}$ be the full subcategory of $(\UE(\gg)\otimes\Weyl)\textbf{-smod}$ consisting of supermodules isomorphic to $M\otimes M(\odd)$ for some $\gg$-supermodule $M$. We call it the \emph{oscillator subcategory}. Conversely, any object $X$ of this subcategory defines a $\UE(\gg)$-supermodule by $X\mapsto \Hom_{\Weyl}(M(\odd),X)$, with the induced $\UE(\gg)$-action.
 
\begin{proposition} 
There is an equivalence of categories 
$$
F : \gg\textbf{-smod}\to \left( \UE(\gg)\otimes \Weyl\right)\textbf{-smod}^{\text{osc}}, \qquad F(M)\coloneqq M\otimes M(\odd)
$$
with inverse $G(X)\coloneqq \Hom_{\Weyl}(M(\odd),X)$. 
\end{proposition}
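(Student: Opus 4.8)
The plan is to realize the two functors as an adjoint pair and then verify that the unit and counit of the adjunction are natural isomorphisms. Write $F\colon\gsmod\to(\UE(\gg)\otimes\Weyl)\textbf{-smod}$ for $F(M)=M\otimes M(\odd)$, with $\UE(\gg)$ acting on the first factor and $\Weyl$ on the second as in the definition of $\DC(\cdot)$ in Section~\ref{subsec::Dirac_Cohomology}, and $G\colon X\mapsto\Hom_{\Weyl}(M(\odd),X)$, the $\UE(\gg)$-action on $G(X)$ coming from the (commuting) $\UE(\gg)$-action on $X$. First I would record that $F$ is left adjoint to $G$: for $M\in\gsmod$ and $X\in(\UE(\gg)\otimes\Weyl)\textbf{-smod}$, the tensor--hom adjunction over $\Weyl$ gives a natural isomorphism $\Hom_{\UE(\gg)\otimes\Weyl}(M\otimes M(\odd),X)\cong\Hom_{\UE(\gg)}(M,\Hom_{\Weyl}(M(\odd),X))$, compatible with the $\ZZ_{2}$-gradings. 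It then remains to show that the unit $\eta_{M}\colon M\to\Hom_{\Weyl}(M(\odd),M\otimes M(\odd))$ and the counit $\varepsilon_{X}\colon\Hom_{\Weyl}(M(\odd),X)\otimes M(\odd)\to X$, $f\otimes v\mapsto f(v)$, are isomorphisms.

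For the unit the inputs are: $M(\odd)$ is a simple $\Weyl$-module (Proposition~\ref{prop::Weyl_Simple}) with $\End_{\Weyl}(M(\odd))\cong\CC$ — this follows from Schur's lemma together with the countability of $\dim_{\CC}M(\odd)$, or concretely from the fact that $M(\odd)=\CC[x_{1},\dots,x_{mn}]$ is generated by $1$, which is annihilated by every $\partial_{k}$, so a $\Weyl$-endomorphism $\phi$ sends $1$ into $\{w:\partial_{k}w=0\ \text{for all }k\}=\CC\cdot 1$ and is therefore a scalar — and $M(\odd)$ is cyclic, hence finitely generated over $\Weyl$, so $\Hom_{\Weyl}(M(\odd),-)$ commutes with arbitrary direct sums. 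Since $M\otimes M(\odd)\cong\bigoplus_{b\in B}M(\odd)$ as $\Weyl$-modules for any $\CC$-basis $B$ of $M$, we get $\Hom_{\Weyl}(M(\odd),M\otimes M(\odd))\cong\bigoplus_{b\in B}\CC\cong M$, and a direct check identifies this isomorphism with $\eta_{M}$ and shows that it is $\UE(\gg)$-equivariant; naturality in $M$ is immediate.

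The counit is the substantive point. Using that $M(\odd)\cong\Weyl/\Weyl(\partial_{1},\dots,\partial_{mn})$ as a left $\Weyl$-module, one identifies $\Hom_{\Weyl}(M(\odd),X)$ with $X^{\partial}:=\{v\in X:\partial_{k}v=0\ \text{for all }k\}$, and under this identification $\varepsilon_{X}$ becomes the multiplication map $X^{\partial}\otimes_{\CC}\CC[x_{1},\dots,x_{mn}]\to X$; it is an isomorphism precisely when $X$, regarded only as a $\Weyl$-module, is freely generated over $\CC[x_{1},\dots,x_{mn}]$ by $X^{\partial}$, that is, $X\cong M(\odd)\otimes X^{\partial}$ canonically. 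This is the Weyl-algebra analogue of Kashiwara's equivalence for the point, reflecting that $\Weyl$ is a simple algebra acting faithfully and irreducibly on $M(\odd)$: every $X$ in the target category — in particular every $X\cong N\otimes M(\odd)$, on which each $\partial_{k}$ acts locally nilpotently — splits as a $\Weyl$-module into a direct sum of copies of $M(\odd)$. Granting this, $\varepsilon_{X}$ is the canonical isomorphism $\bigoplus_{I}M(\odd)\to\bigoplus_{I}M(\odd)$; it is $\UE(\gg)$-equivariant by naturality of the adjunction, and naturality in $X$ is formal. I expect the main obstacle to be exactly this last structural statement — matching $X^{\partial}$ with $\Hom_{\Weyl}(M(\odd),X)$ and invoking the structure theory of $\Weyl$-modules to conclude that any such $X$ is $M(\odd)$-isotypic as a $\Weyl$-module; once it is in place, both triangle identities reduce to routine verifications and the equivalence follows.
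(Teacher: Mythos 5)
Your proposal takes essentially the same route as the paper: the paper likewise verifies $\Hom_{\Weyl}(M(\odd),M\otimes M(\odd))\cong M\otimes\End_{\Weyl}(M(\odd))\cong M$ using the simplicity of $M(\odd)$ (your unit), and then asserts that the evaluation map $\Hom_{\Weyl}(M(\odd),X)\otimes M(\odd)\to X$ is an isomorphism ``utilizing the simplicity of $M(\odd)$'' (your counit), so your adjunction framing is only a repackaging of the same two checks. The structural statement you flag and then grant --- that every $X$ in the target category is $M(\odd)$-isotypic as a $\Weyl$-module, equivalently $X\cong X^{\partial}\otimes\CC[x_{1},\dotsc,x_{mn}]$ --- is exactly the point the paper compresses into that one phrase; your remark that this rests on local nilpotence of the $\partial_{k}$ (automatic for $X\cong N\otimes M(\odd)$, but not for an arbitrary $\Weyl$-module) is, if anything, more careful than the paper's own justification.
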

\begin{proof}
It is well known that $\UE(\gg)$\textbf{-smod} is equivalent to $\gsmod$. By abuse of notation, we still use $F$ for the functor from $\UE(\gg)\textbf{-smod}$ to $(\UE(\gg) \otimes \Weyl)\textbf{-smod}^{\text{osc}}$. 

We first show that $F$ and $G$ are naturally isomorphic on objects. For any $\UE(\gg)$-supermodule $M$, we have the following isomorphisms:
$$
\Hom_{\Weyl}(M(\odd), M \otimes M(\odd)) \cong M \otimes \Hom_{\Weyl}(M(\odd), M(\odd)) \cong M,
$$
where the second isomorphism uses the fact that $M(\odd)$ is a simple $\Weyl$-supermodule (see Proposition~\ref{prop::Weyl_Simple}) and Schur's lemma. Conversely, let $X \in (\UE(\gg)\otimes \Weyl)\textbf{-smod}^{\text{osc}}$. Then there exists some $\UE(\gg)$-supermodule $M$ such that $X \cong M\otimes M(\odd)$ and one has
\[
\Hom_{\Weyl}(M(\odd),X)
\cong
\Hom_{\Weyl}(M(\odd),M\otimes M(\odd))
\cong
M\otimes \End_{\Weyl}(M(\odd))
\cong M,
\]
where the last isomorphism uses Schur’s lemma. Therefore
$
F(G(X))\cong M\otimes M(\odd)\cong X.
$ Altogether, we have proved that the assignments $M \mapsto M\otimes M(\odd)$ and $X \mapsto \Hom_{\Weyl}(M(\odd),X)$ are inverses of each other. In particular, $F$ is essentially surjective. 

We show that $F$ is full and faithful. 
Consider the canonical tensor--Hom adjunction
\[
\Hom_{\UE(\gg) \otimes \Weyl}(M \otimes M(\odd),\, X)
 \cong
 \Hom_{\UE(\gg)}\!\bigl(M,\, \Hom_{\Weyl}(M(\odd),X)\bigr),
\]
which expresses precisely that $F$ is left adjoint to $G$. 
Hence
\[
\Hom_{\UE(\gg) \otimes \Weyl}(F(M),F(N))
 \cong
 \Hom_{\UE(\gg)}(M, G(F(N)))
 \cong
 \Hom_{\UE(\gg)}(M,N),
\]
showing that $F$ is full and faithful. 
Since $F$ is also essentially surjective, we conclude that $F$ is an equivalence of categories. \end{proof}

We consider $\UE(\gg) \otimes \Weyl$ as a $\UE(\even)$-module under the diagonal embedding, and denote the subspace of $\UE(\even)$-invariant elements of $\UE(\gg) \otimes \Weyl$ by $(\UE(\gg) \otimes \Weyl)^{\UE(\even)}$. Let $\mathscr{I}$ be the two-sided ideal in $(\UE(\gg) \otimes \Weyl)^{\UE(\even)}$ generated by the Dirac operator $\Dirac$. Since the Dirac operator $\Dirac$ commutes with $\even$, the ideal $\mathscr{I}$ is $\even$-stable by construction.

We define $\DC'(M)$ to be the subspace of $\mathscr{I}$-invariants in $M \otimes M(\odd)$ for a given $\gg$-supermodule $M$, that is,
\begin{equation}
\DC'(M)\coloneqq \{v \in M \otimes M(\odd) \ : \ x v = 0 \ \text{for all} \ x \in \mathscr{I}\}.
\end{equation}
In particular, $\DC'(M)$ is a $\even$-supermodule for any $\gg$-supermodule $M$. For unitarizable $\gg$-supermodules $M$, we will show that $\DC(M) \cong \DC'(M)$ in Corollary~\ref{DiracsCoincide}. For this reason, we also refer to $\DC'(\cdot)$ as Dirac cohomology.

\begin{proposition} \label{DiracInduction}
The Dirac cohomology $\DC'(\cdot)$ admits a left-adjoint functor $\Ind_{\Dirac}: \gg_{\bar{0},\Updelta}\textbf{-smod} \to (\UE(\gg)\otimes \Weyl)\textbf{-smod}^{\text{osc}}$, called \emph{Dirac induction}, given by 
$$
\Ind_{\Dirac}(V)\coloneqq \left(\UE(\gg)\otimes \Weyl\right)\otimes_{\UE(\gg_{\bar{0},\Updelta})(\CC 1\oplus \mathscr{I})}V
$$
for any $\even$-supermodule $V$ with trivial $\mathscr{I}$-action.
In particular, $\DC'(\cdot)$ is left-exact. 
\end{proposition}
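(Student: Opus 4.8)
The plan is to recognise Proposition \ref{DiracInduction} as the classical extension--restriction-of-scalars adjunction along an inclusion of rings, once the relevant categories have been set up correctly. Throughout write $S:=\UE(\gg)\otimes\Weyl$ and $R:=\UE(\mathfrak{g}_{\bar{0},\Delta})(\CC 1\oplus\mathscr{I})$. The first step is to check that $R$ is genuinely a subalgebra of $S$ and that $\mathscr{I}$ is a two-sided ideal of $R$. This is immediate: since $\mathscr{I}$ is by construction a two-sided ideal of $S^{\UE(\even)}$ and $\UE(\mathfrak{g}_{\bar{0},\Delta})\subseteq S^{\UE(\even)}$, we get $\UE(\mathfrak{g}_{\bar{0},\Delta})\,\mathscr{I}\subseteq\mathscr{I}$ and $\mathscr{I}\,\UE(\mathfrak{g}_{\bar{0},\Delta})\subseteq\mathscr{I}$, whence $R=\UE(\mathfrak{g}_{\bar{0},\Delta})+\mathscr{I}$ is a subalgebra and $R/\mathscr{I}\cong\UE(\mathfrak{g}_{\bar{0},\Delta})/(\UE(\mathfrak{g}_{\bar{0},\Delta})\cap\mathscr{I})$ is a quotient of $\UE(\even)$. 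Thus an $\even$-supermodule with trivial $\mathscr{I}$-action is exactly an $R/\mathscr{I}$-module, and $\Ind_{\Dirac}(V)=S\otimes_{R}V$ is well defined because $S$ is a right $R$-module under restriction of its right regular action.

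Next I would transport everything through the equivalence $\gsmod\cong(\UE(\gg)\otimes\Weyl)\textbf{-smod}$, $M\mapsto M\otimes M(\odd)$, established earlier. Under it $\DC'(\cdot)$ becomes the $\mathscr{I}$-invariants functor $X\mapsto X^{\mathscr{I}}:=\{v\in X:\mathscr{I}v=0\}$ on $(\UE(\gg)\otimes\Weyl)\textbf{-smod}$; this functor lands in $R/\mathscr{I}$-modules, i.e. $\even$-supermodules with trivial $\mathscr{I}$-action, because $X^{\mathscr{I}}$ is $R$-stable: for $r\in R$, $v\in X^{\mathscr{I}}$ and $r'\in\mathscr{I}$ one has $r'(rv)=(r'r)v=0$ since $\mathscr{I}R\subseteq\mathscr{I}$. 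It therefore suffices to prove that $S\otimes_{R}(-)$ is left adjoint to $(-)^{\mathscr{I}}$.

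For the adjunction itself I would invoke the standard tensor--hom adjunction for the inclusion $R\hookrightarrow S$, which gives, naturally in an $R/\mathscr{I}$-module $V$ and an $S$-module $X$, an isomorphism $\Hom_{S}(S\otimes_{R}V,X)\cong\Hom_{R}(V,X\vert_{R})$. Then I would observe that every $R$-linear map $f\colon V\to X\vert_{R}$ automatically has image in $X^{\mathscr{I}}$: for $r'\in\mathscr{I}$ and $v\in V$, $r'f(v)=f(r'v)=f(0)=0$. Conversely an $R$-linear map $V\to X^{\mathscr{I}}$ is in particular an $R$-linear map $V\to X$; and since $\mathscr{I}$ annihilates both $V$ and $X^{\mathscr{I}}$, $R$-linearity between them is the same as $R/\mathscr{I}$-linearity, hence the same as $\UE(\even)$-linearity. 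This yields $\Hom_{S}(S\otimes_{R}V,X)\cong\Hom_{\even}(V,X^{\mathscr{I}})$ naturally, which after transport along the equivalence is the claimed adjunction $\Ind_{\Dirac}\dashv\DC'$. Left-exactness of $\DC'$ is then automatic, being a right adjoint it preserves all limits, in particular kernels; alternatively $X^{\mathscr{I}}=\bigcap_{r\in\mathscr{I}}\Ker(r\vert_{X})$ is visibly left-exact in $X$.

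I expect the only delicate point to be the bookkeeping around $R$ and $R/\mathscr{I}$: making precise that ``$\even$-supermodule with trivial $\mathscr{I}$-action'' is the correct source for $\Ind_{\Dirac}$, and that the $\even$-structure on $\DC'(M)$ produced earlier in the paper coincides with the $R/\mathscr{I}$-structure used here (both arise from the diagonal embedding $\even\to\UE(\gg)\otimes\Weyl$). Once that is pinned down, no computation is needed: the proof reduces to the $(S\otimes_{R}-)\dashv(\text{restriction})$ adjunction together with the elementary remark that a homomorphism out of an $\mathscr{I}$-trivial module necessarily lands in the $\mathscr{I}$-invariants.
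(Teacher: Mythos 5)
Your proposal is correct and follows essentially the same route as the paper's (much terser) proof: identify $\DC'$ as restriction of scalars to $R=\UE(\gg_{\bar{0},\Delta})(\CC 1\oplus\mathscr{I})$ followed by $\mathscr{I}$-invariants, and apply the extension--restriction-of-scalars adjunction. You additionally spell out the two points the paper leaves implicit — that $R$ is a subalgebra with $\mathscr{I}$ a two-sided ideal, and that morphisms out of an $\mathscr{I}$-trivial module automatically land in the $\mathscr{I}$-invariants — which is exactly the bookkeeping needed to make the paper's one-line conclusion rigorous.
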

\begin{proof}
We can understand $\DC'(\cdot)$ through the forgetful functor from $\UE(\gg)\otimes \Weyl$ to $\UE(\gg_{\bar{0},\Updelta})(\CC 1\oplus \mathscr{I})$ and then followed by taking $\mathscr{I}$-invariants. Indeed, $\UE(\gg_{\bar{0},\Updelta})(\CC 1\oplus \mathscr{I})$ is the $\even$-invariant subalgebra of $\UE(\gg)\otimes \Weyl$ generated by $\gg_{\bar{0},\Updelta}$ and $\mathscr{I}$. This has exactly as left-adjoint $\left(\UE(\gg)\otimes \Weyl\right)\otimes_{\UE(\gg_{\bar{0},\Updelta})(\CC 1\oplus \mathscr{I})}V$.
\end{proof}
\begin{corollary}
Let $M$ be a $\gg$-supermodule. Then a submodule $V \subset \DC(M)$ appears as a $\gg_{\bar{0},\Updelta}$-constituent if and only if $M$ is a quotient of $\Ind_{\Dirac}(V)$.
\end{corollary}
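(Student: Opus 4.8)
The plan is to deduce the corollary from the adjunction of Proposition~\ref{DiracInduction} combined with the equivalence of categories $\gg\textbf{-smod}\xrightarrow{\ \sim\ }(\UE(\gg)\otimes\Weyl)\textbf{-smod}$, $M\mapsto M\otimes M(\odd)$. Write $X:=M\otimes M(\odd)$ for the $(\UE(\gg)\otimes\Weyl)$-supermodule attached to $M$; under the equivalence the functor $\DC'(\cdot)$ becomes $X\mapsto X^{\mathscr I}=\DC'(M)$ (which agrees with $\DC(M)$ on the unitarizable supermodules at issue, by Corollary~\ref{DiracsCoincide}), and ``$M$ is a quotient of $\Ind_{\Dirac}(V)$'' means that there is an epimorphism of $(\UE(\gg)\otimes\Weyl)$-supermodules $\Ind_{\Dirac}(V)\twoheadrightarrow X$. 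I would run the argument for $M$ simple, which is the relevant case and which supplies the decisive leverage: an equivalence of categories preserves simple objects, so $X$ is a simple $(\UE(\gg)\otimes\Weyl)$-supermodule, and therefore cyclically generated by any one of its nonzero vectors (concretely by $v_{\Lambda}\otimes 1$, which lies in $\DC(M)$: one checks $\Dirac(v_{\Lambda}\otimes 1)=0$ since $\nn_{\bar{1}}^{+}v_{\Lambda}=0$ and each $\partial_{k}$ annihilates the constant polynomial).

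Next I would make the adjunction explicit. For a simple $\gg_{\bar{0},\Delta}$-supermodule $V$ — necessarily $\mathscr I$-trivial if it occurs in $\DC'(M)$, since all of $\DC'(M)$ is $\mathscr I$-invariant — Proposition~\ref{DiracInduction} gives a natural bijection
\[
\Hom_{(\UE(\gg)\otimes\Weyl)\textbf{-smod}}\!\bigl(\Ind_{\Dirac}(V),X\bigr)\ \cong\ \Hom_{\gg_{\bar{0},\Delta}\textbf{-smod}}\!\bigl(V,\DC'(M)\bigr),
\]
sending $f$ to $v\mapsto f(1\otimes v)$, and sending a $\gg_{\bar{0},\Delta}$-map $g\colon V\to\DC'(M)$ to the $(\UE(\gg)\otimes\Weyl)$-linear extension determined by $1\otimes v\mapsto g(v)$. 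Since $\Ind_{\Dirac}(V)$ is generated over $\UE(\gg)\otimes\Weyl$ by $1\otimes V$, the image of the extension of $g$ is the submodule $(\UE(\gg)\otimes\Weyl)\cdot g(V)$.

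For ``$V\subset\DC(M)$ a $\gg_{\bar{0},\Delta}$-component $\Rightarrow$ $M$ a quotient of $\Ind_{\Dirac}(V)$'', I feed the inclusion $V\hookrightarrow\DC'(M)$ into the bijection: its adjoint $\Ind_{\Dirac}(V)\to X$ has image $(\UE(\gg)\otimes\Weyl)\cdot V$, which is all of $X$ because $X$ is simple and $V\ne 0$; hence $X$, and so $M$, is a quotient of $\Ind_{\Dirac}(V)$. Conversely, given an epimorphism $f\colon\Ind_{\Dirac}(V)\twoheadrightarrow X$, its adjoint $g\colon V\to\DC'(M)$, $g(v)=f(1\otimes v)$, is injective: if $v\in\ker g$ then $(\UE(\gg)\otimes\Weyl)(1\otimes v)\subseteq\ker f$, so $f$ factors through $\Ind_{\Dirac}(V/\ker g)$, and simplicity of $V$ excludes $\ker g=V$ (which would force $f=0$, impossible as $X\ne 0$), leaving $\ker g=0$. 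Thus $V\cong g(V)$ is a $\gg_{\bar{0},\Delta}$-submodule of $\DC'(M)$, and since $\DC'(M)=\DC(M)$ is $\even$-semisimple (Lemma~\ref{lemm::ev}) this submodule is a direct summand, i.e.\ a $\gg_{\bar{0},\Delta}$-component.

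I expect the only real obstacle to be the bookkeeping that matches ``inclusion of a component of $\DC'(M)$'' with ``epimorphism onto $M\otimes M(\odd)$'' under the Hom-bijection, and in particular pinning down that $M\otimes M(\odd)$ is simple as a $(\UE(\gg)\otimes\Weyl)$-supermodule (equivalently, cyclically generated by any nonzero vector, e.g. $v_{\Lambda}\otimes 1$). Everything else is formal: the adjunction of Proposition~\ref{DiracInduction}, the category equivalence, and the complete reducibility of unitarizable supermodules under $\even$.
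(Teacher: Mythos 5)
Your argument is the intended one: the paper offers no proof, treating the corollary as immediate from the adjunction of Proposition~\ref{DiracInduction}, and your write-up simply makes that adjunction argument explicit (adjoint of the inclusion $V\hookrightarrow\DC'(M)$ in one direction, adjoint of the epimorphism in the other), together with the category equivalence to transport simplicity from $M$ to $M\otimes M(\odd)$. Two remarks. First, your restriction to $M$ simple is not a cosmetic choice but a necessary one: for a general $\gg$-supermodule the forward implication fails (take $M=M_{1}\oplus M_{2}$ with $V\subset\DC'(M_{1})$; the image of the adjoint map is only $(\UE(\gg)\otimes\Weyl)\cdot V$, a proper submodule), so the corollary as literally stated needs $M$ simple, or at least cyclic over the image of $V$ — you correctly identified the decisive leverage. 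Second, in the converse direction the upgrade from ``$V$ embeds in $\DC'(M)$'' to ``$V$ is a component'' needs semisimplicity of $\DC'(M)$ as a $\gg_{\bar{0},\Delta}$-module; the reference for this is Proposition~\ref{HDseimsimple} (not Lemma~\ref{lemm::ev}), and it is only available for unitarizable $M$, which is a genuine additional hypothesis beyond what the corollary states. Everything else — the identification of $\DC'$ with $\mathscr{I}$-invariants, the explicit unit/counit of the adjunction, and the simplicity of $V$ forcing injectivity of the adjoint map — is sound.
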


\section{Dirac operators, Dirac cohomology and unitarizable supermodules} \label{DiracAndUnitarity}
\noindent
In this section we explain how the Dirac operator and Dirac cohomology reflect unitarity. 
More precisely, we establish a Parthasarathy--Dirac inequality, show how the Dirac operator detects the unitarity of $M$ (in particular, the selfadjointness of $\Dirac$ is equivalent to the contravariance of a positive definite Hermitian form), derive a decomposition of $\HH \otimes M(\odd)$ with respect to $\Dirac^{2}$, and demonstrate that Dirac cohomology provides a complete characterization of unitarizable supermodules.

\subsection{Dirac operators and unitarity} Fix a unitarizable $\gg$-supermodule $\HH$, equipped with a positive definite Hermitian form $\langle \cdot, \cdot \rangle_{\HH}$. For $M(\odd)$, we use the Bargmann--Fock Hermitian form $\langle \cdot, \cdot \rangle_{M(\odd)}$, as defined in Section~\ref{subsubsec::Weyl_Supermodule}. We then endow the $\UE(\gg) \otimes \Weyl$-supermodule $\HH \otimes M(\odd)$ with the Hermitian form defined by
\begin{equation}
\langle v \otimes P, w \otimes Q \rangle_{\HH \otimes M(\odd)}\coloneqq \langle v, w \rangle_{\HH} \cdot \langle P, Q \rangle_{M(\odd)},
\end{equation}
for all $v \otimes P, w \otimes Q \in \HH \otimes M(\odd)$. If $\HH$ is simple, the Hermitian form $\langle \cdot, \cdot \rangle_{\HH \otimes M(\odd)}$ is positive definite and unique up to a real scalar, since $\HH$ is a highest weight supermodule and $M(\odd)$ is a simple $^{\ast}$-module over $\Weyl$ (\emph{cf.}~\eqref{eq::star_module}). 

We extend the conjugate-linear anti-involution $\omega$ naturally to $\UE(\gg)$. Moreover, $\omega$ defines a conjugate-linear anti-involution on the Weyl algebra $\Weyl$, and we extend it in the obvious way to a conjugate-linear anti-involution on the tensor product $\UE(\gg) \otimes \Weyl$, denoted by the same symbol.

The following lemma is a direct consequence of Lemma~\ref{lemm::adjoint_Weyl_supermodule}.

\begin{lemma} \label{PropertiesH}
For all $X \in \gg$ and for all generators $\partial_k, x_k$ of $\Weyl$, the following identities hold:
\begin{equation*}
\begin{aligned}
\langle (X \otimes x_k) v, w \rangle_{\HH \otimes M(\odd)} &= \langle v, (\omega(X) \otimes \partial_k) w \rangle_{\HH \otimes M(\odd)}, \\
\langle (X \otimes \partial_k) v, w \rangle_{\HH \otimes M(\odd)} &= \langle v, (\omega(X) \otimes x_k) w \rangle_{\HH \otimes M(\odd)},
\end{aligned}
\end{equation*}
for all $v, w \in \HH \otimes M(\odd)$.
\end{lemma}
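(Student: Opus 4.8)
The plan is to read the two asserted identities as the single statement that the Hermitian form on $\HH\otimes M(\odd)$ is contravariant, under the anti‑involution $\omega$ extended to $\UE(\gg)\otimes\Weyl$ by $\omega(X\otimes x_k)=\omega(X)\otimes\partial_k$ and $\omega(X\otimes\partial_k)=\omega(X)\otimes x_k$ (using $\omega(x_k)=\partial_k$, $\omega(\partial_k)=x_k$), restricted to the generators $X\otimes x_k$ and $X\otimes\partial_k$ of $\UE(\gg)\otimes\Weyl$. Since both the $\UE(\gg)\otimes\Weyl$-action on $\HH\otimes M(\odd)$ and the form $\langle\cdot,\cdot\rangle_{\HH\otimes M(\odd)}$ are defined componentwise, and every vector is a finite sum of simple tensors, it suffices to check the identities on $v=v'\otimes P$, $w=w'\otimes Q$ with $v',w'\in\HH$ and $P,Q\in M(\odd)$, and then extend by sesquilinearity (linearity in the first slot, conjugate‑linearity in the second).

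For the first identity I would compute directly
$$
\langle (X\otimes x_k)(v'\otimes P),\, w'\otimes Q\rangle_{\HH\otimes M(\odd)} = \langle Xv',w'\rangle_{\HH}\,\langle x_kP,Q\rangle_{M(\odd)},
$$
then apply contravariance of $\langle\cdot,\cdot\rangle_{\HH}$ with respect to $\omega$ (unitarity of $\HH$, extended to $\UE(\gg)$) to get $\langle Xv',w'\rangle_{\HH}=\langle v',\omega(X)w'\rangle_{\HH}$, and Lemma~\ref{lemm::adjoint_Weyl_supermodule} to get $\langle x_kP,Q\rangle_{M(\odd)}=\langle P,\partial_kQ\rangle_{M(\odd)}$. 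Recombining,
$$
\langle v',\omega(X)w'\rangle_{\HH}\,\langle P,\partial_kQ\rangle_{M(\odd)} = \langle v'\otimes P,\,(\omega(X)\otimes\partial_k)(w'\otimes Q)\rangle_{\HH\otimes M(\odd)},
$$
which is the claim on simple tensors. The second identity is obtained identically, now invoking the companion relation $\langle \partial_kP,Q\rangle_{M(\odd)}=\langle P,x_kQ\rangle_{M(\odd)}$ of Lemma~\ref{lemm::adjoint_Weyl_supermodule}. Extending both sides from simple tensors to arbitrary $v,w\in\HH\otimes M(\odd)$ using the sesquilinearity noted above finishes the argument.

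This lemma carries no genuine difficulty; it is a mechanical assembly of the two factorwise contravariance properties. The only points worth a moment's attention are that no Koszul/parity sign intervenes — which holds because, in the conventions fixed earlier, both the action of $\UE(\gg)\otimes\Weyl$ on $\HH\otimes M(\odd)$ and the Hermitian form $\langle\cdot,\cdot\rangle_{\HH\otimes M(\odd)}$ are componentwise with no sign prefactor — and that one must keep the sesquilinearity convention straight (linear in the first argument, conjugate‑linear in the second) when passing from simple tensors to general elements.
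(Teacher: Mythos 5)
Your proposal is correct and is exactly the argument the paper intends: the paper dispenses with a written proof, remarking only that the lemma is a direct consequence of Lemma~\ref{lemm::adjoint_Weyl_supermodule} together with $\omega(x_{k})=\partial_{k}$, $\omega(\partial_{k})=x_{k}$, which is precisely your factorwise computation on simple tensors (contravariance of $\langle\cdot,\cdot\rangle_{\HH}$ from unitarity, Bargmann--Fock adjointness on $M(\odd)$) extended by sesquilinearity. Your observation that no parity sign enters, because both the action and the form are defined componentwise without sign prefactors, is also consistent with the paper's conventions.
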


We decompose the Dirac operator $\Dirac$ as $\Dirac = \Dp + \Dq$, where $\Dp, \Dq \in \UE(\gg) \otimes \Weyl$ are $\kk^{\CC}$-invariant elements defined in \eqref{eq::decomposition_Dirac}. Recall that
\begin{equation}
\Dp = 2(\ddp - \delp), \qquad \Dq = 2(\ddq - \delq).
\end{equation}
By Lemma~\ref{PropertiesH} and the identity $\omega(x_k) = -\partial_k$ in $\gg$ for all $k = 1, \dots, mn$, the operators $\ddp$ and $\delp$, as well as $\ddq$ and $\delq$, are skew-adjoint to each other up to sign with respect to the Hermitian form $\langle \cdot, \cdot \rangle_{\HH \otimes M(\odd)}$.

\begin{lemma} \label{Relationen}
The operators $\ddp$ and $\delp$, and similarly $\ddq$ and $\delq$, are skew-adjoint to each other with respect to the Hermitian form $\langle \cdot, \cdot \rangle_{\HH \otimes M(\odd)}$, \emph{i.e.},
\[
\langle \delta^{\pp_{1}} v, w \rangle_{\HH \otimes M(\odd)} 
= -\langle v, \mathrm{d}^{\pp_{1}} w \rangle_{\HH \otimes M(\odd)}, \qquad
\langle \delta^{\qq_{2}} v, w \rangle_{\HH \otimes M(\odd)} 
= -\langle v, \mathrm{d}^{\qq_{2}} w \rangle_{\HH \otimes M(\odd)}.
\]
In particular, the operators $\Dp$, $\Dq$, and $\Dirac$ are selfadjoint on $\HH \otimes M(\odd)$.
\end{lemma}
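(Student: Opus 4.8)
The plan is to read everything off Lemma~\ref{PropertiesH} together with the relations $\omega(x_k)=\partial_k$ and $\omega(\partial_k)=x_k$. The only point requiring care is that in $\ddp=\sum_{k=1}^{pn}\partial_k\otimes x_k$ and $\delp=\sum_{k=1}^{pn}x_k\otimes\partial_k$ the left tensor factor is a root vector of $\odd\subset\UE(\gg)$ acting on $\HH$, while the right tensor factor is a generator of $\Weyl$ acting on $M(\odd)$; Lemma~\ref{PropertiesH} is stated precisely for this configuration and already absorbs all super signs, so it may be applied with the odd elements $X=\partial_k$ or $X=x_k$ of $\gg$.

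Concretely, for $v,w\in\HH\otimes M(\odd)$ I would compute
\[
\langle\ddp v,w\rangle_{\HH\otimes M(\odd)}=\sum_{k=1}^{pn}\langle(\partial_k\otimes x_k)v,w\rangle_{\HH\otimes M(\odd)}=\sum_{k=1}^{pn}\langle v,(\omega(\partial_k)\otimes\partial_k)w\rangle_{\HH\otimes M(\odd)},
\]
where the last equality is the first identity of Lemma~\ref{PropertiesH} applied with $X=\partial_k$; since $\omega(\partial_k)=x_k$, this equals $\sum_{k=1}^{pn}\langle v,(x_k\otimes\partial_k)w\rangle_{\HH\otimes M(\odd)}=\langle v,\delp w\rangle_{\HH\otimes M(\odd)}$. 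Hence $\ddp$ and $\delp$ are mutually adjoint, and the computation for $\ddq$ and $\delq$ is word for word the same, now summing over $pn+1\le k\le mn$ and using the second identity of Lemma~\ref{PropertiesH}.

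For the remaining assertion, since $\Dp=2(\ddp-\delp)$ and adjunction is additive and real-linear in the scalar $2$, the previous step gives $(\Dp)^{*}=2\bigl((\ddp)^{*}-(\delp)^{*}\bigr)=2(\delp-\ddp)=-\Dp$, so $\Dp$ is anti-selfadjoint, and likewise $\Dq$; therefore $\Dirac=\Dp+\Dq$ is anti-selfadjoint on $\HH\otimes M(\odd)$ as a sum of anti-selfadjoint operators. There is no genuine obstacle: the content is entirely contained in Lemma~\ref{PropertiesH}, and the only thing one must not do is conflate the two roles --- enveloping-algebra element versus Weyl-algebra generator --- played by the symbols $x_k$ and $\partial_k$.
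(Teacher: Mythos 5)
Your proof is correct and is exactly the argument the paper intends: the paper presents Lemma~\ref{Relationen} as a direct consequence of Lemma~\ref{PropertiesH}, and you have simply spelled out that application (taking $X=\partial_k$ or $X=x_k$ in $\gg$, using $\omega(\partial_k)=x_k$) together with the routine deduction that $\Dp=2(\ddp-\delp)$, $\Dq$, and hence $\Dirac$ are anti-selfadjoint. Your caveat about keeping the enveloping-algebra and Weyl-algebra roles of $x_k,\partial_k$ separate is exactly the right point of care, and is already built into the statement of Lemma~\ref{PropertiesH}.
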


\begin{corollary} \label{iterativeKernel}
For all $k \in \ZZ_{+}$, the operators $\Dirac$, $\Dp$, and $\Dq$ satisfy:
\[
\Ker \Dirac = \Ker \Dirac^{k}, \qquad 
\Ker \Dp = \Ker \big( \Dirac^{\pp_{1}} \big)^{k}, \qquad 
\Ker \Dq = \Ker \big(\Dirac^{\qq_{2}}\big)^{k}.
\]
\end{corollary}

Another direct consequence of Lemma~\ref{Relationen} is an inequality for $\Dirac$, known as \emph{Parthasarathy's Dirac inequality}, or simply the \emph{Dirac inequality}.

\begin{proposition}[Parthasarathy's Dirac inequality] \label{prop::Dirac_inequality}
The square of the Dirac operator satisfies $\Dirac^2 \geq 0$ on $\HH \otimes M(\odd)$; that is,
\[
\langle \Dirac^2 v, v \rangle_{\HH \otimes M(\odd)} \geq 0 \quad \text{for all } v \in \HH \otimes M(\odd).
\]
\end{proposition}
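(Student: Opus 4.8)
The plan is to read off the inequality directly from the anti-selfadjointness of $\Dirac$ recorded in Lemma~\ref{Relationen}, combined with positive-definiteness of $\langle\cdot,\cdot\rangle_{\HH\otimes M(\odd)}$. Since $\Dirac\in\UE(\gg)\otimes\Weyl$ acts on $\HH\otimes M(\odd)$, so does $\Dirac^{2}$, and the Hermitian form is defined on all of $\HH\otimes M(\odd)$; hence $\langle\Dirac^{2}v,v\rangle_{\HH\otimes M(\odd)}$ is meaningful for every $v$.

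First I would restate anti-selfadjointness in the form $\langle\Dirac v,w\rangle_{\HH\otimes M(\odd)}=-\langle v,\Dirac w\rangle_{\HH\otimes M(\odd)}$ for all $v,w\in\HH\otimes M(\odd)$; this is the last assertion of Lemma~\ref{Relationen} (which itself follows from Lemma~\ref{PropertiesH}, the relations $\omega(x_{k})=\partial_{k}$, $\omega(\partial_{k})=x_{k}$, and the decompositions $\Dp=2(\ddp-\delp)$, $\Dq=2(\ddq-\delq)$). Applying this identity with the pair $(\Dirac v,v)$ in place of $(v,w)$ gives, for any fixed $v\in\HH\otimes M(\odd)$,
$$
\langle\Dirac^{2}v,v\rangle_{\HH\otimes M(\odd)}=\langle\Dirac(\Dirac v),v\rangle_{\HH\otimes M(\odd)}=-\langle\Dirac v,\Dirac v\rangle_{\HH\otimes M(\odd)}.
$$
Since $\langle\cdot,\cdot\rangle_{\HH\otimes M(\odd)}$ is positive definite we have $\langle\Dirac v,\Dirac v\rangle_{\HH\otimes M(\odd)}\geq 0$, whence $\langle\Dirac^{2}v,v\rangle_{\HH\otimes M(\odd)}\leq 0$, with equality exactly when $v\in\Ker\Dirac$. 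This yields $\Dirac^{2}\leq 0$ on $\HH\otimes M(\odd)$, and the same computation applied to $\Dp$ and $\Dq$ shows that these are negative semidefinite as well.

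I do not expect a genuine obstacle: the whole content is already packaged in Lemma~\ref{Relationen}, and the remainder is the standard observation that the square of an anti-selfadjoint operator is negative semidefinite. The only points that merit a line of care are the conjugate-linear bookkeeping — the scalar that emerges is $-1$, which is real, so no spurious conjugation appears — and a parity check: $\Dirac$ is odd while $\langle\cdot,\cdot\rangle_{\HH}$ and $\langle\cdot,\cdot\rangle_{M(\odd)}$ are consistent, so $\Dirac^{2}$ is even and $\langle\Dirac^{2}v,v\rangle_{\HH\otimes M(\odd)}$ pairs vectors of equal parity, on which the form is honestly positive definite. Equivalently, one may simply note that $-\Dirac^{2}$ acts on $\HH\otimes M(\odd)$ as the composition of $\Dirac$ with its adjoint, which is manifestly a positive operator.
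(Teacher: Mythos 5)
Your argument is exactly the paper's: both invoke the anti-selfadjointness of $\Dirac$ from Lemma~\ref{Relationen} together with positive definiteness of $\langle\cdot,\cdot\rangle_{\HH\otimes M(\odd)}$ to get $\langle\Dirac^{2}v,v\rangle_{\HH\otimes M(\odd)}=-\langle\Dirac v,\Dirac v\rangle_{\HH\otimes M(\odd)}\leq 0$. The proposal is correct and matches the paper's proof.
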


\begin{proof}
Let $v \in \HH \otimes M(\odd)$ be arbitrary. By Lemma~\ref{Relationen}, the Dirac operator $\Dirac$ is selfadjoint with respect to the Hermitian form $\langle \cdot, \cdot \rangle_{\HH \otimes M(\odd)}$. Moreover, this form is positive definite by construction. Consequently,
\[
\langle \Diracv, \Diracv \rangle_{\HH \otimes M(\odd)} 
= \langle \Dirac^2 v, v \rangle_{\HH \otimes M(\odd)} \geq 0,
\]
which proves the claim.
\end{proof}

We will specify this inequality in Proposition~\ref{prop::specified_Dirac_inequality} for highest weight supermodules. 

Let $M$ be a simple $\gg$-supermodule equipped with a positive definite Hermitian form, and assume that $M$ is $\even$-semisimple. A natural question is: when does the Dirac operator $\Dirac$ act as a selfadjoint operator on $M \otimes M(\odd)$?

\begin{theorem} \label{DiracUnit}
Let $M$ be a simple $\gg$-supermodule equipped with a positive definite Hermitian form $\langle \cdot, \cdot \rangle_M$, such that $M_{\bar{0}}$ and $M_{\bar{1}}$ are mutually orthogonal. Assume that $M$ is $\even$-semisimple. Then the following are equivalent:
\begin{enumerate}
 \item[(i)] $(M, \langle \cdot, \cdot \rangle_M)$ is a unitarizable $\gg$-supermodule.
 \item[(ii)] The Dirac operator $\Dirac$ is selfadjoint with respect to $\langle \cdot, \cdot \rangle_{M \otimes M(\odd)}$.
\end{enumerate}
\end{theorem}

\begin{proof}
If $M$ is unitarizable with respect to $\langle \cdot, \cdot \rangle_M$, then $\Dirac$ is selfadjoint on $M \otimes M(\odd)$ by Lemma~\ref{Relationen}.

Conversely, assume $\Dirac$ is selfadjoint with respect to $\langle \cdot, \cdot \rangle_{M \otimes M(\odd)}$. It suffices to show that the operators $\partial_k \otimes 1$ and $x_k$ have adjoints $\omega(\partial_{k})$ and $\omega(x_{k})$, since they generate $\odd$ and $[\odd, \odd] = \even$. A direct calculation shows:
\[
\Dirac (1 \otimes \partial_k) - (1 \otimes \partial_k) \Dirac = -2(\partial_k \otimes 1), \qquad
(1 \otimes x_k) \Dirac - \Dirac (1 \otimes x_k) = 2(x_k \otimes 1).
\]
Taking adjoints $(\cdot)^{\dagger}$ with respect to $\langle \cdot,\cdot \rangle_{M}$ yields:
\[
\left( \Dirac (1 \otimes \partial_k) - (1 \otimes \partial_k) \Dirac \right)^{\dagger}
= (1 \otimes x_k) \Dirac - \Dirac (1 \otimes x_k),
\]
using Lemmas~\ref{lemm::adjoint_Weyl_supermodule} and~\ref{Relationen}. Consequently, for all $v, w \in M$, we compute:
\begin{equation*}
\begin{aligned}
\langle \partial_k v, w \rangle_M 
&= \langle (\partial_k \otimes 1)(v \otimes 1), w \otimes 1 \rangle_{M \otimes M(\odd)} \\
&= -\tfrac{1}{2} \langle (\Dirac (1 \otimes \partial_k) - (1 \otimes \partial_k) \Dirac)(v \otimes 1), w \otimes 1 \rangle_{M \otimes M(\odd)} \\
&= -\tfrac{1}{2} \langle v \otimes 1, ((1 \otimes x_k) \Dirac - \Dirac (1 \otimes x_k))(w \otimes 1) \rangle_{M \otimes M(\odd)} \\
&= -\langle v \otimes 1, (x_k \otimes 1)(w \otimes 1) \rangle_{M \otimes M(\odd)} \\
&= -\langle v, x_k w \rangle_M \\ &= \langle v, \omega(\partial_{k})w\rangle_{M}.
\end{aligned}
\end{equation*}
Analogously, one shows that $\langle x_{k} v, w \rangle_{M} = \langle v, \omega(x_{k}) w \rangle_{M}$. 
This completes the proof.
\end{proof}

In general, the (Kac) induction of a unitarizable $\even$-supermodule is not itself unitarizable. Moreover, it can happen that a simple $\gg$-supermodule $M$ restricts to a unitarizable $\even$-module, even though $M$ is not unitarizable as a $\gg$-supermodule. The Dirac operator provides a criterion that helps clarify this situation.

\begin{corollary} \label{DUnit}
Let $M$ be a simple $\gg$-supermodule. Assume that $M$ admits an infinitesimal character, is $\even$-semisimple and it is equipped with a $\omega\vert_{\even}$-contravariant positive definite Hermitian form $\langle \cdot, \cdot \rangle_M$ such that $M_{\bar{0}}$ and $M_{\bar{1}}$ are mutually orthogonal. Then $M$ is unitarizable as a $\gg$-supermodule if and only if the following conditions hold:
\begin{enumerate}
 \item[(i)] All eigenvalues of $\Dirac^2$ on $M \otimes M(\odd)$ are non-negative.
 \item[(ii)] For each eigenvalue $\lambda^2$ of $\Dirac^2$, and for all $v,w \in \Ker(\Dirac^{2}-\lambda^{2})$ we have
 \[
 \langle \Diracv, \Diracw \rangle_{M \otimes M(\odd)} = \lambda^2 \langle v, w \rangle_{M \otimes M(\odd)}.
 \]
\end{enumerate}
\end{corollary}

\begin{proof}
We adapt the argument from \cite[Corollary 2]{Dirac_unitarizability}. By assumption and Proposition~\ref{CompletelyReducible}, the $\even$-module $M \otimes M(\odd)$ decomposes completely into simples.

Moreover, by Theorem~\ref{thm::square_dirac} and Proposition~\ref{prop::Dirac_inequality}, the operator $\Dirac^2$ acts on each $\even$-isotypic component as a positive scalar $\lambda^2 \in \RR_{\geq 0}$. 

On each such component, $\Dirac$ has eigenvalues $\pm \lambda$, and these components are mutually orthogonal with respect to the Hermitian form. The operator $\Dirac$ is selfadjoint on $\Ker(\Dirac^2 - \lambda^2)$ if and only if the $+\lambda$ and $-\lambda$ eigenspaces are orthogonal. Indeed, for any $v \in \Ker(\Dirac^2 - \lambda^2)$, the span of $v$ and $\Diracv$ is $\Dirac$-invariant and contains the eigenvectors $\lambda v \pm \Diracv$ with eigenvalues $\pm \lambda$. These eigenspaces are orthogonal if and only if
\[
\langle \lambda v + \Diracv, \lambda w - \Diracw \rangle_{M \otimes M(\odd)} = 0
\]
for all $v, w \in \Ker(\Dirac^2 - \lambda^2)$. A direct computation shows that this is equivalent to
\[
\langle \Diracv, \Diracw \rangle_{M \otimes M(\odd)} = \lambda^2 \langle v, w \rangle_{M \otimes M(\odd)}. \qedhere
\]
\end{proof}

We decompose the $\UE(\gg) \otimes \Weyl$-supermodule $\HH \otimes M(\odd)$, where $\HH$ is a fixed unitarizable simple $\gg$-supermodule, with respect to the Dirac operator $\Dirac$. Since $\HH$ is unitarizable and simple, it is completely reducible as both a $\gg$- and a $\even$-supermodule (see Proposition~\ref{CompletelyReducible}). Moreover, the oscillator module $M(\odd)$ is also unitarizable and completely reducible as a $\even$-module, decomposing into a direct sum of unitarizable highest weight constituents (see Remark~\ref{rmk::decompsoition_oscillator_module}). Consequently, $\HH \otimes M(\odd)$ is a completely reducible unitarizable $\even$-supermodule. 

On any simple $\gg$-constituent of $\HH$, the quadratic Casimir acts as a scalar multiple of the identity. Similarly, on any simple $\even$-constituent of $\HH \otimes M(\odd)$, the even quadratic Casimir acts as a scalar by Dixmier’s Theorem \cite[Proposition 2.6.8]{dixmier1996enveloping}. By Theorem~\ref{thm::square_dirac}, it follows that $\Dirac^2$ is a semisimple operator on $\HH \otimes M(\odd)$, and we can decompose $\HH \otimes M(\odd)$ into eigenspaces $(\HH \otimes M(\odd))(c)$ corresponding to eigenvalue $c$, that is,
\begin{equation}
\HH \otimes M(\odd) = \bigoplus_{c} (\HH \otimes M(\odd))(c),
\end{equation}
where the direct sum is orthogonal with respect to the Hermitian form $\langle \cdot, \cdot \rangle_{\HH \otimes M(\odd)}$.

Each eigenspace is a $\even$-supermodule since $\Dirac$ is $\even$-invariant (Lemma~\ref{lemm::properties_relative_Dirac}). In particular, the eigenspace with eigenvalue $c = 0$ is precisely the kernel of $\Dirac$, as $\Ker \Dirac = \Ker \Dirac^k$ for all $k \in \ZZ_{+}$.

More generally, such a decomposition with respect to $\Dirac^2$ exists for any $\gg$-supermodule $\HH$ of finite length that is $\even$-semisimple.

\begin{lemma} \label{lemm::weight_space_decomp}
Let $M$ be a $\gg$-supermodule that admits an infinitesimal character and is $\even$-semisimple. Then, as a $\even$-supermodule, it decomposes into a direct sum of generalized $\Dirac^2$-eigenspaces:
\[
M \otimes M(\odd) = \bigoplus_{c \in \CC} (M \otimes M(\odd))(c),
\]
where
\[
(M \otimes M(\odd))(c)\coloneqq \left\{ v \in M \otimes M(\odd) \mid \exists n \in \ZZ_{+} \text{ such that } (c \cdot \operatorname{id} - \Dirac^2)^n v = 0 \right\}.
\]
\end{lemma}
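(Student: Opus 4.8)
The plan is to rewrite $\Dirac^{2}$ on $M \otimes M(\odd)$ as a scalar plus the diagonal even Casimir $\Omega_{\even,\Delta}$, and then to read off the generalized eigenspace decomposition from the fact that $\Omega_{\even,\Delta}$ acts locally finitely there. First I would use that $M$ admits an infinitesimal character: then $\Omega_{\gg}\in\mathfrak{Z}(\gg)$ acts on $M$ by a scalar, so $\Omega_{\gg}\otimes 1$ acts on $M\otimes M(\odd)$ by that scalar. Feeding this into Proposition~\ref{prop::square_dirac} and using that $C=\sum_{k}\alpha(W_{k})^{2}$ is a constant (Lemma~\ref{lemm::constant_C}(d)), one obtains an identity $\Dirac^{2}=\Omega_{\even,\Delta}+\kappa\cdot\id$ on $M\otimes M(\odd)$ for some scalar $\kappa\in\CC$. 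Hence $\Dirac^{2}$ and $\Omega_{\even,\Delta}$ differ by a scalar, and the asserted decomposition is equivalent to saying that $\Omega_{\even,\Delta}$ acts locally finitely on $M\otimes M(\odd)$ — every vector lying in a finite-dimensional $\Omega_{\even,\Delta}$-stable subspace — since one then decomposes each such finite-dimensional subspace under $\Omega_{\even,\Delta}$ and the generalized eigenvalues of $\Dirac^{2}$ are those of $\Omega_{\even,\Delta}$ shifted by $\kappa$.

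Next I would reduce to $\even$-constituents. Writing $M=\bigoplus_{i}S_{i}$ (using $\even$-semisimplicity) and $M(\odd)=\bigoplus_{j}T_{j}$ (Lemma~\ref{lemm::oscillator_module_even_semisimple}) with all $S_{i},T_{j}$ simple $\even$-modules, Dixmier's theorem (Quillen's lemma; \cite[Proposition~2.6.8]{dixmier1996enveloping}) shows each $S_{i}$ and each $T_{j}$ has an even infinitesimal character, so $\Omega_{\even}$ acts on each of them by a scalar. Then $M\otimes M(\odd)=\bigoplus_{i,j}S_{i}\otimes T_{j}$ is a decomposition into submodules for the diagonal copy of $\even$, every vector lies in a finite partial sum, and it suffices to prove that $\Omega_{\even,\Delta}$ acts locally finitely on each $S_{i}\otimes T_{j}$.

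Finally, on $S_{i}\otimes T_{j}$ I would exploit the structure of the oscillator module. Each $T_{j}$ is a unitarizable highest weight $\even$-module, and the filtration of $M(\odd)=\CC[x_{1},\dots,x_{mn}]$ by total polynomial degree $\le N$ restricts to an exhaustive filtration of $T_{j}$ by finite-dimensional modules over the parabolic $\mathfrak{p}=\kk^{\CC}\oplus\nn_{\bar 0,n}^{+}$ with Levi $\kk^{\CC}$: by Lemma~\ref{lemm::commutation_relations} and isotropy of $\nn_{\bar 0,n}^{+}$ for $B$, the part of $\alpha(X)$ that raises polynomial degree is supported on $X\in\nn_{\bar 0,n}^{-}$, so $\mathfrak{p}$ preserves each truncation. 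Choosing $N$ so that $F:=T_{j}\cap M(\odd)_{\le N}$ generates $T_{j}$ over $\UE(\even)$, $T_{j}$ is a quotient of the generalized Verma module $\UE(\even)\otimes_{\UE(\mathfrak{p})}F$, hence (tensor identity) $S_{i}\otimes T_{j}$ is a quotient of $\UE(\even)\otimes_{\UE(\mathfrak{p})}(S_{i}\vert_{\mathfrak{p}}\otimes F)$; since $F$ is finite-dimensional, Kostant's theorem on tensoring a module by a finite-dimensional one, combined with the fact that $\mathfrak{Z}(\even)$ acts locally finitely on a module induced from $\mathfrak{p}$ once $\mathfrak{Z}(\kk^{\CC})$ does on the inducing data, yields the required local finiteness. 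When $M$ is moreover a weight module with finite-dimensional $\hh$-weight spaces — the case of Verma, Kac, and unitarizable supermodules — this last step is immediate, since $M\otimes M(\odd)$ is then such a weight module and $\mathfrak{Z}(\even)$ preserves its finite-dimensional weight spaces.

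I expect this final step to be the main obstacle: local finiteness of the diagonal even Casimir on a tensor product of two genuinely infinite-dimensional $\even$-modules can fail in general, so one really has to invoke the highest-weight (parabolic category $\OO$) nature of the oscillator constituents $T_{j}$, via their exhaustion by finite-dimensional $\mathfrak{p}$-submodules coming from the polynomial grading of $M(\odd)$, in order to bootstrap from Kostant's finite-dimensional tensoring theorem. Steps~1 and~2 are formal; all the genuine content sits in controlling $\mathfrak{Z}(\even)$ on $S_{i}\otimes T_{j}$.
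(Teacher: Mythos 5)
Your reductions in the first two steps are correct and are exactly what any proof here must do: with an infinitesimal character, Proposition~\ref{prop::square_dirac} gives $\Dirac^{2}=\Omega_{\even,\Delta}+\kappa\cdot\id$ on $M\otimes M(\odd)$, so the lemma is equivalent to local finiteness of $\Omega_{\even,\Delta}$, and the decomposition into pieces $S_{i}\otimes T_{j}$ is legitimate. The gap is in your main argument for that local finiteness on $S_{i}\otimes T_{j}$. Two links are unjustified. First, you need $\mathfrak{Z}(\kk^{\CC})$ to act locally finitely on the inducing data $S_{i}\vert_{\mathfrak{p}}\otimes F$; but a simple $\even$-module $S_{i}$ restricted to $\kk^{\CC}$ has no reason to be locally $\mathfrak{Z}(\kk^{\CC})$-finite, and Kostant's tensoring theorem does not help, since $F$ is only a $\mathfrak{p}$-module (not an $\even$-module) and the problem sits in the restriction of $S_{i}$, not in the tensoring by $F$. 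Second, the asserted transfer principle --- that $\mathfrak{Z}(\even)$ acts locally finitely on $\UE(\even)\otimes_{\UE(\mathfrak{p})}W$ whenever $\mathfrak{Z}(\kk^{\CC})$ does on $W$ --- is not a standard fact and is doubtful when the nilradical acts nontrivially on an infinite-dimensional $W$ (as it does on your $F$): for $z\in\mathfrak{Z}(\even)$ one only has $z\equiv p(z)\bmod \UE(\even)\,\nn_{\bar{0},n}^{+}$, and the correction terms move vectors of $1\otimes W$ into $\nn_{\bar{0},n}^{-}\,\UE(\even)\otimes W$, so no finite-dimensional $z$-stable subspace is produced. As it stands, the main line of your third step does not establish the claim.

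What actually closes the argument --- and what the paper's one-line citation of \cite[Corollary 3.3]{Dirac_cohomology_category_O} amounts to in the category $\mathcal{O}$ setting --- is precisely your parenthetical fallback: when $M$ is an $\hh$-weight module with finite-dimensional weight spaces and weights bounded above by finitely many elements (Verma and Kac supermodules, unitarizable supermodules, and anything of finite length built from highest weight supermodules, which is the scope of every application in the paper), the tensor product $M\otimes M(\odd)$ is again such a weight module, because the weights of $M(\odd)$ lie in $-\rho_{\bar{1}}-\ZZ_{\geq 0}[\Delta_{\bar{1}}^{+}]$ with finite multiplicities; then $\Omega_{\even,\Delta}$, having weight zero for the diagonal $\hh$-action, preserves each finite-dimensional weight space and is automatically locally finite. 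You should promote that observation from an aside to the actual proof and drop the induced-module detour; note also that for a completely general $\even$-semisimple $M$ with infinitesimal character (no weight-module hypothesis), local finiteness of the cross terms $\sum_{k}W_{k}\otimes\alpha(W_{k})$ on $S_{i}\otimes T_{j}$ is not established by your argument, so the lemma in that generality remains unproved by the proposal.
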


The proof proceeds \emph{mutatis mutandis} as in \cite[Corollary 3.3]{Dirac_cohomology_category_O}.

\begin{lemma} \label{lemm::Hodge_decomposition}
Let $\HH$ be a unitarizable simple $\gg$-supermodule. Then there is an orthogonal decomposition:
\[
\HH \otimes M(\odd) = \Ker \Dirac^2 \oplus \Im \Dirac^2,
\]
where the direct sum is orthogonal with respect to the Hermitian form $\langle \cdot, \cdot \rangle_{\HH \otimes M(\odd)}$.
\end{lemma}

\begin{proof}
First, we recall the eigenspace decomposition with respect to $\Dirac^2$:
\[
\HH \otimes M(\odd) = (\HH \otimes M(\odd))(0) \oplus \bigoplus_{c \neq 0} (\HH \otimes M(\odd))(c),
\]
where the zero eigenspace satisfies $(\HH \otimes M(\odd))(0) = \Ker \Dirac = \Ker \Dirac^{2}$.

Since $\Dirac^{2}$ is a semisimple and selfadjoint operator on $(\HH \otimes M(\odd),\langle\cdot, \cdot \rangle_{\HH \otimes M(\odd)})$, it follows that $\Im \Dirac^{2} = (\Ker \Dirac^{2})^\perp$, \emph{i.e.}, the image and kernel are orthogonal complements. Hence,
\[
\Im \Dirac^{2} = (\Ker \Dirac^{2})^\perp = (\HH \otimes M(\odd))(0)^\perp = \bigoplus_{c \neq 0} (\HH \otimes M(\odd))(c).
\]
This yields the desired orthogonal decomposition.
\end{proof}

\subsection{Dirac cohomology and unitarity} 
The Dirac cohomology functor
\begin{equation}
\DC(\cdot) : \gsmod \to \gg_{\bar{0},\Updelta}\textbf{-smod} \cong \even\textbf{-smod}
\end{equation}
assigns to a $\gg$-supermodule $M$ its Dirac cohomology $\DC(M)\coloneqq \Ker \Dirac \big/ \left( \Ker \Dirac \cap \Im \Dirac \right)$. In this section, we investigate the Dirac cohomology of unitarizable $\gg$-supermodules and show that Dirac cohomology provides a complete characterization.

\subsubsection{Basic facts about Dirac cohomology of unitarizable supermodules}

For unitarizable supermodules, Dirac cohomology admits a particularly simple description: it coincides with the kernel of the Dirac operator.

\begin{proposition} \label{DiracGleichKernel}
Let $\HH$ be a unitarizable $\gg$-supermodule. Then
\[
\DC(\HH) = \Ker \Dirac.
\]
\end{proposition}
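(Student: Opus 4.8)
The plan is to show that the correction term in the definition of Dirac cohomology vanishes, i.e.\ that $\Ker \Dirac \cap \Im \Dirac = \{0\}$ on $\HH \otimes M(\odd)$; granting this, $\DC(\HH) = \Ker \Dirac / (\Ker \Dirac \cap \Im \Dirac) = \Ker \Dirac$ is immediate from the definition. The only inputs are the anti-selfadjointness of $\Dirac$ with respect to $\langle \cdot, \cdot \rangle_{\HH \otimes M(\odd)}$ (Lemma \ref{Relationen}) and the positive-definiteness of that form, both of which are available since $\HH$ is unitarizable.

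First I would note that $\Im \Dirac$ is orthogonal to $\Ker \Dirac$: for any $w \in \HH \otimes M(\odd)$ and any $v \in \Ker \Dirac$, anti-selfadjointness gives
$$
\langle \Dirac w, v \rangle_{\HH \otimes M(\odd)} = -\langle w, \Dirac v \rangle_{\HH \otimes M(\odd)} = 0.
$$
Now take $v \in \Ker \Dirac \cap \Im \Dirac$ and choose $w$ with $\Dirac w = v$. Applying the displayed identity to this $v$ yields $\langle v, v \rangle_{\HH \otimes M(\odd)} = \langle \Dirac w, v \rangle_{\HH \otimes M(\odd)} = 0$, so $v = 0$ by positive-definiteness. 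Hence $\Ker \Dirac \cap \Im \Dirac = \{0\}$. (Alternatively, one has $\Dirac^{2} w = \Dirac v = 0$, so $w \in \Ker \Dirac^{2} = \Ker \Dirac$ by Corollary \ref{iterativeKernel}, whence $v = \Dirac w = 0$.)

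I do not expect any real obstacle here; the argument is the standard Hodge-type orthogonality, and all the substantive work has already been done in Lemma \ref{Relationen} (and, for the alternative route, Corollary \ref{iterativeKernel}). The one point to be careful about is that $\HH \otimes M(\odd)$ is in general infinite-dimensional, so one should not appeal to a finite-dimensional orthogonal-complement decomposition; the two-line positivity argument above sidesteps this entirely.
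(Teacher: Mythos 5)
Your proof is correct and follows essentially the same route as the paper: both establish $\Ker \Dirac \cap \Im \Dirac = \{0\}$ by writing $v = \Dirac w$ and using anti-selfadjointness of $\Dirac$ (Lemma \ref{Relationen}) together with positive-definiteness of $\langle \cdot,\cdot \rangle_{\HH \otimes M(\odd)}$ to get $\langle v,v\rangle = -\langle \Dirac v, w\rangle = 0$. The alternative route via Corollary \ref{iterativeKernel} is a valid minor variant but adds nothing beyond the paper's two-line argument.
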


\begin{proof}
We prove that $\Ker \Dirac \cap \Im \Dirac = \{0\}$. Let $v \in \Ker \Dirac \cap \Im \Dirac $, so that$ \Dirac\! v = 0$ and there exists $w \in \HH \otimes M(\odd)$ with $v = \Dirac\! w$. Since $\HH \otimes M(\odd)$ is equipped with a positive definite Hermitian form, we compute:
\[
\langle v, v \rangle = \langle \Dirac\! w, v \rangle = \langle w, \Dirac\! v \rangle = 0.
\]
Thus $v = 0$, and the claim follows.
\end{proof}

As an immediate consequence, Dirac cohomology is additive on direct sums of unitarizable supermodules.

\begin{lemma}
Let $\HH_1, \HH_2$ be unitarizable $\gg$-supermodules. Then
\[
\DC(\HH_1 \oplus \HH_2) = \DC(\HH_1) \oplus \DC(\HH_2).
\]
\end{lemma}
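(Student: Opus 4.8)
The plan is to reduce everything to Proposition~\ref{DiracGleichKernel}, which identifies Dirac cohomology with the kernel of $\Dirac$ on unitarizable supermodules. First I would observe that the orthogonal direct sum of the two Hermitian forms makes $\HH_{1} \oplus \HH_{2}$ into a unitarizable $\gg$-supermodule: consistency ($\langle(\HH_{1}\oplus\HH_{2})_{\bar{0}},(\HH_{1}\oplus\HH_{2})_{\bar{1}}\rangle=0$) and $\omega$-contravariance are inherited componentwise from $\HH_{1}$ and $\HH_{2}$, and positive definiteness of the direct sum form is immediate. Hence Proposition~\ref{DiracGleichKernel} applies to the left-hand side and gives $\DC(\HH_{1}\oplus\HH_{2})=\Ker\Dirac$, and likewise $\DC(\HH_{i})=\Ker\Dirac$ on $\HH_{i}\otimes M(\odd)$ for $i=1,2$.

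Next I would invoke the canonical isomorphism of $\UE(\gg)\otimes\Weyl$-supermodules
$$
(\HH_{1} \oplus \HH_{2}) \otimes M(\odd) \;\cong\; \bigl(\HH_{1} \otimes M(\odd)\bigr) \oplus \bigl(\HH_{2} \otimes M(\odd)\bigr),
$$
which holds because $-\otimes M(\odd)$ is additive. Since $\Dirac\in\UE(\gg)\otimes\Weyl$ acts componentwise on tensor products, it is block-diagonal with respect to this decomposition and restricts on each summand to the corresponding Dirac operator; moreover the $\even$-action on all three spaces is componentwise, so the identification is $\even$-equivariant. Consequently
$$
\Ker\bigl(\Dirac\big\vert_{(\HH_{1}\oplus\HH_{2})\otimes M(\odd)}\bigr) \;=\; \Ker\bigl(\Dirac\big\vert_{\HH_{1}\otimes M(\odd)}\bigr) \,\oplus\, \Ker\bigl(\Dirac\big\vert_{\HH_{2}\otimes M(\odd)}\bigr)
$$
as $\even$-supermodules, and combining with the previous paragraph yields $\DC(\HH_{1}\oplus\HH_{2})=\DC(\HH_{1})\oplus\DC(\HH_{2})$.

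There is no genuine obstacle here; the only point deserving a word is that the desired equality is stated as one of $\even$-supermodules, so one must first confirm that $\HH_{1}\oplus\HH_{2}$ is again unitarizable in order to apply Proposition~\ref{DiracGleichKernel} on the left-hand side. Should one prefer to avoid this and argue directly from the definition $\DC(M)=\Ker\Dirac/(\Ker\Dirac\cap\Im\Dirac)$, the same componentwise splitting applies verbatim, since both the kernel and the image of a block-diagonal operator decompose as the direct sums of the kernels and images of the blocks, and the quotient then splits accordingly.
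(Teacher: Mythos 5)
Your proposal is correct and matches the paper's intent: the lemma is stated there as an immediate consequence of Proposition~\ref{DiracGleichKernel}, and your argument — unitarizability of the orthogonal direct sum, identification of $\DC$ with $\Ker\Dirac$, and the block-diagonal action of $\Dirac$ on $(\HH_{1}\oplus\HH_{2})\otimes M(\odd)\cong(\HH_{1}\otimes M(\odd))\oplus(\HH_{2}\otimes M(\odd))$ — is exactly the intended one. Your closing remark that the definition-level argument also works verbatim is a nice bonus but not needed.
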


We now express the Dirac operator as $\Dirac = \Dp + \Dq$, where
$
\Dp = 2(\ddp - \delp), \ \Dq = 2(\ddq - \delq),
$
and $\ddp, \ddq, \delp, \delq$ are $\kk^{\CC}$-invariant operators. As shown in Lemma~\ref{Relationen}, these operators are skew-adjoint to one another with respect to the Hermitian form $\langle \cdot, \cdot \rangle_{\HH \otimes M(\odd)}$. This decomposition provides a more explicit understanding of the kernel of $\Dirac$. We now record some structural relations among the constituent operators.

\begin{lemma}\label{Orthogonality} Let $\HH$ be a unitarizable $\gg$-supermodule. 
\begin{enumerate}
 \item[a)] The operators $\ddp, \delp, \ddq, \delq$ square to zero.
 \item[b)] The operators $\ddp$ and $\delq$, and the operators $\ddq$ and $\delp$ commute, \emph{i.e.}, $[\ddp,\delq] = 0$ and $[\ddq, \delp] = 0$.
 \item[c)] With respect to the form $\langle \cdot,\cdot \rangle_{M \otimes M(\odd)}$, the following holds:
 \begin{enumerate}
 \item[(i)] $\Im \ddp$ is orthogonal to $\Ker \delp$ and $\Im \delp$; $\Im \delp$ is orthogonal to $\Ker \ddp$. 
 \item[(ii)] $\Im \ddq$ is orthogonal to $\Ker \delq$ and $\Im \delq$; $\Im \delq$ is orthogonal to $\Ker \ddq$.
 \end{enumerate}
 \item[d)] $\Ker (\Dp)^{2} = \Ker \Dp = \Ker \ddp \cap \Ker \delp$, and $\Ker (\Dq)^{2} = \Ker \Dq = \Ker \ddq \cap \Ker \delq$.
\end{enumerate}
\end{lemma}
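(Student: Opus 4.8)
The four assertions split into two purely algebraic identities in $\UE(\gg)\otimes\Weyl$, namely (a) and (b), and two Hodge-theoretic statements, (c) and (d), which use the unitarizability of $\HH$ only through the adjointness relations of Lemma~\ref{Relationen}. For (a) the plan is to expand $\ddp^{2}=\sum_{k,l=1}^{pn}(\partial_{k}\partial_{l})\otimes(x_{k}x_{l})$ and use that the coordinates of $\Weyl$ commute, $x_{k}x_{l}=x_{l}x_{k}$, to symmetrise the first tensor slot; this gives $\ddp^{2}=\tfrac12\sum_{k,l=1}^{pn}[\partial_{k},\partial_{l}]\otimes x_{k}x_{l}$, where $[\cdot,\cdot]$ is the anticommutator in $\gg$. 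Since the $\partial_{k}$ with $k\le pn$ span $\pp_{1}$, and $\pp_{1}$ is an \emph{abelian} subalgebra of $\gg$ — the block position $(1,3)$ squares to zero, a sharpening of $[\nn_{\bar{1}}^{+},\nn_{\bar{1}}^{+}]\subset\nn_{\bar{0},n}^{+}$ from Lemma~\ref{lemm::commutation_relations} visible directly in the matrix realization — all these brackets vanish and $\ddp^{2}=0$. The identical computation gives $\delp^{2}=\ddq^{2}=\delq^{2}=0$, the relevant abelian subalgebras being $\qq_{1}$, $\qq_{2}$ and $\pp_{2}$ respectively.

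For (b), expand $\ddp\delq=\sum_{1\le k\le pn<l\le mn}(\partial_{k}\otimes x_{k})(x_{l}\otimes\partial_{l})$: in the first slot $\partial_{k}\in\pp_{1}$ and $x_{l}\in\pp_{2}$ with $[\pp_{1},\pp_{2}]=0$, so $\partial_{k}x_{l}=-x_{l}\partial_{k}$ in $\UE(\gg)$; in the second slot $x_{k}$ (a coordinate, $k\le pn$) commutes with $\partial_{l}=\partial/\partial x_{l}$ ($l>pn$) because $k\ne l$. Hence $\ddp\delq=-\delq\ddp$ term by term, and since $\ddp$ and $\delq$ are \emph{odd} elements of $\UE(\gg)\otimes\Weyl$ the bracket in (b) is the anticommutator, so $[\ddp,\delq]=\ddp\delq+\delq\ddp=0$; likewise $[\ddq,\delp]=0$ from $[\qq_{1},\qq_{2}]=0$. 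Part (c) then follows by a single line per statement from Lemma~\ref{Relationen}: for arbitrary $u$ and $w\in\Ker\delq$ one has $\langle\ddq u,w\rangle_{\HH\otimes M(\odd)}=\langle u,\delq w\rangle_{\HH\otimes M(\odd)}=0$, so $\Im\ddq\perp\Ker\delq$, and the remaining orthogonalities are obtained by exchanging $\ddq\leftrightarrow\delq$ (resp.\ $\ddp\leftrightarrow\delp$), with (b) supplying the commutation needed whenever an operator indexed by $\pp_{1}$ must be compared with one indexed by $\qq_{2}$.

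For (d), part (a) gives $\ddp^{2}=\delp^{2}=0$, hence $(\Dp)^{2}=4(\ddp-\delp)^{2}=-4(\ddp\delp+\delp\ddp)$. As $\Dp$ is anti-selfadjoint (Lemma~\ref{Relationen}) and $\langle\cdot,\cdot\rangle_{\HH\otimes M(\odd)}$ is positive definite, $(\Dp)^{2}$ is selfadjoint with $\langle(\Dp)^{2}v,v\rangle=-\langle\Dp v,\Dp v\rangle\le0$, so $(\Dp)^{2}v=0\iff\Dp v=0$; this is $\Ker(\Dp)^{2}=\Ker\Dp$. Using adjointness once more, $\langle(\Dp)^{2}v,v\rangle=-4\bigl(\langle\ddp v,\ddp v\rangle+\langle\delp v,\delp v\rangle\bigr)$, which vanishes exactly when $\ddp v=\delp v=0$; combined with the obvious inclusion $\Ker\ddp\cap\Ker\delp\subset\Ker\Dp$ this yields $\Ker\Dp=\Ker\ddp\cap\Ker\delp$. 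The argument for $\Dq$ is word for word the same.

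There is no serious obstruction — the lemma is essentially bookkeeping — but the one point that needs care is the parity convention: $\ddp,\delp,\ddq,\delq$ must be regarded as odd in $\UE(\gg)\otimes\Weyl$, so that ``squares to zero'' in (a) and ``$[\cdot,\cdot]=0$'' in (b) mean $u^{2}=0$ and $uv+vu=0$; with the opposite convention the cancellation in (b) would fail. The only genuinely structural input is the abelianness of $\pp_{1},\pp_{2},\qq_{1},\qq_{2}$ used in (a) and (b).
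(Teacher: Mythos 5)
Your proof is correct and follows essentially the same route as the paper: abelianness of $\pp_{1},\pp_{2},\qq_{1},\qq_{2}$ for a), the vanishing of $[\pp_{1},\pp_{2}]$, $[\qq_{1},\qq_{2}]$ and of $[x_{k},\partial_{l}]_{W}$ for $k\neq l$ for b), adjointness from Lemma~\ref{Relationen} for c), and anti-selfadjointness together with positive definiteness for d). You also read the statement the right way: the identities in a) and b) must indeed be supercommutators of odd elements (with the ordinary commutator the cancellation in b) fails, exactly as you note), the occurrences of $\Ker$ in c) and of $\ddq$ in the first identity of d) are typos for $\Im$ resp.\ $\ddp$ which your argument silently corrects, and your computation $\langle(\Dp)^{2}v,v\rangle=-4\bigl(\langle\ddp v,\ddp v\rangle+\langle\delp v,\delp v\rangle\bigr)$ is a marginally more direct variant of the paper's argument for d), which instead deduces from $\ddp v=\delp v$ and the orthogonality of $\Im\ddp$ and $\Im\delp$ that both terms vanish.
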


\begin{proof}
a) This is a direct consequence of the fact that $\pp_{1,2}$ and $\qq_{1,2}$ are abelian Lie supersubalgebras of $\gg$.

b) This is a direct consequence of $[\partial_{k},x_{l}] = 0$ unless $k = l$.

c) We only prove that $\Im \ddp$ and $\Ker \delp$ are orthogonal; the rest can be proved similarly using a) and b). First, let $v \in \Im \ddp$ and $w \in \Ker \delp$. Then there exists a non-trivial $v' \in \HH \otimes M(\odd)$ such that $\ddp v' = v$, and consequently by Lemma~\ref{Relationen}
$$
\langle v,w \rangle_{\HH\otimes M(\odd)} = \langle \ddp v',w \rangle_{\HH\otimes M(\odd)} = -\langle v', \delp w \rangle_{\HH\otimes M(\odd)} = 0,
$$
\emph{i.e.}, $\langle \Im \ddp,\Ker \delp \rangle_{\HH\otimes M(\odd)} = 0$. 

d) The operators $\Dp$ and $\Dq$ are selfadjoint by Lemma~\ref{Relationen}, and therefore $\Ker \Dp = \Ker (\Dp)^{2}$ and $\Ker \Dq = \Ker (\Dq)^{2}$. We prove that $\Ker \Dp = \Ker \ddp \cap \Ker \delp$. Let $v \in \Ker \Dp$, then $\Dp v = 2(\ddp - \delp)v = 0$, \emph{i.e.}, $\ddp v = \delp v$. By b), $\Im \ddp$ and $\Im \delp$ are orthogonal to each other, hence $v \in \Ker \delp \cap \Ker \ddq$. The other inclusion is trivial. Analogously, the equality $\Ker \Dq = \Ker \ddq \cap \Ker \delq$ follows.
\end{proof}

In summary, we can describe the Dirac cohomology of unitarizable supermodules in terms of $\Ker \Dp$ and $\Ker \Dq$.

\begin{lemma} The Dirac cohomology of a unitarizable $\gg$-supermodule $\HH$ is
$$
\DC(\HH) = \Ker \Dp \cap \Ker \Dq.
$$
\end{lemma}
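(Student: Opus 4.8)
The plan is to reduce the statement to the description of the Dirac cohomology of a unitarizable supermodule as a kernel. By Proposition~\ref{DiracGleichKernel} we have $\DC(\HH) = \Ker\Dirac$ on $\HH\otimes M(\odd)$, so it suffices to prove $\Ker\Dirac = \Ker\Dp\cap\Ker\Dq$. Since $\Dirac = \Dp+\Dq$, the inclusion $\Ker\Dp\cap\Ker\Dq\subseteq\Ker\Dirac$ is immediate. For the reverse inclusion, Lemma~\ref{Orthogonality}(d) gives $\Ker\Dp = \Ker\ddp\cap\Ker\delp$ and $\Ker\Dq = \Ker\ddq\cap\Ker\delq$, so the task reduces to showing that every $v\in\Ker\Dirac$ satisfies $\ddp v = \delp v = \ddq v = \delq v = 0$.

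First I would square $\Dirac$ in its refined form. Writing $\tfrac{1}{2}\Dirac = \ddp-\delp+\ddq-\delq$ and using that $\ddp,\delp,\ddq,\delq$ square to zero together with $\{\ddp,\delq\}=\{\delp,\ddq\}=0$ (Lemma~\ref{Orthogonality}(a),(b)), one obtains
$$
\tfrac{1}{4}\Dirac^{2} = -\{\ddp,\delp\}-\{\ddq,\delq\}+\{\ddp,\ddq\}+\{\delp,\delq\}.
$$
Now fix $v\in\Ker\Dirac$, so that $\Dirac^{2}v = 0$ and hence $\langle\Dirac^{2}v,v\rangle_{\HH\otimes M(\odd)} = 0$. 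Using the adjointness $\ddp^{\ast}=\delp$ and $\ddq^{\ast}=\delq$ of Lemma~\ref{Relationen} to rewrite each anticommutator term, this identity becomes
$$
\langle\ddp v,\ddp v\rangle + \langle\delp v,\delp v\rangle + \langle\ddq v,\ddq v\rangle + \langle\delq v,\delq v\rangle = 2\re\langle\ddq v,\delp v\rangle + 2\re\langle\ddp v,\delq v\rangle,
$$
all pairings being taken in $\langle\cdot,\cdot\rangle_{\HH\otimes M(\odd)}$. The Cauchy--Schwarz inequality together with the bound $2\lvert\langle a,b\rangle\rvert\le\langle a,a\rangle+\langle b,b\rangle$ shows that the right-hand side is at most the left-hand side; equality must therefore hold in every step, which forces $\ddq v = \delp v$ and $\ddp v = \delq v$.

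It then remains to conclude that these common values vanish, which is the main obstacle. I would invoke the orthogonality relations of Lemma~\ref{Orthogonality}(c): the vector $\delp v = \ddq v$ lies both in $\Im\delp$ and in $\Ker\ddq$ (the latter because $\ddq(\ddq v)=0$), and since $\Im\delp$ is orthogonal to $\Ker\ddq$, positive definiteness of $\langle\cdot,\cdot\rangle_{\HH\otimes M(\odd)}$ forces $\delp v = \ddq v = 0$; the $\pp_{1}\leftrightarrow\qq_{2}$-symmetric form of this argument applied to $\delq v = \ddp v$ gives $\delq v = \ddp v = 0$. Hence $v\in\Ker\ddp\cap\Ker\delp\cap\Ker\ddq\cap\Ker\delq = \Ker\Dp\cap\Ker\Dq$, which proves $\Ker\Dirac\subseteq\Ker\Dp\cap\Ker\Dq$ and thereby the lemma. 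The delicate point is precisely this last step: one has to keep careful track of the four operators $\ddp,\delp,\ddq,\delq$, their mutual anticommutators and their adjointness relations, and if the orthogonality statements of Lemma~\ref{Orthogonality}(c) are not available in the exact form required, one can instead separate the $\Dp$- and $\Dq$-contributions by applying the Hodge-type decomposition of Lemma~\ref{lemm::Hodge_decomposition} to $\Dp$ and $\Dq$ individually, or by using the semisimplicity of $\Dirac^{2}$ on $\HH\otimes M(\odd)$.
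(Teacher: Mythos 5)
Your reduction to $\Ker\Dirac\subseteq\Ker\Dp\cap\Ker\Dq$ via Proposition~\ref{DiracGleichKernel}, the anticommutator expansion of $\tfrac14\Dirac^{2}$, and the Cauchy--Schwarz step are all correct (provided Lemma~\ref{Orthogonality}(b) is read as the vanishing of the \emph{anti}commutators $\{\ddp,\delq\}$ and $\{\ddq,\delp\}$, which is what the root-space computation actually yields). Note, however, that the two relations $\ddq v=\delp v$ and $\ddp v=\delq v$ you extract are together equivalent to $\Dp v=-\Dq v$, which is exactly the point from which the paper's own proof departs: it writes $\Ker\Dirac$ as the union of $\Ker\Dp\cap\Ker\Dq$ with $\{v:\Dp v=-\Dq v\}$ and then shows $\|\Dp v\|^{2}=-\langle\Dp v,\Dq v\rangle_{\HH\otimes M(\odd)}=0$ by expanding the cross pairing explicitly in the Bargmann--Fock form. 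So up to this point you have rederived, by a slicker route, the same reduction, and the whole content of the lemma sits in the step you yourself flag as delicate.

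That last step has a genuine gap. The orthogonality you invoke, $\Im\delp\perp\Ker\ddq$, does not follow from adjointness: since $(\delp)^{\ast}=\ddp$, what one actually gets is $\Im\delp\perp\Ker\ddp$ (and likewise $\Im\ddq\perp\Ker\delq$), whereas your vector $w=\delp v=\ddq v$ is only known to lie in $\Ker\delp\cap\Ker\ddq$, not in $\Ker\ddp$ or $\Ker\delq$. Lemma~\ref{Orthogonality}(c) does literally assert the relation you want, but its statement cannot be taken at face value: its first clause would make $\Ker\ddq\cap\Ker\delp$ orthogonal to itself, hence zero, even though it contains $v_{\Lambda}\otimes 1$, and the paper's own proof of that clause silently replaces a kernel by an image. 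What is really needed is the vanishing of the cross pairing $\langle\Dp v,\Dq v\rangle_{\HH\otimes M(\odd)}$, equivalently of $\re\langle\{\ddp,\ddq\}v,v\rangle_{\HH\otimes M(\odd)}$, and this is precisely what the paper supplies by direct computation, exploiting that $\Dp$ and $\Dq$ involve the disjoint sets of variables $x_{k}$ with $k\le pn$ and $k>pn$. Your proposed fallbacks (a Hodge decomposition for $\Dp$ and $\Dq$ separately, or semisimplicity of $\Dirac^{2}$) are not developed enough to close this; to complete the argument you should carry out the cross-term computation, or a bidegree argument on $\CC[x_{1},\dotsc,x_{pn}]\otimes\CC[x_{pn+1},\dotsc,x_{mn}]$, in place of the unavailable orthogonality.
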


\begin{proof}We decompose the Dirac operator as $\Dirac = \Dp + \Dq$, such that 
$$
\Ker \Dirac = \{ v \in \HH \otimes M(\odd) : \Dp v = -\Dq v\}.
$$
Here, note that $\left(\Ker \Dp \cap \Ker \Dq\right) \subset \{ v \in \HH \otimes M(\odd) : \Dp v = -\Dq v\}$. Assume $v\coloneqq m \otimes P \in \HH \otimes M(\odd)$ satisfies $w\coloneqq \Dp v = -\Dq v$. Then
\begin{equation*}
\begin{split}
0 &\leq \langle w, w \rangle_{\HH \otimes M(\odd)} \\ 
&= -\langle \Dp v, \Dq v \rangle_{\HH \otimes M(\odd)} \\ 
&= -\sum_{k=1}^{pn} \sum_{l=pn+1}^{mn} \langle (\partial_{k}-x_{k})m, (\partial_{l}-x_{l})m \rangle_{\HH} \langle (x_{k}-\partial_{k})P, (x_{l}-\partial_{l})P \rangle_{M(\odd)}.
\end{split}
\end{equation*}
However, for all $1 \leq k \leq pn$ and $p+1 \leq l \leq mn$, we have
$$
\langle (x_{k}-\partial_{k})P \rangle_{\HH},(x_{l}-\partial_{l})P \rangle_{M(\odd)} = 0
$$
by the construction of the Bargmann--Fock form and $k \neq l$. We conclude $w = 0$, as the Hermitian form is positive definite, \emph{i.e.}, $\Dp v = 0$ and $\Dq v = 0$ or $v \in (\Ker \Dp \cap \Ker \Dq)$. The assertion now follows from Proposition~\ref{DiracGleichKernel}.
\end{proof}

In Section~\ref{subsec::Dirac_Induction}, we introduced a left exact functor $\DC'(\cdot)$, which we also referred to as Dirac cohomology. It coincides with the Dirac cohomology $\DC(\cdot)$ on unitarizable $\gg$-supermodules.

\begin{corollary} \label{DiracsCoincide}
 Let $\HH$ be a unitarizable $\gg$-supermodule. Then the Dirac cohomologies $\DC(\HH)$ and $\DC'(\HH)$ coincide. In particular, $\DC(\cdot)$ is left exact in the category of unitarizable $\gg$-supermodules. 
\end{corollary}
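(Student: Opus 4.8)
The plan is to show that the defining conditions of $\DC'(\HH)$ collapse to the kernel of $\Dirac$ when $\HH$ is unitarizable, so that the identification with $\DC(\HH) = \Ker\Dirac$ (Proposition~\ref{DiracGleichKernel}) becomes immediate. Recall that $\DC'(\HH)$ is the space of $v \in \HH \otimes M(\odd)$ annihilated by the two-sided ideal $\mathscr{I}$ in $(\UE(\gg) \otimes \Weyl)^{\UE(\even)}$ generated by $\Dirac$. Since every element of $\mathscr{I}$ has the form $a\Dirac b$ (finite sums) with $a,b$ in the $\even$-invariant subalgebra, any $v \in \Ker\Dirac$ is automatically killed by the \emph{right} multiples $\Dirac b$; the only genuine content is the left factor. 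So the inclusion $\DC'(\HH) \subseteq \Ker\Dirac$ is the point that needs the unitarity hypothesis, while $\DC'(\HH) \supseteq \{v : \Dirac v = 0 \text{ and } \Dirac(\text{everything hitting } v) = 0\}$ requires a Hodge-type argument.

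First I would establish $\DC'(\HH) \subseteq \Ker\Dirac$: since $\Dirac$ itself lies in $\mathscr{I}$, any $\mathscr{I}$-invariant $v$ satisfies $\Dirac v = 0$, so this inclusion is trivial and needs no hypotheses. Next, for the reverse inclusion, take $v \in \Ker\Dirac$; I must show $x v = 0$ for every $x \in \mathscr{I}$, i.e.\ for every $x = a \Dirac b$ with $a, b \in (\UE(\gg)\otimes\Weyl)^{\UE(\even)}$. The obstacle is that $\Dirac b v$ need not vanish even though $\Dirac v = 0$, since $b$ need not commute with $\Dirac$. This is where unitarity enters through Lemma~\ref{lemm::Hodge_decomposition}: because $\HH$ admits an infinitesimal character and is $\even$-semisimple, we have the orthogonal decomposition $\HH \otimes M(\odd) = \Ker\Dirac^{2} \oplus \Im\Dirac^{2} = \Ker\Dirac \oplus \Im\Dirac$, with $\Dirac$ anti-selfadjoint (Lemma~\ref{Relationen}) and $\Ker\Dirac^{\perp} = \Im\Dirac$ (Lemma~\ref{Orthogonality}, via the proof of Proposition~\ref{DiracGleichKernel}). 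On $\Ker\Dirac$, the operator $\Dirac$ vanishes, and $\Ker\Dirac$ is an $\even$-submodule (hence $b$-stable, since $b$ is $\even$-invariant and built from $\Dirac$ and $\gg_{\bar 0,\Delta}$); therefore $b v \in \Ker\Dirac$, so $\Dirac b v = 0$, and a fortiori $a\Dirac b v = 0$. Thus $\Ker\Dirac \subseteq \DC'(\HH)$.

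The key structural fact I would isolate and prove carefully is that the algebra $\UE(\gg_{\bar 0,\Delta})(\CC 1 \oplus \mathscr{I})$ preserves $\Ker\Dirac$ inside $\HH\otimes M(\odd)$: the $\gg_{\bar 0,\Delta}$ part preserves it because $[\Dirac,\even]=0$ forces $\Dirac$ to be an $\even$-morphism so its kernel is an $\even$-submodule, while $\mathscr{I}$ maps all of $\HH\otimes M(\odd)$ into $\Ker\Dirac$ — indeed $\Dirac(a\Dirac b w) = \Dirac a \Dirac b w$, and one checks using $\Dirac^{2} \in \mathfrak{Z}(\even)\otimes 1$-type centrality (Proposition~\ref{prop::square_dirac}, together with the fact that $a$ is $\even$-invariant, so $a$ commutes with $\Dirac^{2}$ modulo $\mathscr{I}$) that the image of $\mathscr{I}$ on a supermodule with infinitesimal character lands in $\Ker\Dirac^{2} = \Ker\Dirac$. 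This is the main obstacle: making precise that $\Dirac^{2}$ acts ``like a scalar'' sufficiently to commute past the $\even$-invariant multipliers, which is exactly the content already exploited in the proof of Lemma~\ref{exactSequence} and Lemma~\ref{lemm::Hodge_decomposition}. Once this is in place, the two inclusions give $\DC'(\HH) = \Ker\Dirac = \DC(\HH)$, and the ``in particular'' follows since $\DC'(\cdot)$ is left exact by Proposition~\ref{DiracInduction} and restricting a left exact functor to a (full) subcategory of unitarizable supermodules — where it agrees with $\DC(\cdot)$ — keeps it left exact there.
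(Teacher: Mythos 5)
Your overall skeleton is the paper's: the inclusion $\DC'(\HH)\subseteq\Ker\Dirac$ is trivial because $\Dirac\in\mathscr{I}$, and the substance is showing that every $x=a\Dirac b$ with $a,b\in(\UE(\gg)\otimes\Weyl)^{\UE(\even)}$ annihilates $\Ker\Dirac$. But the justification you give for the crucial step --- that $\Ker\Dirac$ is stable under $b$ --- does not hold up. You write that $\Ker\Dirac$ is an $\even$-submodule, ``hence $b$-stable, since $b$ is $\even$-invariant and built from $\Dirac$ and $\gg_{\bar{0},\Delta}$.'' Neither half works: being an $\even$-submodule only gives stability under $\UE(\gg_{\bar{0},\Delta})$, not under arbitrary $\even$-invariant elements; and a general $b\in(\UE(\gg)\otimes\Weyl)^{\UE(\even)}$ is emphatically not in the subalgebra generated by $\Dirac$ and $\gg_{\bar{0},\Delta}$ (if it were, the distinction between $\DC$ and $\DC'$ would be vacuous). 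Your third paragraph then tries to repair this by showing that $\UE(\gg_{\bar{0},\Delta})(\CC 1\oplus\mathscr{I})$ preserves $\Ker\Dirac$, but that is the wrong algebra --- the factors $a,b$ range over all of $(\UE(\gg)\otimes\Weyl)^{\UE(\even)}$ --- and the intermediate claim that ``the image of $\mathscr{I}$ lands in $\Ker\Dirac^{2}=\Ker\Dirac$'' is false: $\Dirac$ itself lies in $\mathscr{I}$, and on a unitarizable $\HH$ it has nonzero eigenvalues $\pm i\lambda$ off its kernel, so $\Dirac w\notin\Ker\Dirac$ for such eigenvectors $w$. What is needed is that $\mathscr{I}$ \emph{annihilates} $\Ker\Dirac$, not that it maps into it.

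The correct argument is short and is the one the paper gives. By Proposition~\ref{prop::square_dirac}, $\Dirac^{2}=-\Omega_{\gg}\otimes 1+\Omega_{\even,\Delta}-C$ commutes with every element of $(\UE(\gg)\otimes\Weyl)^{\UE(\even)}$: the first summand is central in $\UE(\gg)\otimes\Weyl$, and $\Omega_{\even,\Delta}$ commutes with anything commuting with $\gg_{\bar{0},\Delta}$. Hence for $v\in\Ker\Dirac=\Ker\Dirac^{2}$ (Corollary~\ref{iterativeKernel}, which is where unitarity enters) and any $\even$-invariant $b$, one has $\Dirac^{2}bv=b\Dirac^{2}v=0$, so $bv\in\Ker\Dirac^{2}=\Ker\Dirac$, whence $\Dirac bv=0$ and $a\Dirac bv=0$. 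The Hodge decomposition of Lemma~\ref{lemm::Hodge_decomposition} that you invoke is not needed. The trivial inclusion and the ``in particular'' clause via Proposition~\ref{DiracInduction} are fine as you state them.
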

\begin{proof} The Dirac cohomology of a unitarizable $\gg$-supermodule $\HH$ is $\DC(\HH)=\Ker \Dirac=\Ker \Dirac^{2}$ by Corollary~\ref{iterativeKernel} and Proposition~\ref{DiracGleichKernel}. Moreover, $\Dirac^{2}$ commutes with any element of $(\UE(\gg)\otimes \Weyl)^{\UE(\even)}$ by Theorem~\ref{thm::square_dirac}, and it is an element of $\mathscr{I}$, where
 $\mathscr{I}$ is the two-sided ideal in $(\UE(\gg)\otimes \Weyl)^{\UE(\even)}$ generated by the Dirac operator $\Dirac$. The Dirac cohomology $\DC'(\HH)$ is defined as the $\mathscr{I}$-invariants in $\HH\otimes M(\odd)$. The relation $\DC'(\HH)\subset \Ker \Dirac^{2}$ is immediate. Conversely, for any $v\in \DC(\HH)$, we have $DXv=0$ for any $X\in (\UE(\gg)\otimes \Weyl)^{\UE(\even)}$, as $\Dirac^{2}\!Xv=X\!\Dirac^{2}\!v=0$. This shows $\DC(\HH) = \DC'(\HH)$. The second assertion follows by Proposition~\ref{DiracInduction}.
\end{proof}

The Dirac cohomology $\DC(\HH)$ inherits the structure of a unitarizable $\even$-supermodule. Since the oscillator module $M(\odd)$ is itself a unitarizable $\even$-supermodule (Proposition~\ref{prop:oscillator_module_even_semisimple}), it follows that $\HH \otimes M(\odd)$ is also a unitarizable $\even$-supermodule. Here the action is induced by the diagonal embedding of $\even$ into $\mathcal{W}(\gg)$. Since the Dirac operator $\Dirac$ is $\even$-invariant, it follows that $\Ker \Dirac$ likewise carries the structure of a unitarizable $\even$-supermodule. 

\begin{proposition} \label{HDseimsimple}
 The Dirac cohomology $\DC(\HH)$ of a unitarizable $\gg$-supermodule $\HH$ is a unitarizable $\even$-supermodule. In particular, if $\HH$ is simple, it decomposes completely into unitarizable highest weight $\even$-supermodules. 
\end{proposition}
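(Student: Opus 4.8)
The plan is to realize $\DC(\HH)$ as an $\even$-submodule of the $\even$-supermodule $\HH \otimes M(\odd)$, equipped with its tensor Hermitian form $\langle\cdot,\cdot\rangle_{\HH\otimes M(\odd)}$, and to show that this ambient supermodule is itself unitarizable; the assertion then follows from the elementary fact that a graded submodule of a unitarizable supermodule is again unitarizable, hence completely reducible.

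First I would verify that $\HH \otimes M(\odd)$ is a unitarizable $\even$-supermodule with respect to the real form $\gg_{\bar{0}}^{\omega}$, under the diagonal action $X \mapsto X \otimes 1 + 1 \otimes \alpha(X)$. The form $\langle \cdot, \cdot \rangle_{\HH \otimes M(\odd)}$ is positive definite and consistent because both tensor factors are. For contravariance I would split the diagonal action into its two summands: $\langle (X \otimes 1) v, w \rangle_{\HH \otimes M(\odd)} = \langle v, (\omega(X) \otimes 1) w \rangle_{\HH \otimes M(\odd)}$ is immediate from contravariance of $\langle \cdot, \cdot \rangle_{\HH}$, while $\langle (1 \otimes \alpha(X)) v, w \rangle_{\HH \otimes M(\odd)} = \langle v, (1 \otimes \alpha(\omega(X))) w \rangle_{\HH \otimes M(\odd)}$ is precisely the content of Lemma~\ref{lemm::oscillator_module_even_semisimple} (unitarizability of $M(\odd)$ as an $\even$-module under $\alpha$). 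Adding the two identities and using that $\alpha(\omega(X))$ is the diagonal companion of $\omega(X) \in \even$ yields $\omega$-contravariance of the diagonal action. This is the statement already anticipated in the paragraph preceding the proposition.

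Next I would note that $\DC(\HH) = \Ker \Dirac$ by Proposition~\ref{DiracGleichKernel}, and that $\Ker \Dirac$ is stable under the diagonal $\even$-action because $\Dirac$ is $\even$-invariant, i.e. $[X \otimes 1 + 1 \otimes \alpha(X), \Dirac] = 0$ in $\UE(\gg) \otimes \Weyl$ (Section~\ref{subsubsec::Dirac_Operator}, \cite[Lemma~10.2.1]{huang2007dirac}). Since $\Dirac$ is an even operator for the $\ZZ_{2}$-grading defining the $\gg$-supermodule structure of $\HH \otimes M(\odd)$, its kernel is a graded subspace, and restricting $\langle \cdot, \cdot \rangle_{\HH \otimes M(\odd)}$ to $\Ker \Dirac$ keeps it positive definite, consistent, and $\omega$-contravariant; hence $\DC(\HH)$ is a unitarizable $\even$-supermodule. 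Complete reducibility into simple unitarizable $\even$-supermodules is then the $\even$-analogue of Proposition~\ref{CompletelyReducible} (equivalently, the last sentence of Lemma~\ref{lemm::ev}): a unitarizable supermodule is completely reducible, and each isotypic summand inherits the restricted form and is therefore a simple unitarizable $\even$-supermodule.

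The argument is essentially a corollary of results already assembled, so I do not expect a genuine obstacle. The only point deserving care is the bookkeeping behind the contravariance of the diagonal embedding — namely that extending $\omega$ to $\UE(\gg) \otimes \Weyl$ sends $X \otimes 1 + 1 \otimes \alpha(X)$ to $\omega(X) \otimes 1 + 1 \otimes \alpha(\omega(X))$, equivalently that $\omega \circ \alpha = \alpha \circ \omega$, which can be read off the explicit formula for $\alpha$ and is exactly what underlies Lemma~\ref{lemm::oscillator_module_even_semisimple}.
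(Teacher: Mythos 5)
Your argument is correct and follows essentially the same route as the paper: the paper's justification (given in the paragraph preceding the proposition) is precisely that $M(\odd)$ is unitarizable under $\alpha$, hence $\HH\otimes M(\odd)$ is a unitarizable $\even$-supermodule under the diagonal embedding, and $\DC(\HH)=\Ker\Dirac$ is an $\even$-invariant graded subspace inheriting the positive definite contravariant form. Your additional care about $\omega\circ\alpha=\alpha\circ\omega$ is exactly the content already packaged into Lemma~\ref{lemm::oscillator_module_even_semisimple}, so nothing is missing.
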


\subsubsection{Computation of Dirac cohomology} Having established the basic properties of the Dirac cohomology for unitarizable supermodules, we now proceed to compute it explicitly. We begin by noting that general highest weight $\gg$-supermodules possess non-trivial Dirac cohomology. 

\begin{proposition} \label{prop::HW_DC}
 Let $M$ be a highest weight $\gg$-supermodule with highest weight $\Lambda$. Then $\DC(M)$ contains a highest weight $\even$-supermodule with highest weight $\Lambda-\rho_{\bar{1}}$ that occurs with multiplicity one. In particular, $\DC(M) \neq \{0\}$.
\end{proposition}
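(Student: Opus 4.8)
The plan is to produce an explicit $\even$-highest weight vector of weight $\Lambda-\rho_{\bar 1}$ inside $\DC(\HH)$. By Proposition~\ref{DiracGleichKernel} we have $\DC(\HH)=\Ker\Dirac$ on $\HH\otimes M(\odd)$, and the natural candidate is $v_{\Lambda}\otimes 1$, with $v_{\Lambda}$ a highest weight vector of $\HH$ and $1\in M(\odd)=\CC[x_{1},\dotsc,x_{mn}]$ the constant polynomial. First I would check $\Dirac(v_{\Lambda}\otimes 1)=0$: in $\Dirac=2\sum_{k}(\partial_{k}\otimes x_{k}-x_{k}\otimes\partial_{k})$ the summand $x_{k}\otimes\partial_{k}$ annihilates $v_{\Lambda}\otimes 1$ because $\partial_{k}$ acts as $\partial/\partial x_{k}$ and kills the constant, while $\partial_{k}\otimes x_{k}$ annihilates it because the $\partial_{k}$ span $\nn_{\bar 1}^{+}\subseteq\nn^{+}$ and $\nn^{+}v_{\Lambda}=0$. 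Hence $v_{\Lambda}\otimes 1\in\Ker\Dirac=\DC(\HH)$.

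Second, I would show $v_{\Lambda}\otimes 1$ is a highest weight vector for $\even$ of weight $\Lambda-\rho_{\bar 1}$, where $\even$ acts diagonally by $X\mapsto X\otimes 1+1\otimes\alpha(X)$. The weight is immediate: $H\in\hh$ acts on $v_{\Lambda}$ by $\Lambda(H)$ and on $1\in M(\odd)$ by $-\rho_{\bar 1}(H)$, the latter being precisely the twist identified in Proposition~\ref{EvenRestriction}. For the highest weight property, let $X\in\nn_{\bar 0}^{+}$; then $Xv_{\Lambda}=0$, and applying the explicit formula for $\alpha(X)$ to the constant polynomial one finds $\alpha(X)\cdot 1=0$: every term involving some $\partial_{k}$ kills the constant, the coefficients $B(X,[\partial_{k},\partial_{j}])$ of the $x_{k}x_{j}$-terms vanish because $[\partial_{k},\partial_{j}]\in[\nn_{\bar 1}^{+},\nn_{\bar 1}^{+}]\subseteq\nn_{\bar 0}^{+}$ (Lemma~\ref{lemm::commutation_relations}) is $B$-orthogonal to $X\in\nn_{\bar 0}^{+}$, and the scalar $-\sum_{l}B(X,[\partial_{l},x_{l}])$ vanishes because $[\partial_{l},x_{l}]\in\hh$ is $B$-orthogonal to $\nn_{\bar 0}^{+}$. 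Thus $X\cdot(v_{\Lambda}\otimes 1)=0$. Since $\DC(\HH)$ is a unitarizable, hence completely reducible, $\even$-supermodule (Proposition~\ref{HDseimsimple}), the cyclic $\even$-submodule generated by $v_{\Lambda}\otimes 1$ is simple and isomorphic to $L_{0}(\Lambda-\rho_{\bar 1})$.

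Third, for the multiplicity-one statement I would argue with weights. Combining $\mathcal{P}_{\HH}\subseteq\Lambda-\ZZ_{+}[\Delta^{+}]$ (Proposition~\ref{prop::Properties_HWM}) with the fact that the $\hh$-weights of $M(\odd)$ lie in $-\rho_{\bar 1}-\ZZ_{+}[\Delta_{\bar 1}^{+}]$ (Proposition~\ref{EvenRestriction}, since the $x_{k}$ span $\nn_{\bar 1}^{-}$), every $\hh$-weight of $\HH\otimes M(\odd)$ lies in $\Lambda-\rho_{\bar 1}-\ZZ_{+}[\Delta^{+}]$, and the weight $\Lambda-\rho_{\bar 1}$ itself occurs with multiplicity one, since it forces $\HH^{\Lambda}\otimes M(\odd)^{-\rho_{\bar 1}}$, each factor one-dimensional. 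Now $\DC(\HH)=\bigoplus_{i}L_{0}(\mu_{i})$ with $\mu_{i}\in\Lambda-\rho_{\bar 1}-\ZZ_{+}[\Delta^{+}]$, and $\Lambda-\rho_{\bar 1}$ is a weight of $L_{0}(\mu_{i})$ only if $\mu_{i}-(\Lambda-\rho_{\bar 1})\in\ZZ_{+}[\Delta_{\bar 0}^{+}]$; together with $\mu_{i}-(\Lambda-\rho_{\bar 1})\in-\ZZ_{+}[\Delta^{+}]$ and the pointedness of the positive system $\Delta^{+}$, this forces $\mu_{i}=\Lambda-\rho_{\bar 1}$ for at most one index $i$. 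The vector $v_{\Lambda}\otimes 1$ shows it holds for exactly one, which yields both assertions. The main obstacle is the bookkeeping in the second step — checking $\alpha(X)\cdot 1=0$ for all of $\nn_{\bar 0}^{+}$, not merely the compact part covered verbatim by Proposition~\ref{EvenRestriction} — which rests on Lemma~\ref{lemm::commutation_relations} and the orthogonality of root spaces under $B$; the remaining steps are routine.
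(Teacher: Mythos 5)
Your proposal is correct and follows essentially the same route as the paper: exhibit $v_{\Lambda}\otimes 1$ as an explicit element of $\Ker\Dirac$, verify via the explicit formula for $\alpha$ and the orthogonality of root spaces under $B$ that it is a $\nn_{\bar 0}^{+}$-highest weight vector of weight $\Lambda-\rho_{\bar 1}$, and deduce multiplicity one from the one-dimensionality of the relevant weight spaces. Your treatment of the multiplicity-one step is slightly more detailed than the paper's (which simply notes that the weight spaces of $v_{\Lambda}$ and $1$ are one-dimensional), but the underlying argument is identical.
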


\begin{proof}
Let $v_{\Lambda}$ be the highest weight vector of $M$, meaning that $\nn^{+}v_{\Lambda} = 0$, and specifically $\nn_{\bar{1}}^{+}v_{\Lambda} = 0$. Since $1$ is constant and annihilated by $\partial_{k}$, the vector $v_{\Lambda} \otimes 1$ lies in the kernel of 
$
\Dirac = 2 \sum_{k=1}^{mn} (\partial_{k} \otimes x_{k} - x_{k} \otimes \partial_{k}).
$
Further, we assert that $v_{\Lambda} \otimes 1$ generates a highest weight $\UE(\even)$-supermodule.

Any element $X \in \even$ acts on $v_{\Lambda}\otimes 1$ via the diagonal embedding:
 \begin{equation*}
 Xv_{\Lambda} \otimes 1 + v_{\Lambda} \otimes \alpha(X) 1,
 \end{equation*}
 and we recall: 
 \begin{multline*}
\alpha(X)= \sum_{k,j=1}^{mn}(B(X,[\partial_{k},\partial_{j}])x_{k}x_{j}+B(X,[x_{k},x_{j}])\partial_{k}\partial_{j}) 
\\ -\sum_{k,j=1}^{mn}2B(X,[x_{k},\partial_{j}])x_{j}\partial_{k}-\sum_{l=1}^{mn}B(X,[\partial_{l},x_{l}]).
\end{multline*} 

By Lemma~\ref{lemm::commutation_relations}, we have the commutation relations $[\partial_{k},\partial_{j}] \in \nn_{\bar{0}}^{+}$, $[x_{k},x_{j}] \in \nn_{\bar{0}}^{-}$ and $[x_{k},\partial_{j}] \in \hh$. Let $X \in \nn^{+}_{\bar{0}}$. Then the definition of $B(\cdot,\cdot)$ forces
\begin{equation*}
 Xv_{\Lambda} \otimes 1 + v_{\Lambda} \otimes \alpha(X) 1 = v_{\Lambda} \otimes \sum_{k,j=1}^{mn} B(X,[x_{k},x_{j}])\partial_{k}\partial_{j}1=0,
\end{equation*}
where we use that $\nn_{\bar{0}}^{+}v_{\Lambda} = 0$.

Any $H \in \hh$ acts on $v_{\Lambda} \otimes 1$ by 
\begin{equation*}
\begin{split}
 Hv_{\Lambda}\otimes 1 + v_{\Lambda} \otimes \alpha(H)v_{\Lambda} &= \Lambda(H)v_{\Lambda}\otimes 1 - v_{\Lambda}\otimes \sum_{l=1}^{mn}B(H,[\partial_{l},x_{l}])1 \\ &=\Lambda(H)v_{\Lambda} \otimes 1 - v_{\Lambda} \otimes \sum_{\alpha \in \Delta_{\bar{1}}^{+}} \frac{1}{2}\alpha(H)1 \\ &= (\Lambda-\rho_{\bar{1}})(H)(v_{\Lambda}\otimes 1).
 \end{split}
\end{equation*}

Hence, $v_{\Lambda}\otimes 1 \in \DC(M)$ generates a (simple) highest weight $\even$-supermodule, which is in particular unitarizable by Proposition~\ref{HDseimsimple}. This module appears with multiplicity one, as the weight spaces of $v_{\Lambda}$ and $1$ are one-dimensional. 
\end{proof}

The following corollary is an immediate consequence of Proposition~\ref{CompletelyReducible}, Theorem~\ref{thm::HW_property_M} and Proposition~\ref{prop::HW_DC}.

\begin{corollary}
 Let $\HH$ be a unitarizable $\gg$-supermodule. Then $\DC(\HH)$ is non-trivial.
\end{corollary}

 To compute $\DC(\HH)$ explicitly, we use Theorem~\ref{thm::square_dirac}. For that, we relate the constant $C$ with the Weyl vector $\rho = \rho_{\bar{0}} - \rho_{\bar{1}}$. 

\begin{lemma} \label{lemm::Value_C}
 The constant is $C = -(\rho_{\bar{1}}-2\rho_{\bar{0}},\rho_{\bar{1}})$.
\end{lemma}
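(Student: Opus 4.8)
The plan is to determine the scalar $C$ by evaluating the square formula of Proposition~\ref{prop::square_dirac} on a single explicit vector lying in the kernel of the Dirac operator. Fix a non-trivial unitarizable highest weight $\gg$-supermodule $\HH$ with highest weight $\Lambda$ and highest weight vector $v_{\Lambda}$ (such modules exist by Theorem~\ref{thm::HW_property_M}; alternatively one can take the trivial supermodule, where $\Lambda = 0$ and the argument shortens). By Proposition~\ref{prop::HW_DC} the vector $v_{\Lambda}\otimes 1 \in \HH\otimes M(\odd)$ lies in $\DC(\HH)$, which for unitarizable $\HH$ equals $\Ker\Dirac$ (Proposition~\ref{DiracGleichKernel}); moreover $v_{\Lambda}\otimes 1$ generates, under the diagonal $\even$-action, a highest weight $\even$-supermodule of highest weight $\Lambda - \rho_{\bar{1}}$. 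In particular $\Dirac^{2}(v_{\Lambda}\otimes 1) = 0$.

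Next I would apply $\Dirac^{2} = -\Omega_{\gg}\otimes 1 + \Omega_{\even,\Delta} - C$ to $v_{\Lambda}\otimes 1$ and read off the eigenvalues of the three summands. The operator $\Omega_{\gg}\otimes 1$ acts through the $\HH$-factor, hence by the scalar $(\Lambda + 2\rho, \Lambda)$ (Lemma~\ref{lemm::action_Casimir}). The operator $\Omega_{\even,\Delta}$ is the image of the quadratic Casimir of $\even$ under the diagonal embedding (Lemma~\ref{lemm::constant_C}), so on the highest weight $\even$-supermodule $\UE(\even)(v_{\Lambda}\otimes 1)$ it acts by the usual Casimir eigenvalue on a highest weight module of highest weight $\mu := \Lambda - \rho_{\bar{1}}$, namely $(\mu + 2\rho_{\bar{0}}, \mu)$ — the reductive analogue of Lemma~\ref{lemm::action_Casimir}. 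Since $C$ is a scalar, this yields
$$
0 \;=\; -(\Lambda + 2\rho, \Lambda) + (\Lambda - \rho_{\bar{1}} + 2\rho_{\bar{0}},\, \Lambda - \rho_{\bar{1}}) - C .
$$

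Finally I would solve for $C$ and simplify, substituting $\rho = \rho_{\bar{0}} - \rho_{\bar{1}}$. Expanding the two quadratic forms by bilinearity and symmetry, all terms involving $\Lambda$ cancel against one another — which is exactly the consistency check that $C$ does not depend on the chosen module, in accordance with Lemma~\ref{lemm::constant_C}(d) — and the remaining expression, depending only on $\rho_{\bar{0}}$ and $\rho_{\bar{1}}$, collapses to $C = -(\rho_{\bar{1}} - 2\rho_{\bar{0}}, \rho_{\bar{1}})$. There is essentially no conceptual obstacle; the only steps that demand care are (i) the identification of the $\even$-highest weight of $v_{\Lambda}\otimes 1$ as $\Lambda - \rho_{\bar{1}}$, which is furnished by Proposition~\ref{prop::HW_DC}, and (ii) keeping the sign and normalisation conventions for the Casimir eigenvalues aligned with those underlying Proposition~\ref{prop::square_dirac} and Lemma~\ref{lemm::action_Casimir} so that the final arithmetic produces the stated sign.
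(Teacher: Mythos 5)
Your argument is the same as the paper's: place $v_{\Lambda}\otimes 1$ in $\Ker\Dirac^{2}$, evaluate the three summands of $\Dirac^{2}$ by their Casimir eigenvalues on the highest weight $\even$-constituent of highest weight $\Lambda-\rho_{\bar{1}}$, and solve for $C$; the identity $(\Lambda-\rho_{\bar{1}}+2\rho_{\bar{0}},\Lambda-\rho_{\bar{1}})=(\Lambda+2\rho,\Lambda)+(\rho_{\bar{1}}-2\rho_{\bar{0}},\rho_{\bar{1}})$ is exactly the ``direct calculation'' the paper performs. (The paper routes the eigenvalue comparison through Theorem~\ref{VoganConjecture}, but the content is identical.)

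There is, however, a sign problem in your last step: from your displayed equation
$0=-(\Lambda+2\rho,\Lambda)+(\Lambda-\rho_{\bar{1}}+2\rho_{\bar{0}},\Lambda-\rho_{\bar{1}})-C$
together with the identity above, one gets $C=+(\rho_{\bar{1}}-2\rho_{\bar{0}},\rho_{\bar{1}})$, not $-(\rho_{\bar{1}}-2\rho_{\bar{0}},\rho_{\bar{1}})$ as you assert. The discrepancy is not really your invention: you faithfully used $\Dirac^{2}=-\Omega_{\gg}\otimes 1+\Omega_{\even,\Delta}-C$ from Proposition~\ref{prop::square_dirac}, whereas the paper's own proof of this lemma (and of Proposition~\ref{prop::kernel_relation}) writes the relation with $+C$, which is what produces the stated minus sign. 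So either Proposition~\ref{prop::square_dirac} or the lemma carries a sign typo; as written, your conclusion does not follow from your own equation, and you should either flag the inconsistency or commit to one sign convention and carry it through.
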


\begin{proof}
Let $\HH$ be a non-trivial unitarizable highest weight $\gg$-supermodule with highest weight $\Lambda$. Such a supermodule exists \cite{jakobsen1994full}. By Proposition~\ref{prop::HW_DC}, there exists a $\even$-supermodule in $\DC(\HH)$ with highest weight $\Lambda-\rho_{\bar{1}}$. As $\DC(\HH)=\Ker \Dirac = \Ker \Dirac^{2}$, we have by Theorem~\ref{VoganConjecture}:
$$
0 = -(\Lambda+2\rho, \Lambda) + (\Lambda-\rho_{\bar{1}}+2\rho_{\bar{0}},\Lambda-\rho_{\bar{1}}) + C,
$$
and a direct calculation yields 
$$(\Lambda-\rho_{\bar{1}}+2\rho_{\bar{0}},\Lambda-\rho_{\bar{1}}) = (\Lambda+2\rho,\Lambda) +(\rho_{\bar{1}}-2\rho_{\bar{0}},\rho_{\bar{1}}).$$
This finishes the proof.
\end{proof}

As a direct consequence, using the expression $\Dirac^{2}=-\Omega_{\gg}\otimes 1+\Omega_{\even,\Delta}-C$, we obtain an explicit form of the Dirac inequality for unitarizable simple supermodules. 

\begin{proposition}\label{prop::specified_Dirac_inequality}
    Let $\HH$ be a unitarizable simple $\gg$-supermodule of highest weight $\Lambda$. If $L_{0}(\mu)$ is a $\even$-constituent of $\HH\otimes M(\odd)$ of highest weight $\mu$, then 
    \[
    (\mu+2\rho,\mu)\geq (\Lambda+2\rho,\Lambda).
    \]
    In particular, $L_{0}(\mu)$ belongs to $\DC(\HH)$ if and only if
    \[
    (\Lambda+2\rho,\Lambda)=(\mu+2\rho,\mu).
    \]
\end{proposition}

\begin{proof}
 The square $\Dirac^{2} = - \Omega_{\gg}\otimes 1 + \Omega_{\even,\Delta} +C$ acts on $L_{0}(\mu)$ by the scalar
$$
-(\Lambda+2\rho,\Lambda) + (\mu-\rho_{\bar{1}}+2\rho_{\bar{0}},\mu-\rho_{\bar{1}}) - (\rho_{\bar{1}}-2\rho_{\bar{0}},\rho_{\bar{1}}) = -(\Lambda+2\rho,\Lambda) + (\mu + 2\rho,\mu).
$$
The first part of the proof follows by Proposition~\ref{prop::Dirac_inequality} and the second part by $\DC(\HH) = \ker \Dirac = \ker \Dirac^{2}$.
\end{proof}

We now determine the Dirac cohomology of unitarizable $\gg$-supermodules. Since any unitarizable $\gg$-supermodule is completely reducible and $\DC(\cdot)$ is additive, it suffices to consider unitarizable simple $\gg$-supermodules. Our computation is based on the following consequence of Vogan’s theorem.

\begin{lemma} \label{lemm::relation_HW_constituents}
 Let $\HH$ be a unitarizable highest weight $\gg$-supermodule with highest weight $\Lambda$. If the Dirac cohomology $\DC(\HH)$ contains a nonzero $\even$-supermodule with even infinitesimal character $\chi_{\lambda}^{\bar{0}}$ for some $\lambda \in \hh^{\ast}$, then 
 \[
 \Lambda - \rho_{\bar{1}} = w(\lambda + \rho_{\bar{0}})-\rho_{\bar{0}}
 \]
 for some $w \in W$.
\end{lemma}

\begin{proof}
By Proposition~\ref{prop::HW_DC}, the supermodule $\DC(\HH)$ contains a $\even$-constituent of highest weight $\Lambda-\rho_{\bar{1}}$. 
If $\lambda$ is the highest weight of another $\even$-constituent, then by Vogan's theorem (Theorem~\ref{VoganConjecture}) we have
\[
\chi^{\bar{0}}_{\Lambda-\rho_{\bar{1}}}(\zeta(z))
 = \chi^{\bar{0}}_{\lambda}(\zeta(z)),
 \qquad z \in \mathfrak{Z}(\gg).
\]
If $z \in \mathfrak{Z}(\even)$, then $\zeta(z)=z$, and therefore the above equality holds for all $z\in\mathfrak{Z}(\even)$. 
In particular,
\[
\chi^{\bar{0}}_{\Lambda-\rho_{\bar{1}}}
 = \chi^{\bar{0}}_{\lambda},
\]
which is the case if and only if there exists $w\in W$ such that (\emph{cf.}~Section~\ref{subsubsec::infinitesimal_characters})
\[
\Lambda - \rho_{\bar{1}}
 = w \cdot \lambda
 = w(\lambda + \rho_{\bar{0}}) - \rho_{\bar{0}}.
\qedhere
\]
\end{proof}

\begin{theorem}
\label{thm::Dirac_cohomology_simple_supermodules}
The Dirac cohomology of a non-trivial unitarizable simple $\gg$-supermodule $\HH$ with highest weight $\Lambda$ is 
$$
\DC(\HH) \cong L_{0}(\Lambda-\rho_{\bar{1}}).
$$
\end{theorem}
\begin{proof}
 We decompose $\DC(\HH)$ into its $\even$-constituents by Proposition~\ref{HDseimsimple}. Note that it decomposes discretely since $\HH$ is simple and by Remark~\ref{rmk::decompsoition_oscillator_module}. Then the module $L_{0}(\Lambda-\rho_{\bar{1}})$ is a simple constituent by Proposition~\ref{prop::HW_DC}, and unitarizable. Moreover, it is a relative holomorphic $\even$-supermodule
 by Harish-Chandra's condition:
 $$
 (\Lambda-\rho_{\bar{1}}+ \rho_{\bar{0}},\epsilon_{1}-\epsilon_{m}) = \lambda_{1}-\lambda_{m} + m-1 -n < 0
 $$
 as $n\geq m$ and $\lambda_{1}-\lambda_{m}\leq 0$, \emph{i.e.}, $\Lambda - \rho_{\bar{1}} \in \Dis$, where $\Dis$ denotes the set of all Harish-Chandra parameters of relative holomorphic discrete series $\even$-modules described in Section~\ref{subsubsec::relative_holomorphic}. In particular, as $\Delta_{\bar{0}}^{+}$ is fixed, $\Lambda-\rho_{\bar{1}}$ is the unique highest weight in its $W$-linkage class of a unitarizable highest weight $\even$-supermodule~\ref{lemm::D_Weyl_Orbit}.

 Any simple $\even$-constituent $L_{0}(\mu)$ is a highest weight $\even$-supermodule by Proposition~\ref{HDseimsimple}, and the highest weight $\mu$ is of the form
 $$
 w\cdot (\Lambda-\rho_{\bar{1}}) = \mu,
 $$
 by Lemma~\ref{lemm::relation_HW_constituents}. Thus, by uniqueness, $w$ must be the identity, and $\Lambda-\rho_{\bar{1}} = \mu$. In addition, the multiplicity is one by Proposition~\ref{prop::HW_DC}. This concludes the proof. 
\end{proof}

\begin{remark}
The proof of the theorem applies verbatim to any $\gg_{\bar{0}}$–semisimple simple highest weight $\gg$-supermodule $M$ whose highest weight $\Lambda$ is the highest weight of a unitarizable highest weight $\gg_{\bar{0}}$–module. In addition, the preceding reasoning extends, without essential modification, to the case where $\Lambda$ is regular and dominant integral. 
\end{remark}

\begin{corollary} \label{cor::even_constituents_Dirac_relative_holomorphic}
 Let $\HH$ be a unitarizable $\gg$-supermodule. Then any $\even$-constituent in $\HH \otimes M(\odd)$ belongs to the relative holomorphic discrete series. 
\end{corollary}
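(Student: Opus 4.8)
The plan is to reduce to the case of a simple $\HH$, decompose $\HH\otimes M(\odd)$ as a unitarizable $\even$-supermodule into simple highest weight pieces, and then verify Harish--Chandra's condition for each piece by the same elementary weight estimate that already appears in the proof of Theorem~\ref{thm::Dirac_cohomology_simple_supermodules}.

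First I would reduce to $\HH=L(\Lambda)$ simple. By Proposition~\ref{CompletelyReducible} we may write $\HH=\bigoplus_a L(\Lambda_a)$, whence $\HH\otimes M(\odd)=\bigoplus_a L(\Lambda_a)\otimes M(\odd)$ as $\even$-supermodules, so it suffices to treat one summand. Since $L(\Lambda)_{\ev}$ is a direct sum of highest weight $\even$-modules (Corollary~\ref{Cor::Even_constituents}) and $M(\odd)$ is a direct sum of highest weight $\even$-modules (Lemma~\ref{lemm::oscillator_module_even_semisimple}, its constituents being the ladder modules), the tensor product has $\hh$-weights bounded above, and being unitarizable over $\even$ (as observed before Proposition~\ref{HDseimsimple}) it decomposes into a direct sum of simple unitarizable highest weight $\even$-supermodules $L_{0}(\eta)$. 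Moreover $\HH\otimes M(\odd)$ is a $(\even,\kk^{\CC})$-module (both tensor factors are, the second by Proposition~\ref{EvenRestriction}), so each such $\eta$ is $\kk^{\CC}$-dominant integral. It remains to prove $\eta\in\Dis$ for every such $\eta$.

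The main step is the observation that, for a $\kk^{\CC}$-dominant $\eta$, the infinitely many Harish--Chandra inequalities collapse to a single one. Writing a non-compact positive root as $\beta=\epsilon_{i_0}-\epsilon_{j_0}$ with $i_0\leq p<j_0\leq m$, $\kk^{\CC}$-dominance gives $\eta_{i_0}\leq\eta_1$ and $\eta_{j_0}\geq\eta_m$, while $(\rho_{\bar 0},\epsilon_{i_0}-\epsilon_{j_0})=j_0-i_0\leq m-1=(\rho_{\bar 0},\epsilon_1-\epsilon_m)$; hence $(\eta+\rho_{\bar 0},\beta)\leq(\eta+\rho_{\bar 0},\epsilon_1-\epsilon_m)$ for all $\beta\in\Delta_n^+$. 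Also $\eta+\rho_{\bar 0}$ is automatically $\kk^{\CC}$-regular, since $\eta$ is $\kk^{\CC}$-dominant integral and $\rho_{\bar 0}-\rho_c=\rho_n$ is $W_c$-invariant. Thus $L_{0}(\eta)$ is a relative holomorphic discrete series $\even$-supermodule (Lemma~\ref{lemm::D_Weyl_Orbit}) as soon as $(\eta+\rho_{\bar 0},\epsilon_1-\epsilon_m)<0$, which is precisely the inequality checked for $\DC(L(\Lambda))$ in Theorem~\ref{thm::Dirac_cohomology_simple_supermodules}.

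Finally I would verify this inequality. Write $\eta=\sigma+\theta$ with $\sigma$ a weight of $\HH$ and $\theta$ an $\hh$-weight of $M(\odd)$. A direct check shows that every positive root $\alpha\in\Delta^+$ has $\alpha_1-\alpha_m\geq 0$, where $\alpha_i$ denotes the $\epsilon_i$-coefficient; so from $\sigma\in\Lambda-\ZZ_{\geq 0}[\Delta^+]$ one gets $\sigma_1-\sigma_m\leq\lambda_1-\lambda_m\leq 0$, the last inequality as in Section~\ref{subsubsec::holomorphic_Weyl_chamber}. Likewise, by Proposition~\ref{EvenRestriction} the $\hh$-weights of $M(\odd)$ lie in $-\rho_{\bar 1}-\ZZ_{\geq 0}[\Delta_{\bar 1}^+]$, and since $(-\rho_{\bar 1})_1-(-\rho_{\bar 1})_m=-n$ and the same coefficient inequality holds for odd positive roots, $\theta_1-\theta_m\leq -n$. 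Therefore $(\eta+\rho_{\bar 0},\epsilon_1-\epsilon_m)=(\eta_1-\eta_m)+(m-1)\leq (m-1)-n<0$ by the standing assumption $m\leq n$, which finishes the argument. The only point requiring genuine care is the reduction via $\kk^{\CC}$-dominance to the extreme root $\epsilon_1-\epsilon_m$, together with the bookkeeping of the $\epsilon$-coefficients of the (odd) positive roots; once these are in place the estimate is the same one already used for the Dirac cohomology of $L(\Lambda)$.
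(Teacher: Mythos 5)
Your proof is correct, and it follows the same underlying idea as the paper — the weight estimate $(\eta+\rho_{\bar{0}},\epsilon_{1}-\epsilon_{m})\leq \lambda_{1}-\lambda_{m}+(m-1)-n<0$ that appears in the proof of Theorem \ref{thm::Dirac_cohomology_simple_supermodules} — but you supply details the paper leaves implicit. The paper's justification for the corollary is essentially the remark that the claim "was seen" in that proof; there, however, Harish--Chandra's condition is only verified for the single constituent $L_{0}(\Lambda-\rho_{\bar{1}})$ and the single non-compact root $\epsilon_{1}-\epsilon_{m}$. Your two additions are exactly what is needed to upgrade this to all constituents of $\HH\otimes M(\odd)$: (i) the reduction, via $\kk^{\CC}$-dominance of $\eta$ and $(\rho_{\bar{0}},\epsilon_{i_{0}}-\epsilon_{j_{0}})=j_{0}-i_{0}\leq m-1$, of the infinitely many inequalities $(\eta+\rho_{\bar{0}},\beta)<0$, $\beta\in\Delta_{n}^{+}$, to the extreme root $\epsilon_{1}-\epsilon_{m}$; and (ii) the bound on $\eta_{1}-\eta_{m}$ obtained by writing $\eta=\sigma+\theta$ with $\sigma$ a weight of $\HH$ and $\theta$ a weight of $M(\odd)$ and checking that every positive (even or odd) root has nonnegative $\epsilon_{1}$-minus-$\epsilon_{m}$ coefficient, so that $\theta_{1}-\theta_{m}\leq -n$. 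Both steps check out, and your treatment of regularity via the $W_{c}$-invariance of $\rho_{n}$ together with the strict non-compact inequality is also fine. The only points you inherit from the paper without independent proof are the inequality $\lambda_{1}-\lambda_{m}\leq 0$ for unitarizable highest weights and the fact that the unitarizable $\even$-module $\HH\otimes M(\odd)$, having weights bounded above with finite multiplicities, decomposes into simple highest weight constituents; the paper assumes both as well, so this is not a gap relative to the source.
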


\begin{proof}
 Any $\even$-constituent in $\HH \otimes M(\odd)$ has highest weight 
 \[
 \mu = \Lambda - \rho_{\bar{1}}-\sum_{i} n_{i} \alpha_{i} - \sum_{j}m_{j} \beta_{j}, \qquad n_{i},m_{j} \in \ZZ_{+}, \quad \alpha_{i} \in \Delta_{\bar{0}}^{+}, \ \beta_{j} \in \Delta_{\bar{1}}^{+}.
 \]
 A direct calculation yields for all $i,j$:
 \[
 (\alpha_{i}, \epsilon_{1}-\epsilon_{m}) \geq 0, \qquad (\beta_{j}, \epsilon_{1}-\epsilon_{m})\geq 0
 \]
 which implies $(\mu+\rho_{\bar{0}},\epsilon_{1}-\epsilon_{m})<0$ since $(\Lambda-\rho_{\bar{1}}+\rho_{\bar{0}}, \epsilon_{1}-\epsilon_{m})<0$. 
\end{proof}

The $\even$-supermodule generated by $1 \in M(\odd)$ is entirely concentrated in even parity due to the $\mathbb{Z}_2$-grading of $M(\odd)$. This leads to the following corollary.

\begin{corollary}\label{DiracOfSimple}
 Let $\HH$ be a non-trivial unitarizable simple $\gg$-supermodule. Then 
 \begin{equation*}
 \DC(\HH)\cong \DC^{+}(\HH).
 \end{equation*}
\end{corollary}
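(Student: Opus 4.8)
The plan is to combine the explicit computation of $\DC(\HH)$ from Theorem~\ref{thm::Dirac_cohomology_simple_supermodules} with a parity count on the oscillator factor. Recall that since $\HH$ is unitarizable, Proposition~\ref{DiracGleichKernel} gives $\DC(\HH) = \Ker \Dirac$, so there is no quotient to take and the splitting $\DC(\HH) = \DC^{+}(\HH) \oplus \DC^{-}(\HH)$ induced by the $\ZZ_{2}$-grading of $M(\odd)$ is simply $\DC^{\pm}(\HH) = \Ker \Dirac \cap \bigl(\HH \otimes M(\odd)_{\bar{0}}\bigr)$ respectively $\Ker \Dirac \cap \bigl(\HH \otimes M(\odd)_{\bar{1}}\bigr)$. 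Hence it suffices to prove $\Ker \Dirac \subset \HH \otimes M(\odd)_{\bar{0}}$.

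First I would record that the diagonal action of $\even$ on $\HH \otimes M(\odd)$ preserves the subspace $\HH \otimes M(\odd)_{\bar{0}}$. Indeed, in the explicit formula for $\alpha : \even \to \Weyl$ recalled before Lemma~\ref{lemm::constant_C}, every monomial appearing in $\alpha(X)$ is one of $x_{k}x_{j}$, $\partial_{k}\partial_{j}$, $x_{j}\partial_{k}$, or a scalar, so it changes the polynomial degree of an element of $M(\odd) = \CC[x_{1},\dots,x_{mn}]$ by $0$ or $\pm 2$. Thus $\alpha(X)$ preserves the $\ZZ_{2}$-grading of $M(\odd)$, and therefore $X \cdot (v \otimes P) = Xv \otimes P + v \otimes \alpha(X)P$ maps $\HH \otimes M(\odd)_{\bar{0}}$ into itself. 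Equivalently, the $\even$-supermodule $\UE(\even)\cdot 1 \subset M(\odd)$ is concentrated in even parity.

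Next, by Theorem~\ref{thm::Dirac_cohomology_simple_supermodules} the Dirac cohomology $\DC(\HH) = L_{0}(\Lambda - \rho_{\bar{1}})$ is a \emph{simple} $\even$-supermodule, and by Proposition~\ref{prop::HW_DC} the vector $v_{\Lambda} \otimes 1 \in \Ker \Dirac$ is its highest weight vector; in particular $\DC(\HH) = \UE(\even)\,(v_{\Lambda} \otimes 1)$. Since $1$ is a constant polynomial, $v_{\Lambda} \otimes 1 \in \HH \otimes M(\odd)_{\bar{0}}$, and by the grading observation the whole orbit $\UE(\even)\,(v_{\Lambda}\otimes 1)$ stays in $\HH \otimes M(\odd)_{\bar{0}}$. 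Therefore $\DC(\HH) \subset \HH \otimes M(\odd)_{\bar{0}}$, so $\DC^{-}(\HH) = \DC(\HH) \cap \bigl(\HH \otimes M(\odd)_{\bar{1}}\bigr) = \{0\}$ and $\DC(\HH) = \DC^{+}(\HH)$. The only substantive input is the degree-parity computation for $\alpha$ in the second paragraph — which is immediate from the explicit formula — so there is no genuine obstacle; everything else is bookkeeping on top of Theorems and Propositions already available in the excerpt.
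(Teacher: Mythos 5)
Your proof is correct and follows essentially the same route as the paper, which justifies the corollary by observing that the $\even$-supermodule generated by $1 \in M(\odd)$ is concentrated in even parity; you simply spell out the degree-parity argument for $\alpha(X)$ and the generation of $\DC(\HH) = L_{0}(\Lambda-\rho_{\bar{1}})$ by $v_{\Lambda}\otimes 1$ that the paper leaves implicit.
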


Furthermore, unitarizable $\gg$-supermodules are uniquely determined by their Dirac cohomology. 

\begin{theorem} \label{Unique}
Let $\HH_{1}$ and $\HH_{2}$ be two unitarizable $\gg$-supermodules. Then $\HH_{1}\cong \HH_{2}$ as $\gg$-supermodules if and only if $\DC(\HH_{1})\cong \DC(\HH_{2})$ as $\even$-supermodules.
\end{theorem}
\begin{proof}
 Unitarizable $\gg$-supermodules are completely reducible, and the Dirac cohomology is additive. It therefore suffices to consider unitarizable highest weight $\gg$-supermodules, say $\HH_{1}=L(\Lambda_1)$ and $\HH_{2}=L(\Lambda_2)$ for some $\Lambda_1,\Lambda_2\in \hh^{\ast}$. 

Then, by Theorem~\ref{thm::Dirac_cohomology_simple_supermodules}, the associated Dirac cohomology is $\DC(\HH_{1}) = L_{0}(\Lambda_{1}-\rho_{\bar{1}})$ and $\DC(\HH_{2}) = L_{0}(\Lambda_{2}-\rho_{\bar{1}})$. The statement now follows, since two highest weight supermodules with respect to $\Delta_{\bar{0}}^{+}$ are isomorphic if and only if they have the same highest weight.
\end{proof}

Finally, we apply the preceding results to compute the Dirac cohomology of highest weight $\gg$-supermodules $M$ whose highest weight $\Lambda$ is the highest weight of a unitarizable highest weight $\even$-module. For an explicit discussion of these supermodules, we refer to \cite{SchmidtWalcher}.

Fix such a supermodule $M$. Then it has a composition series (Proposition~\ref{prop::Properties_Verma_Supermodule}):
$$
\{0\} = M_{0}\subset M_{1}\subset \dotsc \subset M_{n}=M
$$
such that each $M_{i+1}/M_{i}$ is a simple highest weight $\gg$-supermodule, say $L(\Lambda_{i})$. In particular, using \cite[Theorem 2.5, Corollary 2.7]{jakobsen1994full}, any $\Lambda_{i}$ is of the form $\Lambda_{i}=\Lambda-\gamma_{i}$, where $\gamma_{i} \in \Gamma$ is a sum of pairwise distinct odd positive roots. 

\begin{proposition} \label{DecompositionSimple} The Dirac cohomology of $M$ is given by 
$$
\DC(M) =\bigoplus_{i}\DC(L(\Lambda_{i})) \cong \bigoplus_{i} L_{0}(\Lambda_{i}-\rho_{\bar{1}}).
$$
\end{proposition}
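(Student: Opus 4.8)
The plan is to induct on the length $r$ of the composition series and push the short exact sequence at each stage through the six-term exact sequence of Lemma~\ref{exactSequence}, then check that the resulting extension of $\even$-supermodules splits. It is convenient to prove the following slightly more general statement by induction on $r$: if $N$ is a $\gg$-supermodule admitting an infinitesimal character whose composition factors are all non-trivial unitarizable highest weight supermodules $L(\mu_{1}),\dotsc,L(\mu_{r})$, then $\DC^{-}(N)=0$ and $\DC(N)=\DC^{+}(N)\cong\bigoplus_{i=1}^{r}L_{0}(\mu_{i}-\rho_{\bar{1}})$. Applying this to $N=M(\Lambda)$ — which has infinitesimal character $\chi_{\Lambda}$ by Lemma~\ref{lemm::action_Casimir}, and whose composition factors $L(\Lambda_{i})$ are unitarizable by hypothesis and which we take to be non-trivial, this being the relevant case since $\DC(\triv)\cong M(\odd)$ — then gives the proposition, as $\DC(L(\Lambda_{i}))\cong L_{0}(\Lambda_{i}-\rho_{\bar{1}})$ by Theorem~\ref{thm::Dirac_cohomology_simple_supermodules}.

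The base case $r=1$ is Theorem~\ref{thm::Dirac_cohomology_simple_supermodules} together with $\DC=\DC^{+}$ for non-trivial unitarizable simple supermodules (Corollary~\ref{DiracOfSimple}). For the inductive step I would use the short exact sequence $0\to N_{r-1}\to N\to L(\mu_{r})\to 0$, where $N_{r-1}$ inherits the infinitesimal character of $N$ (being a submodule) and has the non-trivial unitarizable composition factors $L(\mu_{1}),\dotsc,L(\mu_{r-1})$, so the inductive hypothesis gives $\DC^{-}(N_{r-1})=0$ and $\DC^{+}(N_{r-1})\cong\bigoplus_{i=1}^{r-1}L_{0}(\mu_{i}-\rho_{\bar{1}})$, while the quotient $L(\mu_{r})$ is the base case. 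All three modules admit an infinitesimal character, so Lemma~\ref{exactSequence} applies; since $\DC^{-}(N_{r-1})=0=\DC^{-}(L(\mu_{r}))$, the six-term sequence degenerates to the short exact sequence $0\to\DC^{+}(N_{r-1})\to\DC^{+}(N)\to\DC^{+}(L(\mu_{r}))\to 0$ of $\even$-supermodules together with $\DC^{-}(N)=0$. In particular $\DC(N)=\DC^{+}(N)$ has a composition series with factors $L_{0}(\mu_{1}-\rho_{\bar{1}}),\dotsc,L_{0}(\mu_{r}-\rho_{\bar{1}})$, counted with multiplicity.

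The main obstacle — the only non-formal step — is to upgrade this to a direct sum, i.e. to show the last short exact sequence splits, equivalently that $\DC^{+}(N)$ is $\even$-semisimple; this is not automatic since the ambient module $N\otimes M(\odd)$ need not be unitarizable. I would argue via a linkage / central-character argument: the proof of Theorem~\ref{thm::Dirac_cohomology_simple_supermodules} shows each $\mu_{i}-\rho_{\bar{1}}$ lies in $\Dis$, hence by Lemma~\ref{lemm::D_Weyl_Orbit} is the unique element of its $W$-linkage class occurring as the highest weight of a unitarizable highest weight $\even$-supermodule with respect to $\Delta_{\bar{0}}^{+}$. Thus if $\mu_{i}\neq\mu_{j}$ the weights $\mu_{i}-\rho_{\bar{1}}$ and $\mu_{j}-\rho_{\bar{1}}$ are not $W$-linked, so $L_{0}(\mu_{i}-\rho_{\bar{1}})$ and $L_{0}(\mu_{j}-\rho_{\bar{1}})$ have distinct even infinitesimal characters and $\Ext^{1}_{\even}(L_{0}(\mu_{i}-\rho_{\bar{1}}),L_{0}(\mu_{j}-\rho_{\bar{1}}))=0$; if $\mu_{i}=\mu_{j}$ the two simple modules coincide and again there is no extension, since $\Ext^{1}$ between a simple highest weight $\even$-supermodule and itself vanishes. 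In either case $\Ext^{1}_{\even}(L_{0}(\mu_{r}-\rho_{\bar{1}}),\DC^{+}(N_{r-1}))=0$, so the sequence splits, the induction closes, and $\DC(M(\Lambda))\cong\bigoplus_{i}L_{0}(\Lambda_{i}-\rho_{\bar{1}})\cong\bigoplus_{i}\DC(L(\Lambda_{i}))$. One should note that this linkage input is genuinely needed: because $\rho=\rho_{\bar{0}}-\rho_{\bar{1}}$, the even infinitesimal character of $L_{0}(\mu_{i}-\rho_{\bar{1}})$ is the one attached to $\mu_{i}+\rho$, and since all composition factors of $M(\Lambda)$ share the infinitesimal character $\chi_{\Lambda}$ these parameters all have the same norm, so the even quadratic Casimir — equivalently $\Dirac^{2}$ — acts by a single scalar on $N\otimes M(\odd)$ and cannot separate the constituents; it is really Lemma~\ref{lemm::D_Weyl_Orbit} that does the work.
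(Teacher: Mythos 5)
Your proposal is correct and follows essentially the same route as the paper: the six-term sequence of Lemma~\ref{exactSequence} degenerates because $\DC^{-}$ vanishes on the unitarizable simple factors, and the resulting short exact sequence of $\DC^{+}$'s is split because the weights $\Lambda_{i}-\rho_{\bar{1}}$ lie in $\Dis$ and are therefore pairwise non-$W$-linked by Lemma~\ref{lemm::D_Weyl_Orbit} (the paper phrases the splitting via Wigner's Lemma, which is the same infinitesimal-character argument you give), followed by induction on the length of the filtration. The only point worth noting is that your extra case $\mu_{i}=\mu_{j}$ never occurs here, since the $\Lambda_{i}$ strictly decrease along the filtration; the self-extension vanishing you invoke would otherwise require justifying that the relevant extension lives in a category where $\Ext^{1}(L_{0}(\nu),L_{0}(\nu))=0$, which is not immediate for subquotients of $M(\Lambda)\otimes M(\odd)$.
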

\begin{proof}
Recall that we work in the category of Harish-Chandra (super)modules. In particular, it is well-known that for $\even$-modules $L_{0}(\Lambda)$ and $L_{0}(\Lambda')$ with $\Lambda,\Lambda'\in\Dis$, one has
\begin{equation} \label{eq::Ext_eins_HC}
\Ext^{1}(L_{0}(\Lambda),L_{0}(\Lambda'))=0
\end{equation}
in the category of Harish-Chandra modules.

We first consider the case $k=2$. The short exact sequence
\[
0\to M_{1}=L(\Lambda_{0})\to M_{2}\to L(\Lambda_{1})\to 0
\]
induces, by Lemma~\ref{exactSequence}, Corollary~\ref{DiracOfSimple}, and Theorem~\ref{thm::Dirac_cohomology_simple_supermodules}, an exact sequence
\[
0\to L_{0}(\Lambda_{0}-\rho_{\bar1})\to \DC^{+}(M_{2})\to L_{0}(\Lambda_{1}-\rho_{\bar1})\to 0,
\]
where parity is suppressed. By Corollary~\ref{cor::even_constituents_Dirac_relative_holomorphic}, both $\Lambda_{0}-\rho_{\bar1}$ and $\Lambda_{1}-\rho_{\bar1}$ lie in $\Dis$. Hence $\Ext^{1}\bigl(L_{0}(\Lambda_{1}-\rho_{\bar1}),L_{0}(\Lambda_{0}-\rho_{\bar1})\bigr)=0$ and the sequence splits, so that
\[
\DC^{+}(M_{2})\cong \DC(L(\Lambda_{0}))\oplus \DC(L(\Lambda_{1})).
\]
Moreover, the six-term exact sequence of Lemma~\ref{exactSequence} gives
$
\DC^{-}(M_{2})=0,
$
since $\DC^{-}(L(\Lambda_{0}))=\DC^{-}(L(\Lambda_{1}))=0$. Thus
\[
\DC(M_{2})\cong \DC(L(\Lambda_{0}))\oplus \DC(L(\Lambda_{1})).
\]
The general case follows by induction, using the same argument and the facts that $\Lambda_i-\rho_{\bar1}\in\Dis$ for all $i$, and \eqref{eq::Ext_eins_HC}.
\end{proof}

\section{Complementary perspectives} \label{sec::complementary_perspectives}

In this section, we consider several complementary viewpoints on Dirac cohomology. 
First, we provide a new characterization of unitarity via the Dirac inequality, which in turn yields a novel classification the full set of unitarizable simple $\gg$-supermodules. 
Next, we introduce an analogue of Kostant’s cohomology and show that, as $\kk^{\CC}$-modules, it agrees with Dirac cohomology. 
We then define the Dirac index and establish that it equals the Euler characteristic of Dirac cohomology. 
Finally, by means of Kostant’s cohomology and the Dirac index, we obtain $\kk^{\CC}$-character formulas.

\subsection{\texorpdfstring{$\even$}\ -Decomposition and unitarity}\label{subsec::Dirac_and_even_decomposition} 

For highest weight $\gg$-supermodules, an important problem is the decomposition under $\even$. This passes from the super setting to the reductive Lie algebra $\even$, where the representation theory is classical. In general, if $M$ is a highest weight $\gg$-supermodule of highest weight $\Lambda$, then $M$ admits a $\even$-filtration whose composition factors are simple highest weight $\even$-supermodules of the form $L_0(\Lambda-\gamma)$, where $\gamma$ is a sum of distinct positive odd roots. In what follows, the $\ZZ_{2}$-grading of the corresponding modules is left implicit, and we simply refer to them as $\even$-modules.

For unitarizable supermodules, this picture becomes more rigid. By Proposition~\ref{CompletelyReducible}, such a module is completely reducible over $\even$. At the same time, the Dirac inequality is strict on every $\even$-constituent of $M\otimes M(\odd)$. In particular,
\begin{equation}
(\Lambda-\gamma+2\rho,\Lambda-\gamma)>(\Lambda+2\rho,\Lambda)>0
\qquad \forall\,\gamma\neq 0
\end{equation}
for every highest weight $\Lambda-\gamma$ occurring in the $\even$-decomposition. This suggests that the strict Dirac inequality should detect unitarity.

We now show that this is indeed the case. In Section~\ref{subsec::Realization_Shapovalov}, we proved that every simple highest weight $\gg$-supermodule $M$ carries a unique non-degenerate contravariant Hermitian form, induced from the Shapovalov form on the corresponding Verma supermodule. Thus $M$ is unitarizable if and only if this form is positive definite. The main point is that this positivity is equivalent to the strict Dirac inequality induced from every $\even$-composition factor of $M$ different from $L_{0}(\Lambda)$, together with the unitarizability of $L_{0}(\Lambda)$ as a $\even$-module.

In what follows, $M$ denotes a simple highest weight $\gg$-supermodule with highest weight $\Lambda$, where $\Lambda$ is the highest weight of a unitarizable highest weight $\even$-module.

We transfer the question of positive definiteness of the Shapovalov form $\langle\cdot,\cdot\rangle_M$ on $M$ to that of the form $\langle\cdot,\cdot\rangle_{M\otimes M(\odd)}$ on $\langle M\otimes 1\rangle_{\even}$, where $\langle M\otimes 1\rangle_{\even}$ denotes the $\even$-submodule of $M\otimes M(\odd)$ generated by $M\otimes 1$ under the diagonal action induced by \eqref{eq::general_embedding}. On this module, positive definiteness is equivalent to strictness of the Dirac inequality.

\begin{lemma}\label{lemm::shift_unitarity}
A simple highest weight $\gg$-supermodule $M$ is unitarizable if and only if $\langle M\otimes 1\rangle_{\even}$ is a unitarizable $\even$-module with respect to the restriction of $\langle\cdot,\cdot\rangle_{M\otimes M(\odd)}$.
\end{lemma}

\begin{proof}
Assume that $M$ is unitarizable. Then $M\otimes M(\odd)$ is a unitarizable $\gg$-module. Hence, under the diagonal action, it is also a unitarizable $\even$-module. Therefore its $\even$-submodule $\langle M\otimes 1\rangle_{\even}$ is unitarizable.

Conversely, assume that $\langle M\otimes 1\rangle_{\even}$ is unitarizable with respect to the restriction of $\langle\cdot,\cdot\rangle_{M\otimes M(\odd)}$. It suffices to show that the Shapovalov form $\langle\cdot,\cdot\rangle_M$ on $M$ is positive definite. For any $0\neq v\in M$ one has $v\otimes 1\in\langle M\otimes 1\rangle_{\even}$. Then
\[
0<\langle v\otimes 1,v\otimes 1\rangle_{M\otimes M(\odd)}=\langle v,v\rangle_M\langle 1,1\rangle_{M(\odd)}=\langle v,v\rangle_M.
\]
Thus $\langle\cdot,\cdot\rangle_M$ is positive definite. Hence $M$ is unitarizable.
\end{proof}

Let $\Gamma$ be the set of all sums of distinct positive odd roots. By~\cite[Theorem~10.4.5]{musson2012lie}, there exists a subset $S(\Lambda)\subseteq\Gamma$ such that, regarded as a $\even$-module, $M$ admits a filtration whose simple highest weight subquotients have highest weights $\Lambda-\gamma$ with $\gamma\in S(\Lambda)$ and they appear with multiplicity $m(\gamma)$ in this filtration. This yields the following decomposition.

\begin{lemma}\label{lemm::decomposition_even_M_otimes_1}
Let $M$ be a simple highest weight $\gg$-supermodule of highest weight $\Lambda$. Assume that $\Lambda$ is the highest weight of a unitarizable highest weight $\even$-module. 

\begin{enumerate}
    \item[a)] There exists a subset $S(\Lambda)\subseteq\Gamma$ such that
\[
\langle M\otimes 1\rangle_{\even}=\bigoplus_{\gamma\in S(\Lambda)}m(\gamma)L_{0}(\Lambda-\rho_{\bar{1}}-\gamma),
\]
where $m(\gamma)\in \ZZ_{\ge 0}$ denotes the multiplicity of the constituent $L_{0}(\Lambda-\rho_{\bar{1}}-\gamma)$.
\item[b)] $L_{0}(\Lambda-\rho_{\bar{1}})$ appears in $\langle M\otimes 1\rangle_{\even}$ as a $\even$-constituent with multiplicity one.
\item[c)] $\langle M\otimes 1\rangle_{\even}$ is a unitarizable $\even$-module.
\item[d)] Considered as a $\even$-submodule in $M\otimes M(\odd)$, $\Dirac^{2}$ acts on each $L_{0}(\Lambda-\rho_{\bar{1}}-\gamma)$ as the scalar multiple of $-(\Lambda+2\rho, \Lambda)+(\Lambda-\gamma+2\rho, \Lambda-\gamma)$.
\end{enumerate}
\end{lemma}

\begin{proof}
By~\cite[Theorem~10.4.5]{musson2012lie}, there exists a subset $S(\Lambda)\subseteq\Gamma$ and a $\even$-filtration
\[
0=M_{0}\subset M_{1}\subset\cdots\subset M_{r}=M
\]
such that each quotient $M_{i+1}/M_{i}$ is isomorphic to $L_{0}(\Lambda-\gamma)$ for some $\gamma\in S(\Lambda)$. Set $N_{i}\coloneqq \langle M_{i}\otimes 1\rangle_{\even}$. Then $(N_{i})$ is a $\even$-filtration of $\langle M\otimes 1\rangle_{\even}$, and each quotient $N_{i+1}/N_{i}$ is generated by the image of $(M_{i+1}/M_{i})\otimes 1$. In particular, the composition factors of $\langle M\otimes 1\rangle_{\even}$ are highest weight $\even$-modules of highest weight $\Lambda-\rho_{\bar1}-\gamma$, with $\gamma\in S(\Lambda)$, occurring with multiplicity $m(\gamma)$.

As in the proof of Corollary~\ref{cor::even_constituents_Dirac_relative_holomorphic}, one has $\Lambda-\rho_{\bar1}-\gamma\in\Dis$ for all $\gamma\in S(\Lambda)$. Hence, by Lemma~\ref{lemm::direct_consequences_Dis}, each such highest weight module is simple, and $\Ext^{1}$ between any two composition factors vanishes. Therefore the filtration splits, and $\langle M\otimes 1\rangle_{\even}$ is the indicated direct sum. This proves~a). Statements~b),~c), and~d) follow immediately.
\end{proof}

\begin{remark}
For $\langle M\otimes 1\rangle_{\even}\subset M\otimes M(\odd)$, one has
\[
\ker\Dirac^{2}\big|_{\langle M\otimes 1\rangle_{\even}}=L_{0}(\Lambda-\rho_{\bar{1}}),\qquad
\Im\Dirac^{2}\big|_{\langle M\otimes 1\rangle_{\even}}=\bigoplus_{\gamma\in S(\Lambda)\setminus\{0\}}L_{0}(\Lambda-\rho_{\bar{1}}-\gamma).
\]
\end{remark}

By Lemma~\ref{lemm::shift_unitarity}, it suffices to show that $\langle M\otimes 1\rangle_{\even}$ is a unitarizable $\even$-module with respect to the restriction of $\langle\cdot,\cdot\rangle_{M\otimes M(\odd)}$. By Lemma~\ref{lemm::decomposition_even_M_otimes_1}, one has
\begin{equation}
\langle M\otimes 1\rangle_{\even}=\bigoplus_{\gamma\in S(\Lambda)}m(\gamma)L_{0}(\Lambda-\rho_{\bar1}-\gamma).
\end{equation}
Equipped with the direct sum of the positive-definite Hermitian forms $\langle\cdot,\cdot\rangle_{L_{0}(\nu)}$ on its simple summands, this becomes a unitarizable $\even$-module. On each simple summand $L_{0}(\nu)$, the restriction of $\langle\cdot,\cdot\rangle_{M\otimes M(\odd)}$ is a nonzero $\even$-invariant Hermitian form. Hence
\begin{equation}\label{eq::idea}
\langle\cdot,\cdot\rangle_{M\otimes M(\odd)}\big|_{L_{0}(\nu)}=c_{\nu}\langle\cdot,\cdot\rangle_{L_{0}(\nu)}
\end{equation}
for some $c_{\nu}\in\RR\setminus\{0\}$. It remains to show that $c_{\nu}>0$ for every $\nu=\Lambda-\rho_{\bar1}-\gamma$ with $\gamma\in S(\Lambda)$. Since $\langle\cdot,\cdot\rangle_{M\otimes M(\odd)}\big|_{L_{0}(\nu)}$ is therefore either positive definite or negative definite, it suffices to evaluate it on a highest weight vector $v_{\nu}$ of $L_{0}(\nu)$. If $\langle v_{\nu},v_{\nu}\rangle_{M\otimes M(\odd)}>0$ for all $\nu$, then $c_{\nu}>0$ for all $\nu$, and the claim follows. For these highest weight vectors, we need the following lemma. 

\begin{lemma} \label{lemm::induction_HW_Dirac_inequality}
    Let $v_{\mu} \otimes 1$ be a highest weight vector in the $\even$-decomposition of $\langle M\otimes M(\odd)\rangle_{\even}$ in Lemma~\ref{lemm::decomposition_even_M_otimes_1}. Then 
    $$
    ((\mu+2\rho,\mu)-(\Lambda+2\rho,\Lambda))\langle v_{\mu}\otimes 1, v_{\mu}\otimes 1\rangle_{M\otimes M(\odd)} = 4\sum_{k=1}^{mn}\langle \partial_{k}v_{\mu}\otimes 1,\partial_{k}v_{\mu}\otimes 1\rangle_{M\otimes M(\odd)}.
    $$
\end{lemma}

\begin{proof} By Lemma~\ref{lemm::decomposition_even_M_otimes_1}, $D^{2}$ acts on each $v_{\mu}\otimes 1$ as the scalar multiple of $-(\Lambda+2\rho, \Lambda)+(\mu+2\rho,\mu)$. Consequently, 
\begin{equation*}
\begin{aligned}
 ((\mu+2\rho,\mu)-(\Lambda+2\rho,\Lambda))\langle v_{\mu}\otimes 1,v_{\mu}\otimes 1\rangle_{M\otimes M(\odd)} &= \langle \Dirac^{2} (v_{\mu} \otimes 1), v_{\mu} \otimes 1 \rangle_{M \otimes M(\odd)} \\
 &= -4 \sum_{k=1}^{mn} \langle x_{k} \partial_{k} v_{\mu} \otimes 1, v_{\mu} \otimes 1 \rangle_{M \otimes M(\odd)} \\
 &= 4 \sum_{k=1}^{mn} \langle \partial_{k} v_{\mu} \otimes 1, \partial_{k} v_{\mu} \otimes 1 \rangle_{M \otimes M(\odd)} \\ &= 4\sum_{k=1}^{mn}\langle \partial_{k}(v_{\mu}\otimes 1),\partial_{k}(v_{\mu}\otimes 1)\rangle_{M\otimes M(\odd)} \end{aligned},
\end{equation*}
where we use in the second equality the explicit form of $\Dirac^{2}$ given in Remark~\ref{rmk::Explicit_form_D_square}, 
and 
$$
\langle 2\sum_{k,l}[\partial_{k},\partial_{l}]v \otimes x_{k}x_{l}, v\otimes 1\rangle_{M\otimes M(\odd)} = 2\sum_{k,l=1}^{mn}\langle[\partial_{k},\partial_{l}]v,v\rangle_{M}\langle x_{k}x_{l},1\rangle_{M(\odd)} = 0,
$$
since $\langle x_{k}x_{l},1 \rangle_{M\odd)} = \langle x_{l}, \partial_{k}1\rangle_{M(\odd)} = 0$ for all $1 \leq k,l \leq mn$. The third equality uses the contravariance of $\langle \cdot, \cdot \rangle_{M \otimes M(\odd)}$ and the fact that $\omega(x_{k}) = -\partial_{k}$ for all $1 \leq k \leq mn$. The last equality uses
\begin{equation*}
\partial_{k}(v_{\Lambda-\gamma}\otimes 1)=\partial_{k}v_{\Lambda-\gamma}\otimes 1\in\langle M\otimes 1\rangle_{\even}. \qedhere
\end{equation*}
\end{proof}

\begin{theorem}\label{UnitarityD2} Let $M$ be a highest weight $\gg$-supermodule with highest weight $\Lambda$. 
Then $M$ is unitarizable if and only if the following two conditions are satisfied:
    \begin{enumerate}
        \item[a)] $\Lambda$ is the highest weight of a unitarizable highest weight $\even$-module.
        \item[b)] If $L_{0}(\mu)$ is a simple $\even$-composition factor in a $\even$-filtration of $M$ with highest weight $\mu$, then
\begin{equation*}
    \begin{aligned}
        (\mu + 2\rho, \mu) > (\Lambda+2\rho, \Lambda).
    \end{aligned}
\end{equation*}
    \end{enumerate}
\end{theorem}

\begin{proof} If $M$ is unitarizable, then a) and b) follow directly from Proposition~\ref{CompletelyReducible} and the Dirac inequality in Proposition~\ref{prop::specified_Dirac_inequality}.

Conversely, following the idea of \eqref{eq::idea}, we show that $c_{\nu}>0$ for every $\nu=\Lambda-\rho_{\bar1}-\gamma$ with $\gamma\in S(\Lambda)$ by the strict Dirac inequality and an induction argument. 

We first endow $\langle M\otimes 1\rangle_{\even}$ with a $\ZZ$-grading compatible with the action of the $\partial_{k}$ on the highest weight vectors $v_{\Lambda-\gamma}\otimes 1$ for $\gamma \in S(\Lambda)$. By definition, each $\gamma\in\Gamma$ is a sum of distinct positive odd roots. By our choice of positive system, the number of summands is uniquely determined by $\gamma$. We call it the \emph{length} of $\gamma$ and denote it by $\operatorname{len}(\gamma)$. This yields a well-defined grading of the $\gg_{\bar0}$-constituents $L_{0}(\mu)$ by $\operatorname{len}(\gamma)$. Explicitly,
\[
\langle M\otimes 1\rangle_{\even}=\bigoplus_{k}\langle M\otimes 1\rangle_{\even}[k],\qquad
\langle M\otimes 1\rangle_{\even}[k]\coloneqq\bigoplus_{\substack{\gamma\in S(\Lambda)\\ \operatorname{len}(\gamma)=k}}m(\gamma)L_{0}(\Lambda-\rho_{\bar{1}}-\gamma).
\]
By construction, $\langle M\otimes 1\rangle_{\even}[0]=L_{0}(\Lambda-\rho_{\bar{1}})$, and the restriction of the Shapovalov form to this summand is positive definite. 

We next show that this grading is compatible with the action of the operators $\partial_k$ on the vectors $v_{\Lambda-\gamma}\otimes 1$. Since $\partial_k(v_{\Lambda-\gamma}\otimes 1)\in \langle M\otimes 1\rangle_{\even},
$
and $\langle M\otimes 1\rangle_{\even}$ is the direct sum of the modules $L_0(\Lambda-\rho_{\bar1}-\gamma')$, every nonzero vector in $\langle M\otimes 1\rangle_{\even}$ must have a weight occurring in one of these summands. Hence such a vector is zero as soon as its weight does not occur in any of them. Recall that every weight of a $\even$-constituent $L_0(\nu)$ is of the form
\begin{equation}\label{eq::form_weights}
\nu-\sum_j s_j\beta_j,\qquad s_j\in\ZZ_+,\ \beta_j\in\Delta_{\bar0}^+.
\end{equation}

Let $v_{\Lambda-\gamma}\otimes 1$ be a highest weight vector in the constituent indexed by $\gamma=\gamma_{1}+\cdots+\gamma_{l}$, where the $\gamma_{i}$ are distinct positive odd roots. We fix such an arbitrary decomposition of $\gamma$; the same argument applies verbatim to any other one. Then $v_{\Lambda-\gamma}\otimes 1 \in \langle M\otimes 1\rangle_{\even}[l]$. Let $\partial_{k}$ be an odd weight vector of positive odd root $\alpha$. We consider $\partial_{k}(v_{\Lambda-\gamma}\otimes 1)$.

The idea is to compare $\alpha_k$ with each summand $\gamma_i$ of $\gamma$. Note that $\partial_{k}(v_{\Lambda-\gamma}\otimes 1)$ is either trivial or non-trivial and has weight of the form
$
\Lambda-\rho_{\bar1}-\gamma+\alpha_{k}.
$ Write
\[
A\coloneqq\{\epsilon_{i}-\delta_{j}:1\leq i\leq p,\ 1\leq j\leq n\},\qquad B\coloneqq\{-\epsilon_{i}+\delta_{j}:p+1\leq i\leq m,\ 1\leq j\leq n\}.
\]
For $\alpha\in A$ and $\beta\in B$, neither $\alpha-\beta$ nor $\beta-\alpha$ is a sum of roots. If $\alpha,\alpha'\in A$ or $\beta,\beta'\in B$, then $\alpha-\alpha'$ and $\beta-\beta'$ are either $0$, not roots, or even roots of either sign. Hence, for each $i$,
\[
\alpha_{k}-\gamma_{i}=0,\qquad \alpha_{k}-\gamma_{i}\notin\Delta,\qquad\text{or}\qquad \alpha_{k}-\gamma_{i}\in\Delta_{\bar0}^{\pm}.
\]
If $\alpha_{k}-\gamma_{i}=0$ or $\alpha_{k}-\gamma_{i}\in-\Delta_{\bar0}^{+}$, then the resulting weight can be written as $\Lambda-\rho_{\bar1}-(\gamma-\alpha_{k})$ in the first case and as $\Lambda-\rho_{\bar1}-(\gamma-\gamma_{i})-(\gamma_{i}-\alpha_{k})$ in the second. Moreover, $\operatorname{len}(\gamma-\alpha_{k})=l-1$ and $\operatorname{len}(\gamma-\gamma_{i})=l-1$.
Consequently, if $\partial_{k}(v_{\Lambda-\gamma}\otimes 1)$ is non-trivial its weight can occur only in a constituent $L_{0}(\Lambda-\rho_{\bar1}-\gamma')$ with $\operatorname{len}(\gamma')\le l-1$. Indeed, its weight then belongs to the constituent
$L_0\bigl(\Lambda-\rho_{\bar1}-(\gamma-\gamma_i)\bigr)$ or $L_{0}(\Lambda-\rho_{\bar{1}}-(\gamma-\alpha_{k}))$,
which lie in layer $l-1$. 	If no such $i$ exists, then this weight occurs in no even constituent, so the vector is zero. Therefore
\[
\partial_{k}(v_{\Lambda-\gamma}\otimes 1)\in\bigoplus_{r\le l-1}\langle M\otimes 1\rangle_{\even}[r].
\]

We are now ready to complete the proof. By assumption, the restriction of the Shapovalov form to $L_{0}(\Lambda-\rho_{\bar{1}})$ is positive definite.  
On each $\langle M \otimes 1\rangle_{\even}[l]$, we define the restriction of the Shapovalov form to be $\langle\cdot,\cdot\rangle_{\langle M \otimes 1\rangle_{\even}[l]}$. Note that $\langle M \otimes 1\rangle_{\even}[0]\cong L_{0}(\Lambda-\rho_{\bar{1}})$. It suffices to prove that $\langle v,v\rangle_{M\otimes M(\odd)}>0$ for any $\even$-highest weight module $v$ appearing in layer $l$, that is, $v=v_{\Lambda-\gamma}\otimes 1$ for some $\gamma \in S(\Lambda)$ with $\operatorname{len}(\gamma)=l$. We argue by induction on $l$.

The case $l=0$ holds by assumption.  
Suppose that the restrictions of the Shapovalov form to $\langle M \otimes 1\rangle_{\even}[0],\langle M \otimes 1\rangle_{\even}[1],\ldots,\langle M \otimes 1\rangle_{\even}[l]$ are positive definite.  
Let $v\in \langle M \otimes 1\rangle_{\even}[l+1]\setminus\{0\}$ be a highest weight vector of a $\even$-constituent in layer $l+1$. Assume $v$ is of the form $v_{\Lambda-\gamma}\otimes 1$. For every $k=1,\ldots,mn$ we have $\partial_{k}v=0$ or $\partial_{k}v\in \langle M \otimes 1\rangle_{\even}[l]$, hence
\[
\langle \partial_{k}v,\partial_{k}v\rangle_{\langle M \otimes 1\rangle_{\even}[l]}\geq 0,
\]
with equality if and only if $\partial_{k}v=0$. At least one $\partial_{k}v$ must be nonzero; otherwise $\mathfrak{n}^{+}_{\bar{1}} v = 0$ and $v$ is $\bb$-primitive which contradicts the simplicity of $M$. Consequently, using Lemma~\ref{lemm::induction_HW_Dirac_inequality}, one has
\[
\bigl((\mu+2\rho,\mu) - (\Lambda+2\rho,\Lambda)\bigr)
\langle v_{\Lambda-\gamma}\otimes 1,v_{\Lambda-\gamma}\otimes 1\rangle_{\langle M \otimes 1\rangle_{\even}[l+1]}
=
4\sum_{k=1}^{mn}
\langle \partial_{k}v_{\Lambda-\gamma}\otimes 1, \partial_{k}v_{\Lambda-\gamma}\otimes 1\rangle_{\langle M \otimes 1\rangle_{\even}[l]}
>0.
\]
Since the Dirac inequalities hold strictly, the coefficient on the left is positive, and therefore
$
\langle v_{\Lambda-\gamma}\otimes 1,v_{\Lambda-\gamma}\otimes 1\rangle_{\langle M \otimes 1\rangle_{\even}[l+1]} > 0.$
This completes the induction and the proof.
\end{proof}

\subsection{Relation to Kostant's cohomology} 
\label{subsec::g_+1 cohomology} We define an analog of Kostant's cohomology similarly to \cite{cheng2004analogue}, which captures the $\kk^{\CC}$-module structure of the Dirac cohomology. When we consider a unitarizable $\gg$-supermodule as a $\kk^{\CC}$-module, we neglect parity. We adapt the notation of Section~\ref{subsec::Dirac_operator}.

\subsubsection{Construction of \texorpdfstring{$\mathrm{H}^{\ast}(\gg_{+1},M)$}\ }
The Lie superalgebra $\gg$ of type $A$ admits a $\ZZ_{2}$-compatible $\ZZ$-grading. We consider $\odd$ as a $\even$-module under the supercommutator. Then $\odd$ decomposes into a direct sum of two simple $\even$-modules, namely $\odd = \gg_{-1} \oplus \gg_{+1}$, where concretely
\begin{equation}
\gg_{+1}\coloneqq \pp_{1} \oplus \pp_{2}, \qquad \gg_{-1}\coloneqq \qq_{1} \oplus \qq_{2}.
\end{equation}
Both are, in particular, abelian Lie subsuperalgebras of $\gg$, \emph{i.e.}, $[\gg_{\pm 1}, \gg_{\pm 1}] = 0$, and the associated $\ZZ$-grading of $\gg$, given by
\begin{equation}
\gg = \gg_{-1} \oplus \even \oplus \gg_{+1},
\end{equation}
is then compatible with the $\ZZ_{2}$-grading on $\gg$.

Fix the complex vector spaces $\gg_{\pm 1}$ and consider the $\ZZ$-gradings $S(\gg_{\pm 1}) = \bigoplus_{n=0}^{\infty}S^{n}(\gg_{\pm 1})$ and $\bigwedge(\gg_{\pm 1}) = \bigoplus_{n=0}^{\infty}\bigwedge^{n}(\gg_{\pm 1})$. Note that $I_{\wedge,S}\cap T^{1}(\gg_{\pm 1}) = \{0\}$, so that we can identify $\gg_{\pm 1}$ with $\bigwedge^{1}(\gg_{\pm 1})$ and $S^{1}(\gg_{\pm 1})$.

 We identify the dual space $\gg_{+1}^{\ast}$ with $\gg_{-1}$ using the bilinear form $2B(\cdot, \cdot)$. This identification is $\even$-invariant, since $B(\cdot, \cdot)$ is invariant. Additionally, we equate $S(\gg_{+1}^{\ast})$ with $S(\gg_{-1})$, the polynomial algebra in the variables $x_{1}, \dotsc, x_{pn}, \partial_{pn+1}, \dotsc, \partial_{mn}$. The universal enveloping algebra $\UE(\gg_{+1})$ can be identified with $\bigwedge(\gg_{+1})$.

To define the cohomology under consideration, we use the standard free resolution of
$\bigwedge(\gg_{+1})$–modules. Recall that \begin{equation}
\begin{aligned}
\mathrm{d}^{\mathfrak{p}_{1}}
 &= \sum_{k=1}^{pn} \partial_{k} \otimes x_{k},
&\qquad
\delta^{\mathfrak{p}_{1}}
 &= \sum_{k=1}^{pn} x_{k} \otimes \partial_{k}, \\[4pt]
\mathrm{d}^{\mathfrak{q}_{2}}
 &= \sum_{k=pn+1}^{mn} \partial_{k} \otimes x_{k},
&\qquad
\delta^{\mathfrak{q}_{2}}
 &= \sum_{k=pn+1}^{mn} x_{k} \otimes \partial_{k}.
\end{aligned}
\end{equation} 
Set $\delta\coloneqq -\mathrm{d}^{\mathfrak{q}_{2}} + \delta^{\mathfrak{p}_{1}},
$
and consider the complex
\begin{equation}
\dotsb
\xlongrightarrow{\delta}
S^{i}(\gg_{+1}) \otimes \bigwedge(\gg_{+1})
\xlongrightarrow{\delta}
S^{i-1}(\gg_{+1}) \otimes \bigwedge(\gg_{+1})
\xlongrightarrow{\delta}
\dotsb
\xlongrightarrow{\delta}
\CC \otimes \bigwedge(\gg_{+1})
\xlongrightarrow{\delta} 0.
\end{equation}
Here, we use that the symmetric algebra $S(\gg_{+1})$ is the polynomial algebra generated by
$
\partial_{1},\ldots,\partial_{pn}, x_{pn+1},\ldots,x_{mn},
$
so that $\delta$ indeed lowers the polynomial degree, and by Lemma~\ref{Orthogonality}, we have $\delta^{2} = 0$. Thus $\delta$ defines a differential. Moreover, it is $\kk^{\CC}$-invariant by Lemma~\ref{lemm::k-invariance}.

Fix a $(\gg,\kk^{\CC})$-supermodule $M$, and apply the contravariant functor $\Hom_{\bigwedge(\gg_{+1})}(-,M)$
to the above resolution. Then we arrive at the following complex of vector spaces
\begin{equation} \label{complex_of_k_modules}
\dotsc\xlongleftarrow{\mathrm{d}}\Hom_{\CC}(S^{i+1}(\gg_{+1}),M) \xlongleftarrow{\mathrm{d}}\Hom_{\CC}(S^{i}(\gg_{+1}),M)\xlongleftarrow{\mathrm{d}}\dotsc\xlongleftarrow{\mathrm{d}}\Hom_{\CC}(S^{0}(\gg_{+1}),M) \leftarrow 0,
\end{equation}
where $\mathrm{d}$ is the pullback operator of $\delta$, explicitly given by $\mathrm{d}\coloneqq \ddp - \delq$ if we identify $\Hom_{\CC}(S^{i}(\gg_{+1}),M)$ with $M\otimes S^{i}(\gg_{-1})$ for any $i$. 

$M$ and $S^{i}(\gg_{+1})$ are $\kk^{\CC}$-modules, where the action on $S^{i}(\gg_{+1})$ is induced by the adjoint action of $\kk^{\CC}$ on $\odd$. This identifies the spaces \begin{equation}\mathrm{C}^{i}(M)\coloneqq \Hom_{\CC}(S^{i}(\gg_{+1}), M) \cong M \otimes S^{i}(\gg_{-1})\end{equation} as $\kk^{\CC}$-modules. The $\kk^{\CC}$-invariance of $\mathrm{d}$ (\emph{cf.}~Lemma~\ref{lemm::k-invariance}) makes the complex $\mathrm{C}\coloneqq (\mathrm{C}^{i}(M), \mathrm{d})$ into a $\kk^{\CC}$-module complex. The cohomology groups of the complex $\mathrm{C}$ will be denoted by $\mathrm{H}^{i}(\gg_{+1}, M)$. These are naturally $\kk^{\CC}$-modules.

\subsubsection{Relation to Dirac cohomology}
For a unitarizable $\gg$-supermodule $\HH$, we regard $\mathrm d$ and $\delta$ as operators on $\HH\otimes M(\odd)$, so that $\Dirac=2(\mathrm d-\delta)$. The Dirac cohomology of $\HH$ is
$
\DC(\HH)=\Ker\Dirac,
$
whereas the analogue of Kostant cohomology is $\Ker\mathrm d/\Im\mathrm d$. We compare these two spaces. We begin by recording the basic properties of $\mathrm d$ and $\delta$.

\begin{lemma} \label{lemm::relations_d_delta}
 Let $\HH$ be a unitarizable simple $\gg$-supermodule. Then the following assertions hold with respect to $\langle \cdot,\cdot \rangle_{\HH \otimes M(\odd)}$:
 \begin{enumerate}
 \item[a)] $\mathrm{d}$ and $\delta$ are skew-adjoint to each other.
 \item[b)] $\Im \mathrm{d}$ is orthogonal to $\Ker \delta$ and $\Im \delta$, while $\Im \delta$ is orthogonal to $\ker \mathrm{d}$.
 \end{enumerate}
\end{lemma}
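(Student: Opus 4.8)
The plan is to reduce both parts to the adjointness relations of Lemma~\ref{Relationen} together with the nilpotency $\mathrm{d}^{2}=\delta^{2}=0$ recorded in and around Lemma~\ref{Orthogonality}. Recall that on $\HH\otimes M(\odd)$ we have $\mathrm{d}=\ddp-\delq$ and $\delta=-\ddq+\delp$, and that Lemma~\ref{Relationen} says $\ddp$ is adjoint to $\delp$ and $\ddq$ is adjoint to $\delq$ with respect to $\langle\cdot,\cdot\rangle_{\HH\otimes M(\odd)}$ (itself a consequence of Lemma~\ref{PropertiesH}, i.e.\ of $\omega(x_{k})=\partial_{k}$ and the mutual adjointness of $x_{k},\partial_{k}$ on the oscillator module).

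For a), I would compute the adjoint termwise. For all $v,w\in\HH\otimes M(\odd)$,
\begin{align*}
\langle \mathrm{d}v,w\rangle_{\HH\otimes M(\odd)}
&=\langle \ddp v,w\rangle_{\HH\otimes M(\odd)}-\langle \delq v,w\rangle_{\HH\otimes M(\odd)}\\
&=\langle v,\delp w\rangle_{\HH\otimes M(\odd)}-\langle v,\ddq w\rangle_{\HH\otimes M(\odd)}
=\langle v,\delta w\rangle_{\HH\otimes M(\odd)},
\end{align*}
using Lemma~\ref{Relationen} for each summand and the fact that all coefficients are real; thus $\mathrm{d}^{\dagger}=\delta$ and likewise $\delta^{\dagger}=\mathrm{d}$. (Alternatively one may read this off from the anti-selfadjointness of $\Dirac=2(\mathrm{d}-\delta)$ in Lemma~\ref{Relationen}, once the termwise computation fixes which of $\ddp,\delp$ and $\ddq,\delq$ pairs with which.)

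For b), all three orthogonality statements are then formal consequences of a) and $\delta^{2}=0$. If $v=\mathrm{d}v'$ and $w\in\Ker\delta$, then $\langle v,w\rangle_{\HH\otimes M(\odd)}=\langle \mathrm{d}v',w\rangle_{\HH\otimes M(\odd)}=\langle v',\delta w\rangle_{\HH\otimes M(\odd)}=0$; if $v=\mathrm{d}v'$ and $w=\delta w'$, then $\langle v,w\rangle_{\HH\otimes M(\odd)}=\langle \mathrm{d}v',\delta w'\rangle_{\HH\otimes M(\odd)}=\langle v',\delta^{2}w'\rangle_{\HH\otimes M(\odd)}=0$; and if $v=\delta v'$ and $w\in\Ker\mathrm{d}$, then $\langle v,w\rangle_{\HH\otimes M(\odd)}=\langle \delta v',w\rangle_{\HH\otimes M(\odd)}=\langle v',\mathrm{d}w\rangle_{\HH\otimes M(\odd)}=0$.

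There is essentially no genuine obstacle here: the mathematical content sits entirely in Lemma~\ref{Relationen}, and the only thing demanding care is the bookkeeping — matching the index ranges ($1\le k\le pn$ for $\ddp,\delp$ versus $pn+1\le k\le mn$ for $\ddq,\delq$) and tracking the sign in $\delta=-\ddq+\delp$ so that the termwise adjoints land exactly where claimed. If a self-contained treatment is wanted, one may also note that $\mathrm{d}^{2}=0$ follows from $\delta^{2}=0$ via a), which is in any case convenient for the Dirac cohomology comparison that this lemma is setting up.
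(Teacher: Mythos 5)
Your proof is correct and follows essentially the same route as the paper: part a) is the termwise adjointness from Lemma~\ref{Relationen} applied to $\mathrm{d}=\ddp-\delq$ and $\delta=\delp-\ddq$, and part b) consists of exactly the formal orthogonality arguments that the paper omits as "analogous to parts c) and d) of Lemma~\ref{Orthogonality}." Your explicit write-out of b), including the use of $\delta^{2}=0$, is a faithful expansion of what the paper leaves implicit.
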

\begin{proof}
 a) The operators are defined as $\mathrm{d} = \ddp - \delq$ and $\delta = -\ddq + \delp$. By Lemma~\ref{Orthogonality}, $\ddp$ is skew-adjoint to $\delp$ and $\ddq$ is skew-adjoint to $\delq$. This proves a).
 
 b) This proof is analogous to the proofs of parts c) and d) in Lemma~\ref{Orthogonality} and will be omitted.
\end{proof}

\begin{lemma} \label{lemm::kerD,kerd}
 Let $\HH$ be a unitarizable simple $\gg$-supermodule. Then the following assertions hold with respect to $\langle \cdot,\cdot \rangle_{\HH \otimes M(\odd)}$:
 \begin{enumerate}
 \item[a)] $\HH\otimes M(\odd) = \Ker \Dirac \oplus \Im \mathrm{d} \oplus \Im \delta$.
 \item[b)] $\ker \mathrm{d} = \Ker \Dirac \oplus \Im \mathrm{d}$.
 \end{enumerate}
\end{lemma}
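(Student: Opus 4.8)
The plan is to reduce both parts to the orthogonal splitting $\HH\otimes M(\odd) = \Ker\Dirac^{2}\oplus\Im\Dirac^{2}$ from Lemma~\ref{lemm::Hodge_decomposition}, reinterpreted through the operators $\mathrm{d}$ and $\delta$. Since $\mathrm{d}^{2}=\delta^{2}=0$ (Lemma~\ref{Orthogonality}) and $\Dirac = 2(\mathrm{d}-\delta)$, we have $\Dirac^{2} = 4(\mathrm{d}-\delta)^{2} = -4(\mathrm{d}\delta+\delta\mathrm{d})$, so $\Ker\Dirac^{2}=\Ker(\mathrm{d}\delta+\delta\mathrm{d})$ and $\Im\Dirac^{2}=\Im(\mathrm{d}\delta+\delta\mathrm{d})$; by Corollary~\ref{iterativeKernel} moreover $\Ker\Dirac = \Ker\Dirac^{2}$. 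So the task reduces to identifying $\Ker(\mathrm{d}\delta+\delta\mathrm{d})$ and $\Im(\mathrm{d}\delta+\delta\mathrm{d})$.

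First I would prove the Hodge-type identity $\Ker(\mathrm{d}\delta+\delta\mathrm{d}) = \Ker\mathrm{d}\cap\Ker\delta$: the inclusion $\supseteq$ is immediate, and for $\subseteq$, if $(\mathrm{d}\delta+\delta\mathrm{d})v = 0$ then, since $\mathrm{d}$ and $\delta$ are mutually adjoint with respect to the positive definite form $\langle\cdot,\cdot\rangle_{\HH\otimes M(\odd)}$ (Lemma~\ref{lemm::relations_d_delta}(a)),
\[
0 = \langle(\mathrm{d}\delta+\delta\mathrm{d})v,v\rangle_{\HH\otimes M(\odd)} = \langle\mathrm{d} v,\mathrm{d} v\rangle_{\HH\otimes M(\odd)} + \langle\delta v,\delta v\rangle_{\HH\otimes M(\odd)},
\]
forcing $\mathrm{d} v = \delta v = 0$. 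In particular $\Ker\Dirac = \Ker\mathrm{d}\cap\Ker\delta$. Next, mutual orthogonality of the three spaces $\Ker\Dirac$, $\Im\mathrm{d}$, $\Im\delta$ is essentially Lemma~\ref{lemm::relations_d_delta}(b): one notes that $\Ker\Dirac = \Ker\mathrm{d}\cap\Ker\delta$ lies in $\Ker\delta$, hence is orthogonal to $\Im\mathrm{d}$, and lies in $\Ker\mathrm{d}$, hence is orthogonal to $\Im\delta$, while $\Im\mathrm{d}\perp\Im\delta$ by that lemma; positive definiteness then makes the sum $\Ker\Dirac+\Im\mathrm{d}+\Im\delta$ direct and orthogonal.

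The core step is to show $\Im\Dirac^{2} = \Im(\mathrm{d}\delta+\delta\mathrm{d}) = \Im\mathrm{d}\oplus\Im\delta$. The inclusion $\subseteq$ is clear. For $\supseteq$, take any $v\in\HH\otimes M(\odd)$ and use Lemma~\ref{lemm::Hodge_decomposition} to write $v = v_{0} + (\mathrm{d}\delta+\delta\mathrm{d})w$ with $v_{0}\in\Ker\Dirac^{2}=\Ker\mathrm{d}\cap\Ker\delta$; then, using $v_{0}\in\Ker\mathrm{d}$ and $\mathrm{d}^{2}=0$,
\[
\mathrm{d} v = \mathrm{d}(\mathrm{d}\delta+\delta\mathrm{d})w = \mathrm{d}\delta\mathrm{d} w = (\mathrm{d}\delta+\delta\mathrm{d})(\mathrm{d} w) \in \Im(\mathrm{d}\delta+\delta\mathrm{d}),
\]
so $\Im\mathrm{d}\subseteq\Im(\mathrm{d}\delta+\delta\mathrm{d})$, and symmetrically $\Im\delta\subseteq\Im(\mathrm{d}\delta+\delta\mathrm{d})$. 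With the orthogonality from the previous paragraph this gives $\Im(\mathrm{d}\delta+\delta\mathrm{d}) = \Im\mathrm{d}\oplus\Im\delta$, and substituting into $\HH\otimes M(\odd)=\Ker\Dirac^{2}\oplus\Im\Dirac^{2}=\Ker\Dirac\oplus\Im(\mathrm{d}\delta+\delta\mathrm{d})$ proves (a). For (b), the inclusion $\Ker\Dirac\oplus\Im\mathrm{d}\subseteq\Ker\mathrm{d}$ is clear since $\mathrm{d}^{2}=0$; conversely, given $v\in\Ker\mathrm{d}$, decompose $v = v_{0}+\mathrm{d} a+\delta b$ by (a), apply $\mathrm{d}$ to obtain $\mathrm{d}\delta b = 0$, whence $\langle\delta b,\delta b\rangle_{\HH\otimes M(\odd)} = \langle b,\mathrm{d}\delta b\rangle_{\HH\otimes M(\odd)} = 0$ and $\delta b = 0$, so $v = v_{0}+\mathrm{d} a\in\Ker\Dirac\oplus\Im\mathrm{d}$.

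The main obstacle is the exhaustion of $\HH\otimes M(\odd)$ by the three summands: in an infinite-dimensional inner product space one does not get $V = W\oplus W^{\perp}$ for free, so the naive argument ``$(\Im\mathrm{d}+\Im\delta)^{\perp}=\Ker\mathrm{d}\cap\Ker\delta$, hence done'' is incomplete. This is exactly why the proof is routed through the already-established decomposition $\HH\otimes M(\odd) = \Ker\Dirac^{2}\oplus\Im\Dirac^{2}$ of Lemma~\ref{lemm::Hodge_decomposition} (whose proof uses that $\HH$ has an infinitesimal character and is $\even$-semisimple, both of which hold for a unitarizable simple $\gg$-supermodule by Proposition~\ref{prop::Properties_HWM} and Lemma~\ref{lemm::ev}); everything else is short algebra with the nilpotent operators $\mathrm{d}$, $\delta$ and the adjointness and positivity of the Hermitian form.
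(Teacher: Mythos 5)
Your proof is correct and follows essentially the same route as the paper: both rest on the decomposition $\HH\otimes M(\odd)=\Ker\Dirac^{2}\oplus\Im\Dirac^{2}$ from Lemma~\ref{lemm::Hodge_decomposition} together with the adjointness and orthogonality relations of Lemma~\ref{lemm::relations_d_delta}. The paper shortcuts your two-sided identification of $\Im\Dirac^{2}$ with $\Im\mathrm{d}\oplus\Im\delta$ by noting that the chain $\HH\otimes M(\odd)=\Ker\Dirac\oplus\Im\Dirac^{2}\subset\Ker\Dirac\oplus\Im\Dirac\subset\Ker\Dirac\oplus\Im\mathrm{d}\oplus\Im\delta$ must collapse to equalities, but this is only a cosmetic difference.
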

\begin{proof}
 a) By Lemma~\ref{lemm::Hodge_decomposition} and Corollary~\ref{iterativeKernel}, we have $\HH \otimes M(\odd) = \Ker \Dirac^{2} \oplus \Im \Dirac^{2} = \Ker \Dirac \oplus \Im \Dirac^{2}$. Moreover, $\Im \mathrm{d}$ and $\Im \delta$ are orthogonal to each other by Lemma~\ref{lemm::relations_d_delta}, $\mathrm{d}^{2} = 0$, and $\delta^{2} = 0$. We conclude:
 $$
 \HH \otimes M(\odd) = \Ker \Dirac \oplus \Im \Dirac^{2} \subset \Ker \Dirac \oplus \Im \Dirac \subset \Ker \Dirac \oplus \Im \mathrm{d} \oplus \Im \delta,
 $$
 and consequently $\HH \otimes M(\odd) = \Ker \Dirac \oplus \Im \mathrm{d} \oplus \Im \delta$.

 b) The assertion follows from a) and Lemma~\ref{lemm::relations_d_delta} once we show $\Ker \Dirac = \ker \mathrm{d} \cap \Ker \delta$. However, this assertion is clear since $\Dirac = 2(\mathrm{d}-\delta)$ and $\Im \mathrm{d}$ and $\Im \delta$ are orthogonal.
\end{proof}

Now Lemma~\ref{lemm::kerD,kerd} and Proposition~\ref{EvenRestriction} allow us to compare the $\kk^{\CC}$-module structures of $\DC(\HH)$ and $\mathrm H^\ast(\gg_{+1},\HH)$. For this, we need the following elementary observation. It follows from the fact that the $B$-dual bases induce a $\kk^{\CC}$-equivariant nondegenerate pairing between the underlying spaces, and hence between the corresponding symmetric algebras.

\begin{lemma}\label{lemm::kk_tensor}
The following assertions hold:
\begin{enumerate}
\item[a)] One has
\[
\CC[x_{1},\dotsc,x_{mn}]\cong \CC[x_{1},\dotsc,x_{pn}]\otimes \CC[x_{pn+1},\dotsc,x_{mn}]
\]
as $\kk^{\CC}$-modules, where the action is induced by the commutator action of $\kk^{\CC}$ on $\odd$.
\item[b)] The $\kk^{\CC}$-modules $\CC[x_{1},\dotsc,x_{pn}]$ and $\CC[\partial_{1},\dotsc,\partial_{pn}]$, as well as $\CC[x_{pn+1},\dotsc,x_{mn}]$ and $\CC[\partial_{pn+1},\dotsc,\partial_{mn}]$, are dual to each other. In particular, they are isomorphic.
\end{enumerate}
\end{lemma}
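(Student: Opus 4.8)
The plan is to reduce both claims to elementary facts about the $\kk^{\CC}$-modules $\pp_{1},\pp_{2},\qq_{1},\qq_{2}$, so the first step is to record how the fixed basis of $\odd$ from Section~\ref{subsubsec::Dirac_Operator} is distributed among the four blocks. A direct inspection of the defining formulas shows that $x_{1},\dotsc,x_{pn}$ is a basis of $\qq_{1}$, $x_{pn+1},\dotsc,x_{mn}$ a basis of $\pp_{2}$, $\partial_{1},\dotsc,\partial_{pn}$ a basis of $\pp_{1}$, and $\partial_{pn+1},\dotsc,\partial_{mn}$ a basis of $\qq_{2}$. Hence, with the $\kk^{\CC}$-action understood as the derivation extension of the commutator action on the spanning space, we may identify $\CC[x_{1},\dotsc,x_{mn}]$ with $S(\nn_{\bar{1}}^{-})=S(\qq_{1}\oplus\pp_{2})$, with $\CC[x_{1},\dotsc,x_{pn}]=S(\qq_{1})$ and $\CC[x_{pn+1},\dotsc,x_{mn}]=S(\pp_{2})$, and likewise $\CC[\partial_{1},\dotsc,\partial_{mn}]=S(\nn_{\bar{1}}^{+})=S(\pp_{1}\oplus\qq_{2})$ with factors $S(\pp_{1})$ and $S(\qq_{2})$. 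By Lemma~\ref{lemm::commutation_relations} each of $\pp_{1},\pp_{2},\qq_{1},\qq_{2}$ is a $\kk^{\CC}$-submodule of $\odd$, so all of these are identifications of $\kk^{\CC}$-modules.

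For part a) I would then invoke the standard fact that if a Lie algebra acts on $V=V_{1}\oplus V_{2}$ preserving the two summands, the canonical algebra isomorphism $S(V_{1}\oplus V_{2})\cong S(V_{1})\otimes S(V_{2})$ is equivariant: it is equivariant on the generators $V_{1},V_{2}$, and on both sides the action is determined from the generators by the derivation property, so it is equivariant throughout. Applying this with $V_{1}=\qq_{1}$ and $V_{2}=\pp_{2}$ gives the asserted $\kk^{\CC}$-isomorphism $\CC[x_{1},\dotsc,x_{mn}]\cong\CC[x_{1},\dotsc,x_{pn}]\otimes\CC[x_{pn+1},\dotsc,x_{mn}]$.

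For part b) the engine is the normalized supertrace form $2B(\cdot,\cdot)$. Since $2B(\partial_{k},x_{l})=\delta_{kl}$, it restricts to a non-degenerate pairing between $\pp_{1}$ and $\qq_{1}$ and between $\qq_{2}$ and $\pp_{2}$, and it is $\kk^{\CC}$-invariant because the supertrace form is $\even$-invariant; hence $\qq_{1}\cong\pp_{1}^{\ast}$ and $\pp_{2}\cong\qq_{2}^{\ast}$ as $\kk^{\CC}$-modules. Passing to symmetric algebras and using the natural $\kk^{\CC}$-module identification $S^{i}(W^{\ast})\cong S^{i}(W)^{\ast}$ valid over $\CC$ for each $i$ (the pairing on $i$-th symmetric powers induced from the pairing of $W^{\ast}$ with $W$ is non-degenerate), we obtain degreewise dualities $\CC[x_{1},\dotsc,x_{pn}]\cong\CC[\partial_{1},\dotsc,\partial_{pn}]^{\ast}$ and $\CC[x_{pn+1},\dotsc,x_{mn}]\cong\CC[\partial_{pn+1},\dotsc,\partial_{mn}]^{\ast}$. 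For the concluding clause I would use the conjugate-linear anti-involution $\omega$ with $\omega(x_{k})=\partial_{k}$: it restricts to a conjugate-linear bijection $\qq_{1}\to\pp_{1}$ (and $\pp_{2}\to\qq_{2}$) satisfying $\omega([X,v])=[\omega(v),\omega(X)]$, which exhibits $\pp_{1}$ as the $\omega$-conjugate of $\qq_{1}$; extending multiplicatively to the symmetric algebras transports this to the stated identification.

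The only point demanding real care is this last clause: a finite-dimensional $\kk^{\CC}$-module need not be isomorphic to its linear dual (already for $\mathfrak{sl}_{3}$-type factors inside $\kk^{\CC}$), so the word "isomorphic" here must be read through the conjugate-linear structure carried by $\omega$ — equivalently, through the unitary structure of the oscillator module of Section~\ref{subsubsec::Weyl_Supermodule} — rather than as a bare isomorphism of $\kk^{\CC}$-modules; what is genuinely proved is the $\kk^{\CC}$-duality $\qq_{i}\cong\pp_{i}^{\ast}$ and its consequences for the symmetric algebras. Everything else is bookkeeping: matching the index ranges $1\le k\le pn$ and $pn<k\le mn$ to the four blocks correctly in the first step, followed by the two formal facts about symmetric algebras of a direct sum and of a dual.
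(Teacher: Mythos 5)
Your block identifications are correct ($x_{1},\dotsc,x_{pn}$ spans $\qq_{1}$, $x_{pn+1},\dotsc,x_{mn}$ spans $\pp_{2}$, $\partial_{1},\dotsc,\partial_{pn}$ spans $\pp_{1}$, $\partial_{pn+1},\dotsc,\partial_{mn}$ spans $\qq_{2}$), and your arguments for a) and for the duality assertion in b) --- equivariance of $S(V_{1}\oplus V_{2})\cong S(V_{1})\otimes S(V_{2})$ when both summands are submodules, plus the $\kk^{\CC}$-invariant non-degenerate pairings $2B\vert_{\pp_{1}\times\qq_{1}}$ and $2B\vert_{\qq_{2}\times\pp_{2}}$ followed by $S^{i}(W^{\ast})\cong S^{i}(W)^{\ast}$ --- are complete. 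The paper offers no proof (the lemma is dismissed as a straightforward calculation), and this is surely the intended computation.

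Your unease about the clause ``in particular, they are isomorphic'' is justified, and you should state the obstruction more forcefully: read as a $\CC$-linear isomorphism of $\kk^{\CC}$-modules, the clause is false in general, not merely in need of careful interpretation. A central element $Z=\diag(a,\dotsc,a,b,\dotsc,b\,\vert\,c,\dotsc,c)\in\kk^{\CC}$ with $pa+qb=nc$ and $b\neq c$ acts on $S^{k}(\pp_{2})$ by $k(b-c)$ and on $S^{k}(\qq_{2})$ by $k(c-b)$, so $\CC[x_{pn+1},\dotsc,x_{mn}]$ and $\CC[\partial_{pn+1},\dotsc,\partial_{mn}]$ have disjoint sets of nonzero central characters; even modulo the center, $\pp_{2}\cong\CC^{q}\boxtimes(\CC^{n})^{\ast}$ and $\qq_{2}\cong(\CC^{q})^{\ast}\boxtimes\CC^{n}$ are non-isomorphic over $\su(q)^{\CC}\oplus\su(n)^{\CC}$ once $q\geq 3$ or $n\geq 3$. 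Your proposed repair via $\omega$ does not close this gap: since $\omega$ is conjugate-linear, $\omega\colon\qq_{1}\to\pp_{1}$ is only an antilinear equivariant bijection, i.e.\ an isomorphism of $\pp_{1}$ with the conjugate module $\overline{\qq_{1}}\cong\qq_{1}^{\ast}$, which is exactly the duality you already proved and not a $\CC$-linear isomorphism. So the honest output of your argument is parts a) and the duality in b); the ``isomorphic'' clause should be weakened to ``dual'', and one must then check that the places where the lemma is invoked --- notably the step replacing $\CC[x_{pn+1},\dotsc,x_{mn}]$ by $\CC[\partial_{pn+1},\dotsc,\partial_{mn}]$ in the proof of Theorem \ref{Injection}, and the identification $S^{k}(\gg_{+1}^{\ast})\cong S^{k}(\nn_{\bar{1}}^{-})$ in Section \ref{subsec::formal_characters} --- genuinely only need the duality $S(\gg_{+1})^{\ast}\cong S(\gg_{-1})$ rather than a linear isomorphism $S(\pp_{2})\cong S(\qq_{2})$.
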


Combining these results, we conclude that for unitarizable simple $\gg$-supermodules, Dirac cohomology $\DC(\HH)$ and Kostant cohomology $\mathrm H^\ast(\gg_{+1},\HH)$ are isomorphic as $\kk^{\CC}$-modules, up to a twist.

\begin{theorem}\label{Injection}
For any unitarizable simple $\gg$-supermodule $\HH$ there exists a $\kk^{\CC}$-module isomorphism
$$
\DC(\HH) \cong \mathrm{H}^{\ast}(\gg_{+1},\HH) \otimes \CC_{-\rho_{\bar{1}}}.
$$
\end{theorem}
\begin{proof}
 First, by Proposition~\ref{DiracGleichKernel} and Lemma~\ref{lemm::kerD,kerd}, one has $\DC(\HH) = \Ker \Dirac \cong \ker \mathrm{d}/ \Im \mathrm{d}$.

 Second, by Proposition~\ref{EvenRestriction} and Lemma~\ref{lemm::kk_tensor}, one has the following isomorphisms of $\kk^{\CC}$-modules
\begin{equation*}
\begin{split}
M(\odd)&\cong \CC[x_{1},\dotsc,x_{mn}]\otimes \CC_{-\rho_{\bar{1}}} \\ &\cong (\CC[x_{1},\dotsc,x_{pn}] \otimes \CC[x_{pn+1},\dotsc,x_{mn}]) \otimes \CC_{-\rho_{\bar{1}}} \\ &\cong(\CC[x_{1}, \dotsc,x_{pn}] \otimes \CC[\partial_{pn+1},\dotsc,\partial_{mn}]) \otimes \CC_{-\rho_{\bar{1}}} \\ &\cong \CC[x_{1},\dotsc,x_{pn},\partial_{pn+1},\dotsc,\partial_{mn}] \otimes \CC_{-\rho_{\bar{1}}}.
\end{split}
\end{equation*}
By construction of $\mathrm{H}^{\ast}(\gg_{+1},\HH)$, the statement follows.
\end{proof}

\subsection{Dirac index} \label{subsec::Dirac_index} In this section, we introduce the Dirac index. For this, we call a $\gg$-supermodule $M$ $\Dirac^{2}$\emph{-semisimple} if $\Dirac^{2}$ acts semisimply on $M\otimes M(\odd)$. To every such supermodule we associate a virtual character, called the \emph{Dirac index}. This class includes, in particular, simple and unitarizable supermodules. This index coincides with the Euler characteristic of Dirac cohomology and satisfies several desirable properties. Moreover, for unitarizable simple $\gg$-supermodules, the Dirac index agrees with the Dirac cohomology.

For any $\gg$-supermodule $M$, the Dirac operator $\Dirac$ acts on $M \otimes M(\odd)_{\bar{0}}$ and $M \otimes M(\odd)_{\bar{1}}$, interchanging these subspaces. The \emph{Dirac index} of $M$ is defined as the virtual $\even$-supermodule
\begin{equation}
\DI(M)\coloneqq M \otimes M(\odd)_{\bar{0}} - M \otimes M(\odd)_{\bar{1}}.
\end{equation}
This is an element of the Grothendieck group of $\even$-supermodules. In contrast, the operator $\Dirac: M \otimes M(\odd)_{\bar{0},\bar{1}} \to M \otimes M(\odd)_{\bar{1},\bar{0}}$ gives rise to a decomposition of the Dirac cohomology $\DC(M)$ into even and odd parts:
\[
\DC(M) = \DC^{+}(M) \oplus \DC^{-}(M),
\]
whose \emph{Euler characteristic} is given by the virtual $\even$-supermodule
\[
\DC^{+}(M) - \DC^{-}(M).
\]
The virtual $\even$-supermodules $\DI(M)$ and the Euler characteristic are identical provided that $M$ is $\Dirac^{2}$-semisimple.

\begin{proposition}\label{prop::Euler_char}
Let $M$ be a $\Dirac^{2}$-semisimple $\gg$-supermodule. Then the Dirac index $\DI(M)$ is equal to the Euler characteristic of the Dirac cohomology $\DC(M)$, \emph{i.e.},
\[
\DI(M) = \DC^{+}(M) - \DC^{-}(M).
\]
\end{proposition}

\begin{proof}
We decompose $M \otimes M(\odd)$ into a direct sum of eigenspaces of $\Dirac^2$:
\[
M \otimes M(\odd) = \big(M \otimes M(\odd)\big)(0) \oplus \bigoplus_{c \neq 0} \big(M \otimes M(\odd)\big)(c).
\]
Since $\Dirac^2$ is even, this decomposition is compatible with the $\ZZ_2$-grading:
\[
\big(M \otimes M(\odd)\big)(c) = \big(M \otimes M(\odd)_{\bar{0}}\big)(c) \oplus \big(M \otimes M(\odd)_{\bar{1}}\big)(c).
\]
Moreover, $\Dirac$ commutes with $\Dirac^2$, so it preserves each eigenspace $\big(M \otimes M(\odd)\big)(c)$. However, $\Dirac$ switches parity, inducing isomorphisms
\[
\Dirac(c): \big(M \otimes M(\odd)_{\bar{0}, \bar{1}}\big)(c) \to \big(M \otimes M(\odd)_{\bar{1}, \bar{0}}\big)(c),
\]
with inverses given by $\frac{1}{c} \Dirac(c)$ for $c \neq 0$. Consequently, the contributions from nonzero eigenspaces cancel in the index, and we have
\[
M \otimes M(\odd)_{\bar{0}} - M \otimes M(\odd)_{\bar{1}} = \big(M \otimes M(\odd)_{\bar{0}}\big)(0) - \big(M \otimes M(\odd)_{\bar{1}}\big)(0).
\]
The Dirac operator $\Dirac$ restricts to a differential on $\Ker(\Dirac^2)$, and its cohomology is precisely the Dirac cohomology. The result then follows from the Euler–Poincaré principle.
\end{proof}

As a direct consequence, for a unitarizable simple $\gg$-supermodule, the Dirac index coincides with its Dirac cohomology (see Corollary~\ref{DiracOfSimple}).

\begin{corollary}\label{prop:Coh=Index}
Let $\HH$ be a unitarizable simple $\gg$-supermodule. Then as $\even$-supermodules
\[
\DI(\HH) \cong \DC(\HH).
\]
\end{corollary}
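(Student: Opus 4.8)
The plan is to deduce this directly from the two results just established, namely Proposition~\ref{prop:Euler char} and Corollary~\ref{DiracOfSimple}. The only preliminary point is to check that the hypothesis of Proposition~\ref{prop:Euler char} is met, i.e.\ that a unitarizable simple $\gg$-supermodule $M$ is $\Dirac^{2}$-semisimple. This I would get from the structure theory already developed: $M$ admits an infinitesimal character by Proposition~\ref{prop::Properties_HWM} and is $\even$-semisimple by Lemma~\ref{lemm::ev}, so by the eigenspace decomposition discussed after Corollary~\ref{DUnit} (equivalently Lemma~\ref{lemm::weight_space_decomp}) the operator $\Dirac^{2}$ acts semisimply on $M\otimes M(\odd)$, and in fact $\Ker\Dirac^{2}=\Ker\Dirac$ by Corollary~\ref{iterativeKernel}.

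With this in hand, Proposition~\ref{prop:Euler char} gives $\DI(M)=\DC^{+}(M)-\DC^{-}(M)$ as virtual $\even$-supermodules. Now I would invoke Corollary~\ref{DiracOfSimple}: for a non-trivial unitarizable simple $\gg$-supermodule one has $\DC(M)\cong\DC^{+}(M)$, which is exactly the vanishing $\DC^{-}(M)=0$ (concretely, $\DC(M)=L_{0}(\Lambda-\rho_{\bar{1}})$ by Theorem~\ref{thm::Dirac_cohomology_simple_supermodules} is generated by $v_{\Lambda}\otimes 1$ with $1\in M(\odd)_{\bar{0}}$, so it sits entirely in the even part of $M(\odd)$). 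Substituting $\DC^{-}(M)=0$ into the Euler characteristic identity yields $\DI(M)=\DC^{+}(M)=\DC(M)$; in particular the a priori virtual object $\DI(M)$ is an honest $\even$-supermodule, isomorphic to $\DC(M)$.

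I do not anticipate any real difficulty here, as both ingredients are already proved; the proof is essentially a bookkeeping step. The one conceptual remark worth making is that the content of the corollary is precisely that the ``negative'' part $\DC^{-}(M)$ of the Dirac cohomology vanishes for unitarizable simple $M$, so that the Euler characteristic degenerates to the full cohomology — and this is exactly what Corollary~\ref{DiracOfSimple} supplies. (For the trivial supermodule $\Dirac$ acts as zero on $M(\odd)$, so $\DC^{\pm}$ are the two parity components of $M(\odd)$; the identification should be read in the non-trivial case matching the hypothesis of Corollary~\ref{DiracOfSimple}.)
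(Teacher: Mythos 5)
Your proposal is correct and follows exactly the paper's route: the corollary is stated there as a direct consequence of Proposition~\ref{prop:Euler char} together with Corollary~\ref{DiracOfSimple}, which forces $\DC^{-}(M)=0$. Your additional verification that $M$ is $\Dirac^{2}$-semisimple (via the infinitesimal character and $\even$-semisimplicity) is a reasonable explicit check of a hypothesis the paper leaves implicit.
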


Furthermore, the Dirac index commutes with tensoring by finite-dimensional $\gg$-supermodules. This compatibility can be used to study the Dirac cohomology of unitarizable supermodules via translation functors.

\begin{lemma}
Let $M$ be a $\gg$-supermodule, and let $F$ be a finite-dimensional $\gg$-supermodule. Then there is a canonical isomorphism of $\even$-supermodules
\[
\DI(M \otimes F) \cong \DI(M) \otimes F.
\]
\end{lemma}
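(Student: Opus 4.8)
The plan is to exhibit a concrete isomorphism of $\even$-supermodules between $M \otimes F \otimes M(\odd)$ carrying the action relevant to $\DI(M \otimes F)$ and $(M \otimes M(\odd)) \otimes F$ carrying the action relevant to $\DI(M) \otimes F$, and then check that it is parity-preserving so that it descends to the virtual modules. Concretely, $\DI(M \otimes F)$ is computed from the $\even$-module $(M \otimes F) \otimes M(\odd)$ where $\even$ acts on $M \otimes F$ by the coproduct (tensor) action and on $M(\odd)$ via $\alpha$; this is the diagonal action of $\gg_{\bar 0,\Delta}$ on the three-fold tensor product $M \otimes F \otimes M(\odd)$, since the $\even$-action on the oscillator factor is $\alpha$. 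On the other side, $\DI(M) \otimes F$ is computed from $(M \otimes M(\odd)) \otimes F$ where $\even$ acts diagonally: on $M \otimes M(\odd)$ by the action used to define $\DI(M)$ (namely $X \mapsto X \otimes 1 + 1 \otimes \alpha(X)$), and on $F$ by the given action. The obvious flip map
$$
\Phi : M \otimes F \otimes M(\odd) \longrightarrow M \otimes M(\odd) \otimes F, \qquad v \otimes f \otimes P \longmapsto (-1)^{p(f)p(P)}\, v \otimes P \otimes f,
$$
is an isomorphism of super vector spaces; because $\even$ acts diagonally on all factors in both cases, $\Phi$ intertwines the two $\even$-actions. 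The Koszul sign is chosen so that $\Phi$ respects the $\ZZ_2$-grading.

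The first step is therefore to record these identifications of the relevant $\even$-module structures, citing the definition of the Dirac index and the fact (used throughout Section~\ref{subsec::Dirac_Cohomology}) that $M(\odd)$ is regarded as a $\even$-module via $\alpha$ and that $M \otimes M(\odd)$ carries the diagonal $\gg_{\bar 0,\Delta}$-action. The second step is to verify that $\Phi$ is $\even$-equivariant; this is the routine check that a permutation of tensor factors, equipped with the Koszul sign rule, commutes with a diagonal action — it uses only that the action on each factor is fixed and independent of the order in which the factors are written. The third step is the bookkeeping with the $\ZZ_2$-grading: $\Phi$ sends $(M \otimes F \otimes M(\odd))_{\bar k}$ onto $(M \otimes M(\odd) \otimes F)_{\bar k}$, and under the decomposition of $M(\odd) = M(\odd)_{\bar 0} \oplus M(\odd)_{\bar 1}$ one gets, writing $F = F_{\bar 0} \oplus F_{\bar 1}$,
\begin{align*}
(M \otimes F \otimes M(\odd))_{\bar 0} &\cong \bigl(M \otimes M(\odd)_{\bar 0} \otimes F_{\bar 0}\bigr) \oplus \bigl(M \otimes M(\odd)_{\bar 1} \otimes F_{\bar 1}\bigr), \\
(M \otimes F \otimes M(\odd))_{\bar 1} &\cong \bigl(M \otimes M(\odd)_{\bar 0} \otimes F_{\bar 1}\bigr) \oplus \bigl(M \otimes M(\odd)_{\bar 1} \otimes F_{\bar 0}\bigr),
\end{align*}
so that $M \otimes F \otimes M(\odd)_{\bar 0} - M \otimes F \otimes M(\odd)_{\bar 1}$, reorganized, equals $\bigl(M \otimes M(\odd)_{\bar 0} - M \otimes M(\odd)_{\bar 1}\bigr) \otimes \bigl(F_{\bar 0} + F_{\bar 1}\bigr) = \DI(M) \otimes F$ in the Grothendieck group of $\even$-supermodules. (Here "$\otimes F$" of a virtual module is taken in the naive sense $F = F_{\bar 0} + F_{\bar 1}$, which is what is meant since $F$ appears as an honest module; if instead one wants the statement as virtual super-objects the signs match automatically.)

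I do not expect a genuine obstacle here; the only subtlety — and hence the step I would be most careful about — is getting the Koszul signs and the parity bookkeeping exactly right, so that $\Phi$ genuinely preserves the $\ZZ_2$-grading and the reorganization of the four summands above produces $\DI(M) \otimes F$ rather than $\DI(M) \otimes \Pi F$ or a sign-twisted variant. Once $\Phi$ is seen to be a grading-preserving $\even$-module isomorphism, the claim is immediate, since $\DI(-)$ depends on $(-)\otimes M(\odd)$ only through its $\even$-module structure together with the $\ZZ_2$-grading inherited from $M(\odd)$, and $\Phi$ preserves both. I would note in passing that the same argument shows $\DI$ commutes with tensoring by $F$ at the level of the actual (non-virtual) modules $M \otimes F \otimes M(\odd)_{\bar\imath}$, which is what makes the interaction with translation functors, alluded to before the statement, work.
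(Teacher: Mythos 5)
Your proof is correct and, once stripped of the scaffolding, is exactly the paper's argument: the paper simply unwinds the definition $\DI(M\otimes F)=(M\otimes F)\otimes M(\odd)_{\bar 0}-(M\otimes F)\otimes M(\odd)_{\bar 1}$ and observes that reordering the (finitely many) tensor factors gives $\bigl(M\otimes M(\odd)_{\bar 0}-M\otimes M(\odd)_{\bar 1}\bigr)\otimes F=\DI(M)\otimes F$, which is your flip map $\Phi$ together with the final identity you state. One remark: the ``third step'' of parity bookkeeping is a detour you do not need, because $\DI$ is defined by splitting according to the parity of the oscillator factor $M(\odd)$ alone, not the total parity of the tensor product; moreover, as displayed, your decomposition of $(M\otimes F\otimes M(\odd))_{\bar k}$ omits the contribution of the parity of $M$ itself. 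Since the correct conclusion $(M\otimes F)\otimes M(\odd)_{\bar\imath}\cong M\otimes M(\odd)_{\bar\imath}\otimes F$ follows directly from $\even$-equivariance and parity-preservation of $\Phi$, nothing is lost, but you could delete that step entirely.
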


\begin{proof}
We compute:
\[
\DI(M \otimes F) = M \otimes F \otimes M(\odd)_{\bar{0}} - M \otimes F \otimes M(\odd)_{\bar{1}},
\]
while
\[
\DI(M) \otimes F = \big(M \otimes M(\odd)_{\bar{0}} - M \otimes M(\odd)_{\bar{1}}\big) \otimes F.
\]
Since $F$ is finite-dimensional, the tensor product distributes over the direct sum and difference, yielding a canonical isomorphism between the two expressions.
\end{proof}

\subsection{Formal characters} \label{subsec::formal_characters}
We now derive two expressions for the formal character of a unitarizable simple $\gg$-supermodule, one in terms of Kostant cohomology and one in terms of the Dirac index.

Let $\HH$ be a unitarizable simple $\gg$-supermodule of highest weight $\Lambda$. Since $\HH$ is a highest weight Harish-Chandra supermodule, all $\hh$-weight spaces $\HH^\mu$ are finite-dimensional. Its formal character is therefore defined by
\begin{equation}
\fch(\HH)\coloneqq \sum_{\mu\in\hh^\ast}\dim \HH^\mu\,e^\mu,
\end{equation}
where $e^\mu$ denotes the formal exponential of $\mu$. On the other hand, $\HH$ decomposes into $\kk^{\CC}$-types as
\begin{equation}
\HH=\bigoplus_{\lambda \in P^{++}_{\kk^{\CC}}}m(\lambda)F^\lambda,
\end{equation}
where $F^\lambda$ is the simple finite-dimensional $\kk^{\CC}$-module of highest weight $\lambda$ and $m(\lambda)<\infty$. Here, $P^{++}_{\kk^{\CC}}$ denotes the set of dominant integral weights in $\hh^{\ast}$ with respect to $\Delta_{c}^{+}$. Since $\hh\subseteq\kk^{\CC}$, each $F^\lambda$ has a well-defined $\hh$-character $\ch(F^\lambda)$. The $\kk^{\CC}$-type decomposition
therefore yields the expression
\begin{equation}
\fch_{\kk^{\CC}}(\HH)\coloneqq \sum_{\lambda \in P^{++}_{\kk^{\CC}}} m(\lambda)\,\fch(F^{\lambda}),
\end{equation}
which, upon restriction to $\hh\subset \kk^{\CC}$, is precisely the formal character $\fch(\HH)$. Hence the formal $\hh$-character of $\HH$ is determined by its $\kk^{\CC}$-type decomposition, and we use the notation $\ch(\cdot)$ to emphasize the $\kk^{\CC}$-type decomposition.

\subsubsection{Formal characters and Kostant's cohomology} We study the relation of $\ch(\HH)$ and $\mathrm{H}^{\ast}(\gg_{+1},\HH)$ partially using ideas and constructions presented in \cite{cheng2004analogue}.

Fix a unitarizable simple $\gg$-supermodule $\HH$. In Section~\ref{subsec::g_+1 cohomology}, we introduced the cohomology groups $\mathrm{H}^{k}(\gg_{+1},\HH)$ as the cohomology groups of the $\kk^{\CC}$-complex $\mathrm{C}\coloneqq (\mathrm{C}^{k}(\HH), \mathrm{d})$ with $\mathrm{C}^{k}(\HH)\coloneqq \Hom_{\CC}(S^{k}(\gg_{+1}),\HH)\cong \HH \otimes S^{k}(\gg_{-1})$. Recall that we treat here $\gg_{\pm 1}$ as ordinary complex vector spaces. Moreover, let $\mathrm{H}^{k}(\gg_{+1},\HH)^{\lambda}$ be the weight $\lambda$-subspace of $\mathrm{H}^{k}(\gg_{+1},\HH)$ for some weight $\lambda \in \hh^{\ast}$. As $\HH$ is a Harish-Chandra supermodule (Proposition~\ref{prop::unitarizable_are_HC_supermodules}), we have for fixed weight $\lambda$ that $\dim(\mathrm{H}^{k}(\gg_{+1},\HH)^{\lambda})\neq 0$ only for finitely many $k$. The Euler--Poincaré principle then implies that 
\begin{equation}
\sum_{k=0}^{\infty}(-1)^{k}\dim( \mathrm{H}^{k}(\gg_{+1},\HH)^{\lambda})=\sum_{k=0}^{\infty}(-1)^{k}\dim(\mathrm{C}^{k}(\HH)^{\lambda}),
\end{equation}
and considering their formal characters gives
\begin{equation}
\sum_{k=0}^{\infty}(-1)^{k} \fch\left( \mathrm{H}^{k}(\gg_{+1},\HH)\right)=\sum_{k=0}^{\infty}(-1)^{k}\fch(\mathrm{C}^{k}(\HH)) = \fch(\HH)\sum_{k=0}^{\infty}(-1)^{k}\fch(S^{k}(\gg_{+1}^{\ast})).
\end{equation}

Next, as $\kk^{\CC}$-modules, we have $S^{k}(\gg_{-1}) \cong S^{k}(\qq_{1}\oplus \pp_{2}) = S^{k}(\nn_{\bar{1}}^{-})$ for any $k$ (\emph{cf.} Lemma~\ref{lemm::kk_tensor}), and it is well-known that 
$
\sum_{k=0}^{\infty}(-1)^{k}\fch(S^{k}(\nn_{\bar{1}}^{-}))=(\prod_{\gamma\in \Delta_{\bar{1}}^{+}}(1+e^{-\gamma}))^{-1}.
$
We conclude
\begin{equation}
\sum_{k=0}^{\infty}(-1)^{k}\fch(\mathrm{C}^{k}(\HH))=\frac{1}{D_{1}}\fch(\HH), \qquad D_{1} = \prod_{\gamma\in \Delta_{\bar{1}}^{+}}(1+e^{-\gamma}),
\end{equation}
and the formal character of $\HH$ is given by
\begin{equation}
\fch(\HH)=D_{1}\sum_{k=0}^{\infty}(-1)^{k}\fch(\mathrm{H}^{k}(\gg_{+1},\HH)).
\end{equation}

We now pass to the $\kk^{\CC}$-type decomposition. Let $[\mathrm{H}^{k}(\gg_{+1},\HH):F^{\mu}]$ denote the multiplicity of the simple (unitarizable highest weight) $\kk^{\CC}$-module $F^{\mu}$ with highest weight $\mu$ in $\mathrm{H}^{k}(\gg_{+1},\HH)$. As $\kk$ is compact, $\mathrm{H}^{k}(\gg_{+1},\HH)$ is completely reducible as a $\kk^{\CC}$-module, and we can express $\ch(\mathrm{H}^{k}(\gg_{+1},\HH))$ in terms of the multiplicities: 
\begin{equation}
\ch(\mathrm{H}^{k}(\gg_{+1},\HH))=\sum_{\mu \in P^{++}_{\kk^{\CC}}}[\mathrm{H}^{k}(\gg_{+1},\HH):F^{\mu}]\ch(F^{\mu}).
\end{equation}
Altogether, we have proven the following theorem.
\begin{theorem}\label{formalCharacter}
 Let $\HH$ be a unitarizable simple $\gg$-supermodule. The formal character of $\HH$ is
 $$
\ch(\HH)=D_{1}\sum_{\mu\in P^{++}_{\kk^{\CC}}}\sum_{k=0}^{\infty}(-1)^{k}[\mathrm{H}^{k}(\gg_{+1},\HH):F^{\mu}]\ch\bigl( F^{\mu}\bigr).
 $$
\end{theorem}

\subsubsection{Formal characters and Dirac index}
We have shown that, for a $\Dirac^{2}$-semisimple supermodule $M$, the Dirac index agrees with Dirac cohomology. We now derive a $\kk^{\CC}$-character formula for $\Dirac^{2}$-semisimple Harish-Chandra supermodules. In what follows, such a supermodule is denoted by $\HH$. By definition $\HH$ admits a formal $\kk^{\CC}$-character, and we obtain a formula $\ch(\HH)$ from $\DI(\HH)$.

Recall from Section~\ref{subsec::Dirac_index} the definition:
\begin{equation}
\DI(\HH) = \HH \otimes M(\odd)_{\bar{0}} - \HH \otimes M(\odd)_{\bar{1}} = \DC^{+}(\HH) - \DC^{-}(\HH),
\end{equation}
which is the Euler characteristic of $\DC(\HH)$ by Proposition~\ref{prop::Euler_char}. In terms of characters, this reads
\begin{equation}
\ch(\HH)\bigl(\ch(M(\odd)_{\bar{0}}) - \ch(M(\odd)_{\bar{1}})\bigr) = \ch(\DC^{+}(\HH)) - \ch(\DC^{-}(\HH)).
\end{equation}
The arguments below follow the methods and ideas of \cite{branching_huang}, adapted to the present setting.

We consider the finite-dimensional vector space $\nn_{\bar{1}}^{-}$, with basis $\{x_{1},\ldots,x_{mn}\}$. We are interested in the free resolution of free $\bigwedge(\nn_{\bar{1}}^{-})$-modules:
\begin{equation}
\ldots \xrightarrow{\delta} S^{i}(\nn_{\bar{1}}^{-}) \otimes \bigwedge (\nn_{\bar{1}}^{-}) \xrightarrow{\delta} S^{i-1}(\nn_{\bar{1}}^{-}) \otimes \bigwedge (\nn_{\bar{1}}^{-})\xrightarrow{\delta} \ldots \xrightarrow{\delta} \mathbb{C} \otimes \bigwedge (\nn_{\bar{1}}^{-}) \xrightarrow{\delta} 0,
\end{equation}
where the boundary operator is 
\begin{equation}
\delta\coloneqq \ddp + \ddq =\sum_{i = 1}^{mn} \frac{\partial}{\partial x_{i}} \otimes x_{i} : S(\nn_{\bar{1}}^{-}) \otimes_{\mathbb{C}} \bigwedge (\nn_{\bar{1}}^{-}) \longrightarrow S(\nn_{\bar{1}}^{-}) \otimes_{\mathbb{C}} \bigwedge (\nn_{\bar{1}}^{-}).
\end{equation}
The boundary operator $\delta$ is invariant under the action of $\kk^{\CC}$, \emph{i.e.}, $[\kk^{\CC}, \delta] = 0$, as shown in Lemma~\ref{lemm::k-invariance}. Additionally, the proof of the following lemma follows \emph{mutatis mutandis} from \cite[Proposition 3.3.5]{huang2007dirac}.

\begin{lemma}\label{lemm::Koszul_complex}
 The following assertion holds:
$$\Ker \delta = \Im \delta \oplus \CC(1\otimes 1).
 $$
 In particular, the cohomology is generated by $\CC(1\otimes 1)$.
\end{lemma}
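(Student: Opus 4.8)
The plan is to recognize $\bigl(S(\nn_{\bar 1}^{-})\otimes \bigwedge\nn_{\bar 1}^{-}, \delta\bigr)$ as the Koszul complex attached to the regular sequence $x_1,\dots,x_{mn}$ in the polynomial ring $S(\nn_{\bar 1}^{-}) = \CC[x_1,\dots,x_{mn}]$, and then invoke the standard fact that this complex is a resolution of the residue field $\CC = S(\nn_{\bar 1}^{-})/(x_1,\dots,x_{mn})$. Concretely, $\delta = \sum_i (\partial/\partial x_i)\otimes x_i$ is, up to the usual reindexing, the Koszul differential for multiplication by the $x_i$. Since $x_1,\dots,x_{mn}$ form a regular sequence in the polynomial ring, the Koszul complex is acyclic in all positive homological degrees, and its degree-zero homology is $S(\nn_{\bar 1}^{-})/(x_1,\dots,x_{mn})\cong\CC$, spanned by the class of $1\otimes 1$. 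This gives exactly $\Ker\delta = \Im\delta \oplus \CC(1\otimes 1)$ once one checks that the bottom of the complex (where $\delta$ lands in $S\otimes\bigwedge\nn_{\bar 1}^{-}$ with the constant $1\otimes 1$ not in the image) contributes the one extra dimension.

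First I would write $\delta$ explicitly on a homogeneous element: on $P\otimes (x_{i_1}\wedge\cdots\wedge x_{i_k})$ it acts by $\sum_j (\partial P/\partial x_{i_j})\otimes (x_{i_1}\wedge\cdots\widehat{x_{i_j}}\cdots\wedge x_{i_k})$ — so it is precisely the Koszul differential for the sequence $(x_1,\dots,x_{mn})$ viewed inside the polynomial ring. Second, I would note that $\delta^2=0$ (already available via Lemma \ref{Orthogonality}, since $\ddp$ and $\ddq$ each square to zero and commute appropriately, or by a direct two-line computation). Third, I would invoke the standard homological algebra statement: for a Noetherian ring $R$ and a regular sequence $r_1,\dots,r_n\in R$, the Koszul complex $K_\bullet(r_1,\dots,r_n;R)$ is a free resolution of $R/(r_1,\dots,r_n)$; apply this with $R = \CC[x_1,\dots,x_{mn}]$ and $r_i = x_i$, which is manifestly a regular sequence. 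This yields $H_k = 0$ for $k>0$ and $H_0 \cong \CC$, and the generator of $H_0$ is the class of $1\otimes 1$, which is not in $\Im\delta$ (the image consists of elements lying in the ideal $(x_1,\dots,x_{mn})\cdot S(\nn_{\bar 1}^{-})\otimes\bigwedge\nn_{\bar 1}^{-}$ at the zeroth level). Reassembling the graded pieces gives $\Ker\delta = \Im\delta \oplus \CC(1\otimes 1)$.

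The only genuine subtlety — and the step I would treat most carefully — is the bookkeeping between the ``cohomological'' grading used in the excerpt (the complex is drawn with $\delta$ decreasing the $S^i$-degree, terminating at $\CC\otimes\bigwedge\nn_{\bar 1}^{-}$) and the homological grading of the classical Koszul complex; one must be sure the extra class $1\otimes 1$ genuinely survives and is not killed by an incoming differential, i.e.\ that the complex is exact everywhere except at the single spot contributing $\CC(1\otimes 1)$. This is where citing \cite[Proposition 3.3.5]{huang2007dirac} \emph{mutatis mutandis}, as the paper already signals, does the real work: the argument there is the term-by-term verification that the Koszul resolution of $\CC$ over the symmetric algebra is exact, transported to the present notation. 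Since $\nn_{\bar 1}^{-}$ is finite-dimensional and the $x_i$ are polynomial generators, no infinite-dimensional complications arise, and the proof is complete once the regularity of $(x_1,\dots,x_{mn})$ and the identification of $\delta$ with the Koszul differential are in place.
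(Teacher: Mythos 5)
Your high-level strategy --- identify the complex as a Koszul-type complex and quote its exactness away from a single spot --- is the right idea and is exactly what the paper's citation of \cite[Proposition 3.3.5]{huang2007dirac} accomplishes. However, your explicit identification of $\delta$ is wrong, and the error is not cosmetic. The operator is $\delta=\sum_i \tfrac{\partial}{\partial x_i}\otimes x_i$, where the second tensor factor acts on $\bigwedge \nn_{\bar{1}}^{-}$ by exterior multiplication with $x_i\in\nn_{\bar{1}}^{-}\subset\bigwedge\nn_{\bar{1}}^{-}$; on $P\otimes\omega$ it gives $\sum_i (\partial P/\partial x_i)\otimes(x_i\wedge\omega)$, lowering symmetric degree and \emph{raising} exterior degree. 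The formula you wrote (differentiate $P$ by the variables occurring in the wedge and delete them) is differentiation paired with \emph{contraction}; for that operator the lemma is simply false, since $x_1\otimes x_1\mapsto 1\otimes 1$ would put $1\otimes 1$ into $\Im\delta$. Relatedly, the complex at hand is not literally the Koszul complex of the regular sequence $(x_1,\dots,x_{mn})$, whose differential multiplies on the polynomial side and contracts on the exterior side; it is its graded transpose. So the regular-sequence theorem cannot be invoked verbatim.

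The gap closes in either of two standard ways. One can dualize: each total-degree strand $0\to S^N\otimes\bigwedge^0\to\cdots\to S^0\otimes\bigwedge^N\to 0$ consists of finite-dimensional spaces, and its transpose is the degree-$N$ strand of the classical Koszul resolution of $\CC$ over $\CC[x_1,\dots,x_{mn}]$, so exactness for $N\geq 1$ transfers. Or, more directly (and this is what the cited Proposition 3.3.5 does), one introduces the contracting homotopy $h=\sum_i x_i\otimes\iota_{x_i}$ and checks the Euler identity $\delta h+h\delta=(p+q)\,\id$ on $S^{p}\otimes\bigwedge^{q}$, giving exactness except at $p=q=0$; since $\delta$ raises exterior degree, nothing maps into $S^0\otimes\bigwedge^0$, whence $1\otimes 1\in\Ker\delta\setminus\Im\delta$ and $\Ker\delta=\Im\delta\oplus\CC(1\otimes 1)$. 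Finally, note that $\delta^2=0$ here follows from $\sum_{k,l}\partial_k\partial_l\otimes x_k\wedge x_l=0$ (symmetry against antisymmetry), not from Lemma \ref{Orthogonality}, which concerns the action on $\HH\otimes M(\odd)$ where the second factor is a polynomial algebra rather than an exterior algebra.
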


\begin{lemma} \label{lemm::char_minus_rho_odd}
 Let $\CC_{-\rho_{\bar{1}}}$ be the one-dimensional $\kk^{\CC}$-module with weight $-\rho_{\bar{1}}$. Then 
 \[
 \ch(\bigwedge\nolimits \nn_{\bar{1}}^{-})\bigl(\ch(M(\odd)_{\bar{0}}) - \ch(M(\odd)_{\bar{1}})\bigr) = \ch(\CC_{-\rho_{\bar{1}}}).
 \]
\end{lemma}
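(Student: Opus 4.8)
The plan is to reduce the identity to a computation in the completed ring of formal $\kk^{\CC}$-characters, the two inputs being Proposition~\ref{EvenRestriction}, which identifies $M(\odd)$ with $\CC[x_{1},\dots,x_{mn}]\otimes\CC_{-\rho_{\bar{1}}}$ as a $\kk^{\CC}$-module, and the Koszul complex of Lemma~\ref{lemm::Koszul_complex}, which supplies the cancellation between symmetric and exterior powers of $\nn_{\bar{1}}^{-}$.

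First I would rewrite the factor coming from the oscillator module. The generators $x_{1},\dots,x_{mn}$ span $\nn_{\bar{1}}^{-}$, which is $\kk^{\CC}$-invariant by Lemma~\ref{lemm::commutation_relations}, so the degree-$k$ homogeneous part of $\CC[x_{1},\dots,x_{mn}]$ is isomorphic to $S^{k}(\nn_{\bar{1}}^{-})$ as a $\kk^{\CC}$-module. Combining this with Proposition~\ref{EvenRestriction} and with the fact that the $\ZZ_{2}$-grading on $M(\odd)$ is by parity of polynomial degree, I obtain
\[
\ch(M(\odd)_{\bar{0}})-\ch(M(\odd)_{\bar{1}})=\ch(\CC_{-\rho_{\bar{1}}})\sum_{k=0}^{\infty}(-1)^{k}\ch(S^{k}(\nn_{\bar{1}}^{-})).
\]

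Next I would establish the identity $\ch(\bigwedge\nolimits\nn_{\bar{1}}^{-})\cdot\sum_{k=0}^{\infty}(-1)^{k}\ch(S^{k}(\nn_{\bar{1}}^{-}))=1$, the character of the trivial $\kk^{\CC}$-module. For this I equip the Koszul complex $(S(\nn_{\bar{1}}^{-})\otimes\bigwedge\nn_{\bar{1}}^{-},\delta)$ of Lemma~\ref{lemm::Koszul_complex} with the $\ZZ_{2}$-grading given by the parity of the symmetric degree $i$: the differential $\delta=\sum_{k}\partial/\partial x_{k}\otimes x_{k}$ lowers $i$ by one, hence is odd, and it preserves the total degree $n=i+j$ (with $j$ the exterior degree), for which each graded piece is finite-dimensional. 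The Euler--Poincar\'e principle, applied in each fixed total degree $n$ and summed over $n$, then identifies the super-character of the complex with that of its cohomology; by Lemma~\ref{lemm::Koszul_complex} the latter is $\CC(1\otimes 1)$, whose character is $1$. Since the super-character of the complex factors as $\ch(\bigwedge\nolimits\nn_{\bar{1}}^{-})\cdot\sum_{k}(-1)^{k}\ch(S^{k}(\nn_{\bar{1}}^{-}))$, the identity follows; equivalently, one may invoke the product formulas $\ch(\bigwedge\nolimits\nn_{\bar{1}}^{-})=\prod_{\gamma\in\Delta_{\bar{1}}^{+}}(1+e^{-\gamma})$ and $\sum_{k}(-1)^{k}\ch(S^{k}(\nn_{\bar{1}}^{-}))=\prod_{\gamma\in\Delta_{\bar{1}}^{+}}(1+e^{-\gamma})^{-1}$ already used in Section~\ref{subsec::formal_characters}. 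Multiplying the displayed formula of the previous paragraph by $\ch(\bigwedge\nolimits\nn_{\bar{1}}^{-})$ and substituting this identity gives $\ch(\bigwedge\nolimits\nn_{\bar{1}}^{-})(\ch(M(\odd)_{\bar{0}})-\ch(M(\odd)_{\bar{1}}))=\ch(\CC_{-\rho_{\bar{1}}})$.

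I do not expect a genuine obstacle: the content is just the Koszul identity together with Proposition~\ref{EvenRestriction}. The only matters requiring care are formal — interpreting $\sum_{k}(-1)^{k}\ch(S^{k}(\nn_{\bar{1}}^{-}))$ in the completion of the character ring where it converges, which is legitimate since $\bigwedge\nolimits\nn_{\bar{1}}^{-}$ is finite-dimensional and all relevant weight spaces are finite-dimensional, and keeping the sign conventions aligned, in particular that the parity of the polynomial degree on $M(\odd)$ matches the parity of the symmetric degree on $S(\nn_{\bar{1}}^{-})$, which is what produces the signs on both sides.
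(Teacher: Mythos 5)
Your proof is correct and follows essentially the same route as the paper: both arguments combine Proposition \ref{EvenRestriction} (to extract the twist by $\CC_{-\rho_{\bar{1}}}$) with the Koszul complex of Lemma \ref{lemm::Koszul_complex} and the Euler--Poincar\'e principle. Your version merely factors the twist out first and spells out the grading and convergence details more explicitly, which is a presentational difference rather than a mathematical one.
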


\begin{proof} By Proposition~\ref{EvenRestriction}, we have the following isomorphism of $\kk^{\CC}$-modules:
$$
\bigwedge\nolimits \nn_{\bar{1}}^{-} \otimes M(\odd) \cong \bigwedge\nolimits \nn_{\bar{1}}^{-} \otimes S(\nn_{\bar{1}}^{-}) \otimes \CC_{-\rho_{\bar{1}}}. 
$$ 
In particular, $\bigwedge\nolimits \nn_{\bar{1}}^{-} \otimes S(\nn_{\bar{1}}^{-})$ is the complex introduced above with $\kk^{\CC}$-invariant boundary operator $\delta$. 
By the Euler--Poincaré principle, 
$$
\ch(\bigwedge\nolimits \nn^{-}_{\bar{1}})\bigl(\ch(M(\odd)_{\bar{0}}) - \ch(M(\odd)_{\bar{1}})\bigr)
$$ 
is the Euler characteristic of this complex. However, by Lemma~\ref{lemm::Koszul_complex}, the cohomology is generated by $1\otimes 1$. The claim therefore follows from Proposition~\ref{EvenRestriction}, since $1\otimes 1$ is $\kk^{\CC}$-invariant.
\end{proof}

\begin{theorem} \label{thm::formal_character_Dirac_index}
 Let $F^{\nu}$ denote a simple $\kk^{\CC}$-module of highest weight $\nu \in \hh^{\ast}$. Define $N(\mu)\coloneqq \bigwedge\nolimits \nn^{-}_{\bar{1}} \otimes F^{\mu}$, and assume $\DC^{+}(\HH) = \sum_{\mu}F^{\mu}$ and $\DC^{-}(\HH) = \sum_{\nu}F^{\nu}$. Then 
 $$
 \ch(\HH) = \sum_{\mu}\ch(N(\mu+\rho_{\bar{1}})) - \sum_{\nu} \ch(N(\nu+\rho_{\bar{1}})).
 $$
\end{theorem}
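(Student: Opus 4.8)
The plan is to derive the character formula by combining the Dirac index identity with the Euler--Poincar\'e principle applied to the Koszul-type complex from Lemma~\ref{lemm::Koszul_complex}. Starting point: by Proposition~\ref{prop:Euler char} and Corollary~\ref{DiracOfSimple} (resp.\ the definition of $\DI$), we have at the level of formal $\kk^{\CC}$-characters
\[
\ch(\HH)\bigl(\ch(M(\odd)_{\bar{0}}) - \ch(M(\odd)_{\bar{1}})\bigr) = \ch(\DC^{+}(\HH)) - \ch(\DC^{-}(\HH)) = \sum_{\mu}\ch(F^{\mu}) - \sum_{\nu}\ch(F^{\nu}),
\]
using the hypotheses $\DC^{+}(\HH) = \sum_{\mu}F^{\mu}$ and $\DC^{-}(\HH) = \sum_{\nu}F^{\nu}$.

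Next I would multiply both sides by $\ch(\bigwedge \nn_{\bar{1}}^{-})$ and invoke the preceding lemma, which gives $\ch(\bigwedge \nn_{\bar{1}}^{-})\bigl(\ch(M(\odd)_{\bar{0}}) - \ch(M(\odd)_{\bar{1}})\bigr) = \ch(\CC_{-\rho_{\bar{1}}})$. Hence
\[
\ch(\HH)\,\ch(\CC_{-\rho_{\bar{1}}}) = \ch(\bigwedge\nolimits \nn_{\bar{1}}^{-})\Bigl(\sum_{\mu}\ch(F^{\mu}) - \sum_{\nu}\ch(F^{\nu})\Bigr),
\]
so that, dividing by the invertible formal character $\ch(\CC_{-\rho_{\bar{1}}}) = e^{-\rho_{\bar{1}}}$, we get
\[
\ch(\HH) = e^{\rho_{\bar{1}}}\,\ch(\bigwedge\nolimits \nn_{\bar{1}}^{-})\Bigl(\sum_{\mu}\ch(F^{\mu}) - \sum_{\nu}\ch(F^{\nu})\Bigr).
\]
It remains to absorb the twist by $e^{\rho_{\bar{1}}}$ into the highest weights: since $\ch(F^{\mu})\,e^{\rho_{\bar{1}}}$ is \emph{not} literally $\ch(F^{\mu+\rho_{\bar{1}}})$, the correct bookkeeping is to note that $e^{\rho_{\bar{1}}}\ch(\bigwedge \nn_{\bar{1}}^{-}) \ch(F^{\mu}) = \ch(\bigwedge \nn_{\bar{1}}^{-})\ch(F^{\mu+\rho_{\bar{1}}})$ when $\rho_{\bar{1}}$ is interpreted appropriately relative to the twist coming from Proposition~\ref{EvenRestriction}; tracking this is exactly where one must be careful. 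With $N(\mu) := \bigwedge \nn_{\bar{1}}^{-} \otimes F^{\mu}$, this yields $\ch(\HH) = \sum_{\mu}\ch(N(\mu+\rho_{\bar{1}})) - \sum_{\nu}\ch(N(\nu+\rho_{\bar{1}}))$, as claimed.

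The main obstacle I expect is the precise handling of the $\rho_{\bar{1}}$-twist: one must reconcile the twist $\CC_{-\rho_{\bar{1}}}$ appearing in Proposition~\ref{EvenRestriction} (relating $M(\odd)$ to $\CC[x_1,\dots,x_{mn}]$ as $\kk^{\CC}$-modules) with the shift by $+\rho_{\bar{1}}$ in the final indexing of the $F$'s, and verify these are consistent rather than off by a sign or doubled. The character manipulations themselves (invertibility of $\prod_{\gamma}(1+e^{-\gamma})$-type series in the completed group ring, legitimacy of the Euler--Poincar\'e argument given the finite-multiplicity / Harish-Chandra hypothesis) are routine and parallel to \cite{branching_huang}, so I would state them briefly and refer to the earlier lemmas; the bulk of the written proof should be the identity chain above together with the twist verification.
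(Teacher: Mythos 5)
Your argument is correct, but it handles the crucial cancellation step by a genuinely different route than the paper. After the common starting identity $\ch(\HH)\bigl(\ch(M(\odd)_{\bar{0}})-\ch(M(\odd)_{\bar{1}})\bigr)=\sum_{\mu}\ch(F^{\mu})-\sum_{\nu}\ch(F^{\nu})$, the paper does \emph{not} divide: it rewrites the desired formula as $\bigl(\ch(\HH)-\sum_{\mu}\ch(N(\mu+\rho_{\bar{1}}))+\sum_{\nu}\ch(N(\nu+\rho_{\bar{1}}))\bigr)\bigl(\ch(M(\odd)_{\bar{0}})-\ch(M(\odd)_{\bar{1}})\bigr)=0$ and shows the first factor vanishes by a maximality argument: choosing a $\kk^{\CC}$-type $F^{\mu_{1}}$ in the difference with $(\mu_{1},\rho_{n})$ maximal, the term $F^{\mu_{1}-\rho_{\bar{1}}}$ it contributes to the product could only be cancelled by some $F^{\mu_{i}}\otimes M(\odd)$ with $\mu_{1}=\mu_{i}-\sum_{j}\beta_{j}-\sum_{k}\alpha_{k}$, forcing $(\mu_{1},\rho_{n})<(\mu_{i},\rho_{n})$, a contradiction. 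That argument mimics the classical branching formula of Huang--Pand\v{z}i\'{c}, where the spin-index character is a non-unit and division is impossible. You instead exploit that in the oscillator setting $\ch(M(\odd)_{\bar{0}})-\ch(M(\odd)_{\bar{1}})=e^{-\rho_{\bar{1}}}\prod_{\gamma\in\Delta_{\bar{1}}^{+}}(1+e^{-\gamma})^{-1}$ \emph{is} invertible in the completed character ring (all supports lie in finitely many cones $\lambda_{0}-\ZZ_{+}[\Delta^{+}]$ with finite multiplicities, so products, rearrangements, and the geometric-series inverse are legitimate), its inverse being exactly $e^{\rho_{\bar{1}}}\ch\bigl(\bigwedge\nn_{\bar{1}}^{-}\bigr)$ by the lemma preceding the theorem. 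This yields a shorter proof; the paper's version is the one that survives in settings where the index character fails to be a unit.

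The one point you flag as delicate is in fact unproblematic, and resolving it closes the only gap in your write-up. Since $[\kk^{\CC},\nn_{\bar{1}}^{\pm}]\subset\nn_{\bar{1}}^{\pm}$, the set $\Delta_{\bar{1}}^{+}$ is stable under the compact Weyl group, so $\rho_{\bar{1}}$ is orthogonal to all compact roots and $\CC_{\rho_{\bar{1}}}$ is a genuine one-dimensional $\kk^{\CC}$-module. Hence $F^{\mu+\rho_{\bar{1}}}\cong F^{\mu}\otimes\CC_{\rho_{\bar{1}}}$ and $\ch(F^{\mu+\rho_{\bar{1}}})=e^{\rho_{\bar{1}}}\ch(F^{\mu})$ \emph{is} a literal identity (contrary to your parenthetical caveat), which immediately gives $e^{\rho_{\bar{1}}}\ch\bigl(\bigwedge\nn_{\bar{1}}^{-}\bigr)\ch(F^{\mu})=\ch(N(\mu+\rho_{\bar{1}}))$; the same fact is used silently in the paper's own chain of equalities.
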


\begin{proof}
Combining Lemma~\ref{lemm::char_minus_rho_odd}, the definition of $N(\mu)$ and the definition of the Dirac index, we have
\begin{equation*}
\begin{aligned}
 \ch(\HH) &\bigl(\ch(M(\odd)_{\bar{0}}) - \ch(M(\odd)_{\bar{1}})\bigr) \\ 
 &= \ch(\DC^{+}(\HH)) - \ch(\DC^{-}(\HH)) \\ 
 &= \sum_{\mu}\ch(F^{\mu}) - \sum_{\nu}\ch(F^{\nu}) \\ 
 &= \bigl(\sum_{\mu}\ch(N(\mu+\rho_{\bar{1}})) - \sum_{\nu}\ch(N(\nu+\rho_{\bar{1}}))\bigr)\bigl(\ch(M(\odd)_{\bar{0}}) - \ch(M(\odd)_{\bar{1}})\bigr),
 \end{aligned}
\end{equation*}
which can be rewritten as 
\[
\bigl(\ch(\HH) - \sum_{\mu}\ch(N(\mu+\rho_{\bar{1}})) + \sum_{\nu}\ch(N(\nu+\rho_{\bar{1}}))\bigr)\bigl(\ch(M(\odd)_{\bar{0}}) - \ch(M(\odd)_{\bar{1}})\bigr) = 0.
\]
We claim that the first factor must be trivial. Assume that it is non-trivial, \emph{i.e.},
\[
\ch(V) = \sum_{i=1}^{\infty} n_{i} \ch(F^{\mu_{i}}) \neq 0, \qquad V\coloneqq \HH - \sum_{\mu} N(\mu+\rho_{\bar{1}}) + \sum_{\nu}N(\nu + \rho_{\bar{1}}).
\]
Assume that $V$ contains a $\kk^{\CC}$-type $F^{\xi}$. Then, $\xi = \Lambda - \sum_{j}\beta_{j} - \sum_{i}\alpha_{i}$ for positive non-compact roots $\beta_{j}$ and positive odd roots $\alpha_{i}$. This follows by Theorem~\ref{thm::even_constituents_HW} and \cite[Proposition 3.6]{Jakobsen_Hermitian}. The $\beta_{j}$ are not present in $N(\Lambda)$ by construction. 

We consider the Weyl vector $\rho_{nc}$ associated to the set of non-compact positive roots (\emph{cf.} Section~\ref{subsubsec::Real_form}). Recall that the non-compact positive roots are $\epsilon_{k}-\epsilon_{l}$ for $1\leq k \leq p$ and $p+1\leq l \leq m$, while the odd positive roots are $\{\epsilon_{k}-\delta_{r}, -\epsilon_{l} +\delta_{s} : 1 \leq r,s \leq n, \ 1 \leq k \leq p, \ p+1\leq l \leq m\}$. Then, a direct calculation yields
\[
(\beta_{j},\rho_{nc}) > 0, \qquad (\alpha_{i},\rho_{nc}) > 0, \qquad \forall i,j.
\]
We conclude $(\xi,\rho_{nc}) \leq (\Lambda,\rho_{nc})$.

Without loss of generality, we can assume that $n_{1} \neq 0$ and $(\mu_{1}, \rho_{nc}) \geq (\mu_{i},\rho_{nc})$ for all $i$. 
Since $F^{\mu_{1}}\otimes M(\odd)$ contains $F^{\mu_{1}}\otimes 1 \cong F^{\mu_{1}-\rho_{\bar{1}}}$ with multiplicity one, the character of $F^{\mu_{1}-\rho_{\bar{1}}}$ appears in $\ch(F^{\mu_{1}})(\ch(M(\odd)_{\bar{0}})-\ch(M(\odd)_{\bar{1}}))$ with coefficient one.

By assumption, the contribution of $\ch(F^{\mu_{1}})(\ch(M(\odd)_{\bar0})-\ch(M(\odd)_{\bar1}))$ must be canceled by the remaining terms in $\ch(\nn_{\bar1}^{-})(\ch(M(\odd)_{\bar0})-\ch(M(\odd)_{\bar1}))$. Hence $F^{\mu_{1}+\rho_{\bar1}}$ must occur in $\ch(F^{\mu_i})(\ch(M(\odd)_{\bar0})-\ch(M(\odd)_{\bar1}))$ for some $i>1$, equivalently, in $F^{\mu_i}\otimes M(\odd)$ for some $i>1$.

The weights of $M(\odd)$ are of the form $-\rho_{\bar{1}} -\sum_{j}\beta_{j} - \sum_{k}\alpha_{k}$, where $\beta_{j}$ are distinct non-compact positive roots and $\alpha_{k}$ are distinct positive odd roots. We conclude that
$$
\mu_{1} = \mu_{i} - \sum_{j}\beta_{j} - \sum_{k}\alpha_{k}.
$$
This leads to a contradiction, as it would imply $(\mu_{1},\rho_{nc}) < (\mu_{i},\rho_{nc})$ since $(\beta_{j},\rho_{nc})>0$ and $(\alpha_{i},\rho_{nc})>0$ for all $i,j$. This finishes the proof.
\end{proof}

\thispagestyle{empty}

 % contains math

%-----------------------------------------------------
\bibliography{literatur}
\bibliographystyle{alpha}
%-----------------------------------------------------
\end{document}